\documentclass[11pt]{article}
\usepackage{amsmath,amsthm,amssymb}
\usepackage{enumerate}
\usepackage{graphicx}
\usepackage[noadjust]{cite}
\usepackage{comment}
\usepackage{oands}
\usepackage{tikz}
\usetikzlibrary{calc,math}
\usepackage{changepage}
\usepackage{bbm}
\usepackage{mathtools}
\usepackage{enumitem}
\usepackage[margin=1in]{geometry}
\usepackage{hyperref}
\usepackage{authblk}
\usepackage[pagewise,mathlines]{lineno}
\usepackage{appendix}
\usepackage{multicol}
\usepackage{stmaryrd}
\usepackage{todonotes}
\usepackage{placeins}
\usepackage{xcolor}
\usepackage{pgfplots}
\pgfplotsset{compat=1.18}

\newcommand\sbullet[1][.5]{\mathbin{\vcenter{\hbox{\scalebox{#1}{$\bullet$}}}}}

\newcommand{\cR}{\mathcal{R}}
\newcommand{\cG}{\mathcal{G}}
\newcommand{\cT}{\mathcal{T}}
\newcommand{\cE}{\mathcal{E}}
\newcommand{\ZZ}{\mathbbm{Z}}
\newcommand{\cW}{\mathcal{W}}
\newcommand{\EE}{\mathbb{E}}
\usepackage{cleveref}
\theoremstyle{plain}
\newtheorem{thm}{Theorem}[section]
\newtheorem*{thm*}{Theorem}

\newtheorem{lem}[thm]{Lemma}
\newtheorem{prop}[thm]{Proposition}
\newtheorem{conj}[thm]{Conjecture}

\def\@rst #1 #2other{#1}
\newcommand\MR[1]{\relax\ifhmode\unskip\spacefactor3000 \space\fi
  \MRhref{\expandafter\@rst #1 other}{#1}}
\newcommand{\MRhref}[2]{\href{http://www.ams.org/mathscinet-getitem?mr=#1}{MR#2}}

\definecolor{prettyblue}{RGB}{20,80,160}
\definecolor{citegreen}{RGB}{50,120,90}
\hypersetup{
  colorlinks=true,
  linkcolor=prettyblue,
  urlcolor=prettyblue,
  citecolor=citegreen,
  linktocpage=true
}
 
\theoremstyle{definition}
\newtheorem{defn}[thm]{Definition}
\newtheorem{remark}[thm]{Remark}

\numberwithin{equation}{section}

\newcommand{\dsb}{\begin{adjustwidth}{2.5em}{0pt}
\begin{footnotesize}}
\newcommand{\dse}{\end{footnotesize}
\end{adjustwidth}}

\newcommand{\ssb}{\begin{adjustwidth}{2.5em}{0pt}}
\newcommand{\sse}{\end{adjustwidth}}

\newcommand{\aryb}{\begin{eqnarray*}}
\newcommand{\arye}{\end{eqnarray*}}
\def\alb#1\ale{\begin{align*}#1\end{align*}}
\def\allb#1\alle{\begin{align}#1\end{align}}
\newcommand{\eqb}{\begin{equation}}
\newcommand{\eqe}{\end{equation}}
\newcommand{\eqbn}{\begin{equation*}}
\newcommand{\eqen}{\end{equation*}}

\newcommand{\PP}{\mathbb{P}}
\newcommand{\cN}{\mathcal{N}}
\newcommand{\cB}{\mathcal{B}}
\newcommand{\NN}{\mathbb{N}}
\newcommand{\cM}{\mathcal{M}}
\newcommand{\cH}{\mathcal{H}}
\newcommand{\CC}{\mathbb{C}}
\newcommand{\HH}{\mathbb{H}}
\newcommand{\RR}{\mathbb{R}}
\newcommand{\BB}{\mathbbm}
\newcommand{\ol}{\overline}

\newcommand{\op}{\operatorname}

\newcommand{\frk}{\mathfrak}

\newcommand{\ep}{\varepsilon}

\newcommand{\bdy}{\partial}

\newcommand{\Fsym}[4]{
    \hyperref[def:F]{F_{#1,#2}^{#3,#4}}
}
\newcommand{\Ssym}[2]{
    \hyperref[def:non-constancy]{S_{#1,#2}}
}
\newcommand{\Sstarsym}[2]{
    \hyperref[def:non-constancy]{S^{*}_{#1,#2}}
}
\newcommand{\Tsym}[2]{
    \hyperref[def:TH]{\cT_{(#1,#2)}}
}
\newcommand{\Hsym}[2]{
    \hyperref[def:TH]{\cH_{(#1,#2)}}
}
\newcommand{\DKPZ}[2]{
    \hyperref[def:deltas]{\Delta_{\mathrm{KPZ}}(#1,#2)}
}
\newcommand{\DbKPZ}[2]{
    \hyperref[def:deltas]{\Delta^{\partial}_{\mathrm{KPZ}}(#1,#2)}
}
\newcommand{\DEuc}[2]{
    \hyperref[def:deltas]{\Delta_{\mathrm{Euc}}(#1,#2)}
}
\newcommand{\DLQG}[2]{
    \hyperref[def:deltas]{\Delta_{\mathrm{LQG}}(#1,#2)}
}
\newcommand{\Gsymlink}[1]{
    \hyperref[def:G-k]{\mathcal{G}_{#1}}
}
\newcommand{\Ggeqsymlink}[1]{
    \hyperref[def:G-geqk0]{\mathcal{G}_{>#1}}
}
\newcommand{\Gkasymlink}[2]{
    \hyperref[def:Gkalpha]{\mathcal{G}_{#1}^{#2}}
}
\newcommand{\Gtildesymlink}[1]{
    \hyperref[def:g-tilde]{\tilde{\mathcal{G}}_{#1}}
}
\newcommand{\Gtildegeqsymlink}[1]{
    \hyperref[def:ggeq-tilde]{\tilde{\mathcal{G}}_{>#1}}
}
\newcommand{\Gkatildesymlink}[2]{
    \hyperref[def:gka-tilde]{\tilde{\mathcal{G}}_{#1}^{#2}}
}
\newcommand{\Adeltasymlink}[1]{
    \hyperref[def:A-delta]{A_{#1}}
}
\newcommand{\Wsymlink}[1]{
    \hyperref[def:wk]{\mathcal{W}_{#1}}
}
\newcommand{\Wgeqsymlink}[1]{
    \hyperref[def:W-geqk0]{\mathcal{W}_{>#1}}
}
\newcommand{\Wkasymlink}[2]{
    \hyperref[eq:55]{\mathcal{W}_{#1}^{#2}}
}
\newcommand{\PPPlink}[1]{
    \hyperref[def:ppp]{\Pi_{#1}}
}

\let\originalleft\left
\let\originalright\right
\renewcommand{\left}{\mathopen{}\mathclose\bgroup\originalleft}
\renewcommand{\right}{\aftergroup\egroup\originalright}

\title{Dimension lower bounds in random geometry via Lipschitz functions}
\date{}
\author[1,2]{Manan Bhatia}
\author[3]{Ewain Gwynne}
\author[3]{Brin Harper}
\affil[1]{Massachusetts Institute of Technology}
\affil[2]{The University of Hong Kong}
\affil[3]{The University of Chicago}

\begin{document}

\maketitle

\begin{abstract}
  We prove lower bounds for the Hausdorff dimensions of various natural sets associated with the Liouville quantum gravity (LQG) metric. We prove that the set of 3-star points (i.e., starting points of three disjoint geodesics) has Hausdorff dimension at least two with respect to the LQG metric, which is conjectured to be optimal. Our proof works for a general class of planar length metrics which also includes, e.g., Kendall's Poisson roads metric. We additionally prove a dimension lower bound of one for the set of 2-star points intersected with the boundary and for the metric net intersected with the boundary, as well as a dimension lower bound of two for the intersection of two metric nets. In the particular setting of LQG, we obtain sharper lower bounds for the Hausdorff dimensions of the set of 2-star points and the LQG metric net, with respect to both the Euclidean metric and the LQG metric. Our proofs are primarily topological. The key idea is to express the sets of interest in terms of non-constancy sets of Lipschitz functions.
\end{abstract}

\tableofcontents

\section{Introduction}
The field of random geometry investigates natural models obtained by distorting distances in Euclidean spaces by a random noise. Chief and particularly well-studied among these are planar models such as first passage percolation (FPP) and random planar triangulations. In recent years, there has been particularly rapid progress in constructing candidate continuum limits of the above models, such as the directed landscape \cite{dov-dl}, as well as the Liouville quantum gravity ($\gamma$-LQG) metrics \cite{dddf-lfpp,gm-uniqueness} and the special case of the Brownian map \cite{legall-geodesics,miermont-brownian-map} (corresponding \cite{lqg-tbm1} to $\gamma=\sqrt{8/3}$).
Recently \cite{geosdonotpausenroute}, there has also been interest in another model -- namely, Kendall's Poisson roads metric \cite{kendallrandomlinesmetricspaces}, which is constructed from a family of infinite ``roads'' on the plane with differing ``speed limits.'' A notable common property of all these models is that of geodesic confluence (or coalescence) \cite{BSS19,gm-confluence,mq-strong-confluence,geosdonotpausenroute,BK25,DEP24}, wherein geodesics between nearby points tend to merge with each other; this phenomenon is the source of highly interesting fractal behavior.

As an example, owing to geodesic confluence, typical points in the above-mentioned continuum models only have a single geodesic emanating out ``locally.'' Indeed, defining a point $z$ in a metric space to be $k$-star if there exist $k$ distinct nontrivial geodesics $P_1,\dots, P_k$ such that $\bigcap_{i=1}^kP_i=\{z\}$, almost every point in the above geometries is a $1$-star. However, for $k\geq 2$, there may exist an exceptional set of points $T_{k\textrm{-star}}$ consisting of $k$-stars. Indeed, beginning with the Brownian map \cite{miermont-brownian-map,mq-strong-confluence,legall-geodesic-stars} and then followed by the directed landscape \cite{Bha22,Dau25}, the set $T_{k\textrm{-star}}$ has been deeply investigated and its Hausdorff dimension has been precisely computed for all values of $k$. Still, for other models of random geometry, a precise computation of the dimension of $T_{k\textrm{-star}}$ remains an open question. For instance, the more general $\gamma$-LQG models appear to lack sufficient integrability to enable a precise computation of such dimensions, and as a result, the sets $T_{k\textrm{-star}}$ are still not well-understood (see \cite{gwynne-geodesic-network} for the best known results). 

An interesting approach, developed \cite{BGH21,BGH22} in the literature on random geometries in the Kardar-Parisi-Zhang (KPZ) universality class \cite{kpz-fluctuation}, is to relate the set $T_{\textup{2-star}}$ to the ``non-constancy set'' of the function $\phi(z) = D(z,a) - D(z,b)$ defined as the difference of distances from two fixed points $a$ and $b$. An analysis of the H\"older continuity properties of such difference profiles yields precise lower bounds on the dimension of the set $T_{\textup{2-star}}$. Inspired by this approach, in this paper, we consider similar difference profiles defined for a general class of metric spaces, and analyze their non-constancy sets. Using this approach, for metric spaces satisfying a certain list of conditions\footnote{this includes coalescent metric spaces such as $\gamma$-LQG and Kendall's Poisson roads metric}, we establish (\Cref{3-star_points_general}) that the set $T_{\textup{3-star}}$  satisfies $\dim(T_{\textup{3-star}})\geq 2$, where $\dim(\cdot)$ denotes the Hausdorff dimension with respect to the random metric being considered. We note that \cite{Dau25} conjectures that for $\gamma$-LQG, we have $\dim(T_{\textup{3-star}})=2$ for all values of $\gamma$, and our result applied to $\gamma$-LQG yields a tight lower bound (\Cref{thm:lqgres}\ref{it:LQG-3-lb}). Additionally, we conjecture (Conjecture \ref{conj:2star-bdry}) that for all $\gamma$, a $\gamma$-LQG surface $(X,D)$ with boundary satisfies $\dim(\partial X\cap T_{\textup{2-star}})=1$ almost surely, and we prove (Theorem \ref{thm:lqgbbdres} \ref{it:LQG-2-lb}) that the lower bound here does hold.

Apart from $k$-stars, one can consider various other interesting fractal sets defined in terms of the random geometry. A particularly interesting one, especially from the point of view of $\gamma$-LQG, is the so-called metric net. Indeed, in these coalescent planar random geometries, a metric ball $\cB_r(z)=\{x:D(z,x)<r\}$ around a point $z$ is often not simply connected but rather has multiple ``holes'' (see \Cref{fig-metric-ball}), and one can consider the metric net $\cN^\infty(z)$ defined as the union of the outer-most boundaries of the balls $\cB_r(z)$ as the radius $r$ varies from $0$ to $\infty$ (see Section \ref{basic-defns-ms}). Using that the metric net can be expressed as the non-constancy set of a certain natural Lipschitz function, we apply the approach discussed above to obtain (\Cref{intersection_metric_nets_general}, \Cref{intersection_metric_nets}) new lower bounds on the Hausdorff dimensions of metric nets and of the intersections of metric nets centered around distinct points for a class of ``nice" coalescent metric spaces, which includes both LQG and the Poisson roads metric. 

In the latter half of the paper (see Section \ref{section-lqg-bounds}), we shift from the setting of a relatively general metric space to that of $\gamma$-LQG, and use the special structure therein to prove stronger dimension lower bounds for non-constancy sets of Lipschitz functions of the LQG metric. These sets will include, in particular, $2$-stars (\Cref{thm:lqgres}\ref{it:LQG-2-lb-wholeplane}, \Cref{thm:lqgbbdres}\ref{it:LQG-2-lb}) and metric nets (\Cref{thm:lqgres}\ref{it:LQG-1net-lb}, \Cref{thm:lqgbbdres}\ref{it:LQG-net-lb-boundary}), both in the ``bulk" and on the boundary. As opposed to the first half of the paper (see Section \ref{section-general-bounds}), where the techniques used are topological and more general, the latter half makes essential use of certain ``thickness'' estimates for the GFF which are specific to the case of LQG.

\begin{figure}[t]
\begin{center}
\includegraphics[width=0.45\textwidth]{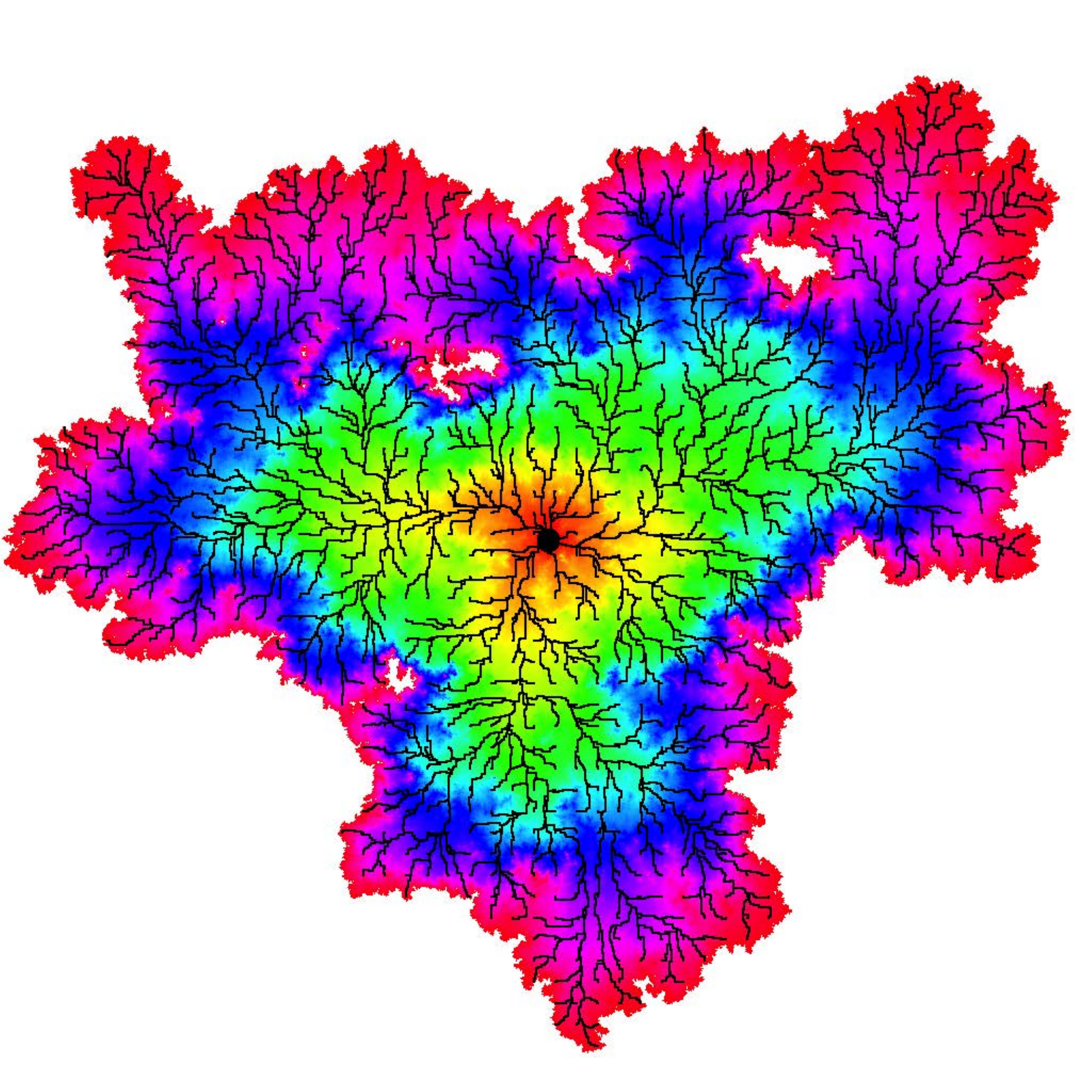}  
\caption{\label{fig-metric-ball}  
(We thank Jason Miller for providing us with the code used to generate this figure.) Simulation of an LQG metric ball $\cB_r(z)$, with $\gamma = \sqrt{8/3}$. The color gradient indicates distance from the center $z$, and the overlaid black lines depict geodesics from the center of the ball to other points. Notice the ``holes" inside the metric ball, i.e. the regions disconnected from infinity by the ball boundary; an LQG metric ball has infinitely many such holes. Also notice the branching behavior of the geodesics. These phenomena are consequences of geodesic confluence.}
\end{center}
\end{figure}

Before stating our main results, we collect a few basic definitions regarding metric spaces.

\subsection{Basic definitions regarding metric spaces}\label{basic-defns-ms}
\begin{itemize}
\item A topological space $X$ is said to be $\boldsymbol{\sigma}$\textbf{-compact} if it is the countable union of compact sets. 
\item A metric space $(X,D)$ is called \textbf{boundedly compact} if the closure of any metric ball is compact. If $D$ is a metric on $\mathbb{C}$ which induces the Euclidean topology, then $(\mathbb{C},D)$ is boundedly compact if and only if $\lim_{z\rightarrow\infty}D(w,z) = \infty$ for some (equivalently, every) $w \in \mathbb{C}$. 
\item For a metric space $(X,D)$, points $z$ and $w$, and a path $\eta\colon [a,b]\rightarrow X$ from $z$ to $w$,  define $\mathrm{len}(\eta;D)=\sup_{a=t_0<\dots<t_n=b}\sum_{i=0}^{n-1}D(\eta(t_i),\eta(t_{i+1}))$. The space $(X,D)$ is a \textbf{length space} if for all $z,w\in X$, we have $D(z,w)=\inf_{\eta: z\rightarrow w}\mathrm{len}(\eta;D)$, where the infimum is over all paths connecting $z$ and $w$; we then call $D$ a \textbf{length metric}. 
\item Any path $\eta$ attaining the above infimum is called a \textbf{geodesic} from $z$ to $w$ and is denoted by $\Gamma_{z,w}$. We will always parametrize geodesics according to their length; that is, we will have $\Gamma_{z,w}\colon[0,D(z,w)]\rightarrow X$ and $\mathrm{len}(\Gamma_{z,w}\lvert_{[s,t]};D)=t-s$ for all $0\leq s < t \leq D(z,w)$. We will often also use $\Gamma_{z,w}$ to also denote the image of the geodesic as a subset of $X$; for example, we can write $z,w\in \Gamma_{z,w}$.
\item If a length metric space admits a geodesic between every pair of points, then it is called a \textbf{geodesic space}. 
\item For a metric space $(X,D)$ and a subset $V \subset X$, let
\begin{equation*}
H_{\delta}^{d}(V) := \inf\left\{ \sum_{j=1}^{\infty}\op{diam}(U_j)^{d} : V \subset \bigcup_{j=1}^{\infty}U_{j}, \, \op{diam}(U_{j}) < \delta \right\}.
\end{equation*}
Then we define the $d$-dimensional \textbf{Hausdorff measure} of $V$ to be
\begin{equation*}
\mathcal{H}^{d}(V) := \lim_{\delta\to 0}H_{\delta}^{d}(V),
\end{equation*}
and the \textbf{Hausdorff dimension} of $V$ to be 
\begin{equation*}
\dim(V) := \op{inf}\{d \geq 0: \mathcal{H}^{d}(V) = 0\}.
\end{equation*}
We may sometimes write $\dim_{D}(V)$ instead of $\dim(V)$ to emphasize that we are computing the Hausdorff dimension of $V$ with respect to the metric $D$.
\item Given $k$ geodesics $\{\Gamma_{z_i,w_i}\}_{i=1}^k$, we say that these geodesics are \textbf{almost disjoint} if the sets $\Gamma_{z_i,w_i}\setminus \{z_i,w_i\}$ are mutually disjoint.
\item A point $z\in D$ is said to be a $\boldsymbol{k}$\textbf{-star} if there exist points $\{z_i\}_{i=1}^{k}$ and almost disjoint geodesics $\{\Gamma_{z,z_i}\}_{i=1}^k$.
\item For a general metric space $(X,D)$, the \textbf{filled metric ball} $\cB^{z,\bullet}_{r}(w)$ of radius $r$ around $w\in X$ and targeted at $z\in X$ is defined as follows. For $r< D(z,w)$, $\cB^{z,\bullet}_{r}(w)$ is the union of $\overline{\cB_{r}(w)}$ along with the set of points $x\in X$ which are disconnected by $\overline{\cB_{r}(w)}$ from $z$, in the sense that any continuous path from $z$ to $x$ must intersect $\overline{\cB_{r}(w)}$. For $r\geq D(z,w)$, $\cB^{z,\bullet}_{r}(w)$ is simply defined to be equal to $X$. We will refer to the set $\partial \cB^{z,\bullet}_{r}(w)$ as the \textbf{outer boundary} of the metric ball $\cB_{r}(w)$. Now, the \textbf{metric net} $\cN^{z}(w)$ targeted at $z$ is defined by $\cN^{z}(w)=\bigcup_{r>0}\partial \cB^{z,\bullet}_{r}(w)$.\footnote{For metrics on the complex plane $\CC$ or the upper half plane $\overline{\HH}$, one can also analogously define metric nets $\cN^z(a)$ for $z=\infty$.}
\end{itemize}

\subsection{Main results: a summary}
\label{sec:main-results:-summ}
As mentioned earlier, some of our results are applicable to rather general metric spaces, while others are specific to LQG. For the reader primarily interested in applications to LQG and the Poisson roads metric, we now concisely state all of the main results of this paper in this setting. Since the LQG metric and the Poisson roads metric are metrics on the plane (see \Cref{sec:preliminaries-2} for a brief working introduction to these metrics), one can define Hausdorff dimensions of exceptional sets with respect to both the random metric and the standard Euclidean metric. To make clear the metric with respect to which the dimension is being taken, we will use a subscript; for example, for LQG, $\dim_{D_h}(\cdot)$ will denote dimension with respect to the LQG metric, and $\dim_{\mathrm{Euc}}(\cdot)$ will indicate dimension with respect to the Euclidean metric.

To simplify notation in the following results, we introduce the following quantities. Throughout, $d_{\gamma}$ will refer to the almost sure Hausdorff dimension of $\gamma$-LQG (see \Cref{sec:preliminaries-2}). 
\hypertarget{deltas}{\begin{defn}\label{def:deltas}
For $\gamma\in (0,2)$ and $\beta>0$, we define
  \begin{align*}
    \Delta_{\mathrm{KPZ}}(\gamma,\beta)&=\beta\frac{d_\gamma (\gamma^2 + 4) - 2 \beta\gamma^2 - 2 \gamma \sqrt{4 d_\gamma^2 - (4\beta + \beta\gamma^2)d_\gamma + \beta^2\gamma^2 } }{2 d_\gamma^2},\\
    \Delta_{\mathrm{KPZ}}^{\partial}(\gamma,\beta) & := \beta\frac{d_{\gamma}(\gamma^{2}+4) -4\beta\gamma^{2} - 2\gamma\sqrt{4d_{\gamma}^{2} - (8\beta + 2\beta\gamma^{2})d_{\gamma} + 4\beta^{2}\gamma^{2}}}{2d_{\gamma}^{2}},\\
    \Delta_{\mathrm{Euc}}(\gamma, \beta) &:= \beta\frac{d_{\gamma}(\gamma^{2}+4) - 2\beta\gamma^{2} - 2\gamma\sqrt{2d_{\gamma}^{2}-(4\beta+\beta\gamma^{2}) d_{\gamma}+\beta^{2}\gamma^{2}}}{2d_{\gamma}^{2}}, \\
    \Delta_{\mathrm{LQG}}(\gamma,\beta) &:= \frac{2d_{\gamma}(\gamma^{2}+4) - 4\beta\gamma^{2} - 4\gamma\sqrt{2d_{\gamma}^{2}-(4\beta+\beta\gamma^{2}) d_{\gamma}+\beta^{2}\gamma^{2}}}{16+\gamma^{4}}.
\end{align*}
\end{defn}}

The first two of these quantities will be useful to us later when converting back and forth between Euclidean and metric dimensions in the context of LQG. For instance, the quantity $\DKPZ{\gamma}{\beta}$ appears when computing the minimum possible Euclidean dimension of a set known to have $D_h$-dimension at least $\beta$ (see Proposition \ref{prop:worstkpz}); indeed, this is a worst-case version (proved in \cite{GP22}) of the Knizhnik-Polyakov-Zamolodchikov (KPZ) \cite{KPZ88} formula, which is why we use the abbreviation KPZ. The quantity $\DbKPZ{\gamma}{\beta}$ appears in a ``boundary'' version of the aforementioned estimate (see Lemma \ref{bdy-kpz}). The latter two quantities $\DEuc{\gamma}{\beta}$ and $\DLQG{\gamma}{\beta}$ are quite specific to the setting of this paper, and will appear in certain lower bounds for Euclidean and LQG dimensions respectively. With the notation of \Cref{def:deltas} in place, we now summarize the new results of this paper for LQG.

\begin{thm}
  \label{thm:lqgres}
Fix $\gamma\in (0,2)$.  Let $h$ be a whole-plane GFF, and let $D_h$ be the associated $\gamma$-LQG metric on $\CC$. Then the following hold almost surely.
  
  \begin{enumerate}[label=(\arabic*)]
  \item\label{it:LQG-3-lb} $\dim_{D_h}\left(T_{\textup{3-star}}\right)\geq 2$, $\dim_{\mathrm{Euc}}(T_{\textup{3-star}})\geq \DKPZ{\gamma}{2}$.
  \item\label{it:LQG-2-lb-wholeplane} $\dim_{D_h}(T_{\textup{2-star}}) \geq 1 + \DLQG{\gamma}{1}$, $\dim_{\mathrm{Euc}}(T_{\textup{2-star}}) \geq 1 + \DEuc{\gamma}{1}$.
  \item\label{it:LQG-1net-lb} Simultaneously for all $a\in \CC$ and $z \in \CC\setminus\{a\}\cup\{\infty\}$, the metric net $\cN^z(a)$ satisfies $\dim_{D_h}(\cN^z(a)) \geq 1 + \DLQG{\gamma}{1}$ and $\dim_{\mathrm{Euc}}(\cN^z(a)) \geq 1 + \DEuc{\gamma}{1}$.
  \item\label{it:LQG-net-int-lb} Simultaneously for all points $a\neq b\in \CC$ and points $z\in \CC\cup\{\infty\}$ not lying on any $D_h$-geodesic from $a$ to $b$, we have $\dim_{D_h}(\cN^z(a)\cap \cN^z(b))\geq 2$ and $\dim_{\mathrm{Euc}}(\cN^z(a)\cap \cN^z(b))\geq \DKPZ{\gamma}{2}$.
  \end{enumerate}
\end{thm}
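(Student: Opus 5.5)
Theorem~\ref{thm:lqgres} is essentially a summary theorem, so the plan is to deduce each item from a general result plus an LQG-specific input. The organizing idea, as advertised in the introduction, is to realize each set of interest (or a subset of it) as the non-constancy set of a Lipschitz function of the metric, and then bound the dimension of that non-constancy set from below.

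\textbf{Items \ref{it:LQG-3-lb} and \ref{it:LQG-net-int-lb}.} I would first check that $(\CC,D_h)$ almost surely satisfies the list of hypotheses of the general theorems (\Cref{3-star_points_general} and \Cref{intersection_metric_nets_general}). These hypotheses are: length space, boundedly compact, Euclidean topology, and geodesic confluence / non-uniqueness properties of the kind proved in \cite{gm-confluence}. Given these, the general results yield $\dim_{D_h}\ge 2$.

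The topological mechanism I would expect is as follows. For fixed $a,b$, the function $\phi(z)=D_h(z,a)-D_h(z,b)$ is $2$-Lipschitz. A Lipschitz image argument then applies: if $\phi$ maps a set $A$ onto an interval, then $\dim A\ge 1$. Iterating with a second function, the map $(\phi_1,\phi_2)$ with $\phi_i=D_h(\cdot,a_i)-D_h(\cdot,b)$ would, by a topological degree argument, cover an open set of $\RR^2$ from points that are $3$-stars (respectively points of $\cN^z(a)\cap\cN^z(b)$). This gives dimension $\ge 2$, since a Lipschitz map from $(X,D_h)$ to $\RR^2$ cannot increase dimension.

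The Euclidean bounds then follow from the worst-case KPZ inequality, Proposition~\ref{prop:worstkpz}, which converts $\dim_{D_h}\ge 2$ into $\dim_{\mathrm{Euc}}\ge \DKPZ{\gamma}{2}$. The "simultaneously for all $a,b,z$" statement in \ref{it:LQG-net-int-lb} requires that the general theorem hold deterministically for every metric satisfying the hypotheses, which it should, since the argument is topological.

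\textbf{Items \ref{it:LQG-2-lb-wholeplane} and \ref{it:LQG-1net-lb}.} Here the bound exceeds the topological value $1$. I would use the LQG-specific result of Section~\ref{section-lqg-bounds}, which bounds non-constancy sets of Lipschitz functions of $D_h$. For a $1$-Lipschitz $\phi$ (e.g.\ $\phi=D_h(\cdot,a)-D_h(\cdot,b)$ for $2$-stars, or a distance function whose non-constancy set lies in $\cN^z(a)$), the key input is that each level set of $\phi$ crossing a region must separate it. A coarea-type argument then gives the following: if the level sets have Euclidean dimension $\ge 1$ on a positive-measure set of levels, then the non-constancy set has dimension $\ge 1+(\text{level set dimension gain})$.

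The gain $\DEuc{\gamma}{1}$ (respectively $\DLQG{\gamma}{1}$) comes from GFF thickness estimates. A curve, having Euclidean dimension $\ge 1$, cannot avoid thick points entirely. By a Frostman-measure argument optimized over the thickness parameter $\alpha$, one obtains the explicit square-root formulas in \Cref{def:deltas}. One then passes between the Euclidean and LQG versions via the relation $D_h$-size $\approx \epsilon^{\xi Q-\xi\alpha}$ of a Euclidean $\epsilon$-ball centered at an $\alpha$-thick point.

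\textbf{Main obstacle.} I expect the hardest step to be this quantitative improvement: producing, on each level set, a Frostman measure supported on points of controlled thickness, uniformly over levels. The difficulty is making the optimization in $\alpha$ produce exactly $\DEuc{\gamma}{1}$ and $\DLQG{\gamma}{1}$. I would first try a multiscale covering estimate for the number of $\alpha$-thick dyadic squares met by a connected set of diameter $\asymp 1$.
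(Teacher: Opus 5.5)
Your top-level assembly matches the paper's. Items (1) and (4) come from \Cref{3-star_points_general} and \Cref{intersection_metric_nets_general}. The latter is deterministic, so (4) holds simultaneously once $D_h$ is a.s.\ geodesic, induces the Euclidean topology and is boundedly compact. The Euclidean bounds in (1) and (4) come from \Cref{prop:worstkpz}, and items (2)--(3) come from \Cref{lqg_lower_bounds_thm} with $\beta=1$.

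\textbf{Gap 1: why the points are stars.} The one genuinely LQG-specific step of the paper's proof is missing, and without it (1) and (2) do not follow. A point of $\{D_h(\cdot,b)-D_h(\cdot,a)=r\}$, or of an intersection of two such sets, need not be a star: its geodesics to $a$ and to $b$ can run together inside the level set, which may even have interior. A degree argument only produces parameters $(r,s)$, not stars. It also does not explain where two-dimensionality comes from: in a tree the image is one-dimensional, and the paper instead uses \Cref{prop-pos-leb} with a triple in $\mathcal{T}$ supplied by \Cref{prop-tree}.

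The paper handles the star property as follows.
\begin{itemize}
\item By \Cref{lem-inside}, a point $z$ of the level set is a 2-star unless some geodesic from $z$ to $a$ spends a nontrivial time interval in that level set.
\item \Cref{countable_tracing_geos_lemma} shows that for $a,b\in\mathbb{Q}^2$ only countably many $r$ are bad. By the confluence statement \Cref{prop:conf1}, every nontrivial piece of a geodesic from $a$ contains one of the countably many segments $\Gamma_{u,v}|_{[s,t]}$ with $u,v\in\mathbb{Q}^2$ and $s,t\in\mathbb{Q}$. Since distinct level sets are disjoint, only countably many $r$ can be bad.
\item This is exactly $A^2\subseteq\mathcal{H}_{(\CC,D_h)}$ for $A=\mathbb{Q}^2$. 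Together with uniqueness of geodesics between rational points \cite{MQ20}, it gives hypothesis \ref{it:as} of \Cref{3-star_points_general}. What is needed is uniqueness on a dense set, not non-uniqueness.
\item For (2), it lets you apply \Cref{lqg_lower_bounds_thm} with $V=f(\CC)\setminus K_{a,b}$, whose non-constancy set lies in $T_{\textup{2-star}}$.
\end{itemize}
For (3), the function must be $x\mapsto\min\{r:x\in\cB^{\bullet,z}_r(a)\}$, which is Lipschitz by the proof of \Cref{intersection_metric_nets} and whose non-constancy set is exactly $\cN^z(a)$. A distance function is nowhere locally constant, so its non-constancy set is all of $\CC$.

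\textbf{Gap 2: where the gain comes from.} Your sketch of the quantitative bounds would not produce $\Delta_{\mathrm{LQG}}$ or $\Delta_{\mathrm{Euc}}$. The step you flag as the main obstacle is one the paper never needs.
\begin{itemize}
\item The inputs you name (each level set separates; a curve cannot avoid thick points) apply equally to a loop made of finitely many $D_h$-geodesic arcs. Such a loop separates two balls and has $D_h$-dimension $1$, so these inputs cannot give any per-level-set gain.
\item In the Euclidean bound the gain is not in the level sets at all. They supply the $1$, while $\Delta_{\mathrm{Euc}}(\gamma,1)$ is the Euclidean cost of the one-parameter family of levels, which is parametrized by a function that is Lipschitz only for $D_h$.
\end{itemize}
The paper's key idea is to couple the two directions at the level of covers.
\begin{itemize}
\item \Cref{radii-product-lower-bound-lemma} shows that $\sum_R\mathrm{diam}_{D_h}(R)^\beta\,\mathrm{diam}_{\mathrm{Euc}}(R)\ge C$ for every cover of $S_{f,V}$. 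Each Euclidean segment $L_\theta$ from $\{f<a\}$ to $\{f>b\}$ meets cover elements whose $f$-images cover $V\cap[a,b]$ (via \Cref{lem:conn}), and one integrates over $\theta$. Your separation idea could give the same inequality dually, by integrating over levels against a Frostman measure on $V$.
\item \Cref{euclidean-dim-lemma} and \Cref{lqg-dim-lemma} then split the cover by the exponent $\alpha$ with $\mathrm{diam}_{D_h}\approx\mathrm{diam}_{\mathrm{Euc}}^{\alpha}$. They bound the atypical part by first-moment counts of thick dyadic squares (resp.\ LQG balls centred at Poissonian points sampled from $\mu_h$, via \Cref{lem:biased}), plus Borel--Cantelli.
\item The optimal thresholds are roots of $g(\alpha)=-1+\beta\alpha+(Q-\alpha/\xi)^2/2$ and of $\ell$. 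These give exactly $\Delta_{\mathrm{Euc}}(\gamma,\beta)$ and $\Delta_{\mathrm{LQG}}(\gamma,\beta)$.
\end{itemize}
No Frostman measure on level sets and no uniformity over levels is ever required.
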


\begin{thm}
  \label{thm:lqgbbdres}
  Fix $\gamma\in (0,2)$. Let $h$ be a Neumann (free-boundary) GFF on the upper half plane $\mathbb{H}$, and let $D_h$ be the associated $\gamma$-LQG metric on $\overline{\mathbb{H}}$. Then the following hold almost surely.

  \begin{enumerate}[label=(\arabic*)]
  \item \label{it:LQG-2-lb} $\dim_{D_h}(T_{\textup{2-star}}\cap \RR)\geq 1$, $\dim_{\mathrm{Euc}}(T_{\textup{2-star}}\cap \RR) \geq \DbKPZ{\gamma}{1}$. 
  \item \label{it:LQG-net-lb-boundary} Simultaneously for all $a\in \overline{\mathbb{H}}$ and $z\in \overline{\mathbb{H}}\setminus \{a\}\cup\{\infty\}$, the metric net $\cN^z(a)$ satisfies $\dim_{D_h}(\cN^z(a)\cap \RR)\geq 1, \dim_{\mathrm{Euc}}(\cN^z(a)\cap \RR) \geq \DbKPZ{\gamma}{1}$.
  \end{enumerate}
\end{thm}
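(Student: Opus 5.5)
The plan is to treat both items as boundary analogues of the bulk arguments, using the paper's main principle: the sets in question contain the non-constancy set of a suitable $1$-Lipschitz function of $D_h$. Such a set then has dimension at least the dimension of a nondegenerate range. The Euclidean bounds will follow from the boundary worst-case KPZ estimate (Lemma \ref{bdy-kpz}), which converts $\dim_{D_h}\geq 1$ on $\RR$ into $\dim_{\mathrm{Euc}}\geq \DbKPZ{\gamma}{1}$. It therefore suffices to prove the two $D_h$-dimension bounds.

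\textbf{Item \ref{it:LQG-2-lb}.} Fix two rational boundary points $a<b$ in $\RR$ and set $\phi(x)=D_h(x,a)-D_h(x,b)$ for $x\in[a,b]$. By the triangle inequality, $\phi$ is $2$-Lipschitz with respect to $D_h$. It is continuous, with $\phi(a)=-D_h(a,b)<0<D_h(a,b)=\phi(b)$, so $\phi([a,b])\supset[-D_h(a,b),D_h(a,b)]$. Let $S\subset[a,b]$ be the set of points $x$ such that $\phi$ is not constant on any neighborhood of $x$ in $[a,b]$. The complement of $S$ is a countable union of intervals on each of which $\phi$ is constant, so $\phi([a,b]\setminus S)$ is countable. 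Hence $\phi(S)$ has positive Lebesgue measure. Since $\phi$ is Lipschitz, $\dim_{D_h}(S)\geq\dim_{\mathrm{Euc}}(\phi(S))=1$.

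It remains to show $S\subset T_{\textup{2-star}}$ up to a countable set. Take $x\in S$ in the open interval $(a,b)$ and geodesics $\Gamma_{x,a}$ and $\Gamma_{x,b}$. We claim they are almost disjoint (or, failing that, that after truncation we obtain two geodesics from $x$ meeting only at $x$). If they shared a common initial segment up to some point $y\neq x$, then $\phi(w)=\phi(y)$ for all $w$ near $x$ on that segment, but this alone does not give local constancy along $\RR$.

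This is the main obstacle. The approach is a topological argument in $\overline{\HH}$ using confluence and the fact that geodesics do not trace the boundary. Suppose every geodesic from $x$ to $a$ and every geodesic from $x$ to $b$ share an initial segment $[x,y]$. Then for boundary points $x'$ near $x$, the geodesics from $x'$ to $a$ and to $b$ are trapped and must merge with this common segment through $y$; the planar ordering of geodesics emanating from boundary points gives this. Consequently $\phi(x')=D_h(x',y)+D_h(y,a)-D_h(x',y)-D_h(y,b)=\phi(y)$ is constant near $x$, which contradicts $x\in S$.

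I would first try to make this rigorous using the general-metric topological lemma from Section \ref{section-general-bounds}, applied to the boundary case. Its required hypotheses are unique geodesics between rational points and the fact that geodesics started from boundary points exit the boundary immediately; both are known for LQG with Neumann boundary. Taking a union over rational $a,b$ gives the claim.

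\textbf{Item \ref{it:LQG-net-lb-boundary}.} Fix $a$ and $z$, and set $\psi(x)=D_h(x,a)$ restricted to the connected component of $\RR\cap\{\text{points not separated from } z\}$. This $\psi$ is $1$-Lipschitz. A boundary point $x$ lies in $\cN^z(a)$ whenever $x\in\partial\cB^{z,\bullet}_r(a)$ with $r=D_h(x,a)$. Boundary points not in the net lie in the interior of some filled ball, that is, in a hole. On the intersection of such a hole with $\RR$, the function $\psi$ is squeezed between the value on the hole's boundary and the radius at which the hole is swallowed, and there are only countably many holes.

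Replace $\psi$ by $\tilde\psi(x)=\inf\{r: x\in\cB^{z,\bullet}_r(a)\}$. This function is still $1$-Lipschitz, since it is monotone in the filling and dominated by $\psi$ with equality on the net. It is constant on each hole-component, and it increases from $D_h(a,\RR)$ to $\infty$ along $\RR$. The non-constancy argument from item \ref{it:LQG-2-lb} then yields $\dim_{D_h}(\cN^z(a)\cap\RR)\geq1$ for fixed $a,z$.

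To get the simultaneous statement, the non-constancy set is monotone under perturbation: for $a'$ close to $a$ we have $\cN^z(a)\supset$ pieces coinciding with $\cN^{z'}(a')$ at large radii. So it suffices to approximate by rational $a,z$ and use countable unions. Finally, apply Lemma \ref{bdy-kpz} to obtain the Euclidean bounds.
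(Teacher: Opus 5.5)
Your reduction in item (1) is the paper's. The map $\phi=D_h(\cdot,a)-D_h(\cdot,b)$ is Lipschitz on the boundary, its non-constancy set $S$ has $\phi(S)$ of positive Lebesgue measure, so $\dim_{D_h}(S)\geq 1$ by \Cref{intersection_universal_bounds} (the first half of \Cref{boundary_thm}). \Cref{bdy-kpz} then gives the Euclidean bound.

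The gap is the step identifying points of $S$ as $2$-stars. Set $r=D_h(x,b)-D_h(x,a)$. \Cref{lem-inside} puts every geodesic from $x$ to $a$ in $F_{a,b}^{r,\geq}$ and every geodesic from $x$ to $b$ in $F_{a,b}^{r,\leq}$. So $x\notin\{a,b\}$ fails to have almost disjoint geodesics to $a,b$ only if every geodesic from $x$ to $a$, and every one to $b$, spends an initial time interval in $F_{a,b}^{r,=}$. There need be no common segment $[x,y]$.

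More seriously, the ``trapping by planar ordering'' does not follow from topology. For $x'$ near $x$, a geodesic from $x'$ to $b$ may leave the region bounded by $\Gamma_{x,y}\cup\Gamma_{y,a}\cup[a,x]$ through a point $w\in\Gamma_{y,a}$ with $D_h(w,b)-D_h(w,a)>r$. A geodesic from $x'$ to $a$ need not meet $\Gamma_{x,y}$ at all. Monotonicity of $D_h(\cdot,b)-D_h(\cdot,a)$ along geodesics then gives $\phi(x')\neq\phi(x)$, and nothing is contradicted.

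What rules this out is confluence (\Cref{prop:conf2}), which your sketch names but never uses. Correspondingly, the extra hypothesis of \Cref{boundary_thm} is $\cH_{(\overline{\HH},D_h)}\cap(\RR\times\RR)\neq\emptyset$, not geodesic uniqueness or that geodesics leave the boundary. The paper applies confluence at the fixed rational point $a$ to show (\Cref{countable_tracing_geos_lemma_bd}) that the following set is countable: the set $K_{a,b}$ of levels $r$ for which some geodesic from $a$ spends a nontrivial time interval in $F_{a,b}^{r,=}$. For $r\notin K_{a,b}$, every point of $F_{a,b}^{r,=}\setminus\{a,b\}$ is a $2$-star by \Cref{lem-inside}. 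Applying \Cref{intersection_universal_bounds} with $V=\RR\setminus K_{a,b}$ then loses nothing. The right bookkeeping discards countably many level sets, not countably many points.

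Your stronger claim can also be proved, but again only via confluence. Suppose $x\in S$ had no almost disjoint pair. Take $x_n\to x$ with $\phi(x_n)\neq\phi(x)$, and take uniform subsequential limits of geodesics from $a$ and from $b$ to $x_n$. \Cref{prop:conf2} forces these geodesics, for large $n$, through points of the initial segments in $F_{a,b}^{r,=}$. Monotonicity along geodesics then gives $\phi(x_n)=\phi(x)$, a contradiction.

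Item (2) follows the paper's route: \Cref{intersection_universal_bounds} applied to $f(x)=\min\{r:x\in\cB^{z,\bullet}_r(a)\}$ restricted to $\RR$, then \Cref{bdy-kpz}. It needs three corrections.

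First, Lipschitzness of $f$ does not follow from $f$ being dominated by $D_h(\cdot,a)$ with equality on the net. Use the argument in the proof of \Cref{intersection_metric_nets}.

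Second, your claim that $f$ runs from $D_h(a,\RR)$ to $\infty$ along $\RR$ holds only for $z=\infty$. For finite $z$ one has $f\leq D_h(a,z)$, and you must check separately that $f|_{\RR}$ is non-constant. This holds, e.g., for $a\in\RR$, since $f(a)=0<f(x)$ for $x\neq a$. It fails when $a\in\HH$ and $z$ lies in a hole of $\overline{\cB_r(a)}$ for some $r<D_h(a,\RR)$. Then all of $\RR$ enters the filled ball at the same radius and $\cN^z(a)\cap\RR=\emptyset$. So some restriction on $(a,z)$ is genuinely needed; the paper's proof writes out only $a=0$, $z=\infty$.

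Third, your perturbation step for simultaneity is unsupported and unnecessary. The non-constancy argument is deterministic given that $(\overline{\HH},D_h)$ is a boundedly compact geodesic space, so it covers all admissible $(a,z)$ at once. \Cref{bdy-kpz} holds a.s.\ simultaneously for all bounded Borel sets.
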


Note that for the particular case of the Brownian map (where $\gamma = \sqrt{8/3}$ and $d_{\gamma}$ = 4), we have $\DKPZ{\sqrt{8/3}}{2} \approx 0.0572$, $\DbKPZ{\sqrt{8/3}}{1} \approx 0.0286$, $\DEuc{\sqrt{8/3}}{1} \approx 0.378$ and $\DLQG{\sqrt{8/3}}{1} \approx 1.047$. For the Brownian map, it is known that the dimension of the set of $k$-stars is equal to $5 - k$ for $k \leq 5$ (see \cite{mq-strong-confluence,legall-geodesic-stars}), and it is expected that the dimension of the metric net is equal to 3 (\cite{dg-confluence}). For all other values of $\gamma$, however, the bounds appearing in \Cref{thm:lqgres} and \Cref{thm:lqgbbdres} of this paper are the first nontrivial bounds for the dimensions of these sets. 

We emphasize that our lower bounds for the LQG dimension of the set of 2-stars and the metric net are nontrivial for all values of $\gamma$; in fact, they are greater than 2 for all $\gamma$. We also note that all of $\DbKPZ{\gamma}{1}$, $\DEuc{\gamma}{1}$, $\DLQG{\gamma}{1}$ converge to $1$ as $\gamma\rightarrow 0$, and that $\DKPZ{\gamma}{2}$ converges to $2$ as $\gamma\rightarrow 0$ (see \Cref{fig:delta-plots}); in the context of \Cref{thm:lqgres} and \Cref{thm:lqgbbdres}, this aligns with the heuristic that the LQG metric behaves more and more ``Euclidean'' as $\gamma\rightarrow 0$.

\begin{figure}[t]
\centering
\usepgfplotslibrary{groupplots}
\pgfplotsset{compat=1.18}

\hspace*{-.75cm}
\begin{tikzpicture}

\begin{groupplot}[
  group style={
    group size=4 by 1,
    horizontal sep=0.7cm,
  },
  width=0.32\textwidth,
  height=0.22\textwidth,
  xmin=0, xmax=2,
  samples=250,
  domain=0.01:1.99,
  axis lines=left,
  xlabel={$\gamma$},
  grid=major,
  grid style={gray!25},
  tick style={black},
  tick label style={font=\scriptsize},
]

\nextgroupplot[
title={$\Delta_{\mathrm{KPZ}}(\gamma,2)$},
ymin=0, ymax=2,
ytick={0,1,2},
]

\addplot[blue, very thick] {
2*(
((2+x^2/2+x/sqrt(6))*(x^2+4)-4*x^2
-2*x*sqrt(4*(2+x^2/2+x/sqrt(6))^2
-(8+2*x^2)*(2+x^2/2+x/sqrt(6))+4*x^2))
/
(2*(2+x^2/2+x/sqrt(6))^2)
)
};

\nextgroupplot[
title={$\Delta_{\mathrm{KPZ}}^\partial(\gamma,1)$},
ymin=0, ymax=1,
ytick={0,0.5,1}
]

\addplot[blue, very thick] {
((2+x^2/2+x/sqrt(6))*(x^2+4)-4*x^2
-2*x*sqrt(4*(2+x^2/2+x/sqrt(6))^2
-(8+2*x^2)*(2+x^2/2+x/sqrt(6))+4*x^2))
/
(2*(2+x^2/2+x/sqrt(6))^2)
};

\nextgroupplot[
title={$\Delta_{\mathrm{Euc}}(\gamma,1)$},
ymin=0.35, ymax=1,
]

\addplot[blue, very thick] {
((2+x^2/2+x/sqrt(6))*(x^2+4)-2*x^2
-2*x*sqrt(2*(2+x^2/2+x/sqrt(6))^2
-(4+x^2)*(2+x^2/2+x/sqrt(6))+x^2))
/
(2*(2+x^2/2+x/sqrt(6))^2)
};

\nextgroupplot[
title={$\Delta_{\mathrm{LQG}}(\gamma,1)$},
ymin=1, ymax=1.05,
]

\addplot[blue, very thick] {
(2*(2+x^2/2+x/sqrt(6))*(x^2+4)-4*x^2
-4*x*sqrt(2*(2+x^2/2+x/sqrt(6))^2
-(4+x^2)*(2+x^2/2+x/sqrt(6))+x^2))
/
(16+x^4)
};

\end{groupplot}

\end{tikzpicture}
\vspace{-.75cm}
\caption{Graphs of the quantities $\DKPZ{\gamma}{2}$, $\DbKPZ{\gamma}{1}$, $\DEuc{\gamma}{1}$, and $\DLQG{\gamma}{1}$ which appear in \Cref{thm:lqgres} and \Cref{thm:lqgbbdres}, using the approximation $d_\gamma \approx 2+\gamma^2/2+\gamma/\sqrt6$ (this approximation is consistent with the best known bounds for $d_{\gamma}$; see \cite{gp-lfpp-bounds}).}\label{fig:delta-plots}
\end{figure}

We remark that, while the results in Theorem \ref{thm:lqgres} and Theorem \ref{thm:lqgbbdres} are stated just for LQG surfaces corresponding to the whole-plane and Neumann GFF, they in fact hold for the interior and boundary of any of the standard LQG surfaces (such as quantum disks, quantum wedges and quantum cones, see e.g. \cite[Section 7]{berestycki-lqg-notes}), as can be seen by a local absolute continuity argument. We will not discuss this aspect further in the paper.

We reiterate that the proofs of \Cref{thm:lqgres} and \Cref{thm:lqgbbdres} involve a combination of two broad approaches: one valid for general metric spaces satisfying certain coalescence-related conditions, and one specific to LQG. Importantly, we note that the estimates attained using our LQG-specific approach (these are the estimates involving the $\Delta$ quantities introduced in Definition \ref{def:deltas}) are not expected to be optimal. However, we emphasize that these are first nontrivial estimates on the respective quantities in the literature. 

In contrast, we do expect the bound $\dim_{D_h}\left(T_{\textup{3-star}}\right)\geq 2$ of \Cref{thm:lqgres} \ref{it:LQG-3-lb} to be optimal. Indeed, as mentioned earlier, the dimension of the set of 3-star points for the LQG metric has been conjectured by Dauvergne to be \textit{equal} to 2 \cite[equation (8)]{Dau25} for all $\gamma\in (0,2)$, and we have been able to prove the tight lower bound. It is already known that the dimension is exactly equal to 2 for the $\gamma = \sqrt{8/3}$ case (see \cite{mq-strong-confluence,legall-geodesic-stars}), but for all other values of $\gamma$ the equality is only conjectural. In this paper, we additionally conjecture that for all values of $\gamma$ and LQG surfaces with boundary, the set of 2-stars on the boundary is always one dimensional. We now state a precise version of this for the case of the Neumann GFF.
\begin{conj}
  \label{conj:2star-bdry}
Let $h$ be a Neumann GFF and consider the LQG surface $(\overline{\mathbb{H}}, D_h)$. Then we have $\dim_{D_h}(T_{\textup{2-star}}\cap \RR)= 1$.
\end{conj}
In fact, we believe that the above should hold for any ``natural'' coalescent planar metric with boundary, though we do not attempt to make this precise in this paper. Note that Theorem \ref{thm:lqgbbdres} \ref{it:LQG-2-lb} proves the lower bound in this conjecture.

Finally, we state the main results of this paper for Kendall's Poisson roads metric. For a brief introduction to this metric, we refer the reader to Section \ref{sec:kendall-intro}.
\begin{thm}
  \label{thm:poissonroad}
  Fix $\beta>2$ and consider the corresponding Poisson roads metric $(\mathbb{R}^2,D)$. Then the following hold almost surely.
  \begin{enumerate}[label=(\arabic*)]
      \item \label{it:pr-3-lb} $\dim_D(T_{\textup{3-star}})\geq 2$.
      \item \label{it:pr-net-int-lb} Simultaneously for all points $a\neq b\in \RR^{2}$ and points $z\in \RR^{2}\cup\{\infty\}$ not lying on any $D$-geodesic from $a$ to $b$, we have $\dim_{D}(\cN^z(a)\cap \cN^z(b))\geq 2$.
    \end{enumerate}  
\end{thm}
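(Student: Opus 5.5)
The plan is to deduce \Cref{thm:poissonroad} from the general results for metric spaces announced in the introduction, namely \Cref{3-star_points_general} (3-stars) and \Cref{intersection_metric_nets_general} (intersections of metric nets). Both are stated for planar boundedly compact geodesic (length) metrics satisfying a list of coalescence-type conditions. So the work splits into two parts: verifying those hypotheses almost surely for Kendall's Poisson roads metric with $\beta>2$, and then applying the general theorems.

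\textbf{Step 1: basic structure.} First I will recall from \Cref{sec:kendall-intro} that for $\beta>2$ the metric $D$ on $\RR^2$ is a.s.\ a length metric inducing the Euclidean topology. It satisfies $D(0,z)\to\infty$ as $z\to\infty$, so it is boundedly compact and hence, by Hopf--Rinow, geodesic.

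\textbf{Step 2: the coalescence hypotheses.} These are the hypotheses the general theorems need in order to write the relevant sets as non-constancy sets of Lipschitz functions. For 3-stars the function is $\phi(z)=D(z,a)-D(z,b)$, and for nets it is the distance function to the outer boundary. The statements required are roughly of the following form:
\begin{itemize}
\item geodesics from a point do not pause en route and do not form bubbles;
\item distinct geodesics to a common target coalesce;
\item the non-constancy set is not locally degenerate, so that suitable level sets are nontrivial curves.
\end{itemize}
For the Poisson roads metric, I will take these from the literature cited in the introduction (\cite{geosdonotpausenroute,kendallrandomlinesmetricspaces}). Geodesics are a.s.\ unique between fixed points. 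Geodesics follow finitely many roads away from their endpoints, and they coalesce. Isotropy and scale invariance of the road process give the required a.s.\ statements simultaneously at all scales.

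\textbf{Step 3: apply the general results.} With the hypotheses verified, \Cref{3-star_points_general} yields $\dim_D(T_{\textup{3-star}})\ge 2$. \Cref{intersection_metric_nets_general} yields the bound for $\cN^z(a)\cap\cN^z(b)$ whenever $z$ avoids every $D$-geodesic from $a$ to $b$.

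\textbf{Step 4: simultaneity.} The statement must hold simultaneously for all $a\ne b$ and $z$. Since the general theorem is deterministic given the hypotheses, it applies to every triple once the a.s.\ structural properties hold globally. This is why Step 2 needs global versions of those properties, not just versions at fixed points. The case $z=\infty$ is handled by the footnoted convention for nets targeted at $\infty$.

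\textbf{Main obstacle.} The main obstacle will be Step 2. The hypotheses must hold simultaneously for all points, including exceptional ones such as points lying on roads or at road intersections. If a hypothesis only holds at fixed points, I would first try to reduce to a countable dense set of centers and pass to all points by a continuity or monotonicity argument on the Lipschitz difference profiles.
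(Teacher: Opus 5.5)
Your overall route (item~\ref{it:pr-3-lb} from \Cref{3-star_points_general}, item~\ref{it:pr-net-int-lb} from \Cref{intersection_metric_nets_general}) is the paper's. However, Step~2 misidentifies what those theorems require, and it never verifies the one hypothesis with real content.

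\Cref{intersection_metric_nets_general} is deterministic and purely topological. It needs only that $D$ is a geodesic metric inducing the Euclidean topology, with $(\RR^2,D)$ boundedly compact; the paper gets bounded compactness from \cite[Theorem 5.1]{kahnlineprocess}. So item~\ref{it:pr-net-int-lb} follows from your Step~1 alone, simultaneously in $a,b,z$ and including $z=\infty$, with no coalescence input. Your Step~4 worry about needing global coalescence for the nets is unfounded.

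For item~\ref{it:pr-3-lb}, connectedness, $\sigma$-compactness, trivial first homology and not being a tree all follow immediately from the homeomorphism with the Euclidean plane. The only substantive hypothesis is assumption~\ref{it:as}: a countable dense $A$ with unique geodesics between its points and $A^2\subseteq\mathcal{H}_{(X,D)}$. Concretely, for $a,b\in A$, the set of $r$ for which some geodesic started from $a$ spends a nontrivial time interval in $F^{r,=}_{a,b}=\{x: D(x,b)-D(x,a)=r\}$ must be Lebesgue-null. This condition is defined in this paper, so it cannot simply be ``taken from the literature''. You never explain how any of the properties you list (no pausing, coalescence, level sets being curves) yields it. Also, the function behind the 3-star bound is the $\RR^2$-valued $x\mapsto (D(x,a)-D(x,b),D(x,a)-D(x,c))$ with $(a,b,c)\in\mathcal{T}_{(X,D)}$, not a scalar difference profile.

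The missing step is a short countability argument:
\begin{enumerate}
\item Take $A=\mathbb{Q}^2$, which has a.s.\ unique geodesics by \cite[Theorem 4.4]{kendallrandomlinesmetricspaces}.
\item Let $S$ be the countable family of segments $\Gamma_{u,v}|_{[s,t]}$ with $u,v\in\mathbb{Q}^2$ and $s<t$ rational. By \cite[Lemma 1]{geosdonotpausenroute}, almost surely every nontrivial sub-segment of every $D$-geodesic contains some $\eta\in S$.
\item Hence if a geodesic from $a$ spends a nontrivial interval in $F^{r,=}_{a,b}$, some $\eta\in S$ lies inside $F^{r,=}_{a,b}$.
\item The level sets $F^{r,=}_{a,b}$ are pairwise disjoint for distinct $r$, so each $\eta$ accounts for at most one $r$. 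The bad set of $r$ is therefore countable, and in particular null.
\end{enumerate}
This uses a global statement only about geodesics, which \cite{geosdonotpausenroute} provides, and only countably many pairs $(a,b)$. The obstacle you anticipate (controlling exceptional points on roads or at intersections, or invoking isotropy and scale invariance) does not arise. The existence of a triple in $A^3\cap\mathcal{T}_{(X,D)}$ and the passage from level sets to 3-stars are already handled inside \Cref{3-star_points_general}.
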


\subsection{Results for general coalescent metric spaces}
\label{sec:main-results}
In this section, we will work in the setting of relatively general metric spaces. Working under the assumption that certain basic regularity conditions hold, we will state dimension lower bounds for various fractal sets associated to coalescent geometries. The results of this section are proved by topological arguments, and do not involve the usage of structures or integrability specific to any particular model of random geometry. In order to state our results, we will crucially require the following definition.

\hypertarget{def-non-constancy-set}{\begin{defn}[Non-constancy set]\label{def:non-constancy}Given a metric space $(X,D)$, a function $f=(f_1,\dots,f_n)\colon X\rightarrow \RR^n$, and a set $V\subseteq \RR^n$, we will often refer to the \textbf{non-constancy set} $S_{f,V}$, given by
\begin{equation}
  \label{eq:1}
  S_{f,V}=\{x \in f^{-1}(V) : \textrm{ for all } i, \: \nexists \epsilon > 0 \text{ s.t. } f_i \text{ is constant on } \cB_{\epsilon}(x)\}.
\end{equation}
In this setting we will also write $S_{f_{i},V}$ as a shorthand for the set
\begin{equation}
  S_{f_{i},V}=\{x \in f_{i}^{-1}\left(\pi_{i}\left(V\right)\right) : \nexists \epsilon > 0 \text{ s.t. } f_i \text{ is constant on } \cB_{\epsilon}(x)\}.
\end{equation}
where $\pi_{i}$ is the projection map projecting a point in $\mathbb{R}^{n}$ to its $i$th coordinate. Note that $S_{f,V} = \bigcap_{i=1}^n S_{f_i,V}$.
\end{defn}}
 
Our first result provides a general dimension lower bound for any non-constancy set of a locally Lipschitz function (i.e., a function which is Lipschitz on compact sets) for a rather general class of metric spaces.

\begin{thm}\label{intersection_universal_bounds}
  Let $(X,D)$ be a $\sigma$-compact metric space. For every $n\in \NN$ and every locally Lipschitz function $f\colon X\rightarrow \RR^n$, and every set $V\subseteq \RR^n$, we have
  \begin{equation}
    \label{eq:2}
    \max\{\dim(\Ssym{f}{V}),n-1\}\geq \dim(f(X)\cap V).
  \end{equation}
\end{thm}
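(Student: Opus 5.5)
The idea is to split $f(X)\cap V$ into the image of the non-constancy set $\Ssym{f}{V}$ and the image of its complement in $f^{-1}(V)$. The first piece is controlled by the Lipschitz property and the second is shown to be small.

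\emph{Step 1 (Lipschitz images).} A Lipschitz map does not increase Hausdorff dimension. Since $X$ is $\sigma$-compact, write $X=\bigcup_j K_j$ with each $K_j$ compact and $f|_{K_j}$ Lipschitz. Then $\dim f(A)\le \dim A$ for every $A\subseteq X$, using countable stability of Hausdorff dimension and the inequality $\op{diam} f(U)\le L_j\op{diam} U$ for $U\subseteq K_j$. Applying this to $A=\Ssym{f}{V}$ gives $\dim f(\Ssym{f}{V})\le \dim \Ssym{f}{V}$.

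\emph{Step 2 (the complement).} Let $x\in f^{-1}(V)\setminus \Ssym{f}{V}$. By \Cref{def:non-constancy}, $\Ssym{f}{V}=\bigcap_i S_{f_i,V}$, so there is an index $i$ and an $\epsilon>0$ with $f_i$ constant on $\cB_\epsilon(x)$. It suffices to show that $\{f_i(x): x\in X,\ f_i \text{ locally constant at } x\}$ is countable for each $i$. Then $f(f^{-1}(V)\setminus\Ssym{f}{V})$ lies in a countable union of hyperplanes $\{y: y_i=c\}$, each of dimension $n-1$, so it has dimension at most $n-1$.

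For countability, use separability. A $\sigma$-compact metric space is separable, hence second countable; fix a countable base $\{B_m\}$. For each $x$ at which $f_i$ is locally constant, choose some $B_m$ with $x\in B_m\subseteq \cB_\epsilon(x)$. Then $f_i$ is constant on $B_m$ with value $f_i(x)$. So each such value is the constant value of $f_i$ on some basic set $B_m$, and there are only countably many of these.

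\emph{Step 3 (combine).} We have
\[
f(X)\cap V = f(f^{-1}(V)) = f(\Ssym{f}{V})\cup f\bigl(f^{-1}(V)\setminus \Ssym{f}{V}\bigr).
\]
By countable (here finite) stability,
\[
\dim(f(X)\cap V)\le \max\{\dim \Ssym{f}{V},\, n-1\}.
\]
Here the dimension of $f(X)\cap V$ is taken with respect to the Euclidean metric on $\RR^n$, which is also the metric used in Step 1.

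I do not expect a real obstacle; the argument is short. The only points needing care are that "locally Lipschitz" means Lipschitz on compact sets, so the $\sigma$-compact exhaustion is required in Step 1, and the separability argument in Step 2 that makes the set of locally constant values countable.
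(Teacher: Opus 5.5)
Your proof is correct and follows essentially the same route as the paper: you split $f(X)\cap V$ into $f(S_{f,V})$ and the image of the locally-constant part, put the latter inside countably many coordinate hyperplanes using separability, and control the former because Lipschitz maps do not increase dimension. The differences are only cosmetic: you use a countable base where the paper counts disjoint balls, you apply finite stability directly where the paper splits on whether $\dim(f(X)\cap V)>n-1$, and you spell out the Lipschitz-image step via the $\sigma$-compact exhaustion.
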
 

This result will be proven by an elementary argument at the beginning of \Cref{section-general-bounds}. In practice we will often apply the above for Lebesgue measurable sets $V\subseteq \RR^n$ satisfying $\mathrm{Leb}(f(X)\cap V)>0$, which we note automatically implies that $\dim(f(X)\cap V)=n$, thereby yielding the bound $\dim(\Ssym{f}{V})\geq n$.

By appropriately choosing the function $f$, we can probe the structure of different fractal sets associated to coalescent geometries. A prime example of this is the metric net (defined in \Cref{basic-defns-ms}), as is illustrated by the following result. 

\begin{thm}\label{intersection_metric_nets_general} Let $D$ be a geodesic metric on $\mathbb{C}$ that induces the Euclidean topology, and suppose that  $(\mathbb{C},D)$ is boundedly compact. Then for all points $a\neq b\in \CC$ and points $z\in \CC\cup\{\infty\}$ not lying on any $D$-geodesic from $a$ to $b$, we have $\dim_{D}(\cN^z(a)\cap \cN^z(b))\geq 2$. 
\end{thm}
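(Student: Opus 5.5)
Our plan is to apply \Cref{intersection_universal_bounds} with $n=2$ to a Lipschitz map $f=(f_1,f_2)\colon\CC\to\RR^2$ whose non-constancy set is contained in $\cN^z(a)\cap\cN^z(b)$, and whose image has positive Lebesgue measure. The natural choice is
\[
f_1(x)=\inf\{r>0: x\in \cB^{z,\bullet}_r(a)\},\qquad f_2(x)=\inf\{r>0: x\in \cB^{z,\bullet}_r(b)\}.
\]
These are the ``filled'' distance functions. We have $f_1(x)=D(a,x)$ on the part of the plane not swallowed by any earlier ball, and $f_1$ is constant on each region that is cut off from $z$ at the first time it is disconnected. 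When $z=\infty$ we use the analogous definition, and bounded compactness guarantees that the filled balls are compact and exhaust $\CC$.

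\textbf{Step 1: $f_1$ is $1$-Lipschitz.} The filled balls are increasing in $r$. If $D(x,y)<\ep$ and $x\in\cB^{z,\bullet}_r(a)$, we claim $y\in\cB^{z,\bullet}_{r+\ep}(a)$. To see this, suppose some path $\eta$ from $z$ to $y$ avoided $\overline{\cB_{r+\ep}(a)}$. Concatenate $\eta$ with a geodesic from $y$ to $x$. Every point of that geodesic lies within $D$-distance $\ep$ of $y$, so it lies outside $\overline{\cB_r(a)}$ unless $D(a,y)\le r+\ep$, and the latter already puts $y$ in the filled ball. Handling this case split gives a path from $z$ to $x$ avoiding $\overline{\cB_r(a)}$, which contradicts $x\in\cB^{z,\bullet}_r(a)$. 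The resulting bound $|f_1(x)-f_1(y)|\le D(x,y)$ is a routine but slightly fiddly topological point.

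\textbf{Step 2: non-constancy points lie in the net.} Suppose $x\in S_{f_1}$ with $f_1(x)=r$, and suppose $x\notin\partial\cB^{z,\bullet}_r(a)$. Then $x$ lies either in the interior of $\cB^{z,\bullet}_r(a)$ or outside it. In the first case, points of the interior with $f_1<r$ would be disconnected from $z$ by smaller balls; we must show that $f_1$ is then locally constant near $x$. In the second case $f_1(x)>r$, a contradiction. The first case is the delicate one. The cleaner route is to argue directly: if every neighbourhood of $x$ contains points with $f_1\ne r$, then $x$ is a limit of boundary points $\partial\cB^{z,\bullet}_{s}(a)$ with $s\to r$, and hence lies in $\overline{\cN^z(a)}$. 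Handling closure versus the net itself is an issue. I expect this to be the main obstacle: showing that $\partial\cB^{z,\bullet}_r(a)$ is closed in $r$ in the appropriate sense, so that $S_{f_1}\subseteq\cN^z(a)$, possibly after discarding a countable set, which does not affect dimension. The same argument gives $S_{f_2}\subseteq\cN^z(b)$, so $S_{f,\RR^2}\subseteq \cN^z(a)\cap\cN^z(b)$.

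\textbf{Step 3: the image has positive area.} By \Cref{intersection_universal_bounds} it remains to show $\mathrm{Leb}(f(\CC))>0$; it suffices to find an open set in the image. Take a geodesic $\Gamma$ from $a$ to $b$, of length $L=D(a,b)$. Since $z\notin\Gamma$, no filled ball around $a$ of radius $<L$ swallows $b$ prematurely. Intuitively, along $\Gamma$ we have $f_1+f_2=L$ on points close to $\Gamma$. Moving off $\Gamma$ in the direction of a compact set separating nothing from $z$ should produce pairs $(f_1,f_2)$ with $f_1+f_2>L$ varying continuously.

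More concretely, consider the continuous map $f$ on a small closed topological disk $U$ around an interior point of $\Gamma$, with $U$ disjoint from $z$ and from all relevant holes. On $U$, $f$ should agree with $(D(a,\cdot),D(b,\cdot))$. We would show that $f(U)$ contains a nondegenerate triangle $\{f_1+f_2\ge L\}$ near $(t,L-t)$ via an intermediate value or degree argument. Concretely, for each $t'$ near $t$, the level set $\{f_1=t'\}\cap U$ is connected to both $\Gamma$ and $\partial U$. Along this level set $f_2$ ranges continuously from $L-t'$ up to a value strictly larger, and the increase is bounded below uniformly in $t'$. Here $z\notin\Gamma$ is used to ensure that points near $\Gamma$ are not swallowed. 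This yields an open set in $f(\CC)$, so $\dim(f(\CC))=2$, and the theorem follows with $n=2$.
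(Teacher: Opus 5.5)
Step 1 is fine, and your overall architecture (a Lipschitz $f$ built from filled balls, plus a positive-area image) matches the paper's. However, Steps 2 and 3 each contain a genuine gap.

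\textbf{Step 2} cannot be completed as planned, because the inclusion $S_{f_1,\RR}\subseteq\cN^z(a)$ is false, and the exceptional set is not countable. Take the Euclidean metric and a finite $z$. Then $f_1(x)=\min(|x-a|,|z-a|)$, so every point of the circle $\{|x-a|=|z-a|\}$ is a non-constancy point. Yet this circle meets no $\partial\cB^{\bullet,z}_r(a)$: these boundaries are circles of radius $r<|z-a|$, and are empty for $r\ge|z-a|$.

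In geometries with holes the same thing happens more broadly. Let $H$ be a hole cut off from $z$ exactly at radius $r$, and let $x\in\partial H$ lie outside the closure of the component of $\CC\setminus\overline{\cB_r(a)}$ containing $z$. Then $D(a,x)=f_1(x)=r$ and $x$ lies in the interior of $\cB^{\bullet,z}_r(a)$. So $x$ is again a non-constancy point outside the net.

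To make your route work, you would need the image under $f$ of this exceptional set to have dimension at most one, and nothing in your plan controls that. The way out is to avoid $S_{f,\RR^2}$ altogether. Exhibit a set $P\subseteq\RR^2$ of positive area such that for each $(r_1,r_2)\in P$ there is some $x\in\partial\cB^{\bullet,z}_{r_1}(a)\cap\partial\cB^{\bullet,z}_{r_2}(b)$. Such an $x$ lies in $\cN^z(a)\cap\cN^z(b)$ and has $f(x)=(r_1,r_2)$. Hence $f(\cN^z(a)\cap\cN^z(b))\supseteq P$, and the Lipschitz property alone gives dimension at least $2$. This is what the paper's argument actually establishes.

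\textbf{Step 3} rests on false premises and misses the key mechanism. The hypothesis on $z$ does not stop $b$ from being absorbed into $\cB^{\bullet,z}_r(a)$ for some $r<D(a,b)$. Absorption happens as soon as $\overline{\cB_r(a)}$ separates $b$ from $z$, which is the typical situation in coalescent geometries. So $f_1+f_2=L$ along $\Gamma$ fails in general.

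Nothing in the hypotheses provides a hole-free disk $U$ on which $f=(D(a,\cdot),D(b,\cdot))$; in LQG-type geometries, balls have holes at all scales. The uniform growth of $f_2$ along level sets of $f_1$ is also unproved.

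The missing idea is global planar topology. For $r<D(a,z)$, $\partial\cB^{\bullet,z}_r(a)$ is a Jordan curve, and likewise for $b$; this is where Euclidean topology and bounded compactness enter. Two closed Jordan domains that intersect without being nested must have intersecting boundaries.

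The paper sets $t=f_1(b)$, which may be strictly less than $D(a,b)$. It takes $q\in\Gamma_{a,b}$ with $D(a,q)=t$ and sets $s=D(q,b)$. For $(r_1,r_2)\in[t-\epsilon,t)\times[s+\epsilon,s+2\epsilon]$ it checks three things:
\begin{itemize}
\item $b\notin\cB^{\bullet,z}_{r_1}(a)$, by the definition of $t$.
\item $a\notin\cB^{\bullet,z}_{r_2}(b)$. Since $z$ lies on no $a$--$b$ geodesic, $z$ is forced into the same unbounded component of $\CC\setminus\overline{\cB_s(b)}$ as $a$, and this persists for slightly larger radii.
\item The two filled balls meet near $q$.
\end{itemize}
The resulting boundary intersection points need not be anywhere near $\Gamma$, which is why a local analysis around $\Gamma$ is the wrong tool.
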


The above results will later be used to obtain some of the results regarding metric nets stated in \Cref{thm:lqgres} and \Cref{thm:lqgbbdres}.

We now move on to results regarding star points. These results will be valid for metric spaces satisfying certain mild regularity conditions. For LQG and the Poisson roads metric, these regularity conditions will be verified to hold by using coalescence, and thus one can intuitively interpret these conditions as being a proxy for geodesic confluence. We now define a useful pair of sets which will help us formulate our results.
\begin{defn}[The sets $\cT_{(X,D)}$ and $\cH_{(X,D)}$]
  \label{def:TH}
  For a geodesic metric space $(X,D)$, we define
  \begin{equation*}
  \label{eq:16}
  \cT_{(X,D)}= \{ (u,v,w)\in X^3: \exists \textrm{ geodesic } \Gamma_{u,w}\textrm{ and } \exists x\in \Gamma_{u,w} \textrm{ not contained in any geodesics } \Gamma_{u,v}, \Gamma_{v,w}\}.
\end{equation*}
Also, we define $\cH_{(X,D)}$ to be the set of all $(u,v)\in X^2$ for which $u\neq v$ and the following property holds:
\begin{enumerate}[label=\Alph*.]
\item \label{it:assum_A}Let $S$ be the set of $s \in \mathbb{R}$ for which there exists a geodesic $\Gamma^s$ started from $u$ and a nontrivial interval $I_s$ for which $\Gamma^s\lvert_{I_s}\subseteq \{x \in X : D(x, u) + s = D(x,v)\}$. Then $S$ has zero Lebesgue measure.\footnote{Note that, since any geodesic $\Gamma$ has finite length, there are at most countably many $s$ for which $\Gamma$ spends a positive Lebesgue measure of time in the set $\{x\in X: D(x,u)+s=D(x,v)\}$. However, in assumption \ref{it:assum_A}, we are allowed to choose different geodesics $\Gamma^s$ for \emph{different} values of $s$.}
\end{enumerate}
\end{defn}

With the above definitions in hand, we can now state the following result for $3$-star points.

\begin{thm}\label{3-star_points_general}
Let $(X,D)$ be a connected $\sigma$-compact geodesic metric space, which is not a singleton, and whose first homology group (with integer coefficients) is trivial. Then for any $(a,b,c)\in \Tsym{X}{D}$, the function $f\colon X\rightarrow \RR^2$ defined by $f(x)=(D(x,a)-D(x,b), D(x,a)-D(x,c))$ satisfies $\dim (\Ssym{f}{\RR^2})\geq 2$.
\noindent Further, if $X$ additionally satisfies the following  three assumptions:
\begin{enumerate}[label=(\alph*)]
    \item \label{it:bddcpt}$X$ is boundedly compact, in the sense that its closed metric balls are compact.
    \item \label{it:tree} $X$ is not a tree, by which we mean a space with a unique path connecting every two points.
    \item \label{it:as} There exists a countable dense set $A \subset X$ such that $A^2\subseteq \Hsym{X}{D}$ and such that there is a unique geodesic $\Gamma_{u,v}$ for all $u,v\in A$.
\end{enumerate}
then we have $\dim\left(T_{\textup{3-star}}\right) \geq 2$.
\end{thm}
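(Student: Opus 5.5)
The plan is to reduce the first claim to \Cref{intersection_universal_bounds} and then to obtain the $3$-star bound by showing that, after discarding a Lebesgue-null set of level values, non-constancy points of $f$ are $3$-stars.

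\textbf{First claim.} By the triangle inequality each coordinate of $f$ is $1$-Lipschitz, so $f$ is $2$-Lipschitz. By \Cref{intersection_universal_bounds} with $n=2$ and $V=\RR^2$, it suffices to show that $f(X)$ has positive Lebesgue measure. I will show that $f(X)$ contains a point $p$ together with an open neighbourhood of it, via a winding-number argument.

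Take the closed loop $\ell$ obtained by concatenating a geodesic $\Gamma_{a,c}$ containing the special point $x$ from the definition of $\Tsym{X}{D}$, then any geodesic $\Gamma_{c,b}$, then any geodesic $\Gamma_{b,a}$.

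If some $p\notin f(X)$, then $f\circ\ell$ is a loop in $\RR^2\setminus\{p\}$. Its winding number about $p$ is the image of $[\ell]\in H_1(X;\ZZ)=0$ under $f_*\colon H_1(X)\to H_1(\RR^2\setminus\{p\})\cong\ZZ$, hence is zero. So it suffices to exhibit an open set of points $p$ around which $f\circ\ell$ winds nontrivially; every such $p$ then lies in $f(X)$.

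To compute the winding I use the explicit structure of $f$ along geodesics:
\begin{itemize}
\item On $\Gamma_{a,b}$ we have $D(y,a)+D(y,b)=D(a,b)$, so $f_1$ moves linearly with slope $2$.
\item On $\Gamma_{a,c}$, $f_2$ moves linearly with slope $2$.
\item On $\Gamma_{b,c}$, $f_2-f_1=D(\cdot,b)-D(\cdot,c)$ moves linearly with slope $2$.
\item Everywhere on $X$, $f$ lies in the hexagon $\{|f_1|\le D(a,b),\ |f_2|\le D(a,c),\ |f_2-f_1|\le D(b,c)\}$.
\end{itemize}
Equality cases in these triangle inequalities correspond exactly to a point lying on a geodesic through $a$, $b$ or $c$. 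The hypothesis that $x$ lies on no geodesic $\Gamma_{a,b}$ or $\Gamma_{b,c}$ should then force $f(x)$ strictly off the curves $f(\Gamma_{a,b})$ and $f(\Gamma_{b,c})$. Concretely, I expect the inequalities $D(x,a)-D(x,b)>-D(a,b)$ and $D(x,b)-D(x,c)<D(b,c)$ to be strict, so that the image triangle is non-degenerate near $f(x)$ and has winding $\pm1$ about nearby points.

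\textbf{Main obstacle.} Pinning down this winding computation rigorously is where I expect the difficulty. The image curves are not segments: only one coordinate combination is linear on each side, so the curves may wiggle. My first attempt is a homotopy to a straight-line triangle, using the linear coordinate on each side to parametrize that side as a graph over an interval. I would then check that the graphs cannot cross the point $p$ chosen just beside $f(x)$.

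\textbf{Second claim.} Assumption \ref{it:tree} together with connectedness should give some triple in $\Tsym{X}{D}$. By density, and using uniqueness of geodesics between points of $A$, I expect to find such a triple $(a,b,c)\in A^3$, so the first part gives $f(X)\supseteq U$ for an open $U$. Let $N_{ab}$, $N_{ac}$, $N_{bc}$ be the null sets of bad values $s$ from property \ref{it:assum_A} for the pairs $(a,b)$, $(a,c)$, $(b,c)$; each is null because $A^2\subseteq\Hsym{X}{D}$. Set
\[
V=\RR^2\setminus\bigl(N_{ab}\times\RR\ \cup\ \RR\times N_{ac}\ \cup\ \{(s,t):t-s\in N_{bc}\}\bigr),
\]
so that $\op{Leb}(f(X)\cap V)>0$ and \Cref{intersection_universal_bounds} yields $\dim \Ssym{f}{V}\ge 2$.

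Now take $y\in\Ssym{f}{V}$ and geodesics $\Gamma_{y,a},\Gamma_{y,b},\Gamma_{y,c}$. Suppose, say, $\Gamma_{y,a}$ and $\Gamma_{y,b}$ shared an initial segment. Then on that segment $D(\cdot,a)-D(\cdot,b)\equiv f_1(y)$, and traversing it backwards gives a piece of a geodesic from $a$ inside this level set, so $f_1(y)\in N_{ab}$, contradicting $y\in f^{-1}(V)$. The same holds for the other two pairs.

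It remains to upgrade ``no shared initial segment'' to almost disjointness of short initial pieces; this requires ruling out intersections accumulating at $y$. I plan to handle it as follows:
\begin{itemize}
\item If two of the geodesics meet again at $w\ne y$, splice to produce two geodesics from $y$ to $w$.
\item Use bounded compactness (\ref{it:bddcpt}) to take the first meeting point, and swap geodesic pieces so that overlaps become shared segments, which were ruled out above.
\item Points where this fails should lie on the countably many geodesics $\Gamma_{u,v}$, $u,v\in A$, whose union has dimension $1$ and can be discarded. The non-constancy of $f$ at $y$ is what prevents $y$ from sitting in the interior of such an overlap.
\end{itemize}
This gives $\dim T_{\textup{3-star}}\ge 2$.
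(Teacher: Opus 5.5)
Your winding-number route to $\mathrm{Leb}(f(X))>0$ differs from the paper's. The paper proves Proposition~\ref{prop-pos-leb} by showing that the level sets $\{D(\cdot,c)-D(\cdot,a)=r\}$ are connected (Lemma~\ref{lem-connected}), that $D(\cdot,c)-D(\cdot,b)$ is non-constant on them for a positive-measure set of $r$ (Lemmas~\ref{lem-good-pt} and~\ref{lem-non-constant}), and then applying Fubini. Your route is viable and uses $H_1(X;\ZZ)=0$ more directly. However, the step that carries all the content is left open, and the inequalities you expect to use are the wrong ones.

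The conditions $D(x,a)-D(x,b)>-D(a,b)$ and $D(x,b)-D(x,c)<D(b,c)$ say that neither $a$ nor $c$ lies on a geodesic from $x$ to $b$. That is not what $(a,b,c)\in\mathcal{T}_{(X,D)}$ asserts, and they can fail for a valid witness $x$. Being off the boundary of the hexagon is also irrelevant to winding. For a tripod, $f\circ\ell$ retraces a tripod and winds around no point, even though the straight triangle on $f(a),f(b),f(c)$ is nondegenerate. So a homotopy to that triangle necessarily sweeps across points, and you would have to control which ones.

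The correct separation is as follows. By the triangle inequality, $f(\Gamma_{a,b})\subseteq\{f_2-f_1\le D(a,b)-D(a,c)\}$ and $f(\Gamma_{b,c})\subseteq\{f_1\ge D(a,c)-D(b,c)\}$. For the witness $x\in\Gamma_{a,c}$, the conditions $D(x,a)+D(x,b)>D(a,b)$ and $D(x,b)+D(x,c)>D(b,c)$ are equivalent to $f_2(x)-f_1(x)>D(a,b)-D(a,c)$ and $f_1(x)<D(a,c)-D(b,c)$. Hence near $f(x)$ the loop $f\circ\ell$ is a single arc of $f(\Gamma_{a,c})$. This arc is a graph over the $f_2$-axis traversed once, since $f_2$ is strictly increasing along $\Gamma_{a,c}$. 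The winding numbers on its two sides differ by one, so one side is an open set of nonzero winding, and that set lies in $f(X)$.

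For the second claim, the genuinely missing ingredient is the existence of a triple $(a,b,c)\in A^3\cap\mathcal{T}_{(X,D)}$. You only assert that a non-tree has some triple in $\mathcal{T}_{(X,D)}$, and density cannot move such a triple into $A^3$. Membership in $\mathcal{T}_{(X,D)}$ is defined through the set of all geodesics between the points, which does not vary continuously with the endpoints, and nothing in the hypotheses makes it an open condition.

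The paper handles this with Proposition~\ref{prop-tree}, proving the contrapositive. Suppose $A^3\cap\mathcal{T}_{(X,D)}=\emptyset$ and geodesics between points of $A$ are unique. Then Lemma~\ref{lem-geo-conc} shows inductively that the union $T_n$ of geodesics among $z_0,\dots,z_n\in A$ is a tree whose arcs are geodesics. Density and bounded compactness give $T_n\to X$ in the local Gromov--Hausdorff sense, and a limit of trees is a tree (Lemma~\ref{lem-tree-conv}). This is where assumption~\ref{it:bddcpt} is actually used.

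Conversely, your disjointness step needs none of the machinery you list. Suppose two of the geodesics, say $\Gamma_{y,a}$ and $\Gamma_{y,b}$, meet at some $w\neq y$. They do so at the common time $t=D(y,w)>0$. Then $\Gamma_{y,b}|_{[0,t]}$ followed by $\Gamma_{y,a}|_{[t,D(y,a)]}$ is a geodesic to $a$ that shares a nontrivial initial segment with $\Gamma_{y,b}$, which you have already excluded. So there is no accumulation issue. The proposal to discard points on $\bigcup_{u,v\in A}\Gamma_{u,v}$ is both unjustified and unnecessary. The paper reaches the same conclusion via the monotonicity in Lemma~\ref{lem-inside}, which places each $\Gamma_{y,\cdot}\setminus\{y\}$ in one of three disjoint strict cells.
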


In practice, in the context of metric spaces obtained as scaling limits of discrete random geometries, \Cref{it:as} is a rather mild regularity condition; we check this condition for our metric spaces of interest fairly easily using the particular coalescence properties known to hold there (e.g., \Cref{prop:conf1} in the case of LQG). 

While Theorem \ref{3-star_points_general} concerns $3$-stars, one may also consider $2$-star points, and for these we obtain the following result. 
\begin{thm}
  \label{boundary_thm}
  Let $(X,D)$ be a $\sigma$-compact geodesic metric space and let $A\subseteq X$ be a closed connected set equipped with the restriction of the metric $D$. Then for all distinct $a,b\in A$, with $f\colon A\rightarrow \RR$ defined as $f(x)=D(x,a)-D(x,b)$, we have $\dim(\Ssym{f}{\RR})\geq 1$. Further, if the set $\Hsym{X}{D}\cap A^2$ is non-empty, then with $T_{\textup{2-star}}$ denoting the set of 2-stars for $(X,D)$, we have $\dim\left(A\cap T_{\textup{2-star}}\right) \geq 1$.
\end{thm}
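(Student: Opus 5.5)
The plan has two parts: first the bound $\dim(S_{f,\RR})\geq 1$, obtained from Theorem \ref{intersection_universal_bounds} with $n=1$, and then a transfer from non-constancy points of $f$ to 2-stars on $A$ using the hypothesis $\Hsym{X}{D}\cap A^2\neq\emptyset$.

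\textbf{Step 1: the bound for $S_{f,\RR}$.} The function $f(x)=D(x,a)-D(x,b)$ is $2$-Lipschitz on $A$ by the triangle inequality. Since $A$ is closed in the $\sigma$-compact space $X$, it is itself $\sigma$-compact. We have $f(a)=-D(a,b)$ and $f(b)=D(a,b)\neq f(a)$. Because $A$ is connected and $f$ is continuous, $f(A)$ contains the whole interval $[-D(a,b),D(a,b)]$, which has positive Lebesgue measure. Theorem \ref{intersection_universal_bounds} with $n=1$ and $V=\RR$ then gives $\max\{\dim S_{f,\RR},0\}\geq 1$, so $\dim(S_{f,\RR})\geq 1$.

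\textbf{Step 2: a refined non-constancy set.} Pick $(a,b)\in\Hsym{X}{D}\cap A^2$ and let $S$ be the null set from property \ref{it:assum_A}. Apply Theorem \ref{intersection_universal_bounds} again with $V=\RR\setminus S$. Since $f(A)\cap V$ still has positive measure, this yields $\dim(S_{f,V})\geq 1$. It then suffices to show that every $x\in S_{f,V}$ other than $a,b$ is a 2-star of $(X,D)$.

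\textbf{Step 3: points of $S_{f,V}$ are 2-stars.} Fix such an $x$ with $s:=f(x)\notin S$, and choose geodesics $\Gamma_{x,a}$ and $\Gamma_{x,b}$. I first argue that they can be taken to meet only at $x$ near their start. Suppose instead that every such pair shares an initial segment. If they meet at a point $y\neq x$ with $D(x,y)=t$, then concatenating the segment of one geodesic from $x$ to $y$ with the segment of the other from $y$ onward produces geodesics along which both distances decrease at unit rate. This forces $D(\cdot,a)-D(\cdot,b)$ to be constant, equal to $s$, on the segment of $\Gamma_{x,a}$ from $x$ to $y$.
\begin{itemize}
\item This uses that $\Gamma_{x,a}$ passes through $y$ and continues to $a$ along a geodesic, so $D(z,a)=D(x,a)-D(x,z)$ for $z$ on the segment.
\item Likewise $D(z,b)=D(x,b)-D(x,z)$ for the same $z$, hence $f(z)=s$ there.
\end{itemize}
Reversing this segment gives a geodesic portion lying in $\{D(\cdot,u)+s'=D(\cdot,v)\}$. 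With the convention $u=a,v=b$ the relevant level is $-s$, so I need to check the sign convention in \ref{it:assum_A}; if necessary I will use $(b,a)$ in place of $(a,b)$, which only flips the sign, and replace $S$ by $S\cup(-S)$. This step still only yields a segment starting at $x$, not a geodesic started from $u$. To get a geodesic started from $u$, I concatenate $\Gamma_{a,y}$ (the reverse of the tail of $\Gamma_{x,a}$ from $y$) with nothing extra: this is a geodesic from $a$ that traverses the segment $[x,y]$ near its end, and along it $f\equiv s$ on a nontrivial interval. So $s\in S$, a contradiction. Hence for $s\notin S$, any geodesics $\Gamma_{x,a}$ and $\Gamma_{x,b}$ intersect only at $x$ (using $x\neq a,b$), and $x$ is a 2-star.

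On this reading, membership in $S_{f,V}$ is only used to guarantee positive measure of values through Theorem \ref{intersection_universal_bounds}; non-constancy itself is not needed in the argument. The main obstacle I expect is exactly the step above: showing that a common point of $\Gamma_{x,a}$ and $\Gamma_{x,b}$ forces a geodesic from $a$ to spend a nontrivial interval in the level set $\{f=s\}$, with the orientation and sign conventions of \ref{it:assum_A} matching. The conclusion is $\dim(A\cap T_{\textup{2-star}})\geq\dim(S_{f,V}\setminus\{a,b\})\geq 1$.
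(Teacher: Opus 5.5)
Your proposal is correct and follows the paper's proof. You apply Theorem~\ref{intersection_universal_bounds} to $f$ on the $\sigma$-compact connected set $A$, apply it again with $V$ the complement of the null set from property~\ref{it:assum_A}, and observe that a common point $y\neq x$ of $\Gamma_{x,a}$ and $\Gamma_{x,b}$ forces $f\equiv f(x)$ on $\Gamma_{x,a}$ between $x$ and $y$; your concatenation argument spells out what the paper attributes to Lemma~\ref{lem-inside}, and your use of $S\cup(-S)$ correctly handles the sign convention the paper leaves implicit. One small slip: the geodesic started from $a$ that you need is the reversal of the whole of $\Gamma_{x,a}$, not the reversal of its tail from $y$.
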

In \Cref{boundary_thm}, the most natural choice is to consider a metric space $(X,D)$ with a connected boundary, and then take the set $A$ to be the above boundary. In this setting, the above result can be viewed as lower bounding the dimension of $2$-stars on the boundary; this should be compared with Theorem \ref{3-star_points_general}, which gives an analogous lower bound for $3$-stars in the ``bulk.''

\subsection{Results specific to LQG}
\label{sec:main-results-ii}
While the previous section stated results valid for non-constancy sets in rather general metric spaces, in this short section, we state results specific to non-constancy sets in LQG. These results make use of properties of the GFF in their proofs, and thus they do not have versions applicable to general metric spaces. In particular, we have the following lower bounds for the dimensions of non-constancy sets of Lipschitz functions with respect to the LQG metric. 

\begin{thm}\label{lqg_lower_bounds_thm}
    Let $h$ be a whole-plane GFF and let $\gamma\in (0,2)$. Let $f\colon \CC\rightarrow \RR$ be a non-constant locally Lipshitz function with respect to $D_h$. Then almost surely, for every set $V \subseteq f(\mathbb{C})$ and every $\beta>0$ such that $\beta\leq\dim(V)$, we have the following bounds for the non-constancy set $\Ssym{f}{V}$.
    \begin{align*}
        \dim_{D_h}(\Ssym{f}{V}) &\geq  \beta + \DLQG{\gamma}{\beta},\\
        \dim_{\mathrm{Euc}}(\Ssym{f}{V}) &\geq 1 + \DEuc{\gamma}{\beta}.
    \end{align*}
\end{thm}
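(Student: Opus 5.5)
We reduce to a statement about level sets and then use GFF thickness estimates to control how small the LQG metric can make those level sets. Since $f$ is locally Lipschitz for $D_h$, the coarea-type inequality behind \Cref{intersection_universal_bounds} gives, for every fixed value $t\in V$, that the non-constancy set $S_{f,\{t\}}=\Ssym{f}{V}\cap f^{-1}(t)$ is non-empty whenever $t$ lies in the interior of $f(\CC)$. Moreover, $\Ssym{f}{V}$ separates the plane: any path between points with different $f$-values must cross $\Ssym{f}{\{t\}}$ for every intermediate $t$.

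The plan is to lower-bound $\dim$ of the union over $t\in V$ of the fibres $F_t:=\Ssym{f}{\{t\}}$. We use a Marstrand/Frostman-type product estimate. Pick a Frostman measure $\nu$ on $V$ of dimension $\beta'<\beta$. Because each fibre separates a fixed pair of regions, a topological argument shows that $F_t$ contains a connected set crossing a fixed Euclidean annulus for $\nu$-a.e.\ $t$. Consequently each fibre has Euclidean dimension at least $1$ and $D_h$-dimension at least the LQG dimension of a Euclidean-connected crossing set. Combining over $t$ with the Lipschitz bound $|f(x)-f(y)|\le C D_h(x,y)$, we build a measure $\mu=\int \mu_t\,d\nu(t)$, where $\mu_t$ is supported on $F_t$.

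We then bound the $D_h$-energy of $\mu$. Two points in different fibres $t,s$ are at $D_h$-distance at least $|t-s|/C$, which contributes the $\beta$ part of the dimension. The extra $\Delta$ terms come from the within-fibre dimension. Here we use the worst-case multifractal estimates for the GFF: for every Euclidean ball $B_r(z)$ crossed by a fibre, the $D_h$-diameter is at most $r^{\xi(Q-\alpha)}$ for the thickness $\alpha$ at $z$, and the number of $\alpha$-thick $r$-balls meeting a curve is at most $r^{-1+\ldots}$ (the boundary-thick-point count along curves, as in \cite{GP22}). We then optimize over $\alpha$, trading the count of thick points along a curve against the diameter distortion. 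This optimization gives a quadratic in $\alpha$, and the square roots in $\DLQG{\gamma}{\beta}$ and $\DEuc{\gamma}{\beta}$ should come exactly from it. For the Euclidean bound, we run the same argument with Euclidean covers of fibres: each fibre has Euclidean dimension $\ge 1$, and the transversal direction contributes $\DEuc{\gamma}{\beta}$ through the worst-case KPZ conversion of the $D_h$-separation $|t-s|$ into Euclidean separation.

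The main obstacle is this multifractal optimization. The fibres are arbitrary random sets that depend on $h$ through $f$, so the thick-point estimates must hold simultaneously for all connected sets and all scales. To handle this, we would first prove a uniform statement: almost surely, for every Euclidean crossing of an annulus at scale $r$ and every $\alpha$, the number of $\alpha$-thick dyadic squares it meets is bounded by the deterministic worst case, with the union bound over squares. We then apply this statement to the fibres.
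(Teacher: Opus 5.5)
Your plan splits the dimension into two parts: $\beta$ from the separation $D_h(x,y)\ge |t-s|/C$ between different fibres, plus an excess coming from each individual fibre. This split cannot reach the stated bounds. The excess is a property of a single fibre $F_t$, so it cannot depend on $\beta$, whereas $\Delta_{\mathrm{LQG}}(\gamma,\beta)$ and $\Delta_{\mathrm{Euc}}(\gamma,\beta)$ do.

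More concretely, the fibres depend on $h$, so you rightly require the within-fibre estimate to hold uniformly over all connected sets crossing an annulus. But a $D_h$-geodesic segment is such a set, and so is a separating loop built from finitely many geodesic segments; each has $D_h$-dimension exactly $1$. So the within-fibre contribution your argument can certify is at most $1$, and the argument as designed yields at most $\dim_{D_h}(S_{f,V})\ge \beta+1$. This is strictly weaker than the claim whenever $\Delta_{\mathrm{LQG}}(\gamma,\beta)>1$; for instance, for $\beta=1$ and $\gamma=\sqrt{8/3}$ the claimed bound is about $2.047$.

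The Euclidean half fails in a similar way. The only thick-square count valid uniformly for $h$-dependent sets is the union bound over all $\asymp r^{-2}$ squares of a bounded region. Your proposed uniform version of the curve count $r^{-1}\cdot r^{(Q-\alpha/\xi)^2/2}$ over all crossings is false, because a connected set can be routed through every $\alpha$-thick square. Converting the transversal $D_h$-dimension $\beta$ with the union-bound count gives only $\Delta_{\mathrm{KPZ}}(\gamma,\beta)$; this is the same as using the worst-case KPZ bound of \cite{GP22} that you invoke. That quantity is far below $\Delta_{\mathrm{Euc}}(\gamma,\beta)$: about $0.02$ versus $0.38$ for $\gamma=\sqrt{8/3}$, $\beta=1$.

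There are also smaller problems. Frostman's lemma needs $V$ to be analytic, whereas the theorem is for arbitrary $V$. And an upper bound on the $D_h$-diameter of Euclidean balls points the wrong way for a $D_h$-energy estimate.

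The missing idea is the mixed Frostman estimate of \Cref{radii-product-lower-bound-lemma}, which never separates the two directions. Take Euclidean balls $A\subseteq\{f<a\}$ and $B\subseteq\{f>b\}$ with $\dim(V\cap[a,b])>\beta$, and the family of segments $L_\theta$ joining them. By \Cref{lem:conn}, $f$ attains every value of $V\cap[a,b]$ at non-constancy points on each $L_\theta$. Hence for any cover $\cR$ of $S_{f,V}$, the Lipschitz bound gives $\sum_{R\cap L_\theta\neq\emptyset}\mathrm{diam}_{D_h}(R)^\beta\gtrsim 1$. This uses only the positive Hausdorff content of $V\cap[a,b]$, with no measure on $V$. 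Integrating over $\theta$ then gives $\sum_{R\in\cR}\mathrm{diam}_{D_h}(R)^\beta\,\mathrm{diam}_{\mathrm{Euc}}(R)\gtrsim 1$.

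Because both factors sit on the same cover element, you can sort elements by their local exponent $\alpha$. You then discard the atypical ones using only global union-bound counts: of thick lattice balls for the Euclidean bound (\Cref{euclidean-dim-lemma}), and of Poissonian LQG balls for the LQG bound (\Cref{lqg-dim-lemma}). These counts are legitimate for $h$-dependent covers. This is how $\beta$ enters the thresholds $g(\alpha)=-1+\beta\alpha+(Q-\alpha/\xi)^2/2$ and $\ell(\alpha)=\beta+1/\alpha+H(\alpha)-d_\gamma$, whose roots produce the $\beta$-dependent square roots.

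Heuristically, the gain over $\beta+1$ is not carried by any individual fibre, each of which may be as small as a geodesic. It comes from the global scarcity of balls on which the metric is atypically contracted, which constrains the whole family jointly. A fibrewise argument discards exactly this information.

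Your picture of $1$ along the fibres plus $\Delta_{\mathrm{Euc}}$ across them can be salvaged for the Euclidean half. To do so, apply the fixed-curve count on the deterministic segments $L_\theta$ and then slice in $\theta$. This gives the same exponent as the paper's count $2^{2k}\cdot 2^{-k}$.
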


Taking $h$ to be a Neumann GFF and considering the LQG surface $\left(\overline{\mathbb{H}},D_{h}\right)$, a slight modification of the proof of \Cref{lqg_lower_bounds_thm} also gives the following theorem about the dimension of the restriction of the non-constancy set to the boundary.

\begin{thm}\label{lqg_line_lower_bounds_thm}
 Fix $\gamma\in (0,2)$. Let $h$ be a Neumann GFF and consider the surface $(\overline{\mathbb{H}},D_h)$. Let $f\colon (\RR,D_h\lvert_{\RR\times \RR})\rightarrow \RR$ be a non-constant locally Lipschitz function. Then almost surely, for every set $V \subseteq f(\mathbb{R})$ and every $\beta>0$ such that $\beta\leq\mathrm{dim}(V)$, we have the following bounds for the set $\Ssym{f}{V}$.
 \begin{align*}
   \dim_{D_{h}}(\Ssym{f}{V}) &\geq \beta,\\
   \dim_{\mathrm{Euc}}(\Ssym{f}{V}) &\geq \DbKPZ{\gamma}{\beta}.
\end{align*}
\end{thm}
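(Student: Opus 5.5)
The plan is to derive both bounds from two inputs: the universal bound of \Cref{intersection_universal_bounds}, which gives the $D_h$-dimension statement, and the worst-case boundary KPZ estimate (Lemma~\ref{bdy-kpz}), which converts it into the Euclidean statement. All of the LQG-specific input sits in the second step, so the main point to check is that the KPZ estimate holds simultaneously for all sets.

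\textbf{The $D_h$-dimension bound.} Consider the metric space $(\RR, D_h|_{\RR\times\RR})$.
\begin{itemize}
\item Since $D_h$ induces the Euclidean topology on $\overline{\HH}$, this space is $\sigma$-compact: $\RR$ is the countable union of the compact intervals $[-m,m]$.
\item Apply \Cref{intersection_universal_bounds} with $n=1$ to the locally Lipschitz function $f$ and to $V$. Since $V\subseteq f(\RR)$, this gives
\[
\max\{\dim_{D_h}(\Ssym{f}{V}),0\}\geq \dim(f(\RR)\cap V)=\dim(V)\geq \beta .
\]
\item Here $\dim(V)$ is the Euclidean dimension of $V\subseteq\RR$, which is the dimension appearing on the right side of \eqref{eq:2}.
\item Since $\beta>0$, the maximum cannot be attained by $0$, so $\dim_{D_h}(\Ssym{f}{V})\geq\beta$.
\end{itemize}
This step is deterministic given the metric. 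Hence it holds simultaneously for every $f$, every $V$ and every $\beta$ on the almost sure event that $D_h$ is a metric on $\overline{\HH}$ inducing the Euclidean topology. Non-constancy of $f$ is used only to ensure that $f(\RR)$, and hence $V$, is nontrivial.

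\textbf{The Euclidean bound.} I would invoke the boundary worst-case KPZ estimate, Lemma~\ref{bdy-kpz}, in the following form: almost surely, simultaneously for every set $Y\subseteq\RR$, if $\dim_{D_h}(Y)\geq\beta$ then $\dim_{\mathrm{Euc}}(Y)\geq\DbKPZ{\gamma}{\beta}$. This is the boundary analogue of \Cref{prop:worstkpz}.

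Applying it with $Y=\Ssym{f}{V}$ and the previous step gives the claim. If the lemma is instead stated as $\dim_{\mathrm{Euc}}(Y)\geq \DbKPZ{\gamma}{\dim_{D_h}(Y)}$, I also need $\beta\mapsto\DbKPZ{\gamma}{\beta}$ to be nondecreasing on $[0,1]$. This follows from elementary calculus on the explicit formula; the natural range is $\beta\leq 1$ because $\dim(V)\leq 1$.

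\textbf{Main obstacle: uniformity of the KPZ estimate.} The step I expect to need the most care is that the KPZ estimate holds simultaneously for all $Y$, which is what lets us handle all $f$ and $V$ at once. This is why a worst-case bound is needed rather than the expectation-type KPZ formula.

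If Lemma~\ref{bdy-kpz} had to be proven here, I would use the following multifractal covering argument.
\begin{itemize}
\item The Neumann GFF restricted to $\RR$ has log-correlated covariance with constant $2$ (in place of $1$ for the bulk).
\item Use boundary semicircle averages $h_\epsilon(x)$. A dyadic interval of length $\epsilon$ where $h_\epsilon\approx\alpha\log\epsilon^{-1}$ has $D_h$-diameter about $\epsilon^{\xi Q-\xi\alpha}$, with high probability and uniformly over intervals. Here $\xi=\gamma/d_\gamma$ and $Q=2/\gamma+\gamma/2$.
\item The number of such dyadic intervals is at most $\epsilon^{-1+\alpha^2/4+o(1)}$.
\item Cover $Y$ by an optimal Euclidean cover. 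Its cost in $D_h$-dimension $\beta$ is then bounded by a sum over $\alpha$ of $\epsilon^{\beta\xi(Q-\alpha)}$ times the number of intervals at level $\alpha$.
\item Optimize over $\alpha$ with Hölder's inequality to obtain the quadratic-root expression defining $\DbKPZ{\gamma}{\beta}$.
\end{itemize}
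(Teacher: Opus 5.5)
Your proposal is correct and follows the paper's proof. The $D_h$-bound comes from \Cref{intersection_universal_bounds} with $n=1$, and the Euclidean bound comes from the boundary worst-case KPZ estimate Lemma~\ref{bdy-kpz}. (The paper proves that lemma via the boundary measure $\nu_h$ and the Hughes--Miller bound $D_h(x,y)\le \nu_h([x,y])^{2/d_\gamma-\eta}$, not via your semicircle-average sketch.)

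One small point: Lemma~\ref{bdy-kpz} is stated for bounded sets with $\dim_{D_h}(H)>\beta$ (strict inequality). To get the form you invoke, including the case $\beta=\dim(V)$, you need two steps. First, restrict $S_{f,V}$ to $[-N,N]$ using countable stability. Second, let $\beta'\uparrow\beta$ and use the continuity of $\beta\mapsto\Delta^{\partial}_{\mathrm{KPZ}}(\gamma,\beta)$, which is exactly the continuity step the paper uses.
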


As we shall see later, \Cref{lqg_lower_bounds_thm} and \Cref{lqg_line_lower_bounds_thm} with suitable choices of Lipschitz function $f$ will yield many of the results stated earlier in \Cref{sec:main-results:-summ}.

\subsection{Acknowledgments} We thank Lingfu Zhang for first suggesting to us the idea of looking at non-constancy sets for Lipschitz functions of the LQG metric. We also thank Wei Qian and Riddhipratim Basu for helpful discussions. M.B.\ acknowledges the partial support of the NSF grant DMS-2153742 and the MathWorks fellowship. E.G.\ was partially supported by the NSF grant DMS-2245832 and B.H.\ was partially supported by the NSF Graduate Research Fellowship Program under Grant No. 2140001.

\section{Preliminaries}
\label{sec:preliminaries-2}

In this short section, we give a quick working introduction to two particular random metrics we will consider in this paper, namely the LQG metric and Kendall's Poisson roads random metric. 

\subsection{Gaussian free field} We now define the whole-plane Gaussian free field (GFF) $h$; for a detailed introduction to the GFF, we refer the reader to the texts \cite{berestycki-lqg-notes}, \cite{pw-gff-notes}, or \cite{shef-gff}. For $v,w\in \CC$, define the kernel $G_{\CC}(v,w)$ by
\begin{equation}
  \label{eq:21}
 G_{\CC}(v,w)=\log\frac{ \max \{|v|,1\} \max\{|v|,1\}}{|v-w|}.
\end{equation}
Informally, one wishes to define $h$ as a centered Gaussian process with covariance kernel given by $G_{\CC}$. However, $G_{\CC}$ blows up along the diagonal, and therefore one cannot define $h$ as an a.s.\ well-defined function. However, if one considers the set $\cM_{\CC}$ of signed measures $\rho$ on $\CC$ such that $\int G_{\CC}(v,w)d\rho(v) d\rho(w)<\infty$, then one can define the whole-plane GFF as a centered Gaussian process $\{(h,\rho)\}_{\rho\in \cM_{\CC}}$ such that for $\rho_1,\rho_2\in \cM_{\CC}$, we have
\begin{equation}
  \label{eq:23}
  \mathrm{Cov}( (h,\rho_1), (h,\rho_2))= \int G(v,w)d\rho_1(v) d\rho_2(w).
\end{equation}
We note that for $z\in \CC, r>0$, the uniform probability measure $\rho_{z,r}$ on the circle of radius $r$ around $z$ satisfies $\rho_{z,r}\in \cM_{\CC}$. In the literature, one often uses $h_{r}(z)$ (commonly referred to as the circle average of the GFF) to denote $(h,\rho_{z,r})$. We note also that the normalization in \eqref{eq:21} is chosen so as to almost surely have $h_{1}(0)=0$ (see e.g. \cite{vargas-dozz-notes}).
\subsection{The LQG measure \texorpdfstring{$\mu_h$}{mu h}} Fix $\gamma\in (0,2)$. Then the LQG measure $\mu_h$ is a random measure on $\CC$ defined \cite{kahane,DS11} so as to intuitively satisfy, for each $S\subseteq \CC$,
\begin{equation}
  \label{eq:30}
  ``\mu_h(S)=\int_S e^{\gamma h(z)}d^2z."
\end{equation}
Note that the above does not make literal sense since $h$ is not a function. However, the definition of the LQG measure can be made rigorous by a regularization procedure, and this is made precise via the theory of Gaussian multiplicate chaos. We refer the reader to \cite{rhodes-vargas-review} for a survey. 
\subsection{The LQG metric} Fix $\gamma\in (0,2)$. Informally, with $d_\gamma$ being a constant which turns out to be the Hausdorff dimension of $(\CC,D_h)$, the $\gamma$-LQG metric is defined so as to intuitively satisfy, for each $u,v\in \CC,$
\begin{equation}
  \label{eq:24}
  ``D_h(u,v)=\inf_{P\colon u\rightarrow v}\int_0^1 e^{(\gamma/ d_\gamma) h(P(t))}|P'(t)|dt,"
\end{equation}
where the infimum is over all smooth unit-time paths from $u$ to $v$. Note that the above equation is highly informal since point-wise values of $h$ are not well-defined. Over the course of a series of works culminating in \cite{dddf-lfpp,gm-uniqueness}, it was established that one can make sense of \eqref{eq:24} via a ``renormalization'' procedure, and that given a whole-plane GFF $h$, there is a well-defined metric $D_h$ associated to it which satisfies a certain list of natural and rigorous axioms suggested by the informal expression \eqref{eq:24}. In this paper, we do not list these axioms but instead refer the reader to the survey \cite{ddg-metric-survey}. 

For a given $\gamma\in (0,2)$, the Hausdorff dimension of the $\gamma$-LQG metric space is almost surely equal (\cite[Corollary 1.7]{DG20}) to the deterministic constant $d_\gamma$ appearing in \eqref{eq:24}; this was defined rigorously in \cite{DG20} and is heuristically the scaling exponent relating area and distance on a $\gamma$-LQG surface. The precise numerical value of $d_{\gamma}$ is still unknown, except in the particular case of $\gamma = \sqrt{8/3}$ (also called the Brownian map), where $d_{\sqrt{8/3}} = 4$. Throughout the paper, we shall set
\begin{align}\label{def:xi-and-Q}
\xi&= \gamma/d_\gamma, \\
    Q&= \gamma/2+2/\gamma. \nonumber
\end{align}

The metric space $(\CC,D_h)$ is almost surely a $\sigma$-compact geodesic metric space which induces the Euclidean topology. Further, for any fixed points $v,w \in \mathbb{C}$ and thus simultaneously for all rational points $v,w\in \mathbb{Q}^{2}$, there is \cite[Theorem 1.2]{MQ20} almost surely a unique $D_{h}$-geodesic $\Gamma_{v,w}$ from $v$ to $w$. 

The following result establishes optimal exponents for which the LQG metric is almost surely locally H\"older continuous with respect to the Euclidean metric and vice versa.

\begin{prop}[{{\cite[Theorem 1.7]{lqg-metric-estimates}}}]
\label{lem:estholder} Let $U \subseteq \mathbb{C}$ be open and bounded. Almost surely, the identity map from $U$, equipped with the Euclidean metric, to $(U,D_{h})$ is locally H\"older continuous with any exponent smaller than $\xi(Q-2)$, and is not locally H\"older continuous with any exponent larger than $\xi(Q-2)$, where $\xi$ and $Q$ are as defined in \eqref{def:xi-and-Q}. Furthermore, the inverse of this map is a.s. locally H\"older continuous with any exponent smaller than 
$\xi^{-1}(Q+2)^{-1}$ and is not locally H\"older continuous with any exponent larger than $\xi^{-1}(Q+2)^{-1}$.
\end{prop}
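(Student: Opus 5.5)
Since this is a quoted result from \cite{lqg-metric-estimates}, I only sketch how I would reprove it from the axioms of the LQG metric \cite{ddg-metric-survey}. The guiding heuristic is that, at Euclidean scale $r$ near a point $z$, the $D_h$-distance across the annulus $B_{2r}(z)\setminus B_r(z)$ (or across a square of side $r$) is of order $r^{\xi Q}e^{\xi h_r(z)}$. This comes from Weyl scaling and the coordinate change formula. Both Hölder exponents then come from the extreme possible sizes of the circle average: uniformly over $z$ in a bounded set and for small $r$, one has $|h_r(z)|\le (2+\varepsilon)\log(1/r)$, and this is sharp because of $\alpha$-thick points with $|\alpha|$ close to $2$.

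\emph{Step 1 (tail bounds).} I would first establish that the normalized crossing distances $r^{-\xi Q}e^{-\xi h_r(z)}D_h(\text{across } B_{2r}(z)\setminus B_r(z))$ and $r^{-\xi Q}e^{-\xi h_r(z)}D_h(\text{around } B_{2r}(z)\setminus B_r(z))$ have superpolynomially decaying upper and lower tails, uniformly in $z$ and $r$. This uses locality together with the near-independence of the field across disjoint concentric annuli: a bad event would have to occur in many independent annuli simultaneously.

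\emph{Step 2 (union bound).} Union bound over a grid of $r^{-2-\varepsilon}$ points at each dyadic scale $r=2^{-k}$, and combine with the circle-average maximum bound $\sup_{z}|h_r(z)|\le(2+\varepsilon)\log(1/r)$. This gives, almost surely and for all small $r$ simultaneously, that every Euclidean ball of radius $r$ in $U$ has $D_h$-diameter at most $r^{\xi(Q-2)-\varepsilon}$. The "around" paths are chained over the dyadic scales, where the sum is geometric. It also gives that every path leaving $B_{2r}(z)$ from $B_r(z)$ has $D_h$-length at least $r^{\xi(Q+2)+\varepsilon}$.

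\emph{Step 3 (Hölder bounds).} These two estimates yield the two positive statements:
\[
D_h(z,w)\lesssim |z-w|^{\xi(Q-2)-\varepsilon}, \qquad |z-w|\lesssim D_h(z,w)^{1/(\xi(Q+2))-\varepsilon}.
\]

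\emph{Step 4 (sharpness).} For optimality I would use the almost sure existence of $\alpha$-thick points $z$ in $U$, i.e.\ points with $h_r(z)/\log(1/r)\to\alpha$, for every $|\alpha|<2$. Applying Step 1 at a single point $z$ along a sequence of scales $r\to 0$ with the lower tail bound shows that the $D_h$-distance across the annulus is at least $r^{\xi(Q-\alpha)+\varepsilon}$; taking $\alpha\uparrow 2$ rules out any Hölder exponent larger than $\xi(Q-2)$. Symmetrically, taking $\alpha\downarrow -2$ and using the upper tail gives a $D_h$-ball of Euclidean size $r$ and $D_h$-radius at most $r^{\xi(Q+2)-\varepsilon}$, which rules out inverse exponents larger than $(\xi(Q+2))^{-1}$. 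This step needs the tail bounds from Step 1 to hold conditionally on, or jointly with, the thick-point event, which follows from the independence-across-scales structure.

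The main obstacle is Step 1: obtaining tail bounds that are uniform and superpolynomial, strong enough to survive the $r^{-2-\varepsilon}$ union bound. In particular this requires tightness of the renormalized metric across scales, which is where the axioms, specifically translation and scale invariance of the law modulo additive constants of the field, are genuinely needed.
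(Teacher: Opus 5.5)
The paper does not prove this statement but quotes it from \cite{lqg-metric-estimates}, and your sketch follows essentially the route taken in that source. The ingredients are: superpolynomial concentration of annulus-crossing distances normalized by $r^{\xi Q}e^{\xi h_r(z)}$ (quoted in this paper as Proposition~\ref{lem:est2}); union bounds, multiscale chaining and the extremal bound $\sup_z |h_r(z)|\le (2+\varepsilon)\log(1/r)$ for the two H\"older statements; and $\alpha$-thick points with $\alpha\to\pm 2$ for sharpness.

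The only loose point is Step~4, where no conditioning on thick-point events is needed. Your Step~2 union bound already gives the normalized two-sided annulus bounds at all grid points and all small dyadic scales simultaneously. By continuity of circle averages and nesting of annuli, these bounds then hold at every $z\in U$, including every thick point.
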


An interesting phenomenon in LQG is the heterogeneity of the space, in the sense that there is no fixed $\alpha$ for which Euclidean distance of order $\epsilon$ translates to a $D_h$-distance of order $\epsilon^{\alpha}$. Instead, the above $\alpha$ is different around different points in $\CC$, owing to the presence of ``thick'' points for the GFF, or regions where the GFF is atypically large or small. The following lemma capturing this phenomenon will be useful to us. 
In what follows, we will use $B_r(z)$ to denote the Euclidean ball of radius $r$ around $z$, while we will use $\cB_r(z)$ to denote the $D_h$-ball of radius $r$ around $z$.

\begin{prop}[{{\cite[Lemma 3.19]{lqg-metric-estimates}}}]
  \label{lem:est1}
Let $h$ be a whole-plane GFF. Fix a compact set $K\subseteq \CC$. Using $\mathrm{diam}_{D_h}(\cdot)$ to denote the LQG diameter of a set, for each $\alpha\in (0,\xi Q)$, for all $\epsilon$ small enough and for each $z \in K$, we have
  \begin{equation}
    \label{eq:25}
        \mathbb{P}\left[\mathrm{diam}_{D_{h}}(B_{\epsilon}(z)) > \epsilon^{\alpha} \right] \leq \epsilon^{(Q-\alpha/\xi)^{2}/2}.
  \end{equation}
\end{prop}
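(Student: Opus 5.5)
The plan is to reduce the estimate to a Gaussian tail bound for the circle average $h_\epsilon(z)$, using the scaling and coordinate-change properties of the LQG metric. Write $h^{z,\epsilon} := h(\epsilon\,\cdot + z) - h_\epsilon(z)$. By the translation and scale invariance of the whole-plane GFF modulo additive constants, $h^{z,\epsilon}$ has the law of a whole-plane GFF normalized to have circle average zero on $\partial B_1(0)$. This holds exactly after the usual normalization, and up to a uniformly bounded perturbation for $z$ in the compact set $K$.

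Combining the LQG coordinate change formula with Weyl scaling (adding a constant $c$ to the field multiplies $D_h$ by $e^{\xi c}$), we get
\begin{equation*}
\mathrm{diam}_{D_h}(B_\epsilon(z)) = \epsilon^{\xi Q} e^{\xi h_\epsilon(z)}\, \mathrm{diam}_{D_{h^{z,\epsilon}}}(B_1(0)).
\end{equation*}
Since $h_\epsilon(z)$ is measurable with respect to $h$ outside $B_\epsilon(z)$, it is independent of the normalized inner field. Consequently the two random factors are independent, or nearly so, and the inner diameter factor $X := \mathrm{diam}_{D_{h^{z,\epsilon}}}(B_1(0))$ has a law not depending on $\epsilon$.

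Next I would bound the tails of $X$. I expect to use the known moment and concentration bounds for LQG diameters of fixed Euclidean sets: $X$ has finite moments of all orders, with lognormal-type upper tail $\PP[X > t] \le \exp(-c(\log t)^2)$. Given this, I would split the event $\{\mathrm{diam}_{D_h}(B_\epsilon(z)) > \epsilon^\alpha\}$ according to whether $X > \epsilon^{-\delta}$, for a small $\delta > 0$.
\begin{itemize}
\item On $\{X > \epsilon^{-\delta}\}$, the tail bound for $X$ gives probability $\exp(-c\delta^2 \log^2(1/\epsilon))$, which is smaller than any power of $\epsilon$.
\item On the complement, the event forces $\xi h_\epsilon(z) > (\alpha - \xi Q - \delta)\log \epsilon$, that is, $h_\epsilon(z) > (Q - \alpha/\xi - \delta/\xi)\log(1/\epsilon)$.
\end{itemize}
Since $\mathrm{Var}(h_\epsilon(z)) = \log(1/\epsilon) + O(1)$ uniformly for $z \in K$, the Gaussian tail bound gives probability at most $\epsilon^{(Q-\alpha/\xi-\delta/\xi)^2/2 + o(1)}$. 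Here the condition $\alpha < \xi Q$ is what ensures the threshold is a positive multiple of $\log(1/\epsilon)$.

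The main obstacle is getting the exponent exactly $(Q-\alpha/\xi)^2/2$, with no $o(1)$ loss and uniformly in $z \in K$. My first attempt would be to avoid the crude $\epsilon^{-\delta}$ truncation. Instead I would compute $\PP[\xi N + \log X > (\xi Q - \alpha)\log(1/\epsilon)]$ directly, where $N$ is the Gaussian circle average, by conditioning on $X$ and integrating the Gaussian tail against the law of $X$. Because $\log X$ has Gaussian-type tails with variance that does not grow in $\epsilon$, the convolution only shifts the mean by $O(1)$ and the variance by $O(1)$. The Gaussian tail then retains the prefactor $(\log(1/\epsilon))^{-1/2}$, which absorbs the resulting constant factors for $\epsilon$ small. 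If this sharp bookkeeping fails, the fallback is to prove the bound with an arbitrarily small loss in the exponent. That weaker form is all the dimension arguments later in the paper should need, since there $\alpha$ may be perturbed slightly.
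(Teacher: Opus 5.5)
The paper does not prove this statement; it imports it from \cite{lqg-metric-estimates}. So your argument has to stand on its own, and as written it does not. Your scaling identity $\mathrm{diam}_{D_h}(B_\epsilon(z))=\epsilon^{\xi Q}e^{\xi h_\epsilon(z)}X$ is fine. The problem is the input you feed into it, which is false: the LQG diameter $X$ of a fixed Euclidean ball does \emph{not} have moments of all orders or a lognormal upper tail. Its upper tail is polynomial, and $\mathbb{E}[X^p]<\infty$ only for $p<4d_\gamma/\gamma^2$.

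The culprit is thick points inside the ball. Suppose $h_r(w)\approx\beta\log r^{-1}$ for some $w\in B_{1/2}(0)$. Then any path from $B_{r/2}(w)$ to $B_1(0)\setminus B_r(w)$ must cross $B_r(w)\setminus B_{r/2}(w)$, at cost about $r^{-\xi(\beta-Q)}$. Optimizing over $r$ and $\beta$ (the optimum is $\beta=4/\gamma$) gives $\mathbb{P}[X>A]=A^{-4d_\gamma/\gamma^2+o(1)}$. Superpolynomial concentration, as in Proposition~\ref{lem:est2}, holds for distances \emph{across} annuli, because paths can route around thick points. A diameter, by contrast, has to reach every point.

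Consequently your first bullet fails. The probability $\mathbb{P}[X>\epsilon^{-\delta}]$ is roughly $\epsilon^{4d_\gamma\delta/\gamma^2}$, which for small $\delta$ is far larger than $\epsilon^{(Q-\alpha/\xi)^2/2}$. The truncation therefore loses a fixed amount in the exponent, and it does not even give your fallback with arbitrarily small loss. Your justification of the convolution step (``$\log X$ has Gaussian-type tails'') is wrong for the same reason.

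What rescues the argument is an exponential Chebyshev bound with \emph{exact} independence. If $X$ were independent of $h_\epsilon(z)$, then for $p>0$
\[
\mathbb{P}\bigl[\mathrm{diam}_{D_h}(B_\epsilon(z))>\epsilon^\alpha\bigr]\le \epsilon^{(\xi Q-\alpha)p}\,\mathbb{E}\bigl[e^{p\xi h_\epsilon(z)}\bigr]\,\mathbb{E}[X^p]\le C\epsilon^{(\xi Q-\alpha)p-\xi^2p^2/2},
\]
and the choice $p_*=(Q-\alpha/\xi)/\xi$ gives exactly the exponent $(Q-\alpha/\xi)^2/2$. The idea missing from your proposal is that this requires $\mathbb{E}[X^{p_*}]<\infty$, i.e.\ $p_*<4d_\gamma/\gamma^2$. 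This holds for every $\alpha\in(0,\xi Q)$ because $p_*<Q/\xi$ and $\gamma Q=2+\gamma^2/2<4$.

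Independence also needs an actual argument.
\begin{itemize}
\item Your $X$ is a global diameter, so it depends on $h^{z,\epsilon}$ outside $B_1(0)$, and that field is correlated with $h_\epsilon(z)$. Being measurable with respect to the outside field does not make $h_\epsilon(z)$ independent of the normalized inner field.
\item Because $X$ is heavy-tailed, ``nearly independent'' cannot be handled by H\"older with a large conjugate exponent.
\end{itemize}
Instead, dominate the diameter by the internal one:
\[
\sup_{u,v\in B_\epsilon(z)}D_h(u,v;B_{2\epsilon}(z))=(2\epsilon)^{\xi Q}e^{\xi h_{2\epsilon}(z)}\tilde X .
\]
By locality, $\tilde X$ is a function of $(h-h_{2\epsilon}(z))|_{B_{2\epsilon}(z)}$; its law does not depend on $\epsilon$ or $z$, and it has finite moments of all orders below $4d_\gamma/\gamma^2$. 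Now project $h_{2\epsilon}(z)$ onto the Gaussian space of that field. The radial/lateral decomposition about $z$ shows the projection has $O(1)$ variance uniformly on $K$. This splits $h_{2\epsilon}(z)$ into an independent part of variance $\log\epsilon^{-1}+O(1)$ and an $O(1)$-variance part, and H\"older absorbs the latter using the slack $p_*<4d_\gamma/\gamma^2$.

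The result is a bound $C\epsilon^{(Q-\alpha/\xi)^2/2}$, uniform in $z\in K$. Dropping the constant $C$ uses the $(\log\epsilon^{-1})^{-1/2}$ Gaussian prefactor together with a moment of order slightly above $p_*$. As you note, though, an $o(1)$ loss in the exponent is all the later arguments in the paper use.
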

In order to control the LQG distance between two concentric circles around a point, we will later need the following result.
\begin{prop}[{{\cite[Lemma 3.21]{lqg-metric-estimates}}}]
  \label{lem:est2}
  Let $h$ be a whole-plane GFF. Let $K_1,K_2\subseteq \CC$ be disjoint non-empty connected and compact sets which are both not singletons. Then, with superpolynomial probability as $A\rightarrow \infty$, uniformly in $r>0$ and $z\in \CC$, we have
  \begin{equation}
    \label{eq:26}
    A^{-1}r^{\xi Q} e^{\xi h_r(z)} \leq D_h( rK_1 + z, rK_2 + z)\leq Ar^{\xi Q} e^{\xi h_r(z)}.
  \end{equation}
\end{prop}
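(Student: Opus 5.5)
By the coordinate-change and Weyl-scaling axioms of the LQG metric (see \cite{ddg-metric-survey}), the proof reduces to a statement about a single fixed pair $(K_1,K_2)$; the real work is a pair of tail bounds for annulus crossings. Fix $r>0$ and $z\in\CC$ and set $\tilde h:=h(r\cdot+z)-h_r(z)$. By the translation and scale invariance of the whole-plane GFF modulo additive constants, $\tilde h$ is again a whole-plane GFF normalized so that $\tilde h_1(0)=0$. The coordinate-change formula with $Q$, combined with Weyl scaling by the constant $h_r(z)$, gives the exact identity
\begin{equation*}
D_h(rK_1+z,\,rK_2+z)=r^{\xi Q}e^{\xi h_r(z)}\,D_{\tilde h}(K_1,K_2).
\end{equation*}
Consequently, for every $(r,z)$ the probability of the bad event equals $\PP[D_h(K_1,K_2)\notin[A^{-1},A]]$ for a normalized whole-plane GFF $h$. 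This quantity does not depend on $(r,z)$, so uniformity is automatic. It therefore suffices to show that $\PP[D_h(K_1,K_2)>A]$ and $\PP[D_h(K_1,K_2)<A^{-1}]$ both decay faster than any power of $A$.

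For the upper tail, pick $x\in K_1$ and $y\in K_2$ and join them by a fixed Euclidean path. Cover this path by finitely many Euclidean balls $B_{\delta}(w_j)$, $j=1,\dots,N$, with consecutive balls overlapping. Then
\begin{equation*}
D_h(K_1,K_2)\le\sum_{j=1}^{N}\mathrm{diam}_{D_h}(B_\delta(w_j)),
\end{equation*}
so it is enough to bound the upper tail of each diameter. For each $j$ I would write $\mathrm{diam}_{D_h}(B_\delta(w_j))=e^{\xi h_{2\delta}(w_j)}\cdot X_j$. The factor $h_{2\delta}(w_j)$ is Gaussian with bounded variance, so $\PP[e^{\xi h_{2\delta}(w_j)}>A^{1/2}]\le e^{-c(\log A)^2}$, which is superpolynomial. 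The factor $X_j$ is a normalized diameter; its tail should come from the standard annulus-crossing estimates. Concretely, the diameter of a ball is bounded by a countable sum over dyadic scales of distances around annuli. Each such distance is a circle-average factor times a normalized around-annulus distance. The normalized around-annulus distances have uniformly superpolynomial (indeed log-normal type) tails, by Gaussian concentration (Borell--TIS) applied to the field restricted to the annulus, together with tightness across scales. Summing over scales with geometric weights $2^{-k\xi Q}e^{\xi(h_{2^{-k}}-h_{1})}$, and controlling the Brownian-motion-like circle-average process $k\mapsto h_{2^{-k}}(w_j)$, gives the superpolynomial bound for $X_j$.

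For the lower tail, use $\op{dist}_{\mathrm{Euc}}(K_1,K_2)=:4\delta>0$. Cover the compact set $K_1$ by finitely many balls $B_\delta(w_j)$ with $w_j\in K_1$. Any path from $K_1$ to $K_2$ starts in some $B_\delta(w_j)$ and must exit $B_{2\delta}(w_j)$, so
\begin{equation*}
D_h(K_1,K_2)\ge\min_j D_h\big(\bdy B_\delta(w_j),\bdy B_{2\delta}(w_j)\big).
\end{equation*}
Each crossing distance equals $e^{\xi h_{2\delta}(w_j)}$ times a normalized crossing distance. The Gaussian factor is handled as before. For the normalized crossing distance I would prove a superpolynomial lower tail by Gaussian concentration for the field on the annulus, the same estimate used in the upper-tail step but for the crossing distance rather than the distance around the annulus. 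A union bound over the finitely many $j$ then finishes the proof.

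The main obstacle is the pair of superpolynomial tail estimates for the normalized crossing and around-annulus distances of a fixed annulus. These are needed uniformly in scale and position, which is exactly what Gaussian concentration plus tightness provides. I would import them from the construction of the metric, namely the tightness-across-scales bounds in \cite{dddf-lfpp,gm-uniqueness}, rather than reprove them. Everything else in the argument is scaling, union bounds and Gaussian tails of circle averages.
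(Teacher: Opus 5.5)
Your scaling reduction is correct. By the exact coordinate-change and Weyl-scaling axioms, $D_h(rK_1+z,rK_2+z)/(r^{\xi Q}e^{\xi h_r(z)})$ has the law of $D_h(K_1,K_2)$, so uniformity in $(r,z)$ is automatic. Your reduction of the lower tail to crossing distances of the annuli $B_{2\delta}(w_j)\setminus\overline{B_\delta(w_j)}$ is also fine.

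\textbf{The upper-tail step cannot work.} Your chain of balls from $x\in K_1$ to $y\in K_2$ gives $D_h(x,y)\le\sum_j\mathrm{diam}_{D_h}(B_\delta(w_j))$. So the right-hand side has an upper tail at least as heavy as that of the point-to-point distance $D_h(x,y)$, and that tail decays only polynomially. Indeed, $t\mapsto h_{e^{-t}}(x)-Qt$ is, up to a Gaussian of bounded variance, a Brownian motion with drift $-Q$. Hence with probability of order $e^{-2Qa}=A^{-2Q/\xi}$ (taking $a=\xi^{-1}\log A$) it exceeds $a$ at some scale $e^{-t}<|x-y|$. On that event, the $D_h$-distance across $B_{e^{-t}}(x)\setminus\overline{B_{e^{-t-1}}(x)}$, which every path from $x$ to $y$ must cross, is typically of order $A$. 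The diameter of a Euclidean ball sees the worst point in the ball, so its tail is even heavier, though still polynomial. Consequently your claim that the normalized diameter $X_j$ has superpolynomial tails is false: summing $2^{-k\xi Q}e^{\xi(h_{2^{-k}}-h_1)}$ over scales is exactly what produces the polynomial tail. A quick sanity check: this part of your argument never uses that $K_1,K_2$ are connected non-singletons. That hypothesis is necessary, since for $K_1=\{x\}$, $K_2=\{y\}$ the upper bound in the proposition fails for the reason just given.

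\textbf{The missing idea} is to connect $K_1$ to $K_2$ by loops around annuli rather than by diameters of balls; this is where the hypothesis on $K_1,K_2$ enters. Write $U_j=B_{2\delta}(w_j)\setminus\overline{B_\delta(w_j)}$.
\begin{itemize}
\item Take $w_1\in K_1$ with $4\delta<\mathrm{diam}(K_1)$. Then the connected set $K_1$ meets both $B_\delta(w_1)$ and $\CC\setminus\overline{B_{2\delta}(w_1)}$, hence meets every loop in $\overline{U_1}$ surrounding $B_\delta(w_1)$.
\item Do the same with $w_N\in K_2$.
\item Choose intermediate centers with consecutive Euclidean distances in $(\delta,2\delta)$, so that loops around consecutive $U_j$ must intersect.
\end{itemize}
Then $D_h(K_1,K_2)$ is at most the sum over $j$ of the $D_h$-distances \emph{around} $U_j$. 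These around-annulus distances do have superpolynomial upper tails, because a loop can detour around thick points; such bounds belong to the same family of estimates in \cite{lqg-metric-estimates}.

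Separately, the annulus estimates you plan to use do not come from Borell--TIS or Gaussian concentration. Weyl scaling controls $\log D_h$ only through the sup norm of continuous perturbations of the field, and in two dimensions the Cameron--Martin ($H^1$) norm does not bound the sup norm. In \cite{lqg-metric-estimates} these estimates come from a multiscale argument combining tightness across scales with the near-independence of the GFF on disjoint concentric annuli. Also, the lower-tail crossing estimate you import is just the special case $K_1=\bdy B_\delta(w)$, $K_2=\bdy B_{2\delta}(w)$ of the proposition itself. That is acceptable as a citation, and the paper itself simply imports the whole statement from \cite{lqg-metric-estimates}. But it means the real content of your proposal is the reduction from general $K_1,K_2$ to annuli, and the upper-tail half of that reduction needs the loop construction above.
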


Also, we will need the following almost sure result on the $\mu_h$-volume of LQG metric balls.
\begin{prop}[{{\cite[Theorem 1.1]{afs-metric-ball}}}]
  \label{lem:est3}
Let $h$ be a whole-plane GFF. For any compact set $K\subseteq \CC$ and $\delta>0$, we almost surely have
  \begin{equation}
    \label{eq:29}
    \sup_{s\in (0,1)} \sup_{z\in K} \frac{\mu_h(\cB_s(z))}{s^{d_\gamma-\delta}}<\infty \textrm{ and } \inf_{s\in (0,1)}\inf_{z\in K} \frac{\mu_{h}(\cB_s(z))}{s^{d_\gamma+\delta}}>0.
  \end{equation}
  Further, the Minkowski dimension of $\gamma$-LQG is a.s.\ equal to $d_\gamma$, in the sense that almost surely, for any Borel set $S\subseteq \CC$ with nontrivial interior, using $N_\epsilon$ to denote the minimum number of LQG balls with radius $\epsilon$ needed to cover $S$, we have $\lim_{\epsilon\rightarrow 0} \frac{\log N_{\epsilon}}{\log \epsilon^{-1}}=d_\gamma$.
\end{prop}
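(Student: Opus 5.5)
The plan is to reduce everything to two scale-invariant comparisons: one between $D_h$-distances and circle averages (Proposition~\ref{lem:est2}), and one between $\mu_h$-masses and circle averages (standard GMC moment bounds). The key identity is $\gamma=\xi d_\gamma$, since it gives
\[
\bigl(r^{\xi Q}e^{\xi h_r(z)}\bigr)^{d_\gamma}=r^{\gamma Q}e^{\gamma h_r(z)}.
\]
Hence if the $D_h$-size of the Euclidean ball $B_r(z)$ is comparable to $s$, its $\mu_h$-mass is comparable to $s^{d_\gamma}$, up to random multiplicative errors.

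\textbf{Random comparison radius.} Fix $z$ in a fine Euclidean grid of $K$ and $s=2^{-n}$. Let $r=r_z(s)$ be the largest dyadic radius for which the annulus crossing quantity satisfies $r^{\xi Q}e^{\xi h_r(z)}\le s$.
\begin{itemize}
\item By Proposition~\ref{lem:est2}, applied to the annulus $B_{2r}(z)\setminus B_r(z)$ and to $\mathrm{diam}_{D_h}(B_{r}(z))$, with probability $1-O(s^{p})$ for every fixed $p$ we have $\cB_s(z)\subseteq B_{s^{-\delta'}\text{-scaled } r}(z)$. Similarly, a Euclidean ball of radius $s^{\delta'} r$ is contained in $\cB_s(z)$.
\item Here one has to compensate for the dyadic discretisation: $h_r(z)$ varies by only $O(\sqrt{\log(1/s)})$ between consecutive dyadic scales, except with superpolynomially small probability, so polynomial factors $s^{\pm\delta'}$ absorb this.
\item Next, control the ratio $\mu_h(B_{\rho}(z))/(\rho^{\gamma Q}e^{\gamma h_\rho(z)})$. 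Its law does not depend on $\rho$ or $z$, and it has finite positive moments of order $<4/\gamma^2$ and finite negative moments of all orders. So it lies in $[s^{\delta'},s^{-\delta'}]$ with probability $1-O(s^{c\delta'})$ for the upper tail. This is only polynomial, so the grid needs care.
\end{itemize}

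\textbf{Main obstacle.} The hard step is the simultaneous control over all $z\in K$ and all $s\in(0,1)$. This is exactly where the heavy upper tail of the GMC mass bites. My first attempt would be the following:
\begin{itemize}
\item Use a grid of mesh $s^{C}$.
\item Derive the mass bound from $\mu_h(B_\rho(w))\le\rho^{\gamma Q-\delta'}e^{\gamma h_\rho(w)}$ via the multifractal bound $\mu_h(B_\rho(w))\le \rho^{\gamma(Q-\gamma)... }$. Concretely, use the a.s. bound $\sup_{w}\mu_h(B_\rho(w))\le \rho^{(\text{some})}$ combined with thick-point bounds $\sup_w |h_\rho(w)|\le (2+\delta)\log(1/\rho)$, together with a moment of order $p$ slightly below $4/\gamma^2$.
\item If the union bound still fails, exploit the fact that the ratio at scale $\rho$ depends only on $h$ near $w$, via a near-independence argument across well-separated grid points, to get much better tail decay for the event that many grid points are bad.
\end{itemize}
Having done this, use Borel--Cantelli in $n$, then monotonicity in $s$ and in $z$ (each $\cB_s(z)$ is sandwiched between balls around nearby grid points of comparable radius), to obtain \eqref{eq:29}.

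\textbf{Minkowski dimension.} This follows from \eqref{eq:29} by packing and covering.
\begin{itemize}
\item \emph{Upper bound on $N_\epsilon$.} Take $S$ bounded and a maximal $\epsilon$-separated subset of $S$. The balls $\cB_{\epsilon/2}$ around its points are disjoint, each has mass at least $c\epsilon^{d_\gamma+\delta}$, and their union lies in a compact set of finite $\mu_h$-mass. Hence $N_\epsilon\lesssim\epsilon^{-d_\gamma-\delta}$.
\item \emph{Lower bound on $N_\epsilon$.} Any cover of $S$ by $N_\epsilon$ balls of radius $\epsilon$ has total mass at most $N_\epsilon C\epsilon^{d_\gamma-\delta}$. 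This must be at least $\mu_h(S)>0$, because $S$ has nonempty interior and $\mu_h$ charges open sets a.s.
\end{itemize}
The unbounded case is handled by restricting to a bounded subset with nonempty interior for the lower bound, and by exhausting with compacts for the upper bound. Letting $\delta\to0$ gives $\lim_{\epsilon\to0}\log N_\epsilon/\log\epsilon^{-1}=d_\gamma$.
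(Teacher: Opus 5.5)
The paper gives no proof of this statement. It is quoted from Ang--Falconet--Sun, and what makes it work there is that the $\mu_h$-volume of an LQG \emph{metric} ball has finite moments of all positive orders, unlike the volume of a Euclidean ball. Measured against that, the lower half of your argument is fine in outline. Euclidean GMC masses have negative moments of every order, so the infimum in \eqref{eq:29} survives a union bound over a polynomial grid. You do need the superpolynomial concentration of $\mathrm{diam}_{D_h}(B_r(z))$ around $r^{\xi Q}e^{\xi h_r(z)}$ from the same source, because Proposition~\ref{lem:est2} as stated only controls the distance between two sets. Your packing/covering deduction of the Minkowski statement is also fine for bounded $S$. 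For unbounded $S$ one has $N_\epsilon=\infty$, so exhausting by compacts does not help, and the statement must be read for bounded $S$.

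The genuine gap is the supremum in \eqref{eq:29}. It is not a wasteful union bound: the inequality $\mu_h(\mathcal{B}_s(z))\le \mu_h(B_R(z))$ with $R\ge r_z(s)$ is itself too lossy.
\begin{itemize}
\item For dyadic $\rho$ and $w\in K\cap \rho\mathbb{Z}^2$, the ratios $X_w=\mu_h(B_\rho(w))/(\rho^{\gamma Q}e^{\gamma h_\rho(w)})$ all have the law of $\mu_{\tilde h}(B_1(0))$. That law has tail $t^{-4/\gamma^2+o(1)}$, and the $X_w$ decorrelate for well-separated $w$.
\item So for any fixed $\kappa<\gamma^2/2$, the expected number of $w$ with $X_w\ge \rho^{-\kappa}$ is $\rho^{-2+4\kappa/\gamma^2+o(1)}\to\infty$. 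Such $w$ therefore exist at all small scales.
\item For such a $w$, set $s=\rho^{\xi Q}e^{\xi h_\rho(w)}$. Then $r_w(s)\ge \rho$. Using your own identity and the thick-point bound $|h_\rho(w)|\le (2+o(1))\log \rho^{-1}$, we get $\mu_h(B_{r_w(s)}(w))\ge X_w s^{d_\gamma}\ge s^{d_\gamma-\kappa/(\xi(Q+2))+o(1)}$.
\end{itemize}
Hence no grid refinement, no moment of order below $4/\gamma^2$, no multifractal or thick-point supremum bound, and no decorrelation argument can give the exponent $d_\gamma-\delta$ for small $\delta$ along this route. Decorrelation would at best show that bad grid points are rare. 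A supremum bound needs none, and here their expected number diverges.

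The metric ball does not see these spikes. The mass of $B_\rho(w)$ is large because of an atypically thick region inside it. Reaching into that region costs $D_h$-distance at the matching rate, since mass scales like $D_h$-size to the power $d_\gamma$ at every scale. So $\mathcal{B}_s(w)$ picks up only a small part of that mass.

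The missing ingredient is a superpolynomial upper tail for the metric-ball volume itself: for every $p$, $\mathbb{P}[\mu_h(\mathcal{B}_s(z))>s^{d_\gamma-\delta}]=O(s^{p})$, uniformly in $z\in K$. This is what the all-moments theorem of Ang--Falconet--Sun supplies, and its proof needs a genuinely metric multi-scale argument, not a comparison with Euclidean balls. Given that estimate, your grid, Borel--Cantelli and monotonicity steps go through as written.
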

We now introduce a lemma which formalizes the phenomenon of geodesic confluence for the LQG metric.\footnote{In fact, a stronger form of \Cref{prop:conf1} holding simultaneously for all $u,v\in \CC$ has now been proved (see \cite{BK25}). The reason that we do not use this stronger version in this paper is to illustrate that the regularity condition \ref{it:as} of \Cref{3-star_points_general} and, thereby, the resulting lower bound on the 3-star dimension, can still be obtained even if the strongest forms of confluence are not yet known. This might be useful when working with other random geometries, where strong forms of confluence are yet to be established.}

\begin{prop}[{\cite[Lemma 3.11]{gwynne2022geodesicsmetricballboundaries}}]
  \label{prop:conf1}
Let $h$ be a whole-plane GFF and let $D_h$ be the associated $\gamma$-LQG metric. Fix a point $u\in \CC$. Then almost surely, for all points $v\in \CC$, for any $D_h$-geodesic $\Gamma$ from $u$ to $v$, we have the following. For any neighborhoods $U\ni u, V\ni v$ and any sequence $\left(\Gamma^{(n)}\right)$ of $D_h$-geodesics converging uniformly to $\Gamma$, we have $\Gamma\setminus \Gamma^{(n)} \; \cup \; \Gamma^{(n)}\setminus \Gamma\subseteq U\cup V$ for all sufficiently large $n$.
\end{prop}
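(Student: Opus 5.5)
This statement is imported from \cite{gwynne2022geodesicsmetricballboundaries}, so this is the plan I would follow to reprove it; I am fairly confident of the fixed-endpoint case and less sure of the fully general case. The main tool is the confluence structure of $D_h$-geodesics emanating from a \emph{fixed} point $u$, as established by Gwynne--Miller for the LQG metric. I would use two facts, both holding almost surely for the fixed $u$.
\begin{enumerate}[label=(\roman*)]
\item \emph{Finiteness across filled balls.} For every $0<s<t$, the set of points of $\partial\cB^{\infty,\bullet}_s(u)$ that are hit by some $D_h$-geodesic from $u$ to a point outside $\cB^{\infty,\bullet}_t(u)$ is finite.
\item \emph{Tree property.} If $x$ is an interior point of a $D_h$-geodesic from $u$, then the geodesic from $u$ to $x$ is unique.
\end{enumerate}
A countable-union argument over rational $s<t$ makes (i) hold simultaneously for all such pairs.

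In the first step, take the geodesics $\Gamma^{(n)}$ to start at $u$ and end at $v_n\to v$. Fix $\delta>0$ so small that $\Gamma([D(u,v)-\delta,D(u,v)])\subseteq V$. Pick rational $s<t<D(u,v)-\delta$ with $D(u,v)-\delta-s$ small. By uniform convergence, for large $n$ the endpoint $v_n$ lies outside $\cB^{\infty,\bullet}_t(u)$, and $\Gamma^{(n)}(s)\to\Gamma(s)$. Since all the points $\Gamma^{(n)}(s)$ lie in the finite set from (i), they must equal $\Gamma(s)$ for all large $n$. By (ii), the geodesics $\Gamma^{(n)}|_{[0,s]}$ and $\Gamma|_{[0,s]}$ then coincide. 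Letting $s\uparrow D(u,v)-\delta$ along rationals is not needed: a single good $s$ with $\Gamma([s,D(u,v)])\subseteq V$ suffices. For large $n$ this also gives $\Gamma^{(n)}([s,\cdot])\subseteq V$, so the symmetric difference of $\Gamma$ and $\Gamma^{(n)}$ lies in $V$.

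In the second step, allow the starting points $u_n$ to vary with $u_n\to u$; this is the step I expect to be the main obstacle, because (i) and (ii) are only available from a fixed root. My plan is to use the first step in a form that does not need the starting point to equal $u$. Choose a time $s_0$ with $\Gamma([0,s_0])\subseteq U$. The argument of the first step runs on the segment $\Gamma([s_0,D(u,v)])$ once we know that $\Gamma^{(n)}$, restricted to times after it leaves $U$, agrees with a geodesic from $u$. To get this, I would show that for large $n$ the curve $\Gamma^{(n)}$ passes through $\Gamma(s_0)$. This should follow from the finiteness statement (i) applied with the roles of the balls reversed: since $u_n$ is $D_h$-close to $u$, metric balls around $u_n$ are sandwiched between balls around $u$. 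The points where $\Gamma^{(n)}$ crosses $\partial\cB^{\infty,\bullet}_{s_0}(u)$ converge to $\Gamma(s_0)$, and they belong to a finite set once one shows they lie on geodesics from $u$. The latter follows by comparing $D(u_n,\cdot)$ with $D(u,\cdot)$, using that the additive error $D(u,u_n)$ tends to zero.

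Once $\Gamma^{(n)}(s_0')=\Gamma(s_0)$ for the appropriate times $s_0'$, the piece of $\Gamma^{(n)}$ after $\Gamma(s_0)$ is a geodesic from an interior point of $\Gamma$. Concatenating it with $\Gamma|_{[0,s_0]}$ gives a geodesic from $u$, and the first step applies to it. Everything not covered by this common piece lies in $U\cup V$, which is the claim.
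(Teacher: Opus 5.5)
The paper gives no proof of this statement; it imports it from \cite{gwynne2022geodesicsmetricballboundaries}. So I am judging your argument on its own terms, and your Step~2 has a genuine gap.

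\textbf{Step 2 cannot be skipped, and it does not work as written.} The statement allows $(\Gamma^{(n)})$ to be arbitrary geodesics, so their starting points $u_n\to u$ need not equal $u$. Your claim that the crossing points of $\Gamma^{(n)}$ with $\partial\cB^{\infty,\bullet}_{s_0}(u)$ lie on geodesics from $u$ does not follow from comparing $D(u_n,\cdot)$ with $D(u,\cdot)$. For $x\in\Gamma^{(n)}$ the triangle inequality only gives $D(u,x)+D(x,v_n)\le D(u,v_n)+2D(u,u_n)$. So $\Gamma^{(n)}$ is merely a $2D(u,u_n)$-approximate geodesic from $u$, and (i) has no analogue for approximate geodesics. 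Indeed, every point of $\partial\cB^{\infty,\bullet}_{s_0}(u)$ within distance $\epsilon/2$ of $\Gamma(s_0)$ lies on an $\epsilon$-approximate geodesic from $u$ to $v$, obtained by a detour from $\Gamma(s_0)$. Hence nothing forces $\Gamma^{(n)}$ through $\Gamma(s_0)$.

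The concatenation step fails for the same reason. Even if $\Gamma^{(n)}$ passed through $x=\Gamma(s_0)$, the path $\Gamma|_{[0,s_0]}$ followed by the rest of $\Gamma^{(n)}$ is a geodesic from $u$ only if $D(u,v_n)=s_0+D(x,v_n)$. That is, some geodesic from $u$ to $v_n$ must already pass through $x$, which is what you are trying to prove.

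The missing ingredient is a confluence statement for geodesics emanating from a \emph{neighborhood} of $u$, not just from $u$. For example, it would suffice to know the following: for each $\epsilon>0$ there are $\rho>0$ and a point $p$ such that every geodesic from a point of $B_\rho(u)$ to a point outside $B_\epsilon(u)$ passes through $p$. Then $\Gamma$, $\Gamma^{(n)}$ and every geodesic from $u$ to $v_n$ all pass through $p$. The concatenation of $\Gamma$ up to $p$ with $\Gamma^{(n)}$ after $p$ is then an honest geodesic from $u$, and Step~1 finishes. But this bottleneck property carries essentially all the content of the varying-start case. It does not come out of the root-based facts (i) and (ii) by perturbing the root.

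\textbf{Step 1 also has a gap as written.} You need $v_n\notin\cB^{\infty,\bullet}_t(u)$ for $t$ close to $D(u,v)$, which requires $v$ to lie in the unbounded component of $\CC\setminus\overline{\cB_t(u)}$. Suppose instead that $v$ lies in a hole of $\overline{\cB_{t_0}(u)}$ for some $t_0<D(u,v)$; LQG balls have infinitely many holes (Figure~\ref{fig-metric-ball}). Then $v\in\cB^{\infty,\bullet}_t(u)$ for all $t\in[t_0,D(u,v))$. For small $V$, (i) then gives coincidence only up to the time $v$ is disconnected from $\infty$, and says nothing about the part of $\Gamma$ inside the hole.

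A repair needs the analogue of (i) for filled balls $\cB^{w,\bullet}_t(u)$ targeted at other points. For instance, prove it almost surely for each fixed $w\in\mathbb{Q}^2$, then apply it with $w$ in the component of $\CC\setminus\overline{\cB_t(u)}$ containing $v$. That is not the statement you quoted.

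Separately, your choice $s<D(u,v)-\delta$ does not guarantee $\Gamma([s,D(u,v)])\subseteq V$. Take $\delta$ with $\Gamma([D(u,v)-2\delta,D(u,v)])$ in the interior of $V$, and $s\in(D(u,v)-2\delta,D(u,v)-\delta)$.
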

Finally, we will require a version of the KPZ \cite{KPZ88} formula relating Euclidean and quantum dimensions in LQG. There have been several mathematical works \cite{DS11, RV11} proving different versions of this formula; we will require the following worst-case version from \cite{GP22}.

\begin{prop}[{{\cite[Theorem 1.8 (1.14)]{GP22}}}]
\label{prop:worstkpz}
Let $h$ be a whole-plane GFF and let $D_h$ be the associated $\gamma$-LQG metric. Then, almost surely, for every Borel set $A\subseteq \CC$, and every $s \leq \dim_{D_{h}}(A)$, we have
    \begin{equation}
        \mathrm{dim}_{\mathrm{Euc}}(A) \geq \DKPZ{\gamma}{s}.
      \end{equation}
\end{prop}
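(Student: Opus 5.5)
The plan is to compare Euclidean coverings of $A$ with $D_h$-coverings of $A$ by the same dyadic squares. The only random input is an almost sure multifractal count of ``bad'' dyadic squares, which holds simultaneously for all Borel sets, so it yields the statement for all $A$ at once. By countable stability of Hausdorff dimension, and since $D_h$ induces the Euclidean topology, we may assume $A\subseteq K$ for a fixed compact $K$. It then suffices to show the following: almost surely, for every $s$ and every $\Delta'>\dim_{\mathrm{Euc}}(A)$ with $\Delta'<\DKPZ{\gamma}{s}$, we have $\mathcal H^{s'}_{D_h}(A)=0$ for some $s'<s$. This contradicts $s\le\dim_{D_h}(A)$.

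\textbf{Step 1 (a.s. counting).} Fix a finite grid $\alpha_0<\alpha_1<\dots<\alpha_m$ in $(0,\xi Q)$, together with $\alpha_{-1}=\xi(Q-2)-\delta$. For each $n$ and $j$, let $N_n(\alpha_j)$ be the number of dyadic squares $S\subseteq K$ of side $2^{-n}$ with $\mathrm{diam}_{D_h}(S)>2^{-n\alpha_j}$. Apply Proposition~\ref{lem:est1} to the Euclidean ball of radius comparable to $2^{-n}$ containing $S$. This gives
\[
\EE N_n(\alpha)\le 2^{n(2-(Q-\alpha/\xi)^2/2)}.
\]
Markov's inequality and Borel--Cantelli then give, almost surely for all large $n$ and all $j$,
\[
N_n(\alpha_j)\le 2^{n(2-(Q-\alpha_j/\xi)^2/2+\delta)}.
\]
Proposition~\ref{lem:estholder} gives, almost surely, $\mathrm{diam}_{D_h}(S)\le 2^{-n\alpha_{-1}}$ for all small squares in $K$. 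Every square therefore falls into some level $[\alpha_{j-1},\alpha_j)$, meaning its diameter lies between $2^{-n\alpha_j}$ and $2^{-n\alpha_{j-1}}$.

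\textbf{Step 2 (deterministic optimization).} Given a cover of $A$ by dyadic squares with $\sum_n M_n2^{-n\Delta'}<\eta$, where $M_n$ is the number of squares of side $2^{-n}$, I would bound
\[
\sum_S \mathrm{diam}_{D_h}(S)^{s'}\le \sum_n\sum_j \min\!\big(M_n,\,N_n(\alpha_{j-1})\big)\,2^{-n\alpha_{j-1}s'}.
\]
Each minimum is bounded by interpolation, $\min(x,y)\le x^py^{1-p}$ with $p\in[0,1]$ chosen per $j$, combined with $M_n\le \eta 2^{n\Delta'}$. The result is that the sum is $O(\eta^{p})$, provided that for every $\alpha$ some $p\in[0,1]$ makes the exponent of $2^n$ negative:
\[
p\Delta'+(1-p)\big(2-(Q-\alpha/\xi)^2/2\big)-\alpha s'<0 .
\]
One can take $p=1$ when $\alpha s'>\Delta'$. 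Squares with $\alpha$ near $\xi Q$ have at most finitely many instances, so they can be absorbed.

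\textbf{Step 3 (identifying the exponent; main obstacle).} The remaining work is to verify that the condition in Step 2 holds for all $\alpha\in(0,\xi Q)$ exactly when $\Delta'<\DKPZ{\gamma}{s'}$. This is a one-variable min--max computation: for each $\alpha$, the critical $\Delta'$ is $\alpha s'$ if $2-(Q-\alpha/\xi)^2/2\le \alpha s'$, and $+\infty$ otherwise. One must then optimize over $\alpha$ and $p$ simultaneously and check that the worst case reproduces the square-root expression in Definition~\ref{def:deltas}. I expect this bookkeeping to be the hardest part, and I suspect the naive single-$\alpha$ bound is not sharp. Lemma-\ref{lem:est1}-type bounds alone may lose information: a square's diameter is governed by the circle average $h_{2^{-n}}$, and the truly sharp argument tracks squares in nested dyadic scales, in the manner of multifractal KPZ proofs. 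If the computation does not match, my fallback is to refine Step 1 using Proposition~\ref{lem:est2}. I would control $\mathrm{diam}_{D_h}(S)$ via $2^{-n\xi Q}e^{\xi h_{2^{-n}}(z)}$ and count squares by the joint law of circle averages along the dyadic ancestry. The interpolation in Step 2 then becomes a large-deviation (Legendre transform) optimization, whose value should be $\DKPZ{\gamma}{s}$. Finally, let $\delta\to0$, refine the grid, and take $\Delta'\downarrow\dim_{\mathrm{Euc}}(A)$.
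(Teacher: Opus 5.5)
The paper gives no argument for this proposition. It imports \cite[Theorem 1.8 (1.14)]{GP22}, which is stated there as an upper bound on $\dim_{D_h}(A)$ in terms of $\Delta_0=\dim_{\mathrm{Euc}}(A)$, and inverts it algebraically. The inverted form is $\dim_{D_h}(A)\le \Delta_0/(\xi(Q-\sqrt{4-2\Delta_0}))$. You instead give a self-contained proof. Your scheme is the whole-plane analogue of the paper's own proof of the boundary version, Lemma~\ref{bdy-kpz}: sort the cells of a dyadic cover by thickness exponent, count thick cells via Proposition~\ref{lem:est1} plus Markov and Borel--Cantelli (simultaneously for all $A$), and place the threshold at a root of a quadratic. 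The approach works, and it buys independence from \cite{GP22} at the cost of some bookkeeping.

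The obstacle you flag in Step 3 is not real, but your case split there is backwards. The exponent is linear in $p$, so only $p\in\{0,1\}$ matter. If $2-(Q-\alpha/\xi)^2/2<\alpha s'$, the thick-cell count alone suffices and $\Delta'$ is unconstrained; otherwise you need $\Delta'<\alpha s'$. Set $g_2(\alpha):=-2+s'\alpha+(Q-\alpha/\xi)^2/2$; this is the analogue of \eqref{eq:quad}, with $-2$ because there is no Euclidean factor. The condition over all $\alpha$ is then $\Delta'<s'\underline r$, where
\[ \underline r=\xi Q-s'\xi^2-\xi\sqrt{4-2s'\xi Q+s'^2\xi^2} \]
is the smaller root of $g_2$. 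Substituting $\xi=\gamma/d_\gamma$ and $\xi Q=(\gamma^2+4)/(2d_\gamma)$ gives $s'\underline r=\Delta_{\mathrm{KPZ}}(\gamma,s')$ exactly. So the single-scale count is sharp, and the fallback through Proposition~\ref{lem:est2} and dyadic ancestry is unnecessary.

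The side facts you need are quick:
\begin{itemize}
\item $d_\gamma^2(4-2s'\xi Q+s'^2\xi^2)=(\gamma^2s'-4d_\gamma)(s'-d_\gamma)\ge 0$, because $s'\le\dim_{D_h}(A)\le d_\gamma$.
\item $g_2(\xi(Q-2))=s'\xi(Q-2)>0$ and $g_2'(\xi(Q-2))=s'-2/\xi<0$. Hence $\xi(Q-2)<\underline r\le \xi Q-s'\xi^2<\xi Q$, so the thick-cell count is only used where Proposition~\ref{lem:est1} applies.
\end{itemize}

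Three smaller repairs are needed in Step 2:
\begin{itemize}
\item A level-$j$ cell has $D_h$-diameter $>2^{-n\alpha_j}$, so it is counted by $N_n(\alpha_j)$, not $N_n(\alpha_{j-1})$.
\item Cells with exponent near or above $\xi Q$ are the typical ones (order $2^{2n}$ per scale), not finitely many. Also, cells with $D_h$-diameter below $2^{-n\alpha_m}$ lie in no level as you set things up. Both groups are handled by the $p=1$ branch, since $\alpha s'\ge s'\underline r>\Delta'$ there; note $\Delta_{\mathrm{KPZ}}(\gamma,s')<\xi Q s'$.
\item The $p=0$ levels carry no factor of $\eta$. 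Either take $p$ slightly positive, which is allowed since the inequality is strict, or use that those terms form a tail $\sum_{n>n_0}$ that vanishes as the mesh of the cover shrinks.
\end{itemize}
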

We note that in the source \cite[Theorem 1.8 (1.14)]{GP22}, the above result is presented as an upper bound for $\dim_{\mathrm{LQG}}(A)$ given $\dim_{\mathrm{Euc}}(A)$; simple algebraic manipulations inverting this expression yield Proposition \ref{prop:worstkpz}. Later on, when investigating fractal sets on the boundary of an LQG surface, we will also require a boundary version of Proposition \ref{prop:worstkpz}--- since we could not find this in the literature, we develop this later as Lemma \ref{bdy-kpz}.
\subsection{LQG surfaces with a boundary and the boundary measure \texorpdfstring{$\nu_h$}{nu h}} There are also natural LQG surfaces having a boundary, and we begin by describing a canonical example. Let $h$ be a Neumann GFF on the upper half plane $\mathbb{H}$ normalized to have average zero on the semicircle $\partial B_{1}(0)\cap \mathbb{H}$, by which we mean a Gaussian process with the kernel $G_{\mathbb{H}}\colon \mathbb{H}^2\rightarrow (0,\infty)$ given by
\begin{equation}
  \label{eq:27}
  G_{\mathbb{H}}(v,w)= \log \frac{\max(|v|,1)^2\max(|w|,1)^2}{|v-w||v-\overline{w}|},
\end{equation}
where $\overline{w}$ denotes the complex conjugate of $w$. Just as in the whole-plane case, one can canonically define the $\gamma$-LQG distance $D_h(v,w)$ for all $v,w\in \mathbb{H}$ and, in fact, this metric also extends continuously to all boundary points $\RR\subseteq \overline{\mathbb{H}}$ (\cite[Proposition 1.7]{hm-metric-gluing}). Note that $(\overline{\mathbb{H}}, D_h)$ is a $\sigma$-compact geodesic metric space and again a.s.\ admits a unique geodesic $\Gamma_{v,w}$ for any fixed $v,w\in \overline{\mathbb{H}}$ (this follows by the same argument as the proof of \cite[Lemma 4.2]{ddg-metric-survey}, which gives the analogous geodesic uniqueness statement for the whole-plane case). Just as in the whole-plane case, one can define the ``bulk'' LQG measure $\mu_h$ when $h$ is a Neumann GFF, and this is a Borel measure on $\overline{\mathbb{H}}$. Further, one can also define \cite{DS11} a boundary LQG measure $\nu_h$ which is informally defined for Borel sets $S\subseteq \RR$ by ``$\nu_h(S)=\int_Se^{(\gamma/2) h(z)}dx$.''

Just as in the whole-plane setting, geodesic confluence holds in the boundary setting as well. The following analogue of Proposition \ref{prop:conf1} captures coalescence in the boundary setting; this was established in \cite{Bha25} by adapting corresponding arguments from the bulk case.

\begin{prop}[{\cite[Lemma 79]{Bha25}}]
  \label{prop:conf2}
Let $h$ be a Neumann GFF and let $D_h$ be the associated $\gamma$-LQG metric on $\overline{\HH}$. Fix a point $u\in \overline{\HH}$. Then almost surely, for all points $v\in \overline{\HH}$, for any $D_h$-geodesic $\Gamma$ from $u$ to $v$, we have the following. For any neighborhoods $V\ni v, U\ni u$ and any sequence $\left(\Gamma^{(n)}\right)$ of $D_h$-geodesics converging uniformly to $\Gamma$, we have $\Gamma\setminus \Gamma^{(n)} \; \cup \; \Gamma^{(n)}\setminus \Gamma\subseteq U\cup V$ for all sufficiently large $n$.
\end{prop}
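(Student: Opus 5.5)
This result is quoted from \cite[Lemma 79]{Bha25}. If I had to prove it myself, I would adapt the bulk argument for Proposition~\ref{prop:conf1} from \cite[Lemma 3.11]{gwynne2022geodesicsmetricballboundaries}, replacing Euclidean annuli centered at interior points by half-annuli (intersected with $\overline{\HH}$) centered at real points, and full annuli elsewhere.

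\textbf{Step 1: finiteness of crossings.} The core input is a ``finitely many geodesic crossings'' statement. For the fixed point $u$, I would show that almost surely, simultaneously for all annuli $A$ with rational center and radii whose closure avoids $u$, the following holds. The collection
\[
\{\Gamma\cap A' : \Gamma \text{ a } D_h\text{-geodesic from } u \text{ to a point outside the outer ball of } A\}
\]
consists of only finitely many distinct sets, where $A'$ is a slightly smaller concentric annulus. In the bulk this comes from the Gwynne--Miller confluence machinery. One shows that at each of many scales there is, with uniformly positive conditional probability given the field outside, a good event: a ``shortcut'' configuration forcing every geodesic from far away that crosses the annulus to pass through a small bottleneck. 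Independence across scales, via the Markov property of the GFF, then gives superpolynomially many good scales. For the Neumann GFF I would rerun this with half-annuli around boundary points. The relevant inputs are the boundary Markov property, the boundary analogue of Proposition~\ref{lem:est2} (proved in the same way using semicircle averages), and the fact that near $\RR$ the field looks like a Neumann GFF on a half-disk.

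\textbf{Step 2: the deterministic conclusion.}
\begin{enumerate}
\item Let $L=D_h(u,v)$ and choose $\delta>0$ so that $\Gamma([0,\delta])\subseteq U$ and $\Gamma([L-\delta,L])\subseteq V$.
\item Cover the compact arc $\Gamma([\delta,L-\delta])$ by finitely many rational annuli from Step~1.
\item Since $\Gamma^{(n)}\to\Gamma$ uniformly, for large $n$ the geodesic $\Gamma^{(n)}$ starts at $u$ and ends near $v$. So in each annulus it crosses along one of the finitely many admissible segments.
\item Among these finitely many segments, those disjoint from $\Gamma$ stay at positive distance from $\Gamma$. Hence for large $n$, $\Gamma^{(n)}$ must use segments meeting $\Gamma$.
\item Two geodesics from the common point $u$ that meet at a point are at the same distance from $u$ there. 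So any intersection lets one splice them.
\item Using geodesic uniqueness for rational target points, I would upgrade ``meeting'' to ``coinciding on the crossing''. The segments can be chosen so that they agree with $\Gamma$ on the relevant portion.
\item This gives $\Gamma^{(n)}=\Gamma$ on $[\delta,L-\delta]$, up to the reparametrization forced by both being length-parametrized from $u$. Hence the symmetric difference lies in $U\cup V$.
\end{enumerate}

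\textbf{Main obstacle.} I expect Step~1 at the boundary to be the hardest part. The shortcut/bottleneck events must be constructed inside half-annuli touching $\RR$, where geodesics can run along the boundary. The positive-probability estimate must be checked for the Neumann field, whose variance near $\RR$ is doubled, so the constants in the independence-across-scales argument change. I would first try to reduce to the bulk case by reflection: the Neumann GFF on $\HH$ is the even part of a whole-plane GFF. I would then check that the bulk good events can be chosen reflection-symmetric, so that they transfer.
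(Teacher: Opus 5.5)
The paper does not prove this statement. It only cites \cite[Lemma 79]{Bha25} and remarks that the bulk argument is adapted, so your overall plan is consistent with it. However, your Step~2 has a genuine gap at item~3.

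Uniform convergence $\Gamma^{(n)}\to\Gamma$ does \emph{not} imply that $\Gamma^{(n)}$ starts at $u$ for large $n$. It only gives starting points $u_n:=\Gamma^{(n)}(0)\to u$ and endpoints $v_n\to v$. The proposition quantifies over \emph{arbitrary} $D_h$-geodesics converging uniformly to $\Gamma$, and the neighbourhood $U\ni u$ appears in the conclusion precisely for this reason. For geodesics rooted at $u$, fixed-root confluence in its metric-ball form (finitely many initial segments $\Gamma'|_{[0,t]}$ among geodesics $\Gamma'$ from $u$ to points outside $\mathcal{B}_s(u)$), combined with a pigeonhole argument, already gives $\Gamma^{(n)}|_{[0,t]}=\Gamma|_{[0,t]}$ for $t$ close to $D_h(u,v)$. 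In that case $U$ would play no role. Your Step~1 only controls geodesics emanating from $u$, so it says nothing about how $\Gamma^{(n)}$ crosses your annuli when $u_n\neq u$.

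This is not a technicality. Prepending a geodesic from $u$ to $u_n$ turns $\Gamma^{(n)}$ into a $2D_h(u,u_n)$-almost geodesic from $u$. Confluence of exact geodesics from a single root gives no control over almost geodesics. Nothing in Step~1 prevents geodesics from nearby points $u_n$ from running alongside $\Gamma$ and converging to it without ever merging with it.

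What is missing is one of the following:
\begin{itemize}
\item a confluence input that is uniform over starting points near $u$, for instance: for $0<t<s$ there is $\epsilon>0$ such that geodesics from points of $\mathcal{B}_\epsilon(u)$ to points outside $\mathcal{B}_s(u)$ cross from $\partial\mathcal{B}_t(u)$ to $\partial\mathcal{B}_s(u)$ along finitely many segments;
\item finiteness of annulus crossings for arbitrary geodesics, which is essentially the all-pairs strong confluence of \cite{BK25}.
\end{itemize}
Your description of the bottleneck mechanism (``forcing every geodesic \dots\ that crosses the annulus'') gestures at the second option. But the statement you actually formulate and use is restricted to geodesics from $u$. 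Proving a uniform-in-the-starting-point statement for the Neumann field, including for $u\in\RR$, is where the real work lies.

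Two smaller remarks.
\begin{itemize}
\item Items 4--6 of Step~2 are unnecessary. The appeal to uniqueness for rational targets is also irrelevant, since $v$ and the endpoints of $\Gamma^{(n)}$ are arbitrary. Once a finiteness statement covers $\Gamma$ and all $\Gamma^{(n)}$, any set equal to $\Gamma^{(n)}\cap A'$ (with $A'$ open) for infinitely many $n$ must equal $\Gamma\cap A'$, by uniform convergence and closedness of geodesics. Hence $\Gamma^{(n)}\cap A'=\Gamma\cap A'$ for all large $n$.
\item The reflection idea cannot serve as a reduction. The even extension of the Neumann GFF has circle averages around real points with variance about $2\log(1/r)$. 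So near $\RR$ it is not locally absolutely continuous with respect to a whole-plane GFF, and the bulk estimates must be re-proved rather than transferred by choosing symmetric events.
\end{itemize}
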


\subsection{Kendall's Poisson roads metric}
\label{sec:kendall-intro}
The planar Poisson roads metric \cite{kendallrandomlinesmetricspaces} is a family of random metrics on $\mathbb{R}^2$ wherein geodesics are constrained to a family of random straight lines with speed limits; we now give a quick definition. First, consider the space $\mathbb{L}$ of affine lines in $\mathbb{R}^2$. It can be shown that there exists a unique (up to a multiplicative constant) locally finite Borel measure $\mu$ on $\mathbb{L}$ which is additionally invariant under rotations and translations of the plane (see \cite{kendallrandomlinesmetricspaces}, Definition 1.2). After fixing a parameter $\beta>2$, we consider a Poisson process $\Pi$ with intensity proportional to $\mu \otimes v^{-\beta} dv$ on $\mathbb{L}\times (0,\infty)$. Now, each $(\ell,v)\in \Pi$ is to be viewed as a ``road," with $v$ being the ``speed limit'' corresponding to the path $\ell$. For each $x \in \RR^2$, we define the speed limit $V(x)=\max\{v: x\in (\ell,v)\in \Pi\}$, with the convention $\sup \emptyset = 0$.
Finally, for $u,v\in \RR^2$, one can define the distance $D(u,v)$ by
\begin{align}
  \label{eq:28}
  D(u,v)= \inf\{\tau: \exists &\textrm { a continuous path }P\colon [0,\tau]\rightarrow \RR^2 \textrm{ from } u \textrm { to } v \textrm{ satisfying }\nonumber\\
  &|P(t)-P(s)|\leq \int_{s}^{t}V(P(t))dt \textrm{ for all } [s,t]\subseteq [0,\tau]\}.
\end{align}
Any path $P$ attaining the above minimum is called a geodesic from $u$ to $v$. With the above definition, it turns out that $(\RR^2,D)$ is a $\sigma$-compact geodesic metric space. In fact, as established in \cite[Theorem 4.4]{kendallrandomlinesmetricspaces}, there is a unique geodesic $\Gamma_{u,v}$ between any fixed points $u,v\in \mathbb{R}^2$ (and thus for all rational points).

\section{Dimension lower bounds for general coalescent metric spaces}\label{section-general-bounds}
In this section, we provide the proofs of the results introduced in \Cref{sec:main-results}. We emphasize that the majority of these results are valid for rather general metric spaces and do not require the special structure of LQG or the Poisson roads metric. Eventually in \Cref{sec:appl-lqg-poiss}, we will specialize to the settings of LQG and the Poisson roads metric and use these results to obtain Theorems \ref{thm:lqgres}\ref{it:LQG-3-lb}, \ref{thm:lqgres}\ref{it:LQG-net-int-lb}, and \ref{thm:poissonroad}.

\subsection{Universal lower bound}

\begin{proof}[Proof of \Cref{intersection_universal_bounds}]
  Fix an arbitrary $V \subseteq \mathbb{R}^{n}$. The result is trivial if $\dim(f(X)\cap V)\leq n-1$, so let us assume that $\dim(f(X)\cap V)>n-1$. As a consequence of the standard fact that any locally Lipschitz map cannot increase Hausdorff dimension, it suffices to prove that 
  \begin{equation}\label{lipschitz-sfv-dimension-equality}
      \dim \left(f(\Ssym{f}{V})\right)=\dim\left(f(X)\cap V\right),
  \end{equation}
  as this will imply that $\dim(\Ssym{f}{V})\geq\dim \left(f(X)\cap V\right)$. 

  We will now prove that \eqref{lipschitz-sfv-dimension-equality} holds. To simplify notation, for a set $A\subseteq X$, we will use the shorthand
  \begin{equation}
    \label{eq:33}
    A^*=f^{-1}(V)\setminus A.
  \end{equation}
  Since $X$ is $\sigma$-compact, it is also separable in the sense that it has a countable dense set. Thus, writing $f=(f_1,\dots,f_n)$, the set $f_i(\Sstarsym{f_i}{V})$ is at most countable for each $1\leq i\leq n$. To see this, note that by the definition of the non-constancy set, for any distinct values $r, r' \in f_i(\Sstarsym{f_i}{V})$ there exist $z, z' \subset V$ and $\epsilon, \epsilon' > 0$ such that $f_{i}$ is identically equal to $r$ on $B_{\epsilon}(z)$ and is identically equal to $r'$ on $B_{\epsilon'}(z')$. As a result, we must necessarily have $B_{\epsilon}(z) \cap B_{\epsilon'}(z') = \emptyset$. However, since $X$ has a countable dense set, there can exist only countably many such disjoint balls, and thus only countably many elements of $f_i(\Sstarsym{f_i}{V})$. Now, with $\pi_i$ denoting the map projecting $\RR^n$ to its $i$th coordinate, since $\Ssym{f}{V}=\bigcap_{i=1}^n \Ssym{f_i}{V}$, we have
  \begin{equation}
    \label{eq:4}
    f(\Sstarsym{f}{V})\subseteq \bigcup_{i=1}^n\pi_i^{-1}( f_i( \Sstarsym{f_i}{V})).
  \end{equation}
  The right hand side of \eqref{eq:4} is a finite union of sets each having Hausdorff dimension at most $n-1$, since $f_i(\Sstarsym{f_i}{V})$ is countable for all $i$ and $\pi_{i}^{-1}(p)$ has dimension $n-1$ for all $p \in \mathbb{R}$. Thus $\dim(f(\Sstarsym{f}{V}))\leq n-1$. However, we have
  \begin{equation}
    \label{eq:50}
    f(X)\cap V= f(\Ssym{f}{V})\cup f(\Sstarsym{f}{V}),
  \end{equation}
  and since we have assumed $\dim(f(X)\cap V)>n-1$, we must have $\dim(f(\Ssym{f}{V}))=\dim(f(X)\cap V)$. Thus, we have shown that \eqref{lipschitz-sfv-dimension-equality} holds, and this completes the proof.
\end{proof}

Before moving on, let us also record an easy lemma valid for real-valued functions on connected spaces which will be useful to us later. 
\begin{lem}
  \label{lem:conn}
Let $(X,D)$ be a connected metric space. For any non-constant Lipschitz function $f\colon X\rightarrow \RR^{n}$ and every set $V\subseteq \RR^{n}$, we have $f(X)\cap V= f(\hyperlink{def-non-constancy-set}{S_{f,V}})$.
\end{lem}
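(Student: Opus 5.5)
Fix $y\in f(X)\cap V$. The inclusion $f(\hyperlink{def-non-constancy-set}{S_{f,V}})\subseteq f(X)\cap V$ is immediate from the definition. The content is the reverse inclusion: we must produce a point $z\in f^{-1}(y)$ at which $f$ is not locally constant. The plan is a connectedness argument for the level set $f^{-1}(y)$.

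Let
\[
W=\{x\in X:\ \exists\,\epsilon>0 \text{ with } f\equiv y \text{ on } \cB_\epsilon(x)\}.
\]
By construction $W$ is open. If some $x\in f^{-1}(y)$ lies outside $W$, that point is the desired one and we are done. So assume $f^{-1}(y)\subseteq W$, which gives $W=f^{-1}(y)$. Then $W$ is also closed, since $f$ is Lipschitz, hence continuous, and level sets of continuous maps are closed. It is nonempty because $y\in f(X)$. Connectedness of $X$ now forces $W=X$, so $f\equiv y$, contradicting the assumption that $f$ is non-constant.

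Concretely, the witness is any point $z\in\overline{W}\setminus W$, which exists because $W$ is a nonempty open proper subset of a connected space. By continuity $f(z)=y\in V$, and $z\notin W$ means $f$ is not identically $y$ on any ball around $z$. Since $f(z)=y$, this is the same as $f$ not being constant on any ball around $z$.

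The main obstacle is matching this conclusion to Definition~\ref{def:non-constancy}, which requires that \emph{each} coordinate $f_i$ fail to be locally constant at $z$. For $n=1$, the real-valued case that the preceding sentence of the paper has in mind, "$f$ not locally constant at $z$" is exactly the membership $z\in\hyperlink{def-non-constancy-set}{S_{f,V}}$, and the argument above closes the proof. For $n\ge 2$ the same argument applies verbatim to each coordinate: running it with $f_i$ and $y_i=\pi_i(y)$ in place of $f$ and $y$ gives $f_i(X)\cap\pi_i(V)=f_i(\hyperlink{def-non-constancy-set}{S_{f_i,V}})$ for every $i$ with $f_i$ non-constant.

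Passing from these coordinatewise statements to a single point $z$ at which all coordinates are simultaneously non-locally-constant is where the argument does not go through. I would check whether this step can be rescued before relying on the lemma for vector-valued $f$. My expectation is that it cannot in general. For example, take $X=[0,2]$ with $f_1(x)=\min(x,1)$ and $f_2(x)=\max(x-1,0)$. Then the value $(1,\tfrac12)$ is attained only at $x=\tfrac32$, where $f_1$ is locally constant. So the lemma should be applied with $n=1$, or coordinatewise as just described, which is all that the later applications to real-valued difference profiles require.
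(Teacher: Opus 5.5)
For $n=1$ your argument is the paper's: if no point of $f^{-1}(y)$ is a point of non-constancy, the nonempty level set $f^{-1}(y)$ is open as well as closed, which contradicts connectedness and non-constancy.

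Your objection for $n\ge 2$ is also correct. It exposes a gap in the paper's proof, not in yours. The paper only produces $x\in f^{-1}(v)$ such that ``$f$ is not constant in a neighbourhood of $x$''. For vector-valued $f$ this only says that \emph{some} coordinate $f_i$ fails to be locally constant at $x$. Membership in $S_{f,V}$, as defined in \eqref{eq:1}, requires \emph{every} coordinate to fail to be locally constant.

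Your example shows the statement is false as written. There $S_{f,\RR^2}=\{1\}$, so $f(S_{f,\RR^2})=\{(1,0)\}$, while $f(X)$ is the whole L-shaped path. Nothing downstream is affected. The lemma is invoked only once, in the proof of \Cref{radii-product-lower-bound-lemma}, for the real-valued restriction $f|_{L_\theta}$, and that use needs only continuity.

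One small imprecision: your ``concrete witness'' $z\in\overline{W}\setminus W$ requires $W\neq\emptyset$. When $W=\emptyset$, any point of $f^{-1}(y)$ already works, as your preceding paragraph says.
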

\begin{proof}
  It suffices to show that for any $v\in f(X)\cap V$, we can find an $x\in X$ such that $f(x)=v$ and such that $f$ is not constant in a neighbourhood of $x$. Assume for the sake of contradiction that this is false; then the non-empty set $f^{-1}(\{v\})$ is open. However, since $f$ is continuous and since $\{v\}$ is closed, $f^{-1}(\{v\})$ is necessarily closed as well and thus clopen. Since $X$ is connected, the only clopen sets are $\emptyset$ and $X$. We already know that $f^{-1}(\{v\})\neq \emptyset$, and also that $f^{-1}(\{v\})\neq X$ since $f$ is not constant. This yields a contradiction and completes the proof.
\end{proof}

\subsection{Lower bounds for metric net intersections}

In this subsection, we will prove \Cref{intersection_metric_nets_general} and \Cref{intersection_metric_nets} (which we shall state soon), which give lower bounds on the dimensions of metric nets and their intersections for rather general metric spaces. Later on in \Cref{sec:appl-lqg-poiss}, applying these results to the specific settings of LQG and the Poisson roads metric, we will obtain Theorems \ref{thm:lqgres}\ref{it:LQG-net-int-lb} and \ref{thm:poissonroad}\ref{it:pr-net-int-lb} as short corollaries. We now state \Cref{intersection_metric_nets}.

\begin{thm}\label{intersection_metric_nets}
  Let $(X,D)$ be a connected $\sigma$-compact geodesic metric space. For $n\in \NN$, fix distinct points $u_1,\dots, u_n\in X$ and $z\in X$. With $\pi_i$ projecting $\RR^n$ to its $i$th coordinate, consider the Lipschitz function $f\colon X\rightarrow \RR^n$ defined by
  \begin{equation}
    \label{eq:7}
    \pi_i(f(x))= \inf\{r: x\in \cB_r^{\bullet,z}(u_i)\}
  \end{equation}
  for $1\leq i \leq n$. Then we have $\max(\dim(\bigcap_{i=1}^n\cN^{z}(u_i)),n-1)\geq \dim( f(X))$.
\end{thm}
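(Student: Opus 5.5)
The plan is to deduce the statement from \Cref{intersection_universal_bounds} applied with $V=\RR^n$. That result gives $\max\{\dim(\Ssym{f}{\RR^n}),n-1\}\geq \dim(f(X))$. It therefore suffices to establish two things: that $f$ is (locally) Lipschitz, and the inclusion $\Ssym{f}{\RR^n}\subseteq \bigcap_{i=1}^n\cN^z(u_i)$. Since $\Ssym{f}{\RR^n}=\bigcap_i \Ssym{f_i}{\RR}$, the inclusion reduces to a single-coordinate claim. Writing $f_i(x)=\inf\{r: x\in \cB^{\bullet,z}_r(u_i)\}$, we must show that every $x$ at which $f_i$ is not locally constant lies in $\cN^z(u_i)$.

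First, the Lipschitz property. I would show $|f_i(x)-f_i(y)|\leq D(x,y)$. Suppose $x\in\cB^{\bullet,z}_r(u_i)$ and $r<D(z,u_i)$; otherwise the filled ball is all of $X$. I claim that then $y\in \cB^{\bullet,z}_{r+D(x,y)+\epsilon}(u_i)$. If $x\in\overline{\cB_r(u_i)}$, this is the triangle inequality. Otherwise every path from $z$ to $x$ meets $\overline{\cB_r(u_i)}$. Take any path from $z$ to $y$ and concatenate it with a geodesic from $y$ to $x$, of length $D(x,y)$. The concatenated path meets $\overline{\cB_r(u_i)}$. If it does so on the geodesic piece, then $y$ is within $r+D(x,y)$ of $u_i$. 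Otherwise the original path already meets the larger closed ball. In either case, $y$ is in the filled ball of radius $r+D(x,y)+\epsilon$ (using monotonicity of the filled balls in $r$). Taking infima gives the bound. If $r+D(x,y)\ge D(z,u_i)$, the filled ball is $X$, which is consistent. Hence $f$ is Lipschitz.

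Second, the net inclusion, which I expect to be the main obstacle since it requires the topology of filled balls. Take $x$ with $f_i$ not constant near $x$, and set $r=f_i(x)$. Suppose first that $0<r<D(z,u_i)$. I would show $x\in\partial\cB^{\bullet,z}_r(u_i)$. The filled ball $\cB^{\bullet,z}_r(u_i)$ is closed: it equals the complement of the connected component of $X\setminus\overline{\cB_r(u_i)}$ containing $z$, and in a geodesic space (hence locally path connected) that component is open. Since $f_i(x)=r$ and the filled balls are nested and closed, $x\in\bigcap_{s>r}\cB^{\bullet,z}_s(u_i)$, and I would check that this intersection equals $\cB^{\bullet,z}_r(u_i)$. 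Continuity of $f_i$ would be used here: any point outside $\cB^{\bullet,z}_r(u_i)$ has a neighbourhood path-connected to $z$ avoiding the closed ball, so a slightly larger ball works as well. If $x$ were an interior point of $\cB^{\bullet,z}_r(u_i)$, then $f_i\le r$ on a neighbourhood $U$ of $x$. I must then rule out $f_i<r$ at points arbitrarily near $x$. The likely route is the identity $f_i(y)=D(y,u_i)$ for $y$ in the closure of the unbounded component, and $f_i$ being constant on each hole. More precisely: on the interior of the filled ball off the outer boundary, points lie either in the open ball, where $f_i(y)=D(y,u_i)<r$, or in holes. Since $f_i(x)=r$, $x$ is not in the open ball. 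I would argue that a small neighbourhood of $x$ avoiding $\partial\cB^{\bullet,z}_r(u_i)$ is contained in a single hole-type region, where $f_i\equiv r$. This follows because the hole is disconnected from $z$ by the sphere of radius exactly $r$ and by no smaller ball, as $f_i(x)=r$. This contradicts non-constancy, so $x\in\partial\cB^{\bullet,z}_r(u_i)\subseteq\cN^z(u_i)$.

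Finally, the degenerate values need separate handling. If $r=0$, then $x=u_i$, which I expect lies in $\cN^z(u_i)$ as the limit of boundaries of small balls, or can be discarded: a single point does not affect dimension as long as $n-1\ge0$. If $r\ge D(z,u_i)$, then $f_i$ is constant equal to $D(z,u_i)$ on the set where it attains that value. Points with $f_i=D(z,u_i)$ that are not locally constant are handled as limits of outer boundaries; at worst, $f_i^{-1}(D(z,u_i))$ contributes a value that can be excluded without changing $\dim f(X)$ beyond $n-1$. Combining the inclusion with \Cref{intersection_universal_bounds} then yields the stated inequality.
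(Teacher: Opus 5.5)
Your route is the paper's: show each $f_i$ is $1$-Lipschitz and apply \Cref{intersection_universal_bounds}, after identifying the non-constancy set with the net. The paper simply asserts that $\cN^{z}(u_i)$ is exactly $S_{f_i,\RR^n}$. Your Lipschitz part is fine if the case split is read as follows: either $D(y,u_i)\le r+D(x,y)$, or a geodesic from $y$ to $x$ stays at distance $>r$ from $u_i$, so every path from $z$ to $y$ already meets $\overline{\cB_r(u_i)}$.

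The gap is in the interior case of $S_{f_i}\subseteq\cN^z(u_i)$. It is not true that a small neighbourhood of an interior point $x$ of $\cB^{\bullet,z}_r(u_i)$ with $f_i(x)=r$ lies in a single hole. Such an $x$ can lie on the sphere $\{D(\cdot,u_i)=r\}$, and then every neighbourhood of $x$ meets $\cB_r(u_i)$, where $f_i\le D(\cdot,u_i)<r$. This happens on the boundary of any hole that is pinched off exactly at time $r$.

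For instance, put a slow region $O$ (large conformal weight) between $u_i$ and a distant $z$. The two fronts passing around $O$ meet behind it at some time $r$, and at that instant the unreached part of $O$ becomes a hole. Take a point $x$ on the penetration front inside $O$, away from the meeting point. It satisfies $f_i(x)=D(x,u_i)=r$, it is interior to $\cB^{\bullet,z}_r(u_i)$, and $f_i$ is not locally constant at $x$. Yet $x\notin\partial\cB^{\bullet,z}_s(u_i)$ for every $s$: for $s<r$ it is outside the filled ball, and for $s\ge r$ it is interior to it. So the inclusion you are trying to prove, and the paper's ``exactly'', fail in general, and no local argument can close this step.

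What is true, and suffices, is that such points occupy only countably many level sets. Suppose $0<f_i(x)=r<D(z,u_i)$ and $x\notin\partial\cB^{\bullet,z}_r(u_i)$. Then $x\in\cB^{\bullet,z}_r(u_i)$ by \Cref{lem:infmin}, so $x$ is interior to it and $f_i\le r=f_i(x)$ on a neighbourhood of $x$. That is, $x$ is a local maximum of $f_i$. As $X$ is separable, the set $L_i$ of local maximum values of $f_i$ is countable, since each is $\sup_B f_i$ for some $B$ in a countable base.

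Now apply \Cref{intersection_universal_bounds} with $V=\{y\in\RR^n: y_i\notin L_i\cup\{0,D(z,u_i)\}\text{ for all } i\}$. Since $0\le f_i\le D(z,u_i)$, every $x\in f^{-1}(V)$ lies in $\partial\cB^{\bullet,z}_{f_i(x)}(u_i)$ for each $i$, so $S_{f,V}\subseteq\bigcap_i\cN^z(u_i)$. Moreover $\RR^n\setminus V$ is a countable union of hyperplanes, so $\dim(f(X)\cap V)=\dim f(X)$ whenever $\dim f(X)>n-1$.

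This also supersedes your treatment of $r=0$ and $r=D(z,u_i)$. The net $\cN^z(u_i)$ is a union, not a closure, so $u_i$ and the points with $f_i=D(z,u_i)$ never belong to it. Only your fallback of discarding those values is valid.
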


Before proving \Cref{intersection_metric_nets}, we will first prove the following simple but helpful lemma, which guarantees that the infimum in \eqref{eq:7} is always a minimum.
\begin{lem}
  \label{lem:infmin}
  Let $(X,D)$ be a metric space. Given points $a,b,z\in X$, let $r_0=\inf\{r: b\in \cB_r^{\bullet,z}(a)\}$. Then we have $b\in \cB_{r_0}^{\bullet,z}(a)$.
\end{lem}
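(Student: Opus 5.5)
The argument is a case split on whether $r_0 \geq D(z,a)$ or $r_0 < D(z,a)$, after first checking that the infimum is over a nonempty set. Note that $\{r : b \in \cB_r^{\bullet,z}(a)\}$ is an up-set: the closed balls $\overline{\cB_r(a)}$ increase in $r$, so the filled balls $\cB_r^{\bullet,z}(a)$ increase in $r$ as well. Moreover the set is nonempty, since $r \geq D(z,a)$ gives $\cB_r^{\bullet,z}(a) = X$. Hence $r_0 \leq D(z,a)$, and if $r_0 = D(z,a)$ the claim holds trivially because $\cB_{r_0}^{\bullet,z}(a) = X$.

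So assume $r_0 < D(z,a)$, and pick $r_n \downarrow r_0$ with $r_n < D(z,a)$ and $b \in \cB_{r_n}^{\bullet,z}(a)$ for all $n$. If $b \in \overline{\cB_{r_n}(a)}$ for infinitely many $n$, then $D(a,b) \leq r_n$ along that subsequence. Hence $D(a,b) \leq r_0$, so $b$ lies in the closed ball $\{x : D(a,x) \leq r_0\} \supseteq \overline{\cB_{r_0}(a)}$.

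Here is the main subtlety: $\overline{\cB_{r_0}(a)}$ could in principle be strictly smaller than the closed ball $\{D(a,\cdot) \leq r_0\}$, and the lemma is stated for a general metric space. I would handle this by not splitting into these two cases at all, and instead working directly with the disconnection property. Take any continuous path $\eta$ from $z$ to $b$. For each $n$, either $b \in \overline{\cB_{r_n}(a)}$, or $\eta$ must intersect $\overline{\cB_{r_n}(a)}$. In either case $\eta$ contains a point $x_n$ with $D(a,x_n) \leq r_n$.

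The image of $\eta$ is compact, so a subsequence of $x_n$ converges to some $x \in \eta$ with $D(a,x) \leq r_0$. Thus every path from $z$ to $b$ meets the set $\{D(a,\cdot) \leq r_0\}$. It remains to upgrade this to meeting $\overline{\cB_{r_0}(a)}$.

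This upgrade is the obstacle, and I would deal with it as follows. If the first such hitting point $x$ has $D(a,x) < r_0$, we are done. Otherwise the intersection consists of points on the sphere $\{D(a,\cdot) = r_0\}$. To get the intended statement in the generality the paper uses later (geodesic spaces, where closed balls equal closures of open balls since points at distance exactly $r_0$ are limits along geodesics toward $a$), I would either add or invoke that $(X,D)$ is a length space, in which $\overline{\cB_{r_0}(a)} = \{D(a,\cdot) \leq r_0\}$. Alternatively, I would read the definition of $\cB^{\bullet,z}_r$ with $\overline{\cB_r}$ meaning the closed ball. With this identification, every path from $z$ to $b$ hits $\overline{\cB_{r_0}(a)}$, or $b$ itself lies in it, and either way $b \in \cB_{r_0}^{\bullet,z}(a)$.
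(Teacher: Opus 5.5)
Your argument is correct and is essentially the paper's proof run directly rather than by contradiction (the paper takes a path $\eta$ from $b$ to $z$ avoiding $\overline{\cB_{r_0}(a)}$ and uses compactness of $\eta$ to conclude that it also avoids $\overline{\cB_{r_0+\epsilon}(a)}$ for small $\epsilon$). The identification $\overline{\cB_{r_0}(a)}=\{x: D(a,x)\le r_0\}$ that you add is exactly what that step silently uses, and it holds in the geodesic spaces where the lemma is applied (for $r_0>0$) or under your closed-ball reading. Your caution is warranted, because for a general metric space the statement can fail. On $\CC$ take $D(x,y)=\min(|x-y|,1)+|\phi(x)-\phi(y)|$, where $\phi\ge 0$ is a continuous bump with $\phi(z)>0$ supported in the Euclidean disk of radius $1/2$ around $z$, and take $|a-z|,|b-z|\ge 2$, $|b-a|=3$. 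Then $r_0=1<D(a,z)$ while $b\notin\cB_1^{\bullet,z}(a)$, since $\overline{\cB_1(a)}$ is just the closed Euclidean unit disk around $a$.
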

\begin{proof}
  Recall that the filled metric ball $\cB_r^{\bullet,z}(a)$ is a closed set which contains the closed (non-filled) metric ball $\overline{\cB_r(a)}$ (see \Cref{basic-defns-ms}). Since $\cB_{r}^{\bullet,z}(a)=X$ for all $r\geq D(a,z)$, we can assume that $r_0<D(a,z)$. With the aim of obtaining an eventual contradiction, assume that $b\notin \cB_{r_0}^{\bullet,z}(a)$. As a result, there is a continuous path $\eta\colon [s_1,s_2]\rightarrow X$ from $b$ to $z$ such that $\eta([s_1,s_2])\subseteq X\setminus \overline{\cB_{r_0}(a)}$. Now, the graph $\eta([s_1,s_2])$ is necessarily compact, and as a result, we have $D(\overline{\cB_{r_0}(a)}, \eta([s_1,s_2]))>0$. Subsequently, for all $\epsilon$ small enough, we must have $\eta([s_1,s_2])\subseteq X \setminus \overline{\cB_{r_0+\epsilon}(a)}$. This contradicts the definition of $r_0$ and completes the proof.
\end{proof}
We now provide the proof of \Cref{intersection_metric_nets}.
\begin{proof}[Proof of \Cref{intersection_metric_nets}]
  Note that, with $f = (f_{1},\dots,f_{n})$ as defined in \eqref{eq:7}, the metric net $\cN^{z}(u_{i})$ is exactly the non-constancy set $\Ssym{f_{i}}{\mathbb{R}^{n}}$. Thus, in view of \Cref{intersection_universal_bounds}, we need only check that $f$ is Lipschitz. In order to prove this, we first show that for distinct points $u,z\in X$, if we have $x\in \cB_r^{\bullet,z}(u)$ for some $r$, then we necessarily have $x'\in \cB_{r+D(x,x')}^{\bullet,z}(u)$. To see this, first note that if $x\in \overline{\cB_r(u)}$, then by the triangle inequality, we immediately have $x'\in \overline{\cB_{r+D(x,x')}(u)}\subseteq \cB_{r+D(x,x')}^{\bullet,z}(u)$. Now, the other case is when $x\in \cB_r^{\bullet,z}(u)$ but $x\not\in \overline{\cB_r(u)}$. In this case, $x$ must necessarily be disconnected from $z$ by $\partial \cB_r(u)$. Now, if $x'$ is also disconnected from $z$ by $\partial \cB_r(u)$, then we automatically have $x'\in \cB_r^{\bullet,z}(u)\subseteq \cB_{r+D(x,x')}^{\bullet,z}(u)$. Alternatively, if $x'$ and $z$ lie in the same connected component of $X\setminus \partial \cB_r(u)$, then we consider a geodesic $\Gamma_{x,x'}$ from $x$ to $x'$. Note that we must have $\Gamma_{x,x'}\cap \partial \cB_r(u)\neq \emptyset$. As a result, we have
  \begin{equation}
    \label{eq:9}
    x'\in \cB_{D(x,x')}(\partial \cB_{r}(x))\subseteq \cB_{r+D(x,x')}(x)\subseteq \cB_{r+D(x,x')}^{\bullet,z}(x),
  \end{equation}
  and this proves the assertion.

As a result of the above assertion and \Cref{lem:infmin}, note that for any $x,x'\in X$, and any $1\leq i \leq n$, we have
  \begin{equation}
    \label{eq:8}
    |\pi_if(x)-\pi_if(x')| =| \min\{r: \pi_i(x)\in \cB_r^{\bullet,z}(u_i)\}- \min\{r: \pi_i(x')\in \cB_r^{\bullet,z}(u_i)\}|\leq D(x,x').
  \end{equation}
  This shows that $f$ is Lipschitz, thereby completing the proof.
\end{proof}

We will now prove \Cref{intersection_metric_nets_general}, which, roughly speaking, says that for a sufficiently ``nice'' planar metric $D$, the intersection of two metric nets has Hausdorff dimension at least 2 (with respect to $D$).

\begin{figure}[t]
\begin{center}
\includegraphics[width=\textwidth]{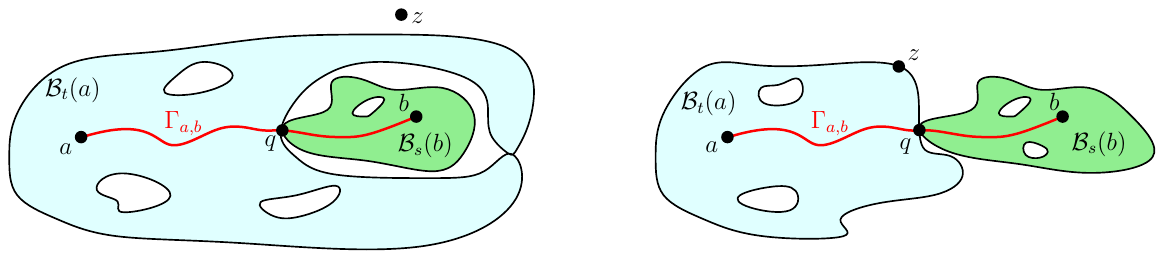}  
\caption{Illustration of the proof of \Cref{intersection_metric_nets_general}.  \textbf{Left:} The case where $b$ is cut off from $\infty$ before $\cB_{t}(a)$ absorbs $z$. \textbf{Right:} The case where $z$ is absorbed by $\cB_{t}(a)$ before $b$ is cut off from $\infty$, corresponding to $z \in \partial\cB_{t}(a)$ and $z \neq q$. (Note that in both cases, we have chosen to illustrate $b\notin \partial \cB_{t}(a)$ and thus $s>0$.)\label{fig-metric-net-intersection}}
\end{center}
\end{figure}

\begin{proof}[Proof of \Cref{intersection_metric_nets_general}]

  First, we observe that if we obtain the desired result for all finite $z$ not lying on any $D$-geodesic from $a$ to $b$, then we can choose a sequence of points $\{z_{i}\}$ diverging to $\infty$ and thereby also obtain the result for $z=\infty$ as well. So, from now on, we will assume that $z\in \CC$. In view of \Cref{intersection_metric_nets}, it suffices to establish that for the function $f\colon \CC\rightarrow \RR^2$ defined by
  \begin{equation}
    \label{eq:10}
    f(x)=( \min\{r: x\in \cB_{r}^{\bullet,z}(a)\}, \min\{r: x\in \cB_{r}^{\bullet,z}(b)\}),
  \end{equation}
  we have $\dim(f(\CC))=2$. We will do this by showing that, in fact, $\mathrm{Leb}(f(\CC))>0$. (The presence of minimums as opposed to infimums in \eqref{eq:10} is justified by \Cref{lem:infmin}.)
  
  Now, since we are working with a geodesic metric space, there exists a $D$-geodesic from $a$ to $b$; we fix a choice of such geodesic $\Gamma_{a,b}$. Writing $f=(f_1,f_2)$, define the time
  \begin{equation}
    \label{eq:34}
    t=f_1(b),
  \end{equation}
  and fix a point
  \begin{equation}
  q\in \Gamma_{a,b}\cap \partial \cB_{t}(a).
  \end{equation}
  We observe that $D(a,q)=t$, and as a result, the point $q$ must be unique depending on the already fixed choice of geodesic $\Gamma_{a,b}$. Also define the time
  \begin{equation}
  s=D(q,b),
  \end{equation}
  and note that $s+t=D(a,b)$. (It is possible that $b\in \partial \cB_t(a)$; in this case we simply have $s=0$ and $q=b$.) Note that there are two cases: the case where $b$ is cut off from $\infty$ before $\cB_{t}(a)$ absorbs $z$, and the case where $z$ is absorbed by $\cB_{t}(a)$ before $b$ is cut off from $\infty$ (corresponding to $z \in \partial\cB_{t}(a)$ and $z \neq q$). Our proof will work the same way in each case, but for clarity, we illustrate both possibilities in \Cref{fig-metric-net-intersection}. 

  Our aim now is to show that we can choose an $\epsilon > 0$ small enough so that for all $(r_1,r_2)\in [t-\epsilon,t)\times [s+\epsilon,s+2\epsilon]$, we have
  \begin{equation}
    \label{eq:37}
    \partial \cB_{r_1}^{\bullet,z}(a)\cap \partial \cB_{r_2}^{\bullet,z}(b)\neq \emptyset.
  \end{equation}
 This shall complete the proof, as for any $x\in \partial \cB_{r_1}^{\bullet,z}(a)\cap \partial \cB_{r_2}^{\bullet,z}(b)$ we necessarily have $f(x)=(r_1,r_2)$, and the set $[t-\epsilon,t)\times [s+\epsilon,s+2\epsilon]$ has positive Lebesgue measure. 
 
 In order to prove \eqref{eq:37}, we will first prove that if $\epsilon > 0$ is sufficiently small, then for all $(r_1,r_2)\in [t-\epsilon,t)\times [s+\epsilon,s+2\epsilon]$ the following three conditions hold:
  \begin{enumerate}
  \item \label{it:pt1} $b\notin  \cB_{r_1}^{\bullet,z}(a)$
  \item \label{it:pt2} $a\notin \cB_{r_2}^{\bullet,z}(b)$
  \item \label{it:pt3} $\cB_{r_1}^{\bullet,z}(a)\cap \cB_{r_2}^{\bullet,z}(b)\neq \emptyset$.
  \end{enumerate}

  That item \eqref{it:pt1} holds for all $r_1<t$ is immediate, since $t=f_1(b)$ is the minimum radius for which $b \in \cB_{r}^{\bullet,z}(a)$. It remains now to establish items \eqref{it:pt2} and \eqref{it:pt3}. 

To show item \eqref{it:pt2}, we first show that
\begin{equation}
\label{eq:both-outside}
a,z\in (\cB_{s}^{\bullet,\infty}(b))^c
\end{equation}
To see the above, first note that for all points $u\in \partial \cB_{t}(a)$, by the triangle inequality, we must have $D(b,u)\geq D(a,b)-t=s$. As a result, we necessarily have $\cB_{s}(b)\subseteq \CC\setminus \cB_{t}^{\bullet,b}(a)$, and thus $a$ lies in the unbounded connected component of $\CC\setminus \overline{\cB_{s}(b)}$, or in other words, $a\in (\cB_{s}^{\bullet,\infty}(b))^c$.

We now show that $z\in (\cB_{s}^{\bullet,\infty}(b))^c$. To see this, first note that by the definition of $t$, we have $z \in \cB_{t}^{\bullet,b}(a)$. Now, any point $x\in \overline{\cB_s(b)}\cap \cB_{t}^{\bullet,b}(a)= \partial \cB_s(b)\cap \partial \cB_{t}^{\bullet,b}(a)$ necessarily satisfies $D(b,x)=s,D(a,x)=t$ and thus lies on a geodesic from $a$ to $b$ (since $D(a,b)=s+t$). As a result, since $z$ does not lie on any geodesic between $a$ and $b$, and since we already know $z\in \cB_t^{\bullet,b}(a)$, we must have $z\in (\overline{\cB_s(b)})^c\cap \cB_t^{\bullet,b}(a)$. Thus, $z$ lies in the same connected component of $\CC\setminus \overline{\cB_{s}(b)}$ as $a$ does, and since we have already shown that $a$ lies in the unbounded component, this completes the proof of \eqref{eq:both-outside}.

Using \eqref{eq:both-outside}, we now complete the proof of item \eqref{it:pt2}. Since $a$ and $z$ both lie in the unbounded connected component $U$ of $\CC\setminus \overline{\cB_{s}(b)}$, we can choose a continuous path $\eta: [0,1] \rightarrow U$ from $a$ to $z$. Since $\eta([0,1])$ is compact and disjoint from $\overline{\cB_{s}(b)}$, its distance to $\overline{\cB_{s}(b)}$ is strictly positive, and so for all sufficiently small $\epsilon$, $\eta([0,1])$ is also disjoint from $\overline{\cB_{s+2\epsilon}(b)}$. So, we can always choose a small enough value of $\epsilon$ such that $\overline{\cB_{s+2\epsilon}(b)}$ does not disconnect $a$ from $z$, thereby yielding that $a\notin \cB_{s+2\epsilon}^{\bullet,z}(b)$. This yields \eqref{it:pt2}.
  
To see that item \eqref{it:pt3} holds, first note that since $q\in \overline{\cB_s(b)}$, we must have $\overline{\cB_{\epsilon}(q)}\subseteq \overline{\cB_{r_2}(b)}$ for all $r_2\in [s+\epsilon, s+2\epsilon]$. Now, we need only show that $\overline{\cB_{t-\epsilon}(a)}\cap \overline{\cB_{\epsilon}(q)}\neq \emptyset$ and this is immediate because $q\in \partial \overline{\cB_t(a)}$. This establishes that $\overline{\cB_{r_1}(a)} \cap \overline{\cB_{r_2}(b)} \neq \emptyset$ which is a stronger statement than item \eqref{it:pt3}.

We now use items \eqref{it:pt1}, \eqref{it:pt2}, and \eqref{it:pt3} to establish \eqref{eq:37} and thereby conclude the proof. By assumption, $D$ induces the Euclidean topology and $(\mathbb{C},D)$ is boundedly compact, or equivalently $\lim_{z\rightarrow\infty}D(w,z) = \infty$ for some (equivalently, every) $w \in \mathbb{C}$. So for all $r>0$, the boundary of every filled $D$-metric ball $\cB^{\bullet,z}_{r}(a)$ and every filled $D$-metric ball $\cB^{\bullet,z}_{r}(b)$ is a Jordan curve (\cite[Proposition 2.1]{tbm-characterization}, \cite[Lemma 2.4]{gwynne2022geodesicsmetricballboundaries}). Further, as a consequence of items \eqref{it:pt1} and \eqref{it:pt2}, if $\epsilon > 0$ is sufficiently small then for all $(r_1,r_2)\in [t-\epsilon,t)\times [s+\epsilon,s+2\epsilon]$ we must have
\begin{equation}
  \label{eq:35}
  \cB_{r_1}^{\bullet,z}(a)\not \subseteq \cB_{r_2}^{\bullet,z}(b), \cB_{r_2}^{\bullet,z}(b)\not\subseteq \cB_{r_1}^{\bullet,z}(a).
\end{equation}

We now view $\cB_{r_1}^{\bullet,z}(a), \cB_{r_2}^{\bullet,z}(b)$ as subsets of the Riemann sphere with boundaries given by the Jordan curves $\partial \cB_{r_1}^{\bullet,z}(a), \partial \cB_{r_2}^{\bullet,z}(b)$. By item \eqref{it:pt3}, note that for all $(r_1,r_2)\in [t-\epsilon,t)\times [s+\epsilon,s+2\epsilon]$, $\cB_{r_1}^{\bullet,z}(a)\cap \cB_{r_2}^{\bullet,z}(b)\neq \emptyset$, and as a consequence of this and \eqref{eq:35}, $\partial \cB_{r_2}^{\bullet,z}(b)$ must nontrivially intersect both the connected components of the complement of the Jordan curve $\partial \cB_{r_1}^{\bullet,z}(a)$. As a result, by the Jordan curve theorem, we must have $\partial \cB_{r_1}^{\bullet,z}(a)\cap \partial \cB_{r_2}^{\bullet,z}(b)\neq \emptyset$ for all $(r_1,r_2)\in [t-\epsilon,t)\times [s+\epsilon,s+2\epsilon]$, thereby establishing \eqref{eq:37} and completing the proof.
\end{proof}

\subsection{Lower bound for the set of 3-stars}\label{3-star-section}

The goal in this subsection is to prove \Cref{3-star_points_general}. We recall \Cref{def:TH} and fix a triple $(a,b,c)\in \Tsym{X}{D}$. Throughout the proof we will consider the Lipschitz function $f(x)=(D(x,a)-D(x,b), D(x,a)-D(x,c))$ from the theorem statement. Later on in \Cref{sec:appl-lqg-poiss}, applying \Cref{3-star_points_general} to the specific settings of LQG and the Poisson roads metric, we will obtain Theorems \ref{thm:lqgres}\ref{it:LQG-3-lb} and \ref{thm:poissonroad}\ref{it:pr-3-lb} as short corollaries.

\subsection*{I. Positive Lebesgue measure condition}
The lower bound on $\dim(\hyperlink{def-non-constancy-set}{S_{f,\RR^2}})$ will eventually be obtained by invoking Theorem \ref{intersection_universal_bounds}. To do so, we will require a lower bound of $2$ on the Hausdorff dimension of the set $f(X)$. In fact, as we state in the following proposition, we will show that the above set has positive two-dimensional Lebesgue measure, and this will be a key ingredient in the proof of Theorem \ref{3-star_points_general}.

\begin{prop} \label{prop-pos-leb}
Let $(X,D)$ be a geodesic metric space whose first homology group (with integer coefficients) is trivial. 
Then for any $(a,b,c)\in \Tsym{X}{D}$, the set
\eqb \label{eqn-pos-leb-set}
Q := \left\{ (r,s) \in \BB R^2 : \text{$\exists z\in X$ such that $D(z,a) +r = D(z,b) + s = D(z,c)$} \right\} . 
\eqe 
 has positive two-dimensional Lebesgue measure. 
\end{prop}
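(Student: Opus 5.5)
The idea is to realize $Q$ as the image of a continuous map into $\RR^2$ and use a winding-number argument: the assumption $H_1(X;\ZZ)=0$ will force every point around which some loop's image winds nontrivially to lie in the image. Set
\[
g(z) = \bigl(D(z,c)-D(z,a),\, D(z,c)-D(z,b)\bigr),
\]
so that $Q = g(X)$ exactly, and $g$ is continuous (indeed $2$-Lipschitz in each coordinate). Since $(a,b,c)\in \Tsym{X}{D}$, fix a geodesic $\Gamma_{a,c}$ and a point $x\in\Gamma_{a,c}$ lying on no geodesic $\Gamma_{a,b}$ and no geodesic $\Gamma_{b,c}$. Fix any geodesics $\Gamma_{a,b},\Gamma_{b,c}$ and let $\ell$ be the closed loop $\Gamma_{a,c}*\Gamma_{c,b}*\Gamma_{b,a}$.

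\textbf{Topological step.} Let $p\in\RR^2\setminus g(\ell)$ and suppose the winding number of the planar loop $g\circ\ell$ about $p$ is nonzero. We claim $p\in g(X)$. Since $H_1(X;\ZZ)=0$, the singular $1$-cycle $\ell$ is the boundary of some singular $2$-chain $\sigma$ in $X$. If $p\notin g(X)$, then $g_*\sigma$ is a $2$-chain in $\RR^2\setminus\{p\}$ with boundary $g_*\ell$. This makes $[g\circ\ell]=0$ in $H_1(\RR^2\setminus\{p\})\cong\ZZ$, contradicting the nonzero winding number.

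The winding number is locally constant on the open set $\RR^2\setminus g(\ell)$. So it suffices to find a single $p\notin g(\ell)$ with nonzero winding number: a whole open disk around $p$ then lies in $Q$, giving positive Lebesgue measure.

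\textbf{Computing the winding number (main obstacle).} The plan is to use the exact affine structure of $g$ along each side of the triangle.
\begin{itemize}
\item Along $\Gamma_{a,c}$, the first coordinate $D(z,c)-D(z,a)$ is affine and strictly decreasing, from $D(a,c)$ to $-D(a,c)$.
\item Along $\Gamma_{b,c}$, the second coordinate is affine and strictly monotone, from $-D(b,c)$ to $D(b,c)$.
\item Along $\Gamma_{a,b}$, the difference of the coordinates, $D(z,a)-D(z,b)$ (second minus first), is affine and monotone.
\end{itemize}
By the triangle inequality, $g(X)$ lies in the hexagon cut out by $|g_1|\le D(a,c)$, $|g_2|\le D(b,c)$ and $|g_2-g_1|\le D(a,b)$. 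Each side of $\ell$ runs "extremally" in the corresponding direction; for instance $\Gamma_{a,b}$ attains equality in $D(z,a)+D(z,b)\ge D(a,b)$.

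At the distinguished point $x$ both inequalities $D(x,a)+D(x,b)>D(a,b)$ and $D(x,b)+D(x,c)>D(b,c)$ are strict. The claim to establish is that $g(x)$, pushed slightly into the interior, is separated from the images of the other two sides, and that the three sides together wrap once around a point $p$ near $g(x)$. Concretely, I would argue via crossings of a ray. Take the ray from $p$ in a suitable direction, say along which $g_1$ is constant. By monotonicity of $g_1$ along $\Gamma_{a,c}$, this ray is crossed exactly once by the $\Gamma_{a,c}$ side, transversally. One then shows that the rays' intersections with the $\Gamma_{c,b}$ and $\Gamma_{b,a}$ sides contribute zero net crossings. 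This is where the strict inequalities at $x$ (via compactness, a uniform gap near $x$) should keep those sides away from a neighborhood of $g(x)$ on one side.

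This crossing count is the delicate part. If the direct ray argument fails, I would first perturb $p$ and use a homotopy of $g\circ\ell$, within $\RR^2\setminus\{p\}$, to the boundary of a polygon built from the three monotone coordinate functions, and read off the winding number $\pm1$ from that polygon.
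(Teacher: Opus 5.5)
Your reduction is sound and genuinely different from the paper's. You correctly identify $Q=g(X)$. Your observation that $H_1(X;\ZZ)=0$ forces into $g(X)$ every point around which $g\circ\ell$ winds nontrivially is also correct. Once completed, your route would even show that $Q$ has nonempty interior. It also uses the homology hypothesis more directly than the paper does. The paper instead proves that the level sets $\{D(\cdot,c)-D(\cdot,a)=r\}$ are connected, and that $D(\cdot,c)-D(\cdot,b)$ is non-constant on them for $r$ in a nontrivial interval.

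However, the proof is not complete. The winding-number computation is the only place where $(a,b,c)\in\mathcal{T}_{(X,D)}$ enters, and you leave it as ``the claim to establish.'' The justification you sketch does not suffice. A uniform gap near $g(x)$ says nothing about where the $\Gamma_{c,b}$ and $\Gamma_{b,a}$ sides cross an unbounded ray far from $g(x)$. More importantly, you never show that $g(x)$ is separated from the images of those sides at all, and that is the real content. The ``extremal'' heuristic is also misleading. The map $g$ only sees differences of distances, so the equality $D(z,a)+D(z,b)=D(a,b)$ on $\Gamma_{a,b}$ is invisible in the plane. The sides therefore do not run along the hexagon's boundary; only $g(a),g(b),g(c)$ lie on it, as alternate vertices.

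\textbf{Missing fact 1: monotonicity.} The coordinate $g_1=D(\cdot,c)-D(\cdot,a)$ is monotone along \emph{all three} sides:
\begin{itemize}
\item strictly decreasing from $D(a,c)$ to $-D(a,c)$ along $\Gamma_{a,c}$;
\item non-decreasing along $\Gamma_{c,b}$, from $-D(a,c)$ to $D(b,c)-D(a,b)$, and then along $\Gamma_{b,a}$ up to $D(a,c)$ (using that distance to a point is $1$-Lipschitz).
\end{itemize}
Hence, on the vertical line $\{g_1=g_1(x)\}$, the loop $g\circ\ell$ consists of the single point $g(x)$ together with one compact vertical segment $\{g_1(x)\}\times J$. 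This segment is traced by $\Gamma_{c,b}*\Gamma_{b,a}$, which crosses the line exactly once, from left to right.

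\textbf{Missing fact 2: $J$ lies strictly above $g_2(x)$.} Suppose $z\in\Gamma_{b,c}\cup\Gamma_{a,b}$ has $g_1(z)=g_1(x)$, and set $\lambda:=D(z,a)-D(x,a)=D(z,c)-D(x,c)$.
\begin{itemize}
\item Since $D(x,a)+D(x,c)=D(a,c)$, the triangle inequality at $z$ gives $\lambda\ge 0$.
\item The strict inequality $D(x,b)+D(x,c)>D(b,c)=D(z,b)+D(z,c)$ (respectively $D(x,a)+D(x,b)>D(a,b)=D(z,a)+D(z,b)$) gives $D(z,b)-D(x,b)<-\lambda$.
\item Therefore $g_2(z)-g_2(x)>2\lambda\ge 0$.
\end{itemize}

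\textbf{How this finishes the proof.} Take any $p=(g_1(x),y)$ with $g_2(x)<y<\min J$. Then $p$ is off $g(\ell)$, and the downward vertical ray from $p$ meets $g(\ell)$ only at $g(x)$. The loop winds $\pm1$ times around $p$: the $\Gamma_{a,c}$ leg passes below $p$ from right to left, and the other two legs pass above it from left to right.

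Alternatively, the computation above already gives $g(x)\notin g(\Gamma_{c,b}\cup\Gamma_{b,a})$. Since $g(\Gamma_{a,c})$ is a $1$-Lipschitz graph over $g_1$, the winding number jumps by $\pm1$ across it near $g(x)$, so one side has nonzero winding number. This is the precise form of your ``gap near $g(x)$'' intuition. Either way, your topological step then completes the proof.
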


We emphasize that the condition $(a,b,c) \in \Tsym{X}{D}$ here cannot be removed: indeed, if, say, $(X,D)$ is a tree with finitely many branches, then $\Tsym{X}{D}$ is empty and the set $Q$ as in \eqref{eqn-pos-leb-set} has zero two-dimensional Lebesgue measure for every triple of points $(a,b,c) \in X^{3}$. We will later see in Proposition~\ref{prop-tree} that, roughly speaking, if $(a,b,c) \not\in \Tsym{X}{D}$ for all triples of points $a,b,c$ in a suitably nice dense subset of $X$, then $X$ in fact must be a tree.

Throughout this section, we consider $a,b,c \in \Tsym{X}{D}$.
The proof of Proposition~\ref{prop-pos-leb} is not too difficult, but does require several preparatory lemmas. First, for points $u,v\in X$, $r \in \mathbb{R}$, and $\#\in \{\leq, <, \geq, > , =\}$, we define
{\eqb  \label{def:F}
F_{u,v}^{r,\#} := \left\{ z\in X : D(z,v)-D (z,u)~ \# ~r \right\}.
\eqe}

The main step in the proof of Proposition~\ref{prop-pos-leb} is to show that there is a positive Lebesgue measure set $R \subset (-D(a,c) , D(a,c))$ such that for $r\in R$, the function $z\mapsto D(z,c) - D(z,b)$ is non-constant on $\Fsym{a}{c}{r}{=}$. This will eventually be done in Lemma~\ref{lem-non-constant}, with Lemma~\ref{lem-good-pt} as an intermediate step. Once we have the positive Lebesgue measure set $R$, we will let $J_r$ be the interval between the maximum and minimum values attained by $D(\cdot,c) - D(\cdot,b)$ on $\Fsym{a}{c}{r}{=}$. A topological argument (based on the connectivity of $\Fsym{a}{c}{r}{=}$; see Lemma~\ref{lem-connected}) shows that, for $Q$ as in~\eqref{eqn-pos-leb-set}, we have $(r,s) \in Q$ whenever $s \in J_r$. This shows that the two-dimensional Lebesgue measure of $Q$ is bounded below by the integral over $R$ of a positive function and is therefore positive. 

The proof that $D(\cdot,c) - D(\cdot,b)$ is non-constant on $\Fsym{a}{c}{r}{=}$ for a positive Lebesgue measure set of values of $r$ (see Lemma~\ref{lem-non-constant}) hinges on the assumption $(a,b,c) \in \Tsym{X}{D}$. To set the stage, we now begin by proving some basic properties of the sets defined in \eqref{def:F}.

\begin{lem}
  \label{lem-inside}
  Let $u,v\in X$, let $r\in (-D(u,v),D(u,v))$, and let $z\in \Fsym{u}{v}{r}{\geq}$ (resp.\ $\Fsym{u}{v}{r}{>}$). Then all geodesics from $z$ to $u$ stay inside $\Fsym{u}{v}{r}{\geq}$ (resp.\ $\Fsym{u}{v}{r}{>}$).
\end{lem}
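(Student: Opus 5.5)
The plan is to argue directly from the triangle inequality along the geodesic. Fix $z\in \Fsym{u}{v}{r}{\geq}$, so that $D(z,v)-D(z,u)\geq r$. Let $\Gamma$ be any geodesic from $z$ to $u$, parametrized by length, and let $w=\Gamma(t)$ be an arbitrary point on it. We then have $D(w,u)=D(z,u)-t$ and $D(z,w)=t$.

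The key estimate comes from the triangle inequality $D(z,v)\leq D(z,w)+D(w,v)=t+D(w,v)$, which gives $D(w,v)\geq D(z,v)-t$. Combining this with the exact expression for $D(w,u)$ yields
\[
D(w,v)-D(w,u)\geq \bigl(D(z,v)-t\bigr)-\bigl(D(z,u)-t\bigr)=D(z,v)-D(z,u)\geq r .
\]
This shows $w\in \Fsym{u}{v}{r}{\geq}$. In other words, the function $D(\cdot,v)-D(\cdot,u)$ is non-decreasing as one moves along a geodesic toward $u$.

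The strict version goes through identically: if $D(z,v)-D(z,u)>r$, the same chain of inequalities ends with $>r$. Hence the whole geodesic stays in $\Fsym{u}{v}{r}{>}$.

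There is no real obstacle here. The hypothesis $r\in(-D(u,v),D(u,v))$ is not needed for this monotonicity argument; it only serves to ensure that the sets are nontrivial (nonempty and proper). The one point requiring care is to use the length parametrization, so that $D(w,u)=D(z,u)-t$ holds exactly; this is valid because subpaths of geodesics are geodesics.
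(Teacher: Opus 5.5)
Your proposal is correct and is essentially the paper's argument: the paper argues by contradiction (if some $\Gamma_{z,u}(t)$ left the set, then $D(z,v)\le D(z,\Gamma_{z,u}(t))+D(\Gamma_{z,u}(t),v)<D(z,\Gamma_{z,u}(t))+D(\Gamma_{z,u}(t),u)+r=D(z,u)+r$), whereas you run the same triangle-inequality-plus-geodesic-identity estimate directly. Your side remarks are also correct: the restriction $r\in(-D(u,v),D(u,v))$ is not used, and your argument additionally shows that $D(\cdot,v)-D(\cdot,u)$ is non-decreasing along any geodesic toward $u$.
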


\begin{proof}
  We only prove the statement for $\Fsym{u}{v}{r}{\geq}$ and an analogous argument works for $\Fsym{u}{v}{r}{>}$ as well. Let $\Gamma_{z,u}$ be a geodesic from $z$ to $u$ and suppose for contradiction that there exists $t \in [0,D(z,u)]$ such that $\Gamma_{z,u}(t) \notin \Fsym{u}{v}{r}{\geq}$. By the triangle inequality and the definition of $\Fsym{u}{v}{r}{\geq}$, 
\eqbn
D(z,v) \leq D(z, \Gamma_{z,u}(t)) + D(\Gamma_{z,u}(t) , v)  <  D(z,\Gamma_{z,u}(t)) + D(\Gamma_{z,u}(t) , u) + r = D(z,u) + r,
\eqen
where the step with the strict inequality is obtained by using that $\Gamma_{z,u}(t)\notin \Fsym{u}{v}{r}{\geq}$, and the last equality uses that $\Gamma_{z,u}$ is a geodesic from $z$ to $u$. The above contradicts the assumption that $z\in \Fsym{u}{v}{r}{\geq}$, and therefore, our assumption that $\Gamma_{z,u}(t)\notin \Fsym{u}{v}{r}{\geq}$ must be incorrect, thereby completing the proof.
\end{proof}

\begin{figure}[t]
\begin{center}
\includegraphics[width=0.7\textwidth]{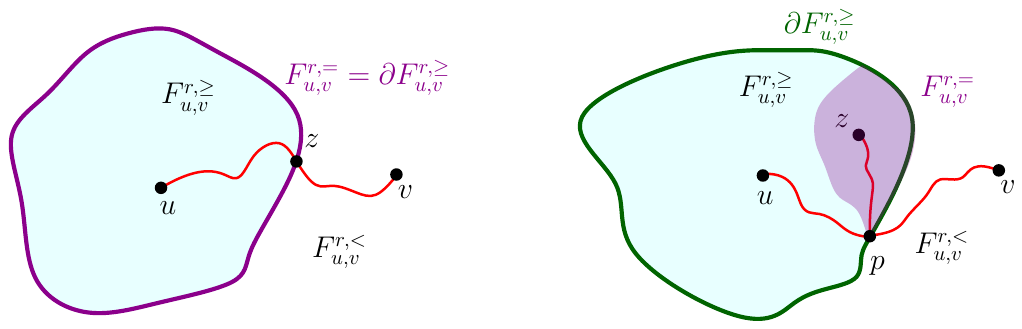}  
\caption{\label{fig-nonempty-interior}The set $\partial \Fsym{u}{v}{r}{\geq}$ may be be a proper subset of $\Fsym{u}{v}{r}{=}$, so it is indeed necessary to consider points $z \in \Fsym{u}{v}{r}{=} \setminus \partial \Fsym{u}{v}{r}{\geq}$ in the proof of \Cref{lem-connected}. \textbf{Left:} a scenario where $\Fsym{u}{v}{r}{=}$ has trivial interior, so that the equality $\partial \Fsym{u}{v}{r}{\geq} = \Fsym{u}{v}{r}{=}$ does hold. Here for every point $z \in \Fsym{u}{v}{r}{=}$, the geodesics (red) from $z$ to $u$ and from $z$ to $v$ do not spend a nontrivial amount of time inside $\Fsym{u}{v}{r}{=}$. \textbf{Right:} a scenario where, owing to confluence, $\Fsym{u}{v}{r}{=}$ has nontrivial interior, so that the equality $\partial \Fsym{u}{v}{r}{\geq} = \Fsym{u}{v}{r}{=}$ does not hold. Here the geodesics to $u$ and $v$ from interior points $z$ of $\Fsym{u}{v}{r}{=}$ all pass through a common ``confluence point'' $p$, and $\partial \Fsym{u}{v}{r}{\geq}$ (green) is a proper subset of $\Fsym{u}{v}{r}{=}$ (the union of the purple and green regions).}
\end{center}
\end{figure}

\begin{lem} \label{lem-connected}
For any points $u,v\in X$ and $r \in (-D (u,v) , D(u,v))$, the sets $\Fsym{u}{v}{r}{\geq}$, $\Fsym{u}{v}{r}{>}$, and $\Fsym{u}{v}{r}{=}$ are connected.
\end{lem}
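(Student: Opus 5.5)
The plan is to show that each of the three sets is path-connected. The connecting paths will be built from geodesics to $u$, using Lemma~\ref{lem-inside}.

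First consider $\Fsym{u}{v}{r}{\geq}$ (the argument for $\Fsym{u}{v}{r}{>}$ is identical). Since $r<D(u,v)$, we have $D(u,v)-D(u,u)=D(u,v)>r$, so $u\in \Fsym{u}{v}{r}{>}\subseteq\Fsym{u}{v}{r}{\geq}$. For any $z\in\Fsym{u}{v}{r}{\geq}$, pick a geodesic $\Gamma_{z,u}$, which exists since $X$ is geodesic. By Lemma~\ref{lem-inside} this geodesic lies inside $\Fsym{u}{v}{r}{\geq}$. Hence every point is joined to $u$ by a path within the set, and the set is path-connected. The same reasoning, with $v$ and $-r$ in place of $u$ and $r$, shows that $\Fsym{v}{u}{-r}{\geq}=\Fsym{u}{v}{r}{\leq}$ is path-connected and contains $v$.

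For $\Fsym{u}{v}{r}{=}$ I would give a retraction-type argument. Take $z,z'\in\Fsym{u}{v}{r}{=}$.

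1. Join $z$ to $u$ by a geodesic $\Gamma_{z,u}$, which stays in $\Fsym{u}{v}{r}{\geq}$.
2. Join $z$ to $v$ by a geodesic $\Gamma_{z,v}$, which stays in $\Fsym{u}{v}{r}{\leq}$ by the symmetric version of Lemma~\ref{lem-inside}.
3. Consider the continuous function $g(x)=D(x,v)-D(x,u)-r$. It is $\geq 0$ on $\Fsym{u}{v}{r}{\geq}$, $\leq0$ on $\Fsym{u}{v}{r}{\leq}$, and vanishes exactly on $\Fsym{u}{v}{r}{=}$.

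Path-connectedness of $\Fsym{u}{v}{r}{=}$ is not directly accessible, so instead I would prove connectedness by contradiction. Suppose $\Fsym{u}{v}{r}{=}=A\sqcup B$ with $A,B$ nonempty, disjoint, and relatively closed. Since $\Fsym{u}{v}{r}{=}$ is closed in $X$, so are $A$ and $B$. Now define
\[
A^+=A\cup\{x\in\Fsym{u}{v}{r}{>}: \text{some geodesic from $x$ to $u$... }\}.
\]
This is getting messy. A cleaner route is to define $U_A$ as the union of all geodesics $\Gamma_{z,u}$ and $\Gamma_{z,v}$ over $z\in A$, and $U_B$ similarly. The goal is then to show that $X=\Fsym{u}{v}{r}{>}\cup\Fsym{u}{v}{r}{<}\cup A\cup B$ yields a disconnection of $X$. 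Since $X$ is connected, this would be a contradiction.

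The main obstacle is the topological step. Here I would use the connectedness of $\Fsym{u}{v}{r}{\geq}$ and $\Fsym{u}{v}{r}{\leq}$ together with the unicoherence-type property that $X$ has trivial first homology. Write $X=P\cup N$ with $P=\Fsym{u}{v}{r}{\geq}$ and $N=\Fsym{u}{v}{r}{\leq}$. Both are closed and connected, and $P\cap N=\Fsym{u}{v}{r}{=}$. The Mayer–Vietoris sequence (valid for closed covers in nice spaces, or after thickening to open sets $P_\delta=\{g>-\delta\}$ and $N_\delta=\{g<\delta\}$, which are connected by the same geodesic argument) gives
\[
H_1(X)\to \tilde H_0(P_\delta\cap N_\delta)\to \tilde H_0(P_\delta)\oplus\tilde H_0(N_\delta)=0.
\]
Since $H_1(X)=0$, it follows that $\tilde H_0(P_\delta\cap N_\delta)=0$, so $\{|g|<\delta\}$ is path-connected for every $\delta$. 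Finally, $\Fsym{u}{v}{r}{=}=\bigcap_\delta\overline{\{|g|<\delta\}}$ is then a decreasing intersection of closed connected sets. I would pass to bounded pieces using compactness, or argue directly: any separation $A,B$ of the level set extends to a separation of $\{|g|<\delta\}$ for small $\delta$, given local compactness near the level set. That limiting step is the delicate one, and I would handle it using $\sigma$-compactness/bounded compactness of the relevant region.
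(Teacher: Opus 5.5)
Your treatment of $F_{u,v}^{r,\geq}$ and $F_{u,v}^{r,>}$ is exactly the paper's. Your Mayer--Vietoris step is also sound: $\{g>-\delta\}=F_{u,v}^{r-\delta,>}$ and $\{g<\delta\}=F_{v,u}^{-r-\delta,>}$ are open, path-connected by Lemma~\ref{lem-inside} for small $\delta$, cover $X$ and intersect, so $H_1(X)=0$ does show that $\{|g|<\delta\}$ is path-connected.

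The gap is the final passage from these thickenings to $F_{u,v}^{r,=}$ itself. You flag it as delicate but do not carry it out, and it does not follow from what you have. A decreasing intersection of closed connected sets can be disconnected: in $\mathbb{R}^2$, the sets $(\{0,1\}\times\mathbb{R})\cup(\mathbb{R}\times[n,\infty))$ decrease to two parallel lines. So some compactness or extra structure is needed.

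The compactness you propose to use is not available. The lemma is needed in Proposition~\ref{prop-pos-leb} for an arbitrary geodesic space with trivial first homology, and even $\sigma$-compactness (assumed only later) gives no local compactness. Bounded compactness would not suffice as stated either. $F_{u,v}^{r,=}$ is typically unbounded, and $\{|g|<\delta\}$ need not lie in any uniform neighbourhood of it: already in the Euclidean plane the level set is a line or hyperbola branch and $\{|g|<\delta\}$ widens linearly at infinity. So the claim that a separation of the level set ``extends to a separation of $\{|g|<\delta\}$'' has no justification.

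The paper avoids any limiting procedure. It applies the topological fact \cite[Lemma A.1]{gwynne-geodesic-network}: in a path-connected, locally path-connected space with trivial $H_1$, a connected set with connected complement has connected boundary. Applied to $F_{u,v}^{r,\geq}$, whose complement $F_{v,u}^{-r,>}$ is connected by the first step, this shows $\partial F_{u,v}^{r,\geq}\subseteq F_{u,v}^{r,=}$ is connected.

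The idea you are missing is how to reach the possibly ``fat'' part $F_{u,v}^{r,=}\setminus\partial F_{u,v}^{r,\geq}$. From such a $z$, follow a geodesic toward $v$. $D(\cdot,v)-D(\cdot,u)$ is non-increasing along it and $v\notin F_{u,v}^{r,\geq}$, so the geodesic stays in $F_{u,v}^{r,=}$ until it hits $\partial F_{u,v}^{r,\geq}$.

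Alternatively, you could repair your own route by dropping the thickening and proving directly that the closed cover $X=F_{u,v}^{r,\geq}\cup F_{u,v}^{r,\leq}$ has connected intersection. Given a separation $F_{u,v}^{r,=}=A\sqcup B$, let $\psi$ be an Urysohn function equal to $0$ on $A$ and $1$ on $B$. Paste $e^{i\pi\psi}$ on $F_{u,v}^{r,\geq}$ with $e^{-i\pi\psi}$ on $F_{u,v}^{r,\leq}$ to get a continuous map $X\to S^1$. Since $X$ is path-connected, locally path-connected and $H_1(X;\mathbb{Z})=0$, this map lifts to $\mathbb{R}$. Connectedness of the two pieces then forces the lift to increase by both $\pi$ and $-\pi$ from a point of $A$ to a point of $B$, a contradiction.
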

\begin{proof}
  By Lemma \ref{lem-inside}, geodesics from any point in $\Fsym{u}{v}{r}{\geq}$ (resp.\ $\Fsym{u}{v}{r}{>}$) to $u$ stay inside $\Fsym{u}{v}{r}{\geq}$ (resp.\ $\Fsym{u}{v}{r}{>}$). As a result, we can path-connect two points in $\Fsym{u}{v}{r}{\geq}$ (resp.\ $\Fsym{u}{v}{r}{>}$) by first drawing a geodesic to $u$ and then to the latter point. Thus, both $\Fsym{u}{v}{r}{\geq}$ and $\Fsym{u}{v}{r}{>}$ are necessarily path-connected.

  Recall that since $X$ is a geodesic metric space, it must be path-connected. It must also be locally path-connected since every metric ball is necessarily path-connected (as can be seen by drawing geodesics to the center of the metric ball). Since we have also assumed that the first homology group (with integer coefficients) of $X$ is trivial, a topological fact (see, e.g.,~\cite[Lemma A.1]{gwynne-geodesic-network}) shows that the connectivity of $\Fsym{u}{v}{r}{\geq}$ and $ X\setminus \Fsym{u}{v}{r}{\geq}= \Fsym{v}{u}{-r}{>}$ implies that the boundary $\bdy \Fsym{u}{v}{r}{\geq}$ is connected. 
    
  We now deduce the connectivity of $\Fsym{u}{v}{r}{=}$. By the continuity of $D$, we have $\bdy \Fsym{u}{v}{r}{\geq} \subset \Fsym{u}{v}{r}{=}$. Now let $z \in \Fsym{u}{v}{r}{=} \setminus \bdy \Fsym{u}{v}{r}{\geq}$ (see \Cref{fig-nonempty-interior}) and let $\Gamma_{z,v}$ be a $D$-geodesic from $z$ to $v$. Since $r > -D(u,v)$, we have $v \notin \Fsym{u}{v}{r}{\geq}$, and so $\Gamma_{z,v}$  must hit $\bdy \Fsym{u}{v}{r}{\geq}$ since it is a continuous curve. Furthermore, the quantity $D(\cdot,v) - D(\cdot,u)$ cannot increase along a geodesic to $v$, so $\Gamma_{z,v}$ must remain in $\Fsym{u}{v}{r}{=}$ up until hitting $\bdy \Fsym{u}{v}{r}{\geq}$. Therefore, every point of $\Fsym{u}{v}{r}{=}\setminus \bdy \Fsym{u}{v}{r}{\geq}$ can be joined by a path in $\Fsym{u}{v}{r}{=}$ to a point of $\bdy \Fsym{u}{v}{r}{\geq}$. Since $\bdy \Fsym{u}{v}{r}{\geq}$ is connected, it follows that $\Fsym{u}{v}{r}{=}$ is also connected. 
\end{proof}

\begin{lem} \label{lem-geo-min}
Consider $u,v\in X$ and let $\Gamma_{v,u}$ be a $D$-geodesic from $v$ to $u$. Then, for each $t\in [0,D(u,v)]$, we have $\Gamma_{v,u}(t) \in \Fsym{u}{v}{2t - D(u,v)}{=}$ and 
\eqb \label{eqn-geo-min}
D(z,v) \geq t = D( \Gamma_{v,u}(t) , v) ,\quad \forall z\in \Fsym{u}{v}{2t-D(u,v)}{=}.
\eqe
\end{lem}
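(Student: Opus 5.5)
The argument is a direct computation with the triangle inequality. Fix $t\in[0,D(u,v)]$ and write $w=\Gamma_{v,u}(t)$.

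First we check that $w$ lies in $\Fsym{u}{v}{2t-D(u,v)}{=}$. Since $\Gamma_{v,u}$ is a geodesic parametrized by length, $D(w,v)=t$ and $D(w,u)=D(u,v)-t$. Hence
\[
D(w,v)-D(w,u)=t-(D(u,v)-t)=2t-D(u,v),
\]
which is exactly the defining condition of $\Fsym{u}{v}{2t-D(u,v)}{=}$ from \eqref{def:F}. This gives the membership claim and the equality $t=D(w,v)$ in \eqref{eqn-geo-min}.

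Next we prove the inequality in \eqref{eqn-geo-min}. Let $z\in\Fsym{u}{v}{2t-D(u,v)}{=}$, so that $D(z,u)=D(z,v)-2t+D(u,v)$. The triangle inequality gives $D(u,v)\le D(z,u)+D(z,v)$. Substituting the expression for $D(z,u)$ yields
\[
D(u,v)\le 2D(z,v)-2t+D(u,v),
\]
that is, $D(z,v)\ge t$, as claimed.

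No step is a genuine obstacle. The only point needing care is the sign convention in \eqref{def:F}: the defining quantity is $D(z,v)-D(z,u)$, not $D(z,u)-D(z,v)$.
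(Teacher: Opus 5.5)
Your proof is correct and follows the paper's argument: membership comes from the length parametrization of $\Gamma_{v,u}$, and the inequality comes from the triangle inequality $D(u,v)\le D(z,u)+D(z,v)$ combined with the defining relation of $F_{u,v}^{2t-D(u,v),=}$. The only difference is presentation: the paper obtains the inequality by contradiction, assuming $D(z,v)<t$, whereas you derive it directly.
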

\begin{proof}
By the definition~\eqref{def:F} of $\Fsym{u}{v}{2t-D(u,v)}{=}$ and the definition of a $D$-geodesic, we have $\Gamma_{v,u}(t) \in \Fsym{u}{v}{2t - D(u,v)}{=}$. 
Now consider $z\in \Fsym{u}{v}{2t-D(u,v)}{=}$ and suppose for contradiction that $D(z,v) < t$. Then by the triangle inequality and the definition of $\Fsym{u}{v}{2t - D(u,v)}{=}$, we deduce
\eqbn
D(u,v) \leq D(z,u) + D(z,v) = 2D(z,v) - 2t + D(u,v) < D(u,v) ,
\eqen
which is a contradiction. 
\end{proof}

We will now prove two lemmas which crucially use the assumption $(a,b,c)\in \Tsym{X}{D}$; then, we will use these lemmas to complete the proof of Proposition \ref{prop-pos-leb}. For the following lemmas, we keep in mind the simple equality $D(\Gamma_{c,a}(t) , c) - D(\Gamma_{c,a}(t) , a)=2t-D(a,c)$.

\begin{figure}[t]
\begin{center}
\includegraphics[width=0.8\textwidth]{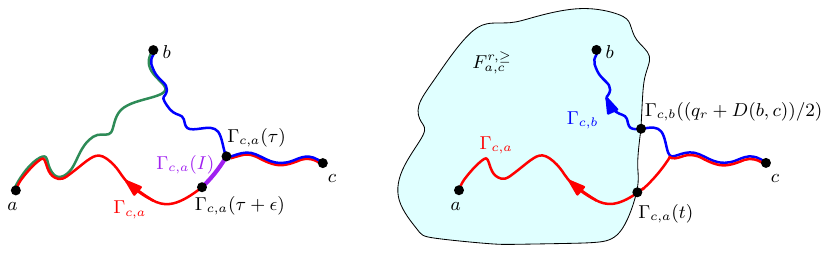}  
\caption{\label{fig-non-constant} \textbf{Left:} Illustration of the proof of Lemma~\ref{lem-good-pt}. The blue and green curves are $D$-geodesics from $c$ and $a$ to $b$. The set $\Gamma_{c,a}(T)$ where $D(\Gamma_{c,a}(t) ,a) + D(\Gamma_{c,a}(t) ,b) > D(a,b)$ and $D(\Gamma_{c,a}(t) ,b) + D(\Gamma_{c,a}(t) ,c) > D(\Gamma_{c,a}(t) ,c)$ consists of points on $\Gamma_{c,a}$ which are not hit by any $D$-geodesics from $c$ or $a$ to $b$. The ``good" set $I$ (purple) is a small open interval close to the left endpoint of $T$ (note that $\Gamma_{c,a}$ goes from $c$ to $a$). 
\textbf{Right:} Illustration of the proof of Lemma~\ref{lem-non-constant}. Note that $t = (r + D(a,c))/2 \in I$.}
\end{center}
\end{figure}

\begin{lem} \label{lem-good-pt} 
Let $(a,b,c)\in \Tsym{X}{D}$. There exists a geodesic $\Gamma_{c,a}$ from $c$ to $a$ and an open interval $I\subset (0,D(a,c))$ such that for each $t\in I$, 
\eqb  \label{eqn-good-pt-tri}
D(\Gamma_{c,a}(t) , b ) + D(\Gamma_{c,a}(t) , c) > D(b,c)  
\eqe 
and
\eqb \label{eqn-good-pt-diff} 
D(\Gamma_{c,a}(t) , c) - D(\Gamma_{c,a}(t) , a) < D(b,c) - D(b,a) .
\eqe 
\end{lem}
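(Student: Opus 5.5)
The idea is to rewrite both inequalities of Lemma~\ref{lem-good-pt} in terms of two nonnegative "excess" functions along a geodesic from $c$ to $a$. Using the defining property of $\Tsym{X}{D}$ and a geodesic concatenation argument, we locate a point where one excess vanishes and the other is strictly positive, and then take $I$ to be a short interval just to the right of that point.

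\textbf{Setup.} Since $(a,b,c)\in\Tsym{X}{D}$, there is a geodesic $\Gamma_{a,c}$ and a point $x$ on it that lies on no geodesic $\Gamma_{a,b}$ and no geodesic $\Gamma_{b,c}$. Let $\Gamma_{c,a}$ be its time reversal, $L=D(a,c)$, and $y_t=\Gamma_{c,a}(t)$. Define the continuous functions
\[
h(t)=D(y_t,b)+D(y_t,c)-D(b,c)\ge 0,\qquad k(t)=D(y_t,a)+D(y_t,b)-D(a,b)\ge 0 .
\]
Condition \eqref{eqn-good-pt-tri} is exactly $h(t)>0$. Since $D(y_t,c)=t$ and $D(y_t,a)=L-t$, a direct rearrangement gives
\[
\bigl(D(b,c)-D(b,a)\bigr)-\bigl(D(y_t,c)-D(y_t,a)\bigr)=k(t)-h(t),
\]
so \eqref{eqn-good-pt-diff} is exactly $k(t)>h(t)$. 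The goal is therefore an open interval $I\subset(0,L)$ on which $h>0$ and $k>h$.

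\textbf{Step 1: locate $x$.} Write $x=y_{t_0}$. If $h(t_0)=0$, then concatenating $\Gamma_{c,a}|_{[0,t_0]}$ with a geodesic from $x$ to $b$ has length $D(c,x)+D(x,b)=D(c,b)$. It is therefore a geodesic from $c$ to $b$ through $x$, which is a contradiction. Hence $h(t_0)>0$, and symmetrically $k(t_0)>0$. Since $h(0)=0$ and $k(L)=0$, this also gives $0<t_0<L$.

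\textbf{Step 2: monotonicity of the zero sets.} This is the key observation and the only delicate point. If $k(s)=0$ for some $s$, then concatenating $\Gamma_{c,a}|_{[s,L]}$ (reversed, so it runs from $a$ to $y_s$) with a geodesic from $y_s$ to $b$ produces a geodesic from $a$ to $b$. Every $y_t$ with $t\ge s$ lies on it, so $k\equiv0$ on $[s,L]$. Because $k(t_0)>0$, this forces $k>0$ on all of $[0,t_0]$. (The analogous statement for $h$ is that $h(u)=0$ implies $h\equiv0$ on $[0,u]$, but we only need the statement for $k$.)

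\textbf{Step 3: build $I$.} Let $s=\sup\{t\le t_0: h(t)=0\}$. This set contains $0$, so $s$ is well defined, and by continuity $h(s)=0$. Since $h(t_0)>0$, we have $0\le s<t_0$, and $h>0$ on $(s,t_0]$. By Step 2, $k(s)>0=h(s)$. By continuity of $k-h$ there is $\delta\in(0,t_0-s)$ such that $k>h$ on $[s,s+\delta)$. Taking $I=(s,s+\delta)\subset(0,L)$, every $t\in I$ satisfies $h(t)>0$ and $k(t)>h(t)$, which are exactly \eqref{eqn-good-pt-tri} and \eqref{eqn-good-pt-diff}.

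Without Step 2 one could worry that $h$ and $k$ vanish simultaneously at $s$, which would force $k-h<0$ just to the right of $s$; the concatenation argument rules this out.
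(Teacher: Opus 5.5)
Your proof is correct and is essentially the paper's argument. The paper likewise shows that the zero sets of your $h$ and $k$ along $\Gamma_{c,a}$ are closed intervals containing $0$ and $D(a,c)$ respectively, takes $\tau$ to be the right endpoint of the zero set of $h$ (where $h=0<k$), and chooses $I=(\tau,\tau+\epsilon)$ by continuity. Two steps are minor streamlinings of that argument: rewriting the second inequality as $k>h$, and taking the supremum only over $[0,t_0]$, so that you need monotonicity of the zero set of $k$ alone.
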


\begin{proof}[Proof of Lemma \ref{lem-good-pt}]
See Figure~\ref{fig-non-constant}, left, for an illustration of the proof.
Since $(a,b,c)\in \Tsym{X}{D}$, we can choose a geodesic $\Gamma_{c,a}$ from $c$ to $a$ and a point $x$ on $\Gamma_{c,a}$ such that $x$ does not lie on any geodesic from $b$ to $a$ or any geodesic from $b$ to $c$. Note that this is equivalent to the strict triangle inequality condition 
\begin{equation}
  \label{eq:3}
  D(x,a) + D(x,b) > D(a,b) \quad \text{and} \quad D(x,b) + D(x,c) > D(b,c) .
\end{equation}
If $t\in [0,D(a,c)]$ such that $D(\Gamma_{c,a}(t) , b) + D(\Gamma_{c,a}(t) , c) = D(b,c)$, then for any $s\in [0,t]$, the triangle inequality together with the fact that $\Gamma_{c,a}$ is a geodesic started from $c$ give
\alb
D(\Gamma_{c,a}(s) , b) + D(\Gamma_{c,a}(s) , c) 
&\leq [ D(\Gamma_{c,a}(t) ,b) + D(\Gamma_{c,a}(s) , \Gamma_{c,a}(t)) ] + s \\
&= D(\Gamma_{c,a}(t) , b) + t
= D(\Gamma_{c,a}(t) , b) + D(\Gamma_{c,a}(t) , c) 
= D(b,c) .
\ale
Hence, the set of $t\in [0,D(a,c)]$ such that $D(\Gamma_{c,a}(t) , b) + D(\Gamma_{c,a}(t) , c) = D(b,c)$ is a closed interval containing 0. 
Similarly, the set of $t\in [0,D(a,c)]$ such that $D(\Gamma_{c,a}(t) , a) + D(\Gamma_{c,a}(t) , b) = D(a,b)$ is a closed interval containing $D(a,c)$. By the above and the presence of the point $x\in \Gamma_{c,a}$ satisfying \eqref{eq:3}, the set
\eqbn
T = \left\{t\in [0,D(a,c)] : D(\Gamma_{c,a}(t) ,a) + D(\Gamma_{c,a}(t) ,b) > D(a,b) \: \text{and} \: D(\Gamma_{c,a}(t) ,b) + D(\Gamma_{c,a}(t) ,c) > D(b ,c) \right\}   
\eqen
is non-empty and is an open interval contained in $(0,D(a,c))$. Thus, with $\tau$ being the left endpoint of the interval $T$, we must have 
\eqb \label{eqn-endpt-c}
D(\Gamma_{c,a}(t) ,b) + D(\Gamma_{c,a}(t) ,c) > D(b,c) ,\quad \forall t \in (\tau , D(a,c)) 
\eqe
and
\eqb  \label{eqn-endpt-tri}
  D(\Gamma_{c,a}(\tau) , a) + D(\Gamma_{c,a}(\tau) , b) > D(a,b)  \quad \text{and} \quad D(\Gamma_{c,a}(\tau) ,b) + D(\Gamma_{c,a}(\tau) ,c) = D(b,c)   .
\eqe 
From~\eqref{eqn-endpt-tri}, we obtain
\begin{equation}
  \label{eqn-endpt-diff}
D(\Gamma_{c,a}(\tau)) , c) - D(\Gamma_{c,a}(\tau) , a)< D(b,c) - D(a,b).
\end{equation}
By~\eqref{eqn-endpt-diff} and the continuity of $t\mapsto D(\Gamma_{c,a}(t)) , c) - D(\Gamma_{c,a}(t) , a)$, there exists $\ep > 0$ such that~\eqref{eqn-good-pt-diff} holds for all $t \in (\tau -\ep , \tau+\ep)$. Combining this with~\eqref{eqn-endpt-c} gives the conclusion of the lemma with $I = (\tau ,\tau+\ep)$.  
\end{proof}

\begin{lem} \label{lem-non-constant}
Let $(a,b,c)\in \Tsym{X}{D}$. Let $\Gamma_{c,a}$ and $I \subset (0,D(a,c))$ be the geodesic from $c$ to $a$ and the open interval whose existence is given by Lemma~\ref{lem-good-pt}. 
Let 
\eqbn
R := \left\{ 2t - D(a,c) : t \in I \right\}  \subset (-D(a,c) , D(a,c)).
\eqen
For each $r\in R$, there exists $z_r,w_r\in \Fsym{a}{c}{r}{=}$ such that
\eqbn
D(z_r,c) - D(z_r,b) < D(w_r,c) - D(w_r,b) .
\eqen
\end{lem}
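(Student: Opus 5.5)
Fix $r\in R$ and write $t=(r+D(a,c))/2\in I$. The plan is to exhibit two explicit points of $\Fsym{a}{c}{r}{=}$ at which $D(\cdot,c)-D(\cdot,b)$ takes different values: one is the point of $\Gamma_{c,a}$ at time $t$, the other a point on a geodesic from $b$ to $a$.

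\textbf{Step 1 (first point).} Take $w_r=\Gamma_{c,a}(t)$. Lemma~\ref{lem-geo-min}, applied with $u=a$, $v=c$ and the geodesic $\Gamma_{c,a}$, gives $w_r\in\Fsym{a}{c}{r}{=}$. By \eqref{eqn-good-pt-tri} we have $D(w_r,b)+D(w_r,c)>D(b,c)$. Since $D(w_r,c)=t$, this gives
\[
D(w_r,c)-D(w_r,b)>2t-D(b,c).
\]

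\textbf{Step 2 (second point).} Take a geodesic $\Gamma_{b,a}$ from $b$ to $a$, and consider $g(s)=D(\Gamma_{b,a}(s),c)-D(\Gamma_{b,a}(s),a)$, which is continuous in $s$. At $s=0$ we have $g(0)=D(b,c)-D(b,a)>r$. Indeed, \eqref{eqn-good-pt-diff} gives $r=D(w_r,c)-D(w_r,a)<D(b,c)-D(b,a)$. At $s=D(a,b)$ we have $g=D(a,c)>r$ as well, so an intermediate value argument along $\Gamma_{b,a}$ alone does not work. Instead, concatenate $\Gamma_{b,a}$ reversed with $\Gamma_{c,a}$ reversed, i.e.\ use the path $c\to a\to b$. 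Along it the function $D(\cdot,c)-D(\cdot,a)$ takes the value $-D(a,c)<r$ at $c$ and the value $g(0)>r$ at $b$. Hence some point $z_r$ on the path satisfies the equality defining $\Fsym{a}{c}{r}{=}$. The intended choice is to locate $z_r$ on the portion nearest $b$. The cleaner route, which I would pursue, is to use the path from $b$ to $w_r$ directly.

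\textbf{Step 3 (lower value at the second point).} For $z_r\in\Fsym{a}{c}{r}{=}$, Lemma~\ref{lem-geo-min} gives $D(z_r,c)\ge t$, and the triangle inequality gives $D(z_r,b)\le\ldots$; we want $D(z_r,c)-D(z_r,b)\le 2t-D(b,c)$. Equality holds exactly when $z_r$ lies on a geodesic from $b$ to $c$ at distance $t$ from $c$. So the cleanest choice is a point $z_r$ on a geodesic $\Gamma_{c,b}$ with $D(z_r,c)-D(z_r,a)=r$. Then
\[
D(z_r,c)-D(z_r,b)=2D(z_r,c)-D(b,c).
\]
Along $\Gamma_{c,b}$ the function $D(\cdot,c)-D(\cdot,a)$ runs from $-D(a,c)<r$ at $c$ to $D(b,c)-D(b,a)>r$ at $b$, so by continuity such a $z_r$ exists. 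Let $s_0$ be the first such time, i.e.\ $D(z_r,c)=s_0$. We then need $s_0\le t$. Here the increments are $1$-Lipschitz: along $\Gamma_{c,b}$ the first coordinate increases at unit speed, and $D(\cdot,c)-D(\cdot,a)$ increases at speed at most $2$. That bound alone does not give $s_0\le t$. Instead, note that $D(z_r,c)+D(z_r,a)\ge D(a,c)$ together with $D(z_r,c)-D(z_r,a)=r$ gives $D(z_r,c)\ge t$, and the two bounds together would force $s_0=t$.

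\textbf{Main obstacle.} The hard point is ruling out equality, i.e.\ showing the value at $z_r$ is at most $2t-D(b,c)$ while at $w_r$ it is strictly larger. With $z_r$ on $\Gamma_{c,b}$ we have exactly $D(z_r,c)-D(z_r,b)=2D(z_r,c)-D(b,c)\ge 2t-D(b,c)$, which points the wrong way. So I would switch the roles of the two points: use $w_r$ as the point with the larger value (strictly above $2t-D(b,c)$). The real difficulty is then producing a point of $\Fsym{a}{c}{r}{=}$ with value at most $2t-D(b,c)$. A first try is the point $z_r$ on $\Gamma_{c,b}$ at $c$-distance exactly $t$, but this requires $D(z_r,c)-D(z_r,a)=r$ there, which needs checking. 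If it fails, I would fall back on the connectedness from Lemma~\ref{lem-connected} combined with \eqref{eqn-good-pt-diff}.
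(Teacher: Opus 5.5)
There is a genuine gap, and it comes from a sign error in Step 1. From \eqref{eqn-good-pt-tri} and $D(w_r,c)=t$ you get $D(w_r,b)>D(b,c)-t$, hence
\[
D(w_r,c)-D(w_r,b)=t-D(w_r,b)<2t-D(b,c),
\]
not $>$. Because of this reversal, your ``main obstacle'' paragraph concludes that both candidate points lie on the same side of $2t-D(b,c)$. The proposal then ends without an argument. The fallback of taking the point of $\Gamma_{c,b}$ at $c$-distance exactly $t$ does not work, since there is no reason for that point to lie in $F_{a,c}^{r,=}$. The appeal to connectedness is never spelled out.

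With the sign corrected, the pieces you already have in Step 3 finish the proof, and this is essentially the paper's argument. By \eqref{eqn-good-pt-diff} we have $D(b,c)-D(b,a)>r$, and also $r>-D(a,c)$. So along a geodesic $\Gamma_{c,b}$ the function $D(\cdot,c)-D(\cdot,a)$ passes through the value $r$, which gives a point $p\in\Gamma_{c,b}\cap F_{a,c}^{r,=}$. Since $p$ lies on a geodesic from $c$ to $b$, we have $D(p,c)-D(p,b)=2D(p,c)-D(b,c)$. Lemma~\ref{lem-geo-min} gives $D(p,c)\ge t$, so this value is at least $2t-D(b,c)$. Therefore $z_r:=\Gamma_{c,a}(t)$ (strictly below $2t-D(b,c)$) and $w_r:=p$ (at least $2t-D(b,c)$) satisfy the lemma.

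The paper phrases the same comparison as a contradiction. It assumes $D(\cdot,c)-D(\cdot,b)\equiv q_r$ on $F_{a,c}^{r,=}$, derives $q_r\ge 2t-D(b,c)$ from the point on $\Gamma_{c,b}$, and then contradicts \eqref{eqn-good-pt-tri} at $\Gamma_{c,a}(t)$. Your Step 2 detour through $\Gamma_{b,a}$ and the Lipschitz speed considerations in Step 3 are unnecessary.
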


\begin{proof}[Proof of Lemma \ref{lem-non-constant}]
See Figure~\ref{fig-non-constant}, right, for an illustration of the proof.
Suppose by way of contradiction that there exists $r\in R$ such that $D(\cdot,c) - D(\cdot,b)$ is constant on $\Fsym{a}{c}{r}{=}$. Define $q_r$ such that
\eqbn
q_r := D(z,c) - D(z,b) ,\quad \text{for every $z\in \Fsym{a}{c}{r}{=}$.} 
\eqen
By the definitions of $\Fsym{a}{c}{r}{=}$ and $R$, for $t= \frac{r + D(a,c)}{2}\in I$, we have
\eqb \label{eqn-pt-on-geo0}
  \Gamma_{c,a}(t) \in \Fsym{a}{c}{r}{=}.
\eqe 
By ~\eqref{eqn-pt-on-geo0} along with ~\eqref{eqn-good-pt-diff} in the defining property of $I$, we have 
\eqbn
r = D(\Gamma_{c,a}(t) , c) - D(\Gamma_{c,a}(t) , a)  < D(b,c) - D(b,a) .
\eqen
Therefore, $b \in \Fsym{a}{c}{r}{\geq}$. Let $\Gamma_{c,b}$ be a $D$-geodesic from $c$ to $b$.  
Since $b\in \Fsym{a}{c}{r}{\geq}$ and $c\notin \Fsym{a}{c}{r}{\geq}$, we get that $\Gamma_{c,b}$ must pass through $\Fsym{a}{c}{r}{=}$. By the definition of $q_r$, the only point of $\Gamma_{c,b}$ which can belong to $\Fsym{a}{c}{r}{=}$ is $ \Gamma_{c,b}((q_r + D(b,c))/2) $. By Lemma~\ref{lem-geo-min}, for any $\alpha\in \Fsym{a}{c}{r}{=}$, we have
\begin{equation}
  \label{eq:12}
  D(\alpha,c)\geq D(\Gamma_{c,a}(t),c)=t.
\end{equation}
Now by \eqref{eq:12}, we obtain
\eqb \label{eqn-qr-relation}
q_r + D(b,c) =2  D( \Gamma_{c,b}((q_r + D(b,c))/2)  , c) \geq 2   D( \Gamma_{c,a}(t) , c) =  2t.
\eqe 

\noindent By using the definition of $q_r$ and since $\Gamma_{c,a}(t) \in \Fsym{a}{c}{r}{=}$, by \Cref{eqn-qr-relation}, we have
\eqbn
 D( \Gamma_{c,a}(t)  , c) - D( \Gamma_{c,a}(t) , b) = q_r \geq 2t  - D(b,c) .
\eqen 
Since $D(\Gamma_{c,a}(t) , c) = t$, this is equivalent to
\eqb  \label{eqn-diff-on-geo}
D(b,c)\geq D(\Gamma_{c,a}(t),c) + D(\Gamma_{c,a}(t),b).
\eqe

\noindent This contradicts~\eqref{eqn-good-pt-tri} in the defining property of $I$, thereby completing the proof.
\end{proof}

Having proved Lemmas \ref{lem-good-pt} and \ref{lem-non-constant}, we can now complete the proof of the main result of this subsection, Proposition~\ref{prop-pos-leb}.
 
\begin{proof}[Proof of Proposition~\ref{prop-pos-leb}] 
By \Cref{lem-non-constant}, adopting the notation of the lemma, for each $r\in R$, there exist $z_r ,w_r  \in \Fsym{a}{c}{r}{=}$ such that 
\eqb  \label{eqn-non-constant-diff}
D(z_r ,c) - D(z_r ,b) < D(w_r ,c) - D(w_r ,b)  .
\eqe 
For $r \in R$, we define the set $G_{r,s}$ by 
\begin{equation}
  \label{eq:14}
  G_{r,s}=\{z\in \Fsym{a}{c}{r}{=}: D(z,c)-D(z,b)\geq s\},
\end{equation}
and we note that $G_{r,s}$ is closed since $\Fsym{a}{c}{r}{=}$ is also closed.
Therefore, if for $r\in R$, we define $J_r$ by
\eqbn
J_r := \left( D(z_r ,c) - D(z_r ,b ) ,  D(w_r ,c) - D(w_r ,b) \right),
\eqen
then for all $s\in J_r$, the sets $G_{r,s}$ and $\overline{\Fsym{a}{c}{r}{=}\setminus G_{r,s}}$ are both non-empty subsets of $\Fsym{a}{c}{r}{=}$ which are also closed. Since $\Fsym{a}{c}{r}{=}$ is connected (Lemma~\ref{lem-connected}), it follows that $G_{r,s} \cap \ol{\Fsym{a}{c}{r}{=} \setminus G_{r,s}}$ is non-empty. If $z$ belongs to this last intersection, then by the continuity of $D$ and the definition of $G_{r,s}$, we must have $D(c,z)-D(c,b)=s$. As a result, with $Q$ as in~\eqref{eqn-pos-leb-set}, 
\eqbn
\bigcup_{ r \in R} (\{r\} \times J_r) \subset Q . 
\eqen
Thus, with $\mathrm{Leb}$ denoting the Lebesgue measure in $\RR^2$, we have
\begin{equation}
  \label{eq:38}
  \mathrm{Leb}(Q)\geq \mathrm{Leb}(\bigcup_{ r \in R} (\{r\} \times J_r))=\int_R ( (D(w_r,c)-D(w_r,b)) - (D(z_r,c)-D(z_r,b)))dr>0,
\end{equation}
where in the above, we have used that the integral of a strictly positive function (see \eqref{eqn-non-constant-diff}) over the interval $R$ (which is nontrivial by \Cref{lem-good-pt}) must be strictly positive. This completes the proof. 
\end{proof}
 
\subsection*{II. Total geodesic overlap for a countable dense set implies that $X$ is a tree}

In order to complete the proof of Theorem \ref{3-star_points_general}, we will need to prove one more intermediate result. Recall the assumption \ref{it:tree} in Theorem \ref{3-star_points_general}, where it is assumed that the space $(X,D)$ is not a tree\footnote{Note that a tree refers to a space wherein there exists a unique path (necessarily the geodesic) between any two points.}. The goal now is to show that if $X$ is not a tree, then there is no countable dense subset $A$ such that geodesics between pairs of points in $A$ are unique and $A^3$ completely avoids $\Tsym{X}{D}$.

\begin{prop} \label{prop-tree}
Let $(X,D)$ be a boundedly compact geodesic metric space. Assume that there is a countable dense set $A\subset X$ with the following properties.
\begin{enumerate}
\item For any $a,b\in A$, there is a unique $D$-geodesic $\Gamma_{a,b}$.
\item $A^3\cap\Tsym{X}{D}=\emptyset$.
\end{enumerate}  
Then $(X,D)$ is a tree.
\end{prop}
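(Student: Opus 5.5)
The plan is to show that $(X,D)$ is $0$-hyperbolic in the sense of Gromov's four-point condition, and then to invoke the classical fact that a geodesic $0$-hyperbolic metric space is an $\RR$-tree. In an $\RR$-tree every injective path between two points has the geodesic as its image, which is exactly the notion of "tree" in the statement.

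\textbf{Step 1 (tripods for triangles in $A$).} For $u,v,w\in A$, the assumption $(u,v,w)\notin\Tsym{X}{D}$ together with uniqueness of geodesics gives
\[
\Gamma_{u,w}\subseteq \Gamma_{u,v}\cup\Gamma_{v,w}.
\]
The same inclusion holds for every permutation of $(u,v,w)$. As in the proof of Lemma~\ref{lem-good-pt}, the set of times at which $\Gamma_{u,w}$ lies on a geodesic to $v$ through $u$ is a closed interval containing $0$. Similarly, the set of times at which it lies on a geodesic through $w$ is a closed interval containing $D(u,w)$. These two intervals cover $[0,D(u,w)]$, so they meet at some point $o$, which I will call the centre. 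I then check that $\Gamma_{u,w}$, $\Gamma_{u,v}$ and $\Gamma_{v,w}$ together form a tripod with centre $o$, using uniqueness of geodesics between points of $A$ to identify the shared segments. A consequence is that, for $p,u,w\in A$,
\[
D(p,\Gamma_{u,w})=(u|w)_p:=\tfrac12\bigl(D(p,u)+D(p,w)-D(u,w)\bigr).
\]

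\textbf{Step 2 (four-point condition on $A$, then on $X$).} For $p,u,v,w\in A$, the inclusion from Step~1 gives
\[
D(p,\Gamma_{u,w})\ge \min\bigl(D(p,\Gamma_{u,v}),\,D(p,\Gamma_{v,w})\bigr).
\]
Combined with the identity at the end of Step~1, this yields
\[
(u|w)_p\ge\min\bigl((u|v)_p,\,(v|w)_p\bigr),
\]
which is the four-point condition with constant $0$ on $A$. Both sides are continuous in $(p,u,v,w)$ and $A$ is dense, so the inequality extends to all of $X^4$. Hence $(X,D)$ is $0$-hyperbolic.

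\textbf{Step 3 (conclusion).} A geodesic metric space satisfying the four-point condition with constant $0$ is an $\RR$-tree; this is standard (Chiswell, or Bridson--Haefliger). Bounded compactness gives completeness, though geodesicity alone should suffice. Every injective path in an $\RR$-tree between two points coincides with the unique geodesic. Finally, an arbitrary path from $x$ to $y$ contains an arc from $x$ to $y$, and in an $\RR$-tree the image of any path from $x$ to $y$ contains the geodesic from $x$ to $y$. So any two points of $X$ are joined by a unique path (as an arc), i.e.\ $X$ is a tree in the sense of the statement.

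\textbf{Main obstacle.} I expect the crux to be Step~1, namely extracting the exact tripod structure, and in particular the distance identity $D(p,\Gamma_{u,w})=(u|w)_p$, from the purely set-theoretic assumption $A^3\cap\Tsym{X}{D}=\emptyset$. The delicate part is using uniqueness of geodesics between points of $A$ to force the three legs to coincide along their common initial segments. This matters because the centre need not lie in $A$, so uniqueness must be applied only to pairs of points of $A$. If it becomes awkward, I would instead restrict geodesics from $p$ to subsegments with endpoints approximated by points of $A$, and pass to limits using bounded compactness.
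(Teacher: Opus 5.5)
Your proof is correct, and it takes a genuinely different route from the paper's.

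\textbf{The paper's route.} The paper enumerates $A=\{z_n\}$. Using Lemma~\ref{lem-geo-conc}, it proves by induction that the union $T_n$ of the geodesics between $z_0,\dots,z_n$ is a finite tree whose internal paths are $D$-geodesics. It then uses bounded compactness and density to get $T_n\to X$ in the local Gromov--Hausdorff sense. It concludes via the Evans--Pitman--Winter result that trees are closed under such limits.

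\textbf{Your route.} You verify Gromov's $0$-hyperbolicity condition directly on $A^4$, pass it to $X^4$ by continuity, and invoke the characterization of $\mathbb{R}$-trees as geodesic $0$-hyperbolic spaces.

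\textbf{Step 1 is easier than you expect.} You only need a single point $o$ common to $\Gamma_{u,w}$, $\Gamma_{u,p}$ and $\Gamma_{p,w}$, and your closed-cover/connectedness argument already gives this. From the three equalities $D(u,o)+D(o,p)=D(u,p)$, $D(p,o)+D(o,w)=D(p,w)$ and $D(u,o)+D(o,w)=D(u,w)$, you get $D(p,o)=(u|w)_p$. Conversely, $D(p,x)\ge (u|w)_p$ holds for every $x$ on any $u$--$w$ geodesic in any metric space, by adding two triangle inequalities. So the full tripod identification, and your fallback of approximating by points of $A$, are unnecessary.

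\textbf{What each approach buys.} Your route needs no induction and no Gromov--Hausdorff machinery. It does not use bounded compactness at all, since geodesicity suffices for the $\mathbb{R}$-tree characterization. It also essentially inlines the mechanism behind the Evans--Pitman--Winter closedness lemma, which itself rests on the connected-plus-four-point-condition characterization of $\mathbb{R}$-trees. The paper's route gives a concrete picture: explicit finite spanning subtrees built from the geodesics between points of $A$, from which the tree structure of $X$ is inherited in the limit.
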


Here is an outline of the proof of Proposition~\ref{prop-tree}. First, for any finite collection of points $z_0,\dots,z_n\in A$, we can use assumption on the set $A$ to show that the union of the $D$-geodesics between pairs of points in $\{z_0,\dots,z_n\}$ is a tree (the technical tool to achieve the above is Lemma~\ref{lem-geo-conc}). Further, it is known (see \cite{epw-real-tree}) that the local Gromov-Hausdorff limit of trees is a tree (Lemma~\ref{lem-tree-conv}). Thus, due to the density of $A$, as $n\rightarrow \infty$, the trees corresponding to $\{z_0,\dots,z_n\}$ must converge to $(X,D)$ in the local Gromov-Hausdorff sense (see Definition~\ref{def-gh-loc}), and thus $(X,D)$ itself must be a tree. 

In order to be self-contained, we now define local Gromov-Hausdorff convergence. The following definition appears, in a more general form, in~\cite[Section 1.3.2]{gwynne-miller-uihpq}; see also~\cite[Section 8.1]{bbi-metric-geometry}. 

\begin{defn}[Local Gromov-Hausdorff convergence] \label{def-gh-loc}
Recall that for non-empty compact subsets $A$ and $B$ of a metric space $(X,D)$, we define the \textbf{Hausdorff distance} between them by
\eqbn
d_{H}^{X}(A,B)=\max\{\sup_{a\in A}d_{B}(a),\sup_{b\in B}d_{A}(b)\},
\eqen
where for $V \subset X$ we denote $d_{V}(z) = \inf_{v \in V}D(v,z)$. We will write $\BB d^{\op{GH}}$ for the \textbf{Gromov-Hausdorff distance} on compact metric spaces with a marked point, defined by
\eqbn
\BB d^{\op{GH}}((X,D_{X},x),(Y,D_{Y},y)) = \inf_{Z,\phi,\psi}\max\{d_{H}^{Z}\left(\phi(X),\psi(Y)\right),D_{Z}\left(\phi(x),\psi(y)\right)\},
\eqen
where $\phi$ and $\psi$ range over all isometric embeddings of $(X,D_{X},x), (Y,D_{Y},y)$ into a metric space $(Z,D_{Z})$. 

Now for a metric space with a marked point $(X,D,x)$ and $r > 0$, let $\frk B_r(X,D,x)$ be the metric space consisting of the closed $D$-ball of radius $r$ centered at $x$, equipped with the restriction of $D$ and the marked point $x$. We say that a sequence of boundedly compact pointed geodesic metric spaces $(X_n,D_n,x_n)$ converge in the \textbf{local Gromov-Hausdorff sense} to $(X,D,x)$ if for each $R>0$, we have
\eqbn
\lim_{n\rightarrow \infty}\BB d^{\op{GH}} \left(  \frk B_R(X_n,D_n,x_n)  , \frk B_R(X,D,x)  \right) =0.
\eqen
\end{defn}

The following lemma is essentially proven in~\cite{epw-real-tree} and will be very useful for us.

\begin{lem}[\cite{{epw-real-tree}}] \label{lem-tree-conv}
Let $\{(T_n,D_n,x_n)\}_{n\geq 1}$ be a sequence of trees, each with a marked point. 
If $(T_n,D_n,x_n) \to (T,D,x)$ in the local Gromov-Hausdorff sense, then $(T,D,x)$ is a tree.
\end{lem}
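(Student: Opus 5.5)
Following Evans--Pitman--Winter, I will use the four-point condition. A geodesic metric space is an $\RR$-tree if and only if it is $0$-hyperbolic, meaning that for all $x,y,z,w$,
\[
D(x,y)+D(z,w)\le \max\{D(x,z)+D(y,w),\,D(x,w)+D(y,z)\}.
\]
Here ``tree'' in the paper's sense (a unique path between any two points) coincides with an $\RR$-tree for geodesic spaces, since $\RR$-trees have unique arcs and uniqueness of paths forces geodesics to be the unique arcs. The plan is to show that $T$ is a geodesic space satisfying the four-point condition.

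\textbf{Step 1: $T$ is geodesic.} Local GH limits of length spaces are length spaces (Burago--Burago--Ivanov, Section 8.1). Each $\frk B_R(T_n,D_n,x_n)$ is compact, so the limit balls $\frk B_R(T,D,x)$ are compact and $T$ is boundedly compact. A complete, locally compact length space is geodesic by Hopf--Rinow. This requires $T$ to be complete, which I will take as part of the definition of local GH limits (the limit is taken in the class of boundedly compact spaces).

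\textbf{Step 2: the four-point condition passes to the limit.} Fix $x',y',z',w'\in T$ and choose $R$ larger than all their distances to $x$. For small $\epsilon$, pick a metric space $Z$ and isometric embeddings of $\frk B_R(T_n,D_n,x_n)$ and $\frk B_R(T,D,x)$ into $Z$ with Hausdorff distance less than $\epsilon$. Choose points $x'_n,y'_n,z'_n,w'_n\in T_n$ within $\epsilon$ of the images of the four points. Each pairwise distance then changes by at most $2\epsilon$. The four-point inequality holds exactly in $T_n$, so in $T$ it holds up to an error of $8\epsilon$. Letting $\epsilon\to0$ gives the four-point condition in $T$.

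\textbf{Step 3: conclude.} A geodesic $0$-hyperbolic space is an $\RR$-tree (Chiswell; Evans--Pitman--Winter). Hence $(T,D,x)$ is a tree.

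\textbf{Main obstacle.} The only real difficulty is Step 1: making sure completeness and the geodesic property survive the local limit. I would cite Burago--Burago--Ivanov rather than reprove these facts, or alternatively note that ``boundedly compact pointed geodesic'' is already built into the convergence in Definition \ref{def-gh-loc}.
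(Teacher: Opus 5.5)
Your proof is correct, but it is organized differently from the paper's.

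The paper proves nothing from scratch here. It quotes \cite[Lemma 2.1]{epw-real-tree}, which says that Gromov--Hausdorff limits of compact $\RR$-trees are $\RR$-trees. It then transfers this to the local setting through the equivalence ``$(T,D,x)$ is a tree iff every closed ball $\frk B_r(T,D,x)$ is a tree'', applied first to the balls of $T_n$ and then to the balls of $T$.

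You instead work directly with the local convergence and in effect reproduce the argument behind the cited compact lemma: geodesicity and the four-point condition both pass to the limit, and a geodesic $0$-hyperbolic space is an $\RR$-tree.

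What your route buys:
\begin{itemize}
\item It is more self-contained.
\item It never needs the ``only if'' half of the paper's equivalence, namely that closed balls of an $\RR$-tree, with the restricted metric, are again $\RR$-trees. This holds because balls in $\RR$-trees are geodesically convex, but the paper uses it without comment. Your Step 2 only ever looks at four points inside a sufficiently large ball.
\end{itemize}

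What the paper's route buys: it is shorter, and it delegates to the reference the completeness and geodesicity issues you handle in Step 1. As you observe, these are largely built into Definition~\ref{def-gh-loc}. The Gromov--Hausdorff distance there is only defined between compact balls, so the limit is automatically boundedly compact and hence complete. Exact midpoints can then be obtained directly: transport midpoints from $T_n$ through approximations of a slightly larger ball and use compactness. This avoids appealing to Burago--Burago--Ivanov's results, which are stated for their own variant of pointed convergence.
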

\begin{proof}
The analogous statement for compact trees, with the Gromov-Hausdorff distance in place of the local Gromov-Hausdorff distance, is~\cite[Lemma 2.1]{epw-real-tree}. The statement for the local Gromov-Hausdorff distance follows easily from the fact that $(T,D,x)$ is a tree if and only if $\frk B_r(T,D,x)  $ is a tree for each $r >0$. 
\end{proof}

Shortly, we will take $T_n$ to be the set consisting of all $D$-geodesics joining points in $\{z_0,\dots,z_n\}\subseteq A$. In order to use the above convergence result, we will first need to establish that the set $T_n$ itself is a tree, and this will be done by using the following easy lemma.
\begin{lem} \label{lem-geo-conc}
Assume that we are in the setting of Proposition~\ref{prop-tree}. 
Let $a,b,c , d \in A$ and let $\Gamma_{a,b}$ (resp.\ $\Gamma_{c,d}$) be the unique $D$-geodesic from $a$ to $b$ (resp.\ $c$ to $d$).
Assume that $\Gamma_{a,b} \cap \Gamma_{c,d} \not=\emptyset$. Let $ \tau$ be the smallest time $t$ for which $\Gamma_{c,d}(t) \in \Gamma_{a,b}   $ and let $\sigma$ be the smallest time such that $\Gamma_{a,b}(\sigma) = \Gamma_{c,d}( \tau)$. Then the $D$-geodesic from $c$ to $a$ (resp.\ $c$ to $b$) is the concatenation of $\Gamma_{c,d}|_{[0, \tau]}$ followed by the time reversal of $\Gamma_{a,b}|_{[0,\sigma]}$ (resp.\ followed by $\Gamma_{a,b}|_{[\sigma,D(a,b)]}$). 
\end{lem}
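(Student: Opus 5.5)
The plan is to reduce everything to uniqueness of geodesics. First I rewrite the hypothesis $A^3\cap\Tsym{X}{D}=\emptyset$. For $u,v,w\in A$ geodesics are unique, so $(u,v,w)\notin\Tsym{X}{D}$ says: every point $x\in\Gamma_{u,w}$ lies on $\Gamma_{u,v}$ or on $\Gamma_{v,w}$. Equivalently, $D(x,u)+D(x,v)=D(u,v)$ or $D(x,v)+D(x,w)=D(v,w)$.

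I will use two standard facts about a unique geodesic $\Gamma_{u,v}$ between points of $A$.
\begin{itemize}
\item Any sub-segment of $\Gamma_{u,v}$ starting at $u$ is the unique geodesic between its endpoints. Indeed, a second geodesic from $u$ to $\Gamma_{u,v}(s)$, concatenated with $\Gamma_{u,v}|_{[s,D(u,v)]}$, would give a second geodesic from $u$ to $v$.
\item If $x$ satisfies $D(u,x)+D(x,v)=D(u,v)$, then concatenating any geodesic from $u$ to $x$ with any geodesic from $x$ to $v$ gives a geodesic from $u$ to $v$. Hence $x\in\Gamma_{u,v}$ and both pieces are sub-segments of $\Gamma_{u,v}$.
\end{itemize}

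\textbf{Step 1: a branch point $m$.} I apply the rewritten hypothesis to the triple $(a,c,b)$: every point of $\Gamma_{a,b}$ lies on $\Gamma_{a,c}$ or on $\Gamma_{c,b}$. As in the proof of Lemma~\ref{lem-good-pt}, consider
\[
\{t: D(\Gamma_{a,b}(t),a)+D(\Gamma_{a,b}(t),c)=D(a,c)\}.
\]
By the triangle inequality along $\Gamma_{a,b}$, this is a closed interval containing $0$. The analogous set for $b$ and $c$ is a closed interval containing $D(a,b)$. The two intervals cover $[0,D(a,b)]$, so they intersect, giving a point $m\in\Gamma_{a,b}$ lying on both $\Gamma_{c,a}$ and $\Gamma_{c,b}$. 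By the two facts above:
\begin{itemize}
\item $\Gamma_{c,a}$ is $\Gamma_{c,m}$ followed by the reversal of $\Gamma_{a,b}$ from $m$ back to $a$;
\item $\Gamma_{c,b}$ is $\Gamma_{c,m}$ followed by $\Gamma_{a,b}$ from $m$ to $b$.
\end{itemize}
Here $\Gamma_{c,m}$ is the common initial segment of $\Gamma_{c,a}$ and $\Gamma_{c,b}$.

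\textbf{Step 2: identify $m$ with $p$.} Let $p:=\Gamma_{c,d}(\tau)=\Gamma_{a,b}(\sigma)$. Then $p$ lies on $\Gamma_{a,b}$ either between $a$ and $m$ or between $m$ and $b$; by symmetry assume the latter. By Step 1, $p\in\Gamma_{c,b}$, and the segment of $\Gamma_{c,b}$ from $c$ to $p$ is the unique geodesic from $c$ to $p$. Also $\Gamma_{c,d}|_{[0,\tau]}$ is a geodesic from $c$ to $p$, so the two coincide. That segment passes through $m\in\Gamma_{a,b}$ at time $D(c,m)\le\tau$. By minimality of $\tau$ this forces $D(c,m)=\tau$, hence $m=p$. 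The case where $p$ lies between $a$ and $m$ is identical, using $\Gamma_{c,a}$ instead.

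\textbf{Step 3: conclude.} With $m=p$, Step 1 says $\Gamma_{c,a}$ is $\Gamma_{c,d}|_{[0,\tau]}$ followed by the reversal of $\Gamma_{a,b}|_{[0,\sigma]}$. Likewise $\Gamma_{c,b}$ is $\Gamma_{c,d}|_{[0,\tau]}$ followed by $\Gamma_{a,b}|_{[\sigma,D(a,b)]}$. Since $\Gamma_{a,b}$ is injective, $\sigma$ is just the unique time it visits $p$, which is consistent with its definition as a smallest time.

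The main obstacle is Step 2. It requires $p$ (not in $A$) to be reached by a geodesic from $c$ that coincides with $\Gamma_{c,d}|_{[0,\tau]}$. This is handled by the observation that initial segments of unique geodesics are themselves unique geodesics.
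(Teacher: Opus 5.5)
Your proof is correct, but it is organized differently from the paper's. The paper never constructs a branch point. It applies the hypothesis $(a,c,b)\notin\mathcal{T}_{(X,D)}$ directly at $p=\Gamma_{c,d}(\tau)=\Gamma_{a,b}(\sigma)$ to get, say, $p\in\Gamma_{c,a}$. It then notes $D(a,c)=\tau+\sigma$, so the concatenation is a geodesic and hence equals $\Gamma_{c,a}$. Next it observes that minimality of $\tau$ makes $\sigma$ the last time $\Gamma_{a,b}$ meets $\Gamma_{c,a}$. Hence $\Gamma_{a,b}|_{(\sigma,D(a,b)]}\subset\Gamma_{c,b}$, and by closedness $p\in\Gamma_{c,b}$ as well; the same length computation then gives the second decomposition. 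You instead first prove a statement about the triple alone: $\Gamma_{c,a}$, $\Gamma_{c,b}$, $\Gamma_{a,b}$ form a tripod whose center $m$ lies on $\Gamma_{a,b}$, via the two-closed-intervals covering argument borrowed from Lemma~\ref{lem-good-pt}. Only then do you bring in $d$, using that initial segments of unique geodesics are unique, together with the minimality of $\tau$, to force $m=p$. The paper's route is shorter and needs only uniqueness of geodesics between points of $A$ plus one distance identity. Yours costs the extra ``initial segments are unique'' fact. In exchange, it makes explicit the tripod structure that the induction in Proposition~\ref{prop-tree} really exploits. It also shows that the conclusion is essentially a property of the triple $(a,b,c)$, with $d$ serving only to locate the first hitting point.
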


\begin{figure}[t]
\begin{center}
\includegraphics[width=0.5\textwidth]{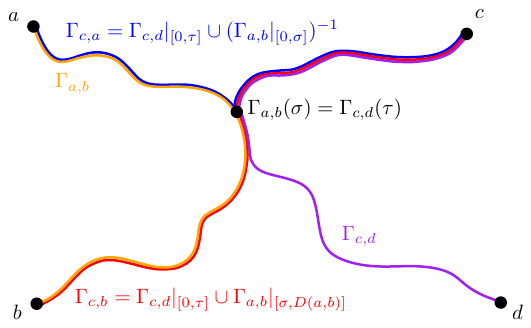}  
\caption{\label{fig:lem-geo-concat}Illustration of the proof of \Cref{lem-geo-conc}. The $D$-geodesic $\Gamma_{c,a}$ from $c$ to $a$, drawn in blue, has $D$-length $\sigma + \tau$; it is the concatenation of $\Gamma_{c,d}|_{[0, \tau]}$ (purple, length $\tau)$ followed by the time reversal of $\Gamma_{a,b}|_{[0,\sigma]}$ (orange, length $\sigma$). Similarly, the $D$-geodesic $\Gamma_{c,b}$ from $c$ to $b$, drawn in red, has $D$-length $\tau + (D(a,b)-\sigma)$; it is the concatenation of $\Gamma_{c,d}|_{[0, \tau]}$ (purple, length $\tau$) followed by $\Gamma_{a,b}|_{[\sigma,D(a,b)]}$ (orange, length $D(a,b)-\sigma$).}
\end{center}
\end{figure}

\begin{proof}
Let $\Gamma_{c,a}$ and $\Gamma_{c,b}$ be the unique $D$-geodesics from $c$ to $a$ and $c$ to $b$ respectively (see \Cref{fig:lem-geo-concat} for an illustration). By our assumption that $A^3\cap \Tsym{X}{D}=\emptyset$, the point $\Gamma_{c,d}(\tau) = \Gamma_{a,b}(\sigma)$ lies in the union $\Gamma_{c,a}\cup \Gamma_{c,b}$. Assume without loss of generality that $\Gamma_{c,d}(\tau) \in \Gamma_{c,a}   $. 
Since $\Gamma_{c,a}$ is a $D$-geodesic and $\Gamma_{c,d}(\tau) = \Gamma_{a,b}(\sigma)$, this implies that
\begin{equation}
  \label{eq:22}
D(a,c) = D(\Gamma_{c,d}(\tau) , a)   + D(\Gamma_{c,d}(\tau) , c) = D(\Gamma_{a,b}(\sigma) , a) + \tau = \sigma+\tau .  
\end{equation}
Therefore, the concatenation of $\Gamma_{c,d}|_{[0, \tau]}$ followed by the time reversal of $\Gamma_{a,b}|_{[0,\sigma]}$ is a $D$-geodesic from $c$ to $a$ since it $D$-length is precisely $\sigma+\tau$. By the uniqueness of $D$-geodesics between points of $A$, this concatenation is equal to $\Gamma_{c,a}$. 
In particular, $\sigma$ is the largest time in $[0,D(a,b)]$ for which $\Gamma_{a,b}(\sigma) \in \Gamma_{c,a} $. 
By our assumption on $A$, it follows that $\Gamma_{a,b}\lvert_{(\sigma,D(a,b)]} \subset \Gamma_{c,b}$. Since (the range of) $\Gamma_{c,b}$ is closed, we have $\Gamma_{a,b}(\sigma) \in \Gamma_{c,b}$, i.e., $\Gamma_{c,d}(\tau) \in \Gamma_{c,b}$. Then, precisely the same argument as in \eqref{eq:22} above shows that $\Gamma_{c,b}$ is the concatenation of $\Gamma_{c,d}|_{[0, \tau]}$ followed by $\Gamma_{a,b}|_{[\sigma,D(a,b)]}$.
\end{proof} 

With the above preparation in hand, we are now ready to complete the proof of Proposition \ref{prop-tree}.

\begin{proof}[Proof of Proposition~\ref{prop-tree}]
  Enumerate the points of $A$ by $A = \{z_n\}_{n\geq 0}$; for each $i,j\geq 0$, we will consider the (unique) geodesic $\Gamma_{z_i,z_j}$. Our goal now is to inductively show that for all $n\geq 0$, the set $T_n=\bigcup_{0\leq i,j\leq n}\Gamma_{z_i,z_j}$ (see \Cref{fig-geo-tree}) satisfies the following properties.
\begin{enumerate}[label=(\roman*)]
\item The set $T_n$ is a tree in the sense that for each $z,w\in T_n$, there is a unique path in $T_n$ from $z$ to $w$. Note that this path must necessarily be simple. \label{item-tree}
\item For each $z,w\in T_n$, the unique simple path in $T_n$ from $z$ to $w$ is, in fact, the unique $D$-geodesic $\Gamma_{z,w}$. \label{item-tree-geo}
\end{enumerate}
To begin, note that, note that $T_1=\Gamma_{z_1,z_0}$ trivially satisfies the above properties. Inductively, assume that $n \geq 2$ and $T_{n-1}$ has been established to be a tree. Let $\tau_n$ be the smallest time $t$ for which $\Gamma_{z_n,z_0}(t) \in T_{n-1}$. Let $H_n := T_{n-1} \cup \Gamma_{z_n,z_0}\lvert_{[0,\tau_n]}$. By the inductive hypothesis, $H_n$ is the union of a tree and a simple path which intersects it at a single point, and is therefore a tree. In fact, since the unique path from $\Gamma_{z_n,z_0}(\tau_n)$ to $z_0$ in $T_n$ is also the unique $D$-geodesic from $\Gamma_{z_n,z_0}(\tau_n)$ to $z_0$, we must have $H_n=T_{n-1}\cup \Gamma_{z_n,z_0}$. We now show that, in fact, $H_n=T_n$. 

Indeed, by applying Lemma~\ref{lem-geo-conc} with $a = c=z_{0} , b = z_{n} , d = z_{i}$, we obtain that for each $0\leq i \leq n-1$, $\Gamma_{z_n,z_i}$ is exactly the concatenation of $\Gamma_{z_n,z_0}\lvert_{[0,\tau_n]}$ with the unique geodesic $\Gamma_{\Gamma_{z_n,z_0}(\tau_n), z_i}\subseteq T_{n-1}$. This implies that $H_n=T_n$, thereby yielding that $T_n$ is a tree and therefore proving item \ref{item-tree}.

The task now is to prove item \ref{item-tree-geo}. For this, we first note that if $z,w\in T_{n-1}\subseteq T_n$, then we already know that $\Gamma_{z,w}$ is unique and lies in $T_{n-1}$. Also, if we have $z,w\in \Gamma_{z_n,z_0}\lvert_{[0,\tau_n]}$, then by the uniqueness of $\Gamma_{z_n,z_0}$, we know that $\Gamma_{z,w}$ is unique and lies in $\Gamma_{z_n,z_0}\lvert_{[0,\tau_n]}$. Thus, without loss of generality, we can assume that $z\in \Gamma_{z_n,z_0}\lvert_{[0,\tau_n]}$ and $w\in T_{n-1}$. Our plan now is to establish that
\begin{equation}
  \label{eq:39}
  T_{n-1}= \bigcup_{0\leq i\leq n-1} \Gamma_{\Gamma_{z_n,z_0}(\tau_n), z_i}.
\end{equation}
Indeed, if this were true, then any $w\in T_{n-1}$ must satisfy $w\in \Gamma_{\Gamma_{z_n,z_0}(\tau_n), z_i}$ for some $0\leq i\leq n-1$. We can then consider the unique geodesic $\Gamma_{z_n,z_i}$, and this uniqueness implies that $\Gamma_{z,w}$ is unique and is contained in $\Gamma_{z_n,z_0}\lvert_{[0,\tau_n]}\cup \Gamma_{\Gamma_{z_n,z_0}(\tau_n), z_i}\subseteq T_{n}$.

In order to show \eqref{eq:39}, we simply establish that for any point $\alpha\in T_{n-1}$, we must have
\begin{equation}
  \label{eq:31}
  T_{n-1}=\bigcup_{0\leq i\leq n-1}\Gamma_{\alpha, z_i}.
\end{equation}
To see this, take a generic $x\in T_{n-1}$; we need to establish that $x\in \bigcup_{0\leq i\leq n-1}\Gamma_{\alpha, z_i}$. We might as well assume that $x\notin \{\alpha ,z_0,\dots,z_{n-1}\}$. Thus, $x$ is not a leaf of the tree $T_{n-1}$, and as a result, the set $T_{n-1}\setminus \{x\}$ must be disconnected. Thus, $x$ must lie on at least one of the paths $\Gamma_{\alpha,z_i}$ for $0\leq i \leq n-1$ as otherwise the set $T_{n-1}\setminus \{x\}$ would necessarily be connected. This shows \eqref{eq:31} and \eqref{eq:39}, thereby concluding the induction. Thus, we have established that the set $T_n$ satisfies \ref{item-tree} and \ref{item-tree-geo} for all $n$. 

\begin{figure}[t]
\begin{center}
\includegraphics[width=0.7\textwidth]{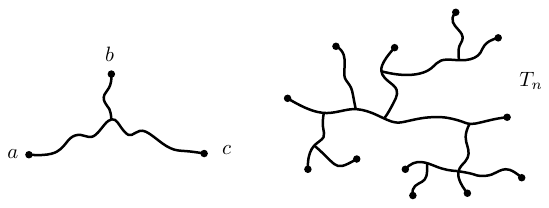}  
\caption{\label{fig-geo-tree}  
\textbf{Left:} The $D$-geodesics from $a$ to $b$, $b$ to $c$, and $c$ to $a$ for three points $a,b,c \in A$. 
\textbf{Right:} The tree $T_n$ formed by the $D$-geodesics between points in a finite subset of $A$, as in the proof of Proposition~\ref{prop-tree}. 
}
\end{center}
\end{figure}

We are now ready to establish that $(X,D)$ must be a tree and for this, we will use Lemma \ref{lem-tree-conv}. Since the pointed space $(T_n, D\lvert_{T_n\times T_n},z_0)$ is a tree, by \Cref{lem-tree-conv} it suffices to establish that it converges to $(X,D,z_0)$ in the local Gromov-Hausdorff sense. That is, for any fixed radius $R>0$, we need to establish that
\begin{equation}
  \label{eq:32}
  \lim_{n\rightarrow \infty}\BB d^{\op{GH}} \left(  \frk B_R(T_n, D\lvert_{T_n\times T_n},z_0)  , \frk B_R(X,D,z_0)  \right) =0
\end{equation}
Now, since $X$ is boundedly compact, the closed metric ball $\overline{\cB_R(z_0)}$ is necessarily compact. As a result, since $A$ is dense in $X$, for any fixed $\epsilon>0$ and all $n$ large enough, $\overline{\cB_R(z_0)}\cap T_n$ contains an $\epsilon$-net of points for the set $\overline{\cB_R(z_0)}$. This establishes \eqref{eq:32}, thereby completing the proof. 
\end{proof}

Having proved \Cref{prop-pos-leb} and \Cref{prop-tree}, we are now finally ready to complete the proof of \Cref{3-star_points_general}.

\begin{proof}[Proof of \Cref{3-star_points_general}]
  We begin by proving the first part of the theorem. Let $f$ be as in the statement of the theorem and let $(a,b,c)\in \Tsym{X}{D}$. Then, by Proposition \ref{prop-pos-leb}, we have $\mathrm{Leb}(f(X))>0$ and therefore $\dim(f(X))=2$. By now applying Theorem \ref{intersection_universal_bounds} with $V=\RR^2$, we have $\dim( \hyperlink{def-non-constancy-set}{S_{f,\RR^2}})\geq 2$.

  We now move on to the second part of the theorem, where we additionally assume that the geodesic metric space $(X,D)$ satisfies assumptions \ref{it:bddcpt}, \ref{it:tree}, and \ref{it:as}. Consider the dense subset $A \subset X$ given by assumption \ref{it:as}. Since $(X,D)$ is boundedly compact (assumption \ref{it:bddcpt}), Proposition \ref{prop-tree} applies; then since there exists a unique geodesic between each pair of points in $A$ (assumption \ref{it:as}) and since $X$ is not a tree (assumption \ref{it:tree}), there must exist a triple $(a,b,c)\in A^3\cap \Tsym{X}{D}$. Further, since $A^2\subseteq \Hsym{X}{D}$ (assumption \ref{it:as}), we have $\{(a,b),(a,c),(b,c)\}\subseteq \Hsym{X}{D}$. We now consider the set $K_{a,b,c}$ of all $(r,s) \in \mathbb{R}^{2}$ for which at least one of the following occurs:
\begin{enumerate}[label=(\Alph*)]
\item \label{it:r1}A geodesic started from $a$ traces $\Fsym{a}{b}{r}{=}\cup \Fsym{a}{c}{s}{=} $ for a nontrivial time interval.
\item \label{it:s1}A geodesic started from $b$ traces $\Fsym{b}{c}{s-r}{=}\cup \Fsym{a}{b}{r}{=}$ for a nontrivial time interval.
\item \label{it:t1} A geodesic started from $c$ traces $\Fsym{b}{c}{s-r}{=}\cup \Fsym{a}{c}{s}{=}$ for a nontrivial time interval.
\end{enumerate}
Then, since $\{(a,b),(a,c),(b,c)\} \subseteq \Hsym{X}{D}$, the two-dimensional Lebesgue measure of $K_{a,b,c}$ must necessarily be equal to zero. Note the special property $\Fsym{a}{b}{r}{=}\cap \Fsym{a}{c}{s}{=}\subseteq \Fsym{b}{c}{s-r}{=}$; this will be useful shortly.

\begin{figure}[t]
\begin{center}
\includegraphics[width=\textwidth]{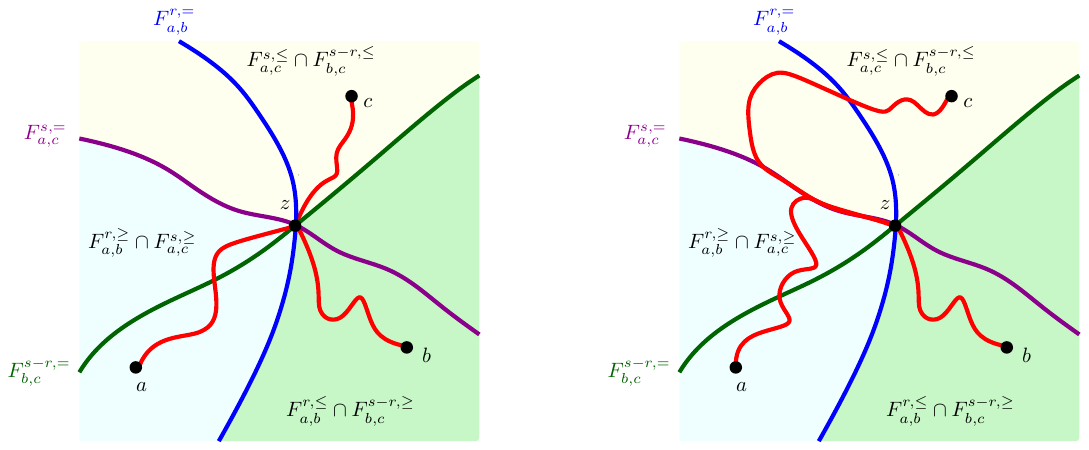}\caption{\textbf{Left:} The point $z \in \Fsym{a}{b}{r}{=}\cap \Fsym{a}{c}{s}{=}$ is a 3-star point for $D$. The $D$-geodesic (red) from $z$ to $a$ does not trace $\Fsym{a}{b}{r}{=}\cup \Fsym{a}{c}{s}{=}$ for a nontrivial interval, and the analogous statements hold for the geodesics from $z$ to $b$ and to $c$. \textbf{Right:} Here the geodesic from $z$ to $a$ traces $\Fsym{a}{b}{r}{=}\cup \Fsym{a}{c}{s}{=}$ and the geodesic from $z$ to $c$ traces $\Fsym{b}{c}{s-r}{=} \cup \Fsym{a}{b}{r}{=}$. Hence, although $z \in \Fsym{a}{b}{r}{=}\cap \Fsym{a}{c}{s}{=}$, we cannot deduce that it is a 3-star point. For simplicity we have depicted $\Fsym{a}{b}{r}{=}\cap \Fsym{a}{c}{s}{=}$ as a single point, but this is not necessarily the case; it may even have nonempty interior (see \Cref{fig-nonempty-interior}).}
\label{fig-voronoi}
\end{center}
\end{figure}

  Observe that, by Lemma \ref{lem-inside}, for $z \in \Fsym{a}{b}{r}{=} \cap \Fsym{a}{c}{s}{=}\subseteq \Fsym{b}{c}{s-r}{=}$, all geodesics $\Gamma_{z,a}, \Gamma_{z,b},\Gamma_{z,c}$ necessarily satisfy
  \begin{equation}
    \label{eq:40}
    \Gamma_{z,a}\subseteq \Fsym{a}{b}{r}{\geq }\cap \Fsym{a}{c}{s}{\geq}, \Gamma_{z,b}\subseteq \Fsym{b}{c}{s-r}{\geq} \cap \Fsym{a}{b}{r}{\leq}, \Gamma_{z,c}\subseteq \Fsym{b}{c}{s-r}{\leq}\cap \Fsym{a}{c}{s}{\leq}.
  \end{equation}
  Also, note that the following sets are all mutually disjoint:
  \begin{equation}
    \label{eq:41}
    \Fsym{a}{b}{r}{>}\cap \Fsym{a}{c}{s}{>}, \Fsym{b}{c}{s-r}{>} \cap \Fsym{a}{b}{r}{<}, \Fsym{b}{c}{s-r}{<}\cap \Fsym{a}{c}{s}{<}.
  \end{equation}
As a result, any $z \in \Fsym{a}{b}{r}{=} \cap \Fsym{a}{c}{s}{=}\subseteq \Fsym{b}{c}{s-r}{=}$ necessarily admits 3 almost disjoint geodesics $\Gamma_{z,a}, \Gamma_{z,b}, \Gamma_{z,c}$ and is therefore a 3-star point for $D$, \textit{unless} one of the following occurs (see \Cref{fig-voronoi}):
\begin{enumerate}
\item \label{it:r}A geodesic from $z$ to $a$ traces $\Fsym{a}{b}{r}{=}\cup \Fsym{a}{c}{s}{=} $ for a nontrivial time interval.
\item \label{it:s}A geodesic from $z$ to $b$ traces $\Fsym{b}{c}{s-r}{=}\cup \Fsym{a}{b}{r}{=}$ for a nontrivial time interval.
\item \label{it:t} A geodesic from $z$ to $c$ traces $\Fsym{b}{c}{s-r}{=}\cup \Fsym{a}{c}{s}{=}$ for a nontrivial time interval.
\end{enumerate}
In other words, for any $(r,s)\in \RR^2\setminus K_{a,b,c}$, every $z\in \Fsym{a}{b}{r}{=}\cap \Fsym{a}{c}{s}{=}$ must satisfy $z\in T_{\textup{3-star}}$.
However, as just discussed,
\begin{equation}
  \label{eq:19}
  \mathrm{Leb}(K_{a,b,c})=0.
\end{equation}

With the above ingredients in hand, we are now ready to complete the proof. Recall that $f(x) = (D(x,a) - D(x,b),D(x,a) - D(x,c))$ and that $\mathrm{Leb}(f(X))>2$. By using \eqref{eq:19}, we obtain that
\begin{equation}
  \label{eq:20}
  \mathrm{Leb}(f(X)\cap (\RR^2\setminus K_{a,b,c}))>0.
\end{equation}
We now apply \Cref{intersection_universal_bounds} with $n=2$ and $V=\RR^2\setminus K_{a,b,c}$ to conclude that we have $\dim(f^{-1}(\RR^2\setminus K_{a,b,c}))\geq 2$.
Now, as discussed just before \eqref{eq:19}, any $z\in f^{-1}(\RR^2\setminus K_{a,b,c})$ necessarily satisfies $z\in T_{\textup{3-star}}$. This completes the proof.
\end{proof}

\subsection{Lower bound for 2-stars on a connected set}
\label{sec:lower-bound-2}

\begin{proof}[Proof of Theorem \ref{boundary_thm}]
As in the statement of the result, let $a,b\in A$ be distinct points and let $f\colon A\rightarrow \RR$ be defined by $f(x)=D(x,a)-D(x,b)$. Note that $f(a)\neq f(b)$ since $f(a)=-D(a,b)<0$ while $f(b)=D(a,b)>0$. Since $f$, as defined above, is continuous and $A$ is connected, $f(A)$ must be connected as well, and as a result, we must have
\begin{equation}
  \label{eq:17}
  \mathrm{Leb}(f(A))>0,
\end{equation}
which in particular implies that $\dim f(A)=1$. Now, since $X$ is $\sigma$-compact and $A$ is closed, $(A, D\lvert_{A\times A})$ is also a $\sigma$-compact metric space. Thus, we can now apply Theorem \ref{intersection_universal_bounds} to the metric space given by $(A, D\lvert_{A\times A})$ and the function $f$ as defined above. As a consequence, we immediately obtain that $\dim(S_{f,\RR})\geq 1$.
  
We now come to the second part of the theorem and additionally assume that $\Hsym{X}{D}\cap A^2\neq \emptyset$. Fix $(a,b)\in \Hsym{X}{D}\cap A^2$ and note that we must have $a\neq b$ by the definition of $\Hsym{X}{D}$. Now, as a consequence of Lemma \ref{lem-inside}, note that for any $r\in \RR$ and any $z\in \Fsym{a}{b}{r}{=}$, we are guaranteed that $z$ admits almost disjoint geodesics $\Gamma_{z,a},\Gamma_{z,b}$ unless the following holds:
\begin{enumerate}
\item \label{it:timeint} A geodesic from $z$ to $a$ traces $\Fsym{a}{b}{r}{=}$ for a nontrivial time interval.
\end{enumerate}
In particular, note that if there exist almost disjoint geodesics $\Gamma_{z,a},\Gamma_{z,b}$, then we must necessarily have $z\in T_{\textup{2-star}}$.

Since $(a,b)\in \Hsym{X}{D}$, the set $K_{a,b}$ of $r\in \RR$ for which condition \ref{it:timeint} above holds satisfies $\mathrm{Leb}(K_{a,b})=0$. By combining this with \eqref{eq:17}, we have
\begin{equation}
  \label{eq:18}
  \mathrm{Leb}(f(A)\cap (\RR \setminus K_{a,b}))=\mathrm{Leb}(f(A))>0.
\end{equation}
We now apply Theorem \ref{intersection_universal_bounds} with $n=1$ and $V=(\RR \setminus K_{a,b})$ to obtain that $\dim(f^{-1}(\RR\setminus K_{a,b}))\geq 1$. Finally, as we discussed, any $z\in f^{-1}(\RR\setminus K_{a,b})\subseteq A$ must satisfy $z\in T_{\textup{2-star}}$ as well. This completes the proof.
\end{proof}

\begin{remark} It is natural to ask whether the techniques of \Cref{3-star-section} and \Cref{sec:lower-bound-2} can be extended to prove lower bounds on the set of $k$-stars for $k\geq 4$. To do so, one would need to establish an analogue of the ``positive Lebesgue measure condition'' \Cref{prop-pos-leb} for a Lipschitz function $f$ whose non-constancy set corresponds to potential $k$-star points. A central obstacle is that the collection of Voronoi-type cells $V_{1}, ... V_{k}$ defined as
\begin{equation*}
V_j=\bigcap_{i\neq j}\Fsym{u_{j}}{u_{i}}{r_{i}-r_{j}}{\geq}
\end{equation*}
(where $r_{2},...,r_{k} \in \mathbb{R}$ are ``offset parameters'' and $r_{1} = 0$) for $k > 3$ points may exhibit increasingly complicated geometric configurations, in contrast to the relatively tractable cases for $k \leq 3$ which we have analyzed so far. In fact, it is unclear in general whether there will exist any point which is incident to four distinct Voronoi cells, let alone a positive measure set of such points. More concretely, it is unclear what conditions on the underlying geodesic space would guarantee the existence of parameters $r_{2}, r_{3}, ..., r_{k}$ such that
\begin{equation}\label{eq:voronoi-condition-nonempty}
\bigcap_{i=1}^{k}V_{i} = \bigcap_{i=2}^{k}\Fsym{u_{1}}{u_{i}}{r_{i}}{=} \neq \emptyset,
\end{equation}
which is the type of condition one would hope to use to deduce the existence of a $k$-star. Because of this difficulty, it seems likely that new techniques will be required to obtain analogous bounds for larger $k$.
\end{remark}

\section{Dimension lower bounds specific to LQG}\label{section-lqg-bounds}

\subsection{Frostman-type estimates}

In this section we will lay the groundwork for the proof of \Cref{lqg_lower_bounds_thm}, which establishes lower bounds for the LQG and Euclidean Hausdorff dimensions of the non-constancy set of a locally Lipschitz function for the LQG metric. This will be done by obtaining Frostman-type estimates, as is usual for dimension lower bounds. Interestingly, we will first obtain a Frostman result (see \Cref{radii-product-lower-bound-lemma}) wherein Euclidean and LQG diameters are juxtaposed into a single estimate. Thereafter, we will leverage this estimate along with an analysis of thick points of the GFF to obtain separate Frostman estimates and thus dimension lower bounds corresponding to each of the Euclidean (Proposition \ref{euclidean-dim-lemma}) and LQG (Proposition \ref{lqg-dim-lemma}) metrics.

Throughout, we will consider a function $f$ which is locally Lipschitz with respect to the LQG metric, along with its associated non-constancy set $\Ssym{f}{V}$ for $V \subseteq f(\mathbb{C})$. So long as $f$ is not just a constant function, we can choose random constants $a < b$ such that each of the sets $\{x : f(x) < a\}$ and $\{x : f(x) > b\}$ contains a Euclidean ball, denoted by $A$ and $B$ respectively (here we are using that almost surely the subcritical LQG metric induces the Euclidean topology), which we will choose so that both have the same radius. We now introduce a definition which will be useful shortly.

\begin{defn}[The collection $\{L_{\theta}\}$]\label{def-l-theta} Given two disjoint Euclidean balls $A$ and $B$ with the same radius, we define the collection of line segments $\{L_{\theta}\}$ as follows. Parametrize the boundaries of $\partial A$ and $\partial B$ by $\theta \in (-\pi, \pi]$ at constant Euclidean speed, choosing the point $\theta = 0$ on each of $\partial A$ and $\partial B$ to be the unique closest point to the other ball. Then we define $L_{\theta}$ to be the (Euclidean) line segment connecting the points $\theta$ on each of $\partial A$ and $\partial B$, and we take $\{L_{\theta}\}$ to be the collection of all such line segments for $\theta \in \left(-\frac{\pi}{4},\frac{\pi}{4}\right)$. (See \Cref{fig-demo} for an illustration of this definition.)    
\end{defn}

\begin{figure}[t]
\begin{center}
\includegraphics[width=0.7\textwidth]{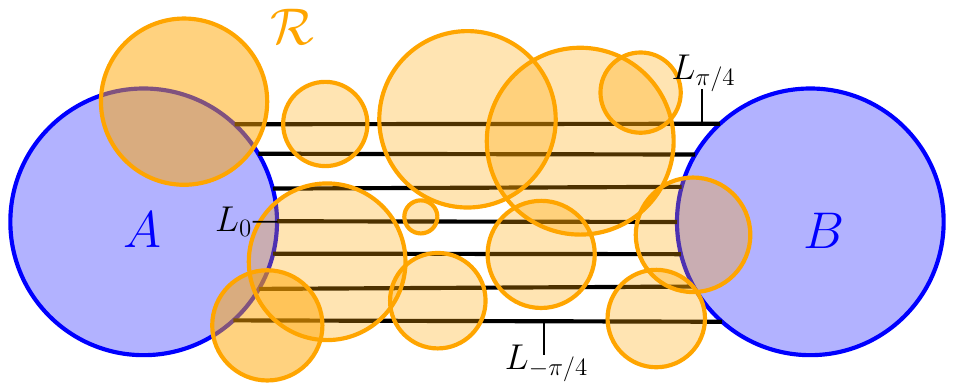}  
\caption{\label{fig-demo}  
Illustration of the setup for \Cref{lqg_lower_bounds_thm}, in the case where we lower-bound the Euclidean dimension of $\Ssym{f}{V}$. $A$ and $B$ are Euclidean balls of equal radius chosen so that $f(x) < a$ on $A$ and $f(x) > b$ on $B$, for random constants $a$ and $b$. $\{L_{\theta}\}$ is the family of line segments defined in \Cref{def-l-theta}. $\mathcal{R}$ is an arbitrary cover of the non-constancy set $\Ssym{f}{V}$ by Euclidean balls. The case where we are lower-bounding the LQG dimension is identical, except that $\mathcal{R}$ will be an arbitrary cover of $\Ssym{f}{V}$ by LQG balls rather than Euclidean ones.}
\end{center}
\end{figure}

The following lemma, whose proof will utilize the above definition, is the crucial Frostman-style estimate which will eventually be used to obtain the LQG and Euclidean dimension lower bounds stated in Theorem \ref{lqg_lower_bounds_thm}.

\begin{lem}\label{radii-product-lower-bound-lemma}
  Let $h$ be a whole-plane GFF and let $\gamma \in (0,2)$. For a convex open set $X\subseteq \CC$, let $f: X \rightarrow \mathbb{R}$ be a random non-constant locally Lipschitz function with respect to $D_{h}$. Then almost surely, for every set $V\subseteq f(X)$, and every $\beta<\dim(V)$, there exists $C_{V,\beta}>0$ such that for every countable cover $\cR$ of $\Ssym{f}{V}$ by subsets of $\mathbb{C}$, we have
\begin{equation}\label{radii-product-lower-bound}
    C_{V,\beta} \leq \sum_{R \in \cR}\operatorname{diam}_{D_{h}}(R)^\beta\cdot\operatorname{diam}_{\mathrm{Euc}}(R),
\end{equation}
\end{lem}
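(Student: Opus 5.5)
We plan to prove \eqref{radii-product-lower-bound} with a Fubini-type argument over the family of segments $\{L_\theta\}$ from \Cref{def-l-theta}. Since $f$ is non-constant, we fix (as in the discussion preceding \Cref{def-l-theta}) constants $a<b$ and equal-radius Euclidean balls $A,B\subseteq X$ with $f<a$ on $A$ and $f>b$ on $B$. We may shrink $V$ to $V\cap(a,b)$: this only changes $V$ by at most a union with $V\setminus(a,b)$. To avoid losing dimension, we instead choose $a,b$ so that $\dim(V\cap(a,b))>\beta$. This is possible because $V\subseteq f(X)$ is covered by countably many intervals $(a_k,b_k)$ realized in this way, using the connectedness of $X$ and \Cref{lem:conn}.

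For $V'=V\cap(a,b)$ with $\dim V'>\beta$, Frostman's lemma on $\RR$ gives a probability measure $\nu$ on $V'$ with $\nu(I)\le C|I|^\beta$ for all intervals $I$. By convexity of $X$, every segment $L_\theta$ lies in $X$ and runs from $A$ to $B$. The restriction $f|_{L_\theta}$ is continuous and passes from below $a$ to above $b$. For each $v\in V'$, take the \emph{last} time $L_\theta$ hits $f^{-1}(v)$ before its first entry into $\{f\ge b\}$. More robustly, use a point of $f^{-1}(v)\cap L_\theta$ that lies in $\Ssym{f}{V}$, obtained as in \Cref{lem:conn} along the segment. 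The cleaner statement is that $f(L_\theta\cap \Ssym{f}{V})\supseteq V'$: if $f$ were locally constant near every preimage of $v$ on $L_\theta$, the clopen argument applied to the segment gives a contradiction.

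Now let $\cR$ be any countable cover of $\Ssym{f}{V}$. Fix $\theta$. The sets $R\cap L_\theta$ cover $L_\theta\cap\Ssym{f}{V}$, so the images $f(R\cap L_\theta)$ cover $V'$. By the local Lipschitz property (with constant $K$ on the compact convex hull of $A\cup B$), $f(R\cap L_\theta)$ lies in an interval of length at most $K\operatorname{diam}_{D_h}(R)$. Hence
\[
1=\nu(V')\le C K^\beta\sum_{R\in\cR}\operatorname{diam}_{D_h}(R)^\beta\,\mathbbm 1_{R\cap L_\theta\neq\emptyset}.
\]
Integrating over $\theta\in(-\pi/4,\pi/4)$ and using Fubini, it suffices to show that the Lebesgue measure of $\{\theta: R\cap L_\theta\ne\emptyset\}$ is at most $c\operatorname{diam}_{\mathrm{Euc}}(R)$ for a constant $c$ depending on $A,B$. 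This geometric step is the main obstacle. The segments $L_\theta$ connect points at equal angular parameter on two congruent circles, so the map $\theta\mapsto L_\theta$ is a smooth family of segments. For $|\theta|<\pi/4$ these segments are pairwise disjoint and sweep out a region transversally: the direction field is smooth and the transverse speed is bounded below. A set of Euclidean diameter $\delta$ can therefore only meet segments from a $\theta$-interval of length $O(\delta)$. We also handle sets $R$ with large diameter trivially, by bounding the measure by $\pi/2$, and we shrink the angular range if needed to guarantee bi-Lipschitz transversality.

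Combining these bounds gives $\pi/2\le cCK^\beta\sum_R\operatorname{diam}_{D_h}(R)^\beta\operatorname{diam}_{\mathrm{Euc}}(R)$, which yields $C_{V,\beta}$. All random objects ($A$, $B$, $K$, $\nu$) are chosen after sampling, so the statement holds almost surely simultaneously for all $V$ and $\beta$.
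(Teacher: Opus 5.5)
Your architecture is the paper's. You choose $a<b$ with $\dim(V\cap[a,b])>\beta$ and equal-radius balls $A\subseteq\{f<a\}$, $B\subseteq\{f>b\}$. You run the clopen argument of Lemma~\ref{lem:conn} on each segment $L_\theta$ to see that the images of the sets $R$ meeting $L_\theta$ cover $V\cap[a,b]$. You use the Lipschitz bound to control $\sum_{R\cap L_\theta\neq\emptyset}\operatorname{diam}_{D_h}(R)^\beta$ from below. Finally you integrate in $\theta$, using that points on $L_{\theta_1}$ and $L_{\theta_2}$ are at Euclidean distance at least $c|\theta_1-\theta_2|$. Your use of $f(R\cap L_\theta)$ with a Lipschitz constant on the compact hull of $A\cup B$ is slightly cleaner than the paper's reduction to covers with $R\subseteq B_{2K}(0)$.

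The one step that fails as written is the appeal to Frostman's lemma. Frostman's lemma is a theorem about Borel (more generally analytic) sets. The statement you are proving asserts the bound for \emph{every} set $V\subseteq f(X)$, and for arbitrary sets Frostman's lemma can fail. For instance, there are universally null subsets of $\RR$ of positive Hausdorff dimension. Such a set has outer measure zero for every non-atomic Borel measure, so no $\nu$ with $\nu(I)\le C|I|^\beta$ and $\nu^*(V')>0$ exists.

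The fix is to drop the measure and work with Hausdorff content directly, which is exactly what the paper does via its constant $C^*_{V,\beta}$. Since $\{f(R\cap L_\theta)\}_{R}$ covers $V'$ by sets of diameter at most $K\operatorname{diam}_{D_h}(R)$, you get $\mathcal{H}^{\beta}_{\infty}(V')\le K^\beta\sum_{R\cap L_\theta\neq\emptyset}\operatorname{diam}_{D_h}(R)^\beta$. Moreover $\mathcal{H}^{\beta}_{\infty}(V')>0$ for any set with $\mathcal{H}^\beta(V')>0$, in particular whenever $\beta<\dim V'$, with no measurability needed. With that substitution your argument is complete for all $V$. Alternatively, you can add the hypothesis that $V$ is Borel, which covers every application in the paper since there $V$ is co-countable.
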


\begin{proof}
  Let $a < b$ be (random) constants chosen such that
  \begin{enumerate}
  \item $\{x : f(x) < a\}$ contains an Euclidean ball $A$ and $\{x : f(x) > b\}$ contains an Euclidean ball $B$, both with the same radius.
  \item \label{it:ab} $\dim(V\cap [a,b])>\beta$.
  \end{enumerate}
That such constants $a,b$ indeed exist almost surely can be checked by using the countable stability of Hausdorff dimension and the continuity of $f$, along with the fact that almost surely the LQG metric induces the Euclidean topology. For this choice of $A$ and $B$, consider the collection of line segments $\{L_{\theta}\}$ (see \Cref{def-l-theta}), which we note lies in $X$ since $X$ is convex. See \Cref{fig-demo} for an illustration of the setup. 

We now fix a big Euclidean ball $B_K(0)$ such that $B_K(0)\supset A \cup B \cup \{L_{\theta}\}$. For our purposes, it will suffice to show that the bound in \eqref{radii-product-lower-bound} holds for all countable covers $\cR$ of $\Ssym{f}{V}\cap \{L_{\theta}\}$, since any cover of $\Ssym{f}{V}$ is also a cover of $\Ssym{f}{V} \cap \{L_{\theta}\}$. In fact, we can additionally restrict to countable covers $\cR$ for which every $R\in \cR$ satisfies
  \begin{equation}
    \label{eq:53}
    R\cap B_{K}(0)\neq \emptyset\textrm{ and } R\subseteq B_{2K}(0).
  \end{equation}
This restriction is justified as follows. Since $\Ssym{f}{V} \cap \{L_{\theta}\} \subset B_{K}(0)$, we may discard any $R \in \cR$ with $R\cap B_{K}(0) = \emptyset$. Additionally, if both $R\cap B_{K}(0)$ and $R\cap B_{2K}(0)^c$ are non-empty for some $R\in \cR$, then we would already have
  \begin{equation}
    \label{eq:52}
    \operatorname{diam}_{D_{h}}(R)^\beta\cdot\operatorname{diam}_{\mathrm{Euc}}(R)\geq D_h(\partial B_{K}(0), \partial B_{2K}(0))^\beta \cdot \mathrm{dist}(\partial B_{K}(0), \partial B_{2K}(0))>0.
  \end{equation}

  By the continuity of $f$, for every $\theta\in (-\pi/4,\pi/4)$, we necessarily have $[a,b]\subseteq f(L_\theta)$, and thereby $V\cap [a,b]\subseteq f(L_\theta)$. Further, we note the basic inequality
  \begin{equation}
    \label{eq:48}
    \Ssym{f\lvert_{L_\theta}}{V}\subseteq \Ssym{f}{V}.
  \end{equation}
  Therefore, since $\cR$ covers $\Ssym{f}{V}$, it also covers $\Ssym{f\lvert_{L_\theta}}{V}$ for every $\theta\in (-\pi/4,\pi/4)$. Thus, by Lemma \ref{lem:conn}, we have
    \begin{equation}
    \label{eq:49}
    f(\Ssym{f\lvert_{L_\theta}}{V})= f( L_{\theta})\cap V\supseteq V\cap [a,b].
  \end{equation}
  Thus, since $\cR$ covers $\Ssym{f\lvert_{L_\theta}}{V}$ for every $\theta$, the family $\{f(R)\}_{R\in \cR,R \cap L_{\theta} \neq \emptyset}$ covers $V \cap [a,b]$ for every $\theta$. Since $f\lvert_{B_{2K}(0)}$ is $C_K$-Lipschitz with respect to $D_{h}$ for a constant depending on $K$, we must have
  \begin{equation}\label{crucial_inequality}
\sum\limits_{R \in \cR : R \cap L_{\theta} \neq \emptyset}\text{diam}_{D_{h}}(R )^{\beta} \geq C_K^{-\beta}\sum\limits_{R \in \cR : R \cap L_{\theta} \neq \emptyset}\text{diam}_{\mathrm{Euc}}(f(R) )^{\beta} \geq C_K^{-\beta} C^*_{V,\beta},
\end{equation}
where $C^*_{V,\beta}$ can be defined by
\begin{equation}
  \label{eq:51}
  C^*_{V,\beta}\coloneqq \inf_{\textrm{covers } \cR' \textrm{ of } V\cap [a,b]} \sum_{R\in \cR'} \mathrm{diam}_{\mathrm{Euc}}(R)^\beta,
\end{equation}
which we note is strictly positive since by \eqref{it:ab}, we have $\beta<\mathrm{dim}(V\cap [a,b])$.

The bound \eqref{crucial_inequality} holds for each choice of $\theta \in \left(-\frac{\pi}{4},\frac{\pi}{4}\right)$, so integrating over $\theta \in (-\frac{\pi}{4}, \frac{\pi}{4})$ (i.e., along the ``innermost quadrant" of the boundaries of each of $A$ and $B$), we have: 

\begin{align}
  \label{eq:fubini}
\frac{\pi}{2}C_K^{-\beta} C^*_{V,\beta} &\leq \int_{-\frac{\pi}{4}}^{\frac{\pi}{4}}\sum\limits_{R \in \cR : R \cap L_{\theta} \neq \emptyset}\text{diam}_{D_{h}}(R)^\beta d\theta \nonumber \\
&= \sum\limits_{R \in \cR}\int_{\theta \in \left(-\frac{\pi}{4}, \frac{\pi}{4}\right) : R \cap L_{\theta} \neq \emptyset} \text{diam}_{D_{h}}(R)^\beta d\theta \quad \text{ (Fubini) } \nonumber \\
&= \sum\limits_{R \in \cR}\text{diam}_{D_{h}}(R)^\beta \cdot \text{Leb}\{\theta \in \left(-\frac{\pi}{4}, \frac{\pi}{4}\right): R \cap L_{\theta} \neq \emptyset\} \nonumber \\
&\lesssim \sum\limits_{R \in \cR} \text{diam}_{D_{h}}(R)^\beta \cdot \text{diam}_{\mathrm{Euc}}(R).
\end{align}
To obtain the last line above, we have simply used that there exists a universal constant $c > 0$ such that for any two $\theta_1,\theta_2\in (-\pi/4,\pi/4)$ and points $x_1\in L_{\theta_1}, x_2\in L_{\theta_2}$, we have $|x_1-x_2|\geq c|\theta_1-\theta_2|$. For this step, it is necessary to work with $\theta$ bounded away from $-\pi/2,\pi/2$, and for this reason we considered $\theta\in (-\pi/4,\pi/4)$ as opposed to $\theta\in (-\pi/2,\pi/2)$. The proof is now completed by defining $C_{V,\beta}=c\frac{\pi}{2}C_K^{-\beta} C^*_{V,\beta}$ and invoking \eqref{eq:fubini}.
\end{proof}

To lower-bound the LQG Hausdorff dimension, we will want to convert the bound \eqref{radii-product-lower-bound} given by \Cref{radii-product-lower-bound-lemma} to a bound of the form
\begin{equation}\label{euclidean-ineq}
C \leq \sum\limits_{R \in \mathcal{R}} \text{diam}_{D_{h}}(R)^{\beta+p}
\end{equation}
which holds for the same constant $C$ across all arbitrary covers, yielding a lower bound of $\beta + p$ for the LQG dimension of $\Ssym{f}{V}$. Analogously, to lower-bound the Euclidean Hausdorff dimension, we will want to convert the bound \eqref{radii-product-lower-bound} to a bound of the form
\begin{equation}\label{lqg-ineq}
C' \leq \sum\limits_{R \in \mathcal{R}} \text{diam}_{\mathrm{Euc}}(R)^{\beta p'+1},
\end{equation}
yielding a lower bound of $\beta p' + 1$ for the Euclidean dimension of $\Ssym{f}{V}$. Since \Cref{radii-product-lower-bound-lemma} applies for every $\beta < \dim(V)$, this will then give lower bounds of $\dim(V) + p$ for the LQG dimension of $\Ssym{f}{V}$ and $\dim(V) \cdot p' + 1$ for its Euclidean dimension.

Although we could directly substitute in the H\"older exponents for which the LQG metric is known to be almost surely locally H\"older continuous with respect to Euclidean distance and vice versa (\Cref{lem:estholder}) in order to obtain bounds of the form \eqref{euclidean-ineq} and \eqref{lqg-ineq}, it turns out to be the case that we can get much better bounds by partitioning the elements of our cover according to the approximate local ``thickness'' of the underlying Gaussian free field, and applying different estimates to relate Euclidean and LQG diameters depending on this thickness for a given element.

Let us now introduce a definition which will be very useful to us throughout the proofs to follow.

\begin{defn}(Sufficient covering family)\label{def-sufficient-collection}
Let $(X,D)$ be a metric space and consider a subset $K \subseteq X$. Let $\mathcal{A}$ be a collection of sets which satisfies the following property for some $\delta > 0$, $M > 0$, and $C > 0$: for every set $U \subset K$ with $0 < \operatorname{diam}(U) \leq \delta$, there exists a collection $\{A_{i}\} \subset \mathcal{A}$ with cardinality at most $M$ such that $\operatorname{diam}(A_{i}) \leq C\operatorname{diam}(U)$ for each $i$, and such that $U \subset \cup_{i=1}^{M}A_{i}$. Then we call $\mathcal{A}$ a \textbf{sufficient covering family} for $K$.
\end{defn}

We observe that if $A$ is a sufficient covering family for $K$, then to lower-bound the Hausdorff dimension of an arbitrary subset of $K$, it suffices to consider only covers consisting of elements of $A$, rather than all arbitrary covers. This follows directly from the definition of Hausdorff dimension. So, as we go on to compute our dimension lower bounds, we will make use of convenient choices of sufficient covering families to simplify our arguments.

We will now proceed to obtain inequalities of the form \eqref{euclidean-ineq} and \eqref{lqg-ineq}, which will then lead us in \Cref{lqg_lower_bounds_thm} to lower bounds on the Euclidean and LQG Hausdorff dimensions of $\Ssym{f}{V}$. 

\subsection{Euclidean lower bound}
\label{subsec:Euc}

The goal of this section is to obtain the following lower bound on the Euclidean dimension of sets which satisfy a particular ``juxtaposed'' Frostman-style estimate \eqref{eq:78}.

\begin{prop}\label{euclidean-dim-lemma}
  Let $h$ be a whole-plane GFF and let $\gamma \in (0,2)$. Then almost surely, the following holds for all bounded Borel sets $H\subseteq \CC$ and $\beta>0$. If there exists a constant $C_{H,\beta}>0$ such that for all countable covers $\cR$ of $H$, we have
  \begin{equation}
    \label{eq:78}
    \sum_{R\in \cR}\mathrm{diam}_{D_h}(R)^\beta \cdot \mathrm{diam}_{\mathrm{Euc}}(R)>C_{H,
    \beta},
  \end{equation}
then we necessarily have $\dim_{\mathrm{Euc}}(H)\geq 1+\DEuc{\gamma}{\beta}$.
\end{prop}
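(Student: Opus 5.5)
Throughout, write $d_\gamma,\xi,Q$ as in \eqref{def:xi-and-Q}. The plan is to argue by contradiction, testing the hypothesis \eqref{eq:78} against Euclidean covers of $H$ by dyadic squares, sorted according to the ``thickness'' exponent of the GFF on each square.

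\textbf{Step 1: reduction to dyadic squares.} Since $H$ is bounded, fix a compact $K\supseteq H$. Closed dyadic squares (and their $3\times 3$ neighbourhoods) form a sufficient covering family for $K$ in the Euclidean metric (Definition~\ref{def-sufficient-collection}). Suppose $\dim_{\mathrm{Euc}}(H)<d$ for some $d<1+\DEuc{\gamma}{\beta}$. Then for every $\eta>0$ there is a cover $\cR$ of $H$ by dyadic squares with
\[
\sum_{S\in\cR}|S|^d<\eta,
\]
where $|S|$ denotes side length. Let $n_\epsilon$ be the number of squares of side $\epsilon=2^{-k}$ in $\cR$. Then $n_\epsilon\le \eta\epsilon^{-d}$, and we may take all $\epsilon$ below any prescribed threshold. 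I will show that $\sum_{S\in\cR}\mathrm{diam}_{D_h}(S)^\beta\,|S|\to 0$ as $\eta\to 0$, contradicting \eqref{eq:78}. Since \eqref{eq:78} holds for every cover, it holds in particular for dyadic ones, so this suffices.

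\textbf{Step 2: almost sure counting of thin squares.} Fix a fine mesh $\alpha_0<\alpha_1<\dots<\alpha_m$ of $[\xi(Q-2)-\delta,\xi Q)$. For each dyadic scale $\epsilon$ and each $j$, let $N_\epsilon(\alpha_j)$ be the number of dyadic squares of side $\epsilon$ in $K$ with $\mathrm{diam}_{D_h}>\epsilon^{\alpha_j}$. By Proposition~\ref{lem:est1} (applied to slightly enlarged balls),
\[
\EE[N_\epsilon(\alpha_j)]\lesssim \epsilon^{-2+(Q-\alpha_j/\xi)^2/2}.
\]
Markov's inequality and Borel--Cantelli over dyadic $\epsilon$ and finitely many $j$ then give, almost surely,
\[
N_\epsilon(\alpha_j)\le \epsilon^{-s(\alpha_j)-\delta},\qquad s(\alpha):=2-\tfrac12(Q-\alpha/\xi)^2,
\]
for all small $\epsilon$ and all $j$. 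Separately, Proposition~\ref{lem:estholder} gives $\mathrm{diam}_{D_h}(S)\le |S|^{\xi(Q-2)-\delta}$ for every small square. Both statements are about all dyadic squares at once, not about $H$, so a single almost sure event handles all $H$ and $\beta$ simultaneously.

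\textbf{Step 3: interpolation and optimization.} Assign each $S\in\cR$ of side $\epsilon$ to the largest $j$ with $\mathrm{diam}_{D_h}(S)\le \epsilon^{\alpha_j}$. Such squares contribute at most $\epsilon^{\alpha_j\beta+1}$ each. Apart from the top bin they also have $\mathrm{diam}>\epsilon^{\alpha_{j+1}}$, so there are at most $\min(n_\epsilon,N_\epsilon(\alpha_{j+1}))$ of them. For any $\theta\in[0,1]$ we have $\min(x,y)\le x^\theta y^{1-\theta}$, so the contribution of bin $j$ at scale $\epsilon$ is bounded by
\[
(n_\epsilon\epsilon^{d})^{\theta}\,\epsilon^{\alpha_j\beta+1-d\theta-(1-\theta)(s(\alpha_{j+1})+\delta)} .
\]
It then suffices to find, for each $\alpha$, some $\theta\in(0,1]$ with
\[
\alpha\beta+1-d\theta-(1-\theta)s(\alpha)>0 .
\]
Given this, summing over dyadic $\epsilon$ yields a bound $\lesssim\eta^{\theta_{\min}}$, which tends to $0$ with $\eta$. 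Taking $\theta=1$ works whenever $\alpha\beta+1>d$. For smaller $\alpha$ one optimizes over $\theta$, and the problem reduces to checking
\[
d<\min_{\alpha}\max_{\theta}\,\frac{\alpha\beta+1-(1-\theta)s(\alpha)}{\theta} .
\]
The right-hand side is a quadratic optimization in $\alpha$ (since $s$ is quadratic in $\alpha$), and its value should be exactly $1+\DEuc{\gamma}{\beta}$; the square root in Definition~\ref{def:deltas} is the signature of this optimization. For the bins with $\alpha\ge\alpha_m$, the trivial bound together with $\alpha\beta+1>d$ handles them, provided the mesh is chosen after $d$.

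\textbf{Main obstacle.} I expect the main difficulty to lie in this last optimization: verifying that the min–max equals $1+\DEuc{\gamma}{\beta}$, and handling the regime $\alpha<\xi(Q-2)$, where $s(\alpha)$ would be negative and the counting bound is vacuous but the Hölder bound applies. I would first carry out the interpolation with $\theta$ chosen pointwise in $\alpha$, check the critical $\alpha$ where $\alpha\beta+1=d$ against the formula, and only then worry about the $\delta$-losses from the mesh, which vanish as the mesh is refined and $\delta\to 0$.
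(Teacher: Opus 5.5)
Your argument is correct and is essentially the paper's proof. It uses the same binning by the exponent $\alpha$ with $\mathrm{diam}_{D_h}(S)\approx|S|^{\alpha}$, the same almost sure counts of thick squares from Proposition~\ref{lem:est1} via Markov and Borel--Cantelli, and the same H\"older input, just phrased by contradiction. The optimization you flag as the main obstacle is immediate: the exponent is affine in $\theta$, so only $\theta\downarrow 0$ (requiring $g(\alpha)>0$, with $g$ as in \eqref{eq:quad}) and $\theta=1$ (requiring $\alpha\beta+1>d$) matter, and ``one of these holds for every $\alpha$'' is exactly $d<1+\beta\underline{r}=1+\DEuc{\gamma}{\beta}$, where $\underline{r}$ is the smaller root of $g$ — which is precisely the paper's split at a threshold $\alpha_*\approx\underline{r}$ into thick squares (summed over the whole grid) and thin squares (controlled by the cover itself).
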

  
In order to prove the above result, we will first prove a series of small lemmas, and will thereafter combine them to complete the proof. The broad strategy is as follows: we will group up the elements $R$ of an arbitrary countable cover $\cR$ of $H$ according to the exponent $\alpha$ such that $\operatorname{diam}_{D_{h}}(R)\sim\operatorname{diam}_{\mathrm{Euc}}(R)^\alpha$, where intuitively, the above exponent $\alpha$ is small if the GFF $h$ is ``thick'' around $R$. Standard estimates for the GFF will allow us to estimate the number of $R\in \cR$ which are atypically thick, in the sense that the corresponding $\alpha$ is small. Thus, after choosing an appropriate ``thickness threshold'' $\alpha_*$, we can decompose $\cR=\cR_{\downarrow}\cup \cR_{\uparrow}$, where the first set $\cR_{\downarrow}$ consists of atypically thick $R$ (in the sense that $\alpha=\alpha(R)<\alpha_*$) and has very small cardinality, thereby ensuring that the sum $\sum_{R \in \cR_\downarrow}\operatorname{diam}_{D_{h}}(R)^\beta\cdot\operatorname{diam}_{\mathrm{Euc}}(R)$ is smaller than $C_{H,\beta}/2$. Given \eqref{eq:78}, this will allow us to conclude that
\begin{displaymath}
  C_{H,\beta}/2\leq \sum_{R \in \cR_\uparrow}\operatorname{diam}_{D_{h}}(R)^\beta\cdot\operatorname{diam}_{\mathrm{Euc}}(R)\lesssim \sum_{R \in \cR_\uparrow} \operatorname{diam}_{\mathrm{Euc}}(R)^{1+\beta \alpha_*} \leq \sum_{R \in \cR} \operatorname{diam}_{\mathrm{Euc}}(R)^{1+\beta \alpha_*}
\end{displaymath}
for all countable covers $\mathcal{R}$ of $H$, thereby yielding a lower bound of $1+\beta \alpha_*$ for its Euclidean dimension. As we will soon see, the optimal value of $\alpha_*$ will arise as a root of the quadratic equation
\begin{equation}
  \label{eq:quad}
g(\alpha) = -1+\beta\alpha+(Q-\alpha/\xi)^{2}/2.
\end{equation}

We now set the stage to formally execute the above strategy. First, note that because $\mathbb{C} = \bigcup_{n \in \mathbb{N}} \overline{B_{n}(0)}$ and because our sets of interest $H$ are bounded, it will suffice to fix $N \in \mathbb{N}$ and prove the desired bound under the additional assumption that $H \subset \overline{B_{N}(0)}$. We will work with a fixed such choice of $\overline{B_{N}(0)}$ throughout this section. 

We will also work throughout this section with the following collections of sets $\Gsymlink{k}$. Fix some positive integer $k_{0}$ (we will choose this later on to be very large). For each integer $k > k_{0}$, we define the collection $\Gsymlink{k}$ to consist of the collection of Euclidean balls with radius $2^{-k}$ which are centered at the points $\{z_{j}\}$ of the lattice $2^{-k}\mathbb{Z}^{2} \cap \overline{B_{N+1}(0)}$. That is, we define
\begin{equation}\label{def:G-k}
    \cG_k
    :=
    \left\{
    B_{2^{-k}}(z_j) \colon z_j \in 2^{-k}\mathbb Z^2 \cap \overline{B_{N+1}(0)}
    \right\}.
\end{equation}

When proving the desired lower bound on $\dim_{\mathrm{Euc}}(H)$, rather than working with all possible covers $\cR$ of $H$, we will instead only consider covers whose elements belong to the family
\begin{equation}\label{def:G-geqk0}
    \cG_{>k_0}:=\bigcup_{k>k_0}\Gsymlink{k}.
\end{equation}
This restriction is justified by the following lemma:

\begin{lem}
  \label{lem:it-is-suf}
  For $\overline{B_N(0)}$ equipped with the Euclidean metric, the family $\Ggeqsymlink{k_{0}}$ is a sufficient covering family (in the sense of Definition \ref{def-sufficient-collection}) for any $k_0\in \mathbbm{N}$.
\end{lem}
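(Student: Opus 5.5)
We verify Definition \ref{def-sufficient-collection} directly, with $K=\overline{B_N(0)}$ and the Euclidean metric, by choosing $\delta=2^{-k_0-1}$, $C=4$ and $M=9$. Take any $U\subset \overline{B_N(0)}$ with $0<\operatorname{diam}(U)=d\leq \delta$. Let $k$ be the unique integer with $2^{-k-1}< d\leq 2^{-k}$. Since $d\leq 2^{-k_0-1}$, we have $k\geq k_0+1>k_0$, so the balls of $\Gsymlink{k}$ all belong to $\Ggeqsymlink{k_0}$.

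Next we cover $U$ by boundedly many balls of $\Gsymlink{k}$. Fix any $u\in U$, so that $U\subseteq \overline{B_{d}(u)}\subseteq \overline{B_{2^{-k}}(u)}$. Every point of $\CC$ lies within Euclidean distance $2^{-k}/\sqrt2<2^{-k}$ of some lattice point of $2^{-k}\ZZ^2$, so the open balls $B_{2^{-k}}(z)$, $z\in 2^{-k}\ZZ^2$, cover the plane. Any such ball that meets $\overline{B_{2^{-k}}(u)}$ has its center within distance $2\cdot 2^{-k}$ of $u$. A disk of radius $2\cdot 2^{-k}$ contains at most $25$ points of $2^{-k}\ZZ^2$, since the lattice points of a disk of radius $2$ in $\ZZ^2$ lie in a $5\times 5$ box. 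We therefore take $M=25$ rather than $9$; the exact constant is irrelevant.

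It remains to check that these centers are admissible. Each such center $z_j$ satisfies $|z_j|\leq |u|+2^{1-k}\leq N+2^{1-k_0}$, which is at most $N+1$ once $k_0\geq 1$. So $z_j\in 2^{-k}\ZZ^2\cap\overline{B_{N+1}(0)}$ and $B_{2^{-k}}(z_j)\in\Gsymlink{k}$. This is the one point that needs care: the family $\cG_k$ is truncated to $\overline{B_{N+1}(0)}$, and the margin of $1$ absorbs the lattice offset. For $k_0=0$ the same bound still works if we shrink $\delta$ to $2^{-2}$.

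Finally, each selected ball has diameter $2\cdot 2^{-k}<4d=4\operatorname{diam}(U)$, so $C=4$ works. Hence $\Ggeqsymlink{k_0}$ is a sufficient covering family with $\delta=\min(2^{-k_0-1},1/4)$, $M=25$ and $C=4$. No step is a genuine obstacle; the only points to watch are the dyadic scale selection and the boundary truncation at radius $N+1$.
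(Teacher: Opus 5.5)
Your proof is correct and is the same direct verification of Definition~\ref{def-sufficient-collection} that the paper gives; the paper simply asserts $\delta=2^{-k_0}$, $M=4$, $C=4$. Your cruder count $M=25$ is harmless, and your choice $\delta=2^{-k_0-1}$ is in fact the safer one: with $\delta=2^{-k_0}$, the closed disk of radius $2^{-k_0-1}$ about the origin already needs five admissible balls, because its center and the four axis points at distance $2^{-k_0-1}$ are lattice points at every scale $k>k_0$, so each must be the center of its own ball. When you write it up, state $M=25$ from the start rather than announcing $M=9$ and then revising it.
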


\begin{proof}
    Take e.g. $\delta = 2^{-k_{0}}$, $M = 4$, and $C=4$ in \Cref{def-sufficient-collection}.
\end{proof}

We now make a few more useful definitions.
\begin{enumerate}
\item With $\xi, Q$ defined as in \eqref{def:xi-and-Q}, we set
\begin{equation}\label{def:holder-exps}
    \underline{\alpha} := \xi\left(Q-2\right) \text{ and } \overline{\alpha} := \xi\left(Q+2\right).
\end{equation} Note that these are the almost sure local H\"older exponents relating the Euclidean and LQG metrics (\Cref{lem:estholder}).
\item\label{scaling-exp-theta} We also define a ``mesh size" $\delta > 0$ and discretize the continuous interval $[\underline{\alpha},\overline{\alpha}]$ into
\begin{equation}\label{def:A-delta}
    A_{\delta} := [\underline{\alpha}-\delta,\overline{\alpha}+\delta]\cap \delta\ZZ.
\end{equation}
The elements of $\Adeltasymlink{\delta}$ will represent approximate scaling exponents $\theta$ relating a set's LQG and Euclidean diameters.
 \item For $\alpha\in \Adeltasymlink{\delta}$, define 
 \begin{equation}\label{def:Gkalpha}
     \cG_{k}^{\alpha} := \{U \in \Gsymlink{k} : \text{diam}_{D_{h}}(U) \in (2^{-\alpha k},2^{-(\alpha-\delta) k}] \}.
 \end{equation}
Intuitively, the family $\Gkasymlink{k}{\alpha}$ consists of Euclidean balls of radius $2^{-k}$ for which the scaling exponent $\theta$ (see \Cref{scaling-exp-theta}) is approximately $\alpha$.
\end{enumerate}

\noindent The following easy decomposition of covers is at the heart of the argument.

\begin{lem}
  \label{lem:split}
Almost surely, for all $k_0$ large enough, and any cover $\cR$ of a set $H\subseteq \overline{B_N(0)}$ with $\cR$ consisting only of elements of $\Ggeqsymlink{k_0}=\bigcup_{k>k_0}\Gsymlink{k}$, and any $\alpha_*\in \Adeltasymlink{\delta}$, we have
  \begin{equation}
    \label{eq:43}
    \sum_{R\in \cR}\operatorname{diam}_{D_{h}}(R)^\beta\cdot\operatorname{diam}_{\mathrm{Euc}}(R)\leq \sum_{R\in \cR}\operatorname{diam}_{\mathrm{Euc}}(R)^{1+\beta (\alpha_*-\delta)} + \sum_{\alpha\in \Adeltasymlink{\delta}, \alpha<\alpha_*} \sum_{k>k_0}\# \Gkasymlink{k}{\alpha} (2^{-k})^{1+\beta (\alpha-\delta)}.
  \end{equation}
\end{lem}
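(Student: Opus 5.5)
The plan is to split the sum on the left of \eqref{eq:43} according to which class $\Gkasymlink{k}{\alpha}$ each $R\in\cR$ belongs to, and to bound the two resulting groups separately. First we need every $R$ to lie in exactly one class. By \Cref{lem:estholder}, applied on a bounded open set containing $\overline{B_{N+2}(0)}$, the identity map and its inverse are almost surely locally H\"older continuous with exponents slightly below $\underline{\alpha}$ and slightly below $\overline{\alpha}^{-1}$ respectively. Hence there is a random $k_0$ such that for every $k>k_0$ and every $U\in\Gsymlink{k}$ we have
\[
2^{-(\overline{\alpha}+\delta)k}\le \operatorname{diam}_{D_h}(U)\le 2^{-(\underline{\alpha}-\delta/2)k}.
\]
Here we use $\operatorname{diam}_{\mathrm{Euc}}(U)=2\cdot 2^{-k}$, with the constant factor absorbed by taking $k_0$ large. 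The intervals $(2^{-\alpha k},2^{-(\alpha-\delta)k}]$ for $\alpha\in\Adeltasymlink{\delta}$ tile a range containing the one above, since $\Adeltasymlink{\delta}$ runs over the grid $\delta\ZZ$ from below $\underline{\alpha}-\delta$ up to $\overline{\alpha}+\delta$. So, almost surely for all $k_0$ large enough, each $U\in\Gsymlink{k}$ with $k>k_0$ lies in exactly one $\Gkasymlink{k}{\alpha}$. Boundary rounding at the ends of the grid is handled by the $\delta$ margins, possibly after enlarging $k_0$.

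Now fix $\alpha_*\in\Adeltasymlink{\delta}$. We split $\cR=\cR_\uparrow\cup\cR_\downarrow$, where $R\in\cR_\uparrow$ if its class satisfies $\alpha\ge\alpha_*$, and $R\in\cR_\downarrow$ otherwise.

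For $R\in\cR_\uparrow\cap\Gkasymlink{k}{\alpha}$ we have $\operatorname{diam}_{D_h}(R)\le 2^{-(\alpha-\delta)k}\le 2^{-(\alpha_*-\delta)k}$. Multiplying by $\operatorname{diam}_{\mathrm{Euc}}(R)$ gives a contribution of at most $\operatorname{diam}_{\mathrm{Euc}}(R)^{1+\beta(\alpha_*-\delta)}$, up to the harmless factor coming from $\operatorname{diam}_{\mathrm{Euc}}=2\cdot2^{-k}$. That factor can be absorbed either by reading the diameter as $2^{-k}$ or by a constant. Summing over $\cR_\uparrow\subseteq\cR$ then gives the first term of \eqref{eq:43}.

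For $R\in\cR_\downarrow\cap\Gkasymlink{k}{\alpha}$ with $\alpha<\alpha_*$, the same upper bound gives a contribution of $(2^{-k})^{1+\beta(\alpha-\delta)}$. Since the elements of $\cR$ are distinct members of $\Gkasymlink{k}{\alpha}$, the number of such $R$ is at most $\#\Gkasymlink{k}{\alpha}$. Summing over $k>k_0$ and $\alpha<\alpha_*$ then yields the second term.

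The only real subtlety is the first step: making sure that, almost surely and uniformly over all balls in $\Ggeqsymlink{k_0}$ for $k_0$ large, the exponent classes exhaust $\Gsymlink{k}$. This comes from the almost sure H\"older bounds on the compact set $\overline{B_{N+2}(0)}$. The remaining work is bookkeeping of the factor $2$ in the Euclidean diameter, which I would handle by a constant or by slightly adjusting $k_0$.
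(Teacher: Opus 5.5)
Your proposal is correct and follows the paper's proof essentially verbatim. Like the paper, you use the almost sure H\"older bounds of \Cref{lem:estholder} to place each $R\in\cR$ in exactly one class $\cG_k^{\alpha}$, split according to whether $\alpha\ge\alpha_*$ or $\alpha<\alpha_*$, bound the first group using $\operatorname{diam}_{\mathrm{Euc}}(R)\le 1$, and bound the second group by $\#\cG_k^{\alpha}$.

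A minor note on bookkeeping, which the paper also glosses over. The factor $2$ coming from $\operatorname{diam}_{\mathrm{Euc}}(R)=2\cdot 2^{-k}$ actually works in your favor in the first term, so no constant is needed there. In the second term it costs a factor of $2$, which is harmless for how the lemma is used.
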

\begin{proof}
  To begin, we note that by H\"older continuity (\Cref{lem:estholder}), if $k_0$ is chosen to be large enough, then we can ensure that each $R\in \cR$ belongs to $\Gkasymlink{k}{\alpha(R)}$ for precisely one $k > k_{0}$ and one $\alpha(R)\in \Adeltasymlink{\delta}$. We note that this $\alpha(R)$ must satisfy precisely one of  $\alpha(R)\geq \alpha_*$ and $\alpha(R)< \alpha_*$. Thus, we can write
  \begin{align}
    \label{eq:44}
    &\sum_{R\in \cR}\operatorname{diam}_{D_{h}}(R)^\beta\cdot\operatorname{diam}_{\mathrm{Euc}}(R)\nonumber\\
    &= \sum_{R\in \cR, \alpha(R)\geq   \alpha_* }\operatorname{diam}_{D_{h}}(R)^\beta\cdot\operatorname{diam}_{\mathrm{Euc}}(R)+ \sum_{R\in \cR, \alpha(R)<  \alpha_*}\operatorname{diam}_{D_{h}}(R)^\beta\cdot\operatorname{diam}_{\mathrm{Euc}}(R)\nonumber\\
    &\leq \sum_{R\in \cR, \alpha(R)\geq  \alpha_* }\operatorname{diam}_{\mathrm{Euc}}(R)^{1+\beta(\alpha(R)-\delta)} + \sum_{R\in \cR, \alpha(R)< \alpha_* }\operatorname{diam}_{\mathrm{Euc}}(R)^{1+\beta (\alpha(R)-\delta)}\nonumber\\
    &\leq \sum_{R\in \cR}\operatorname{diam}_{\mathrm{Euc}}(R)^{1+\beta (\alpha_* -\delta)} + \sum_{\alpha\in \Adeltasymlink{\delta}, \alpha<\alpha_*} \sum_{k>k_0}\# \Gkasymlink{k}{\alpha} (2^{-k})^{1+\beta(\alpha -\delta)},
  \end{align}
 where to obtain the first term in the last line above, we have used that every $R\in \cR\subseteq \Ggeqsymlink{k_0}$ has $\operatorname{diam}_{\mathrm{Euc}}(R)\leq 1$ since $k_0$ is taken to be large.
\end{proof}

We will adjust the constant $\alpha_{*}$ in the above lemma such the latter term on the right hand side in \eqref{eq:43} is small. For this, we will need the following bound on the cardinalities $\#\Gkasymlink{k}{\alpha}$.

\begin{lem}\label{tk-alpha-bound-lemma} Almost surely, there exists a constant $C > 0$ such that for all $\alpha\in \Adeltasymlink{\delta}$ satisfying $\alpha< \xi Q$ and all $k$ large enough, we have
\begin{equation}\label{tkalpha_bound}
    \#\Gkasymlink{k}{\alpha} < C\cdot 2^{2k}\cdot 2^{-k\left((Q-\alpha/\xi)^{2}/2-\delta\right)}.
\end{equation}
\end{lem}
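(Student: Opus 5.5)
This is a first-moment bound followed by Borel--Cantelli. It does not control $\#\Gkasymlink{k}{\alpha}$ pathwise at each scale. Instead, it bounds $\EE[\#\Gkasymlink{k}{\alpha}]$, turns that into a bound valid for all large $k$ via Markov's inequality, and then uses a union bound over the finitely many $\alpha\in\Adeltasymlink{\delta}$.

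\textbf{Step 1 (expectation).} Every $U\in\Gsymlink{k}$ is a Euclidean ball $B_{2^{-k}}(z_j)$ with $z_j\in K:=\overline{B_{N+1}(0)}$. If $U\in\Gkasymlink{k}{\alpha}$, then in particular $\mathrm{diam}_{D_h}(B_{2^{-k}}(z_j))>2^{-\alpha k}$.

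For $\alpha\in(0,\xi Q)$, \Cref{lem:est1} with $\epsilon=2^{-k}$ gives, uniformly over $z_j\in K$ and for all $k$ large enough,
\begin{equation*}
\PP\big[B_{2^{-k}}(z_j)\in \Gkasymlink{k}{\alpha}\big]\le 2^{-k(Q-\alpha/\xi)^2/2}.
\end{equation*}
Elements of $\Adeltasymlink{\delta}$ can be $\le 0$, since the grid extends down to $\underline{\alpha}-\delta$ and $\underline{\alpha}$ may be small. For such $\alpha$ I would apply \Cref{lem:est1} at the smallest positive admissible exponent instead. This yields an even stronger bound, because the probability is monotone in $\alpha$ and $(Q-\alpha/\xi)^2$ only increases as $\alpha$ decreases. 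Alternatively, such $\alpha$ can simply be discarded for large $k$, since by \Cref{lem:estholder} $\Gkasymlink{k}{\alpha}$ is then a.s.\ empty.

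The number of lattice points in $2^{-k}\ZZ^2\cap K$ is at most $C_N 2^{2k}$. Hence
\begin{equation*}
\EE[\#\Gkasymlink{k}{\alpha}]\le C_N\, 2^{2k}\,2^{-k(Q-\alpha/\xi)^2/2}.
\end{equation*}

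\textbf{Step 2 (Markov and Borel--Cantelli).} Markov's inequality gives
\begin{equation*}
\PP\big[\#\Gkasymlink{k}{\alpha}\ge 2^{2k}2^{-k((Q-\alpha/\xi)^2/2-\delta)}\big]\le C_N 2^{-\delta k}.
\end{equation*}
This is summable in $k$, so by Borel--Cantelli, almost surely the event fails for all $k\ge k_\alpha$.

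$\Adeltasymlink{\delta}$ is a finite set, since $\underline\alpha$ and $\overline\alpha$ are deterministic. Therefore a single almost sure event works for all $\alpha\in\Adeltasymlink{\delta}$ with $\alpha<\xi Q$ once $k\ge\max_\alpha k_\alpha$. Taking $C=1$, or any $C>1$ to get strict inequality, then gives \eqref{tkalpha_bound}.

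\textbf{Main obstacle.} The only delicate point is uniformity. \Cref{lem:est1} must hold with the threshold "$\epsilon$ small enough" uniform in $z\in K$, which is exactly how it is stated, and it must be applied to finitely many fixed $\alpha$. The probabilistic estimate involves only deterministic quantities, and "almost surely, for all $k$ large" is obtained after the union over the finite grid.

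One caveat concerns the boundary case. \Cref{lem:est1} requires $\alpha<\xi Q$ strictly, and the lemma only claims the bound for such $\alpha$. There is also the loss of the factor $2^{-\delta k}$ from Markov's inequality, but this is absorbed by the $-\delta$ in the exponent.
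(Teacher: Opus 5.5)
Your proposal is correct and is essentially the paper's own proof. Both bound $\mathbb{P}[B_{2^{-k}}(z_j)\in\mathcal{G}_k^\alpha]$ uniformly in $z_j$ via \Cref{lem:est1}, multiply by the $O(2^{2k})$ lattice count, apply Markov's inequality with the extra $2^{k\delta}$ slack, union-bound over the finite set $A_\delta$, and conclude by Borel--Cantelli.

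One correction to your aside on grid points $\alpha\le 0$, which the paper does not address. Applying \Cref{lem:est1} at some $\alpha'\in(0,\xi Q)$ with $\alpha'>\alpha$ gives only the weaker exponent $(Q-\alpha'/\xi)^2/2<(Q-\alpha/\xi)^2/2$, not a stronger one, so that route does not yield the stated bound. Your second fix does work: for $\alpha\le 0$, uniform continuity of $D_h$ on compact sets forces $\mathcal{G}_k^\alpha=\emptyset$ for all large $k$.
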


\begin{proof}
 Since $\alpha< \xi Q$, by \Cref{lem:est1}, we obtain that for all $z \in \overline{B_{N+1}(0)}$, and all sufficiently small $\epsilon > 0$, we have
\begin{align*}
    \mathbb{P}\left[\mathrm{diam}_{D_{h}}(B_{\epsilon}(z)) > \epsilon^{\alpha} \right] \leq \epsilon^{(Q-\alpha/\xi)^{2}/2}, \
\end{align*}
so that for all sufficiently large $k$, for every element $R \in \Gsymlink{k}$, we have
\begin{align*}
    \mathbb{P}\left[R \in \Gkasymlink{k}{\alpha}\right] \leq 2^{-k\left(Q-\alpha/\xi\right)^{2}/2}.
\end{align*}
Note that the set $\Gsymlink{k}$ contains at most $O(1)2^{2k}$ elements. Thus, for all $k$ sufficiently large, there exists a constant $C > 0$ such that
\begin{equation}\label{tkalpha_exp_bound}
    \mathbb{E}[\#\Gkasymlink{k}{\alpha}] \leq C\cdot 2^{2k}\cdot 2^{-k(Q-\alpha/\xi)^{2}/2}.
\end{equation}
Thus for all $k$ sufficiently large, defining the event $X_{k,\alpha} = \{\#\Gkasymlink{k}{\alpha} \geq C\cdot 2^{2k}\cdot 2^{-k\left((Q-\alpha/\xi)^{2}/2-\delta\right)}\}$, by Markov's inequality and \eqref{tkalpha_exp_bound} we have
\begin{align*}
    \mathbb{P}(X_{k,\alpha}) \leq \frac{\mathbb{E}[\#\Gkasymlink{k}{\alpha}]}{C\cdot 2^{2k}\cdot 2^{-k\left((Q-\alpha/\xi)^{2}/2-\delta\right)}} \leq 2^{-k\delta},
\end{align*}
and by the union bound,
\begin{align*}
\mathbb{P}(X_{k,\alpha} \text{ occurs for some } \alpha \in \Adeltasymlink{\delta}) \leq \#\Adeltasymlink{\delta}\cdot 2^{-k\delta}.
\end{align*}
Thus, by the Borel-Cantelli lemma, the $\limsup$ of the events $\{X_{k,\alpha} \text{ occurs for some } \alpha \in \Adeltasymlink{\delta}\}$ must have zero probability. That is, almost surely, for all $k$ large enough and all $\alpha\in \Adeltasymlink{\delta}$, we have
\begin{equation}\label{tkalpha_bound2}
    \#\Gkasymlink{k}{\alpha} < C\cdot 2^{2k}\cdot 2^{-k\left((Q-\alpha/\xi)^{2}/2-\delta\right)},
  \end{equation}
  and this completes the proof.
\end{proof}

We now use the above lemma to show that for an appropriate choice of $\alpha_*$, the second term on the right hand side in \eqref{eq:43} is necessarily small.

\begin{lem}\label{euclidean-sum-control-lemma}
  Fix an $\alpha_*\in \Adeltasymlink{\delta}\cap (0,\xi Q)$ such that $\min g( (-\infty,\alpha_*]\cap \Adeltasymlink{\delta})>(2+\beta)\delta$, where $g$ is the quadratic defined in \eqref{eq:quad}. Then, almost surely, for any $\epsilon>0$, we can choose $k_0$ to be large enough such that
  \begin{equation}\label{negative-set-lower-bound}
    \sum_{\alpha\in \Adeltasymlink{\delta}, \alpha<\alpha_*} \sum_{k>k_0}\# \Gkasymlink{k}{\alpha} (2^{-k})^{1+\beta (\alpha-\delta)}<\epsilon.
    \end{equation}
\end{lem}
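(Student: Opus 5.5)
We plug the cardinality bound of \Cref{tk-alpha-bound-lemma} into each term of the double sum and check that the resulting exponent is negative. Every $\alpha\in\Adeltasymlink{\delta}$ with $\alpha<\alpha_*$ satisfies $\alpha<\alpha_*<\xi Q$, so \Cref{tk-alpha-bound-lemma} applies to all such $\alpha$ at once. Since $\Adeltasymlink{\delta}$ is finite, almost surely there are a constant $C>0$ and a threshold $k_1$ such that for all $k>k_1$ and all $\alpha\in\Adeltasymlink{\delta}$ with $\alpha<\alpha_*$,
\[
\#\Gkasymlink{k}{\alpha}\,(2^{-k})^{1+\beta(\alpha-\delta)}
< C\, 2^{2k}\, 2^{-k((Q-\alpha/\xi)^2/2-\delta)}\, 2^{-k(1+\beta\alpha-\beta\delta)}
= C\, 2^{-k\left(g(\alpha)-(1+\beta)\delta\right)},
\]
where we used $2-(Q-\alpha/\xi)^2/2-1-\beta\alpha=-g(\alpha)$ with $g$ as in \eqref{eq:quad}.

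The hypothesis $\min g((-\infty,\alpha_*]\cap\Adeltasymlink{\delta})>(2+\beta)\delta$ gives, for each such $\alpha$,
\[
g(\alpha)-(1+\beta)\delta>\delta>0 .
\]
Hence each term is at most $C2^{-k\delta}$, uniformly in the relevant $\alpha$.

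For $k_0\ge k_1$, summing over $k>k_0$ and over the at most $\#\Adeltasymlink{\delta}$ admissible values of $\alpha$ bounds the left side of \eqref{negative-set-lower-bound} by
\[
\#\Adeltasymlink{\delta}\cdot C\sum_{k>k_0}2^{-k\delta}
= \#\Adeltasymlink{\delta}\cdot C\,\frac{2^{-(k_0+1)\delta}}{1-2^{-\delta}} .
\]
This tends to $0$ as $k_0\to\infty$, so for any $\epsilon>0$ we can choose $k_0$ large enough that it is below $\epsilon$.

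Two points need care. First, the almost sure statement of \Cref{tk-alpha-bound-lemma} must hold simultaneously for all $\alpha$ in the finite set $\Adeltasymlink{\delta}$. Its proof already handles this through the union bound over $\Adeltasymlink{\delta}$ followed by Borel--Cantelli. Second, the collections $\Gkasymlink{k}{\alpha}$ do not depend on the cover $\cR$, so the chosen $k_0$ works uniformly over all covers, as needed when the lemma is combined with \Cref{lem:split}. Beyond these, the argument is just the exponent bookkeeping above, and I expect no real obstacle.
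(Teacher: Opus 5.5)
Your proposal is correct and follows the paper's proof essentially line for line. You substitute the cardinality bound from \Cref{tk-alpha-bound-lemma}, identify the exponent as $g(\alpha)-(1+\beta)\delta>\delta$ using the hypothesis on $\alpha_*$, and bound the double sum by a constant times $\sum_{k>k_0}2^{-\delta k}$, which tends to $0$ as $k_0\to\infty$. Your two remarks, on simultaneity over the finite exponent set and on the independence of the collections from the cover, are accurate and simply make explicit what the paper leaves implicit.
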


\begin{proof}
  By Lemma \ref{tk-alpha-bound-lemma}, for all large $k_0$, we have
  \begin{align}
    \label{eq:45}
    \sum_{\alpha\in \Adeltasymlink{\delta}, \alpha<\alpha_*} \sum_{k>k_0}\# \Gkasymlink{k}{\alpha} (2^{-k})^{1+\beta (\alpha-\delta)} & \leq \sum_{\alpha\in \Adeltasymlink{\delta}, \alpha<\alpha_*} \sum_{k>k_0}C'\cdot 2^{2k}\cdot 2^{-k\left((Q-\alpha/\xi)^{2}/2-\delta\right)} (2^{-k})^{1+\beta(\alpha-\delta)}\nonumber\\
    &=\sum_{\alpha\in \Adeltasymlink{\delta}, \alpha<\alpha_*} \sum_{k>k_0}C' (2^{-k})^{g(\alpha)-(1+\beta)\delta}\leq C'\#\Adeltasymlink{\delta} \sum_{k > k_0}2^{-\delta k},\\
  \end{align}
where to obtain the final inequality we have used the assumption $\min g( (-\infty,\alpha_*]\cap \Adeltasymlink{\delta})>(2+\beta)\delta$. It is now easy to see that the expression above must be smaller than $\epsilon$ for all large $k_0$.
\end{proof}

With these lemmas in hand, we are now ready to complete the proof of Proposition \ref{euclidean-dim-lemma}.

\begin{figure}[t]
\begin{center}  
\begin{tikzpicture}[scale=1.3, >=stealth]
  \def\r{2.0}
  \def\aleft{4}
  \def\aright{4}
  \def\xmax{4.0}
  \def\xleft{5.0}
  \def\xaxisright{5.0}
  \def\ymax{3.2}
  \def\xiq{1}
  \def\xad{2.46}
  
  \pgfmathsetmacro{\k}{\ymax/((\xmax^2 - \r^2))}

  \tikzmath{
    function g(\x) {return \k*(\x*\x - \r*\r);};
  }

  \pgfmathsetmacro{\ydelta}{g(-\xad+0.2)}

  \draw[->] (-\xleft,0) -- (\xaxisright,0) node[below right] {$\alpha$};

  \draw[red, line width=2pt, dotted] (-\aleft,0) -- (-\xad,0);
  \draw[green!60!black, line width=2pt, dotted] (-\xad,0) -- (\aright,0);

  \fill (-\r,0) circle (1.4pt);
  \fill ( \r,0) circle (1.4pt);
  \node[below=4pt] at (-\r,0) {$\underline{r}$};
  \node[below=4pt] at ( \r,0) {$\overline{r}$};

  \fill (-\aleft,0) circle (1.4pt);
  \fill ( \aright,0) circle (1.4pt);
  \node[below=4pt] at (-\aleft,0) {$\underline{\alpha}$};
  \node[below=4pt] at ( \aright,0) {$\overline{\alpha}$};

  \fill (-\xiq,0) circle (1.4pt);
  \node[below=4pt] at (-\xiq,0) {$\xi Q$};

  \fill[red] (-\xad,0) circle (1.8pt);
  \node[below=4pt] at (-\xad,0) {$\alpha_{\delta,*}$};

  \draw[dashed, black] (-\xleft,\ydelta) -- (\xaxisright,\ydelta)
  node[pos=0.9, above] {$y=(2+\beta)\delta$};

  \draw[line width=0.8pt, black, domain=-\xmax:\xmax, samples=180]
    plot (\x,{g(\x)});

  \node[black] at (2.95,2) {$g(\alpha)$};
\end{tikzpicture}
\caption{Illustration of the proof setup for \Cref{euclidean-dim-lemma}. For small $\delta > 0$, we define $\alpha_{\delta,*}$ to be the largest value in $\Adeltasymlink{\delta} \cap [\underline{\alpha},\underline{r}]$ such that $g(\alpha_{\delta,*})>(2+\beta)\delta$. The values $\Adeltasymlink{\delta} \cap [\underline{\alpha},\alpha_{\delta,*}]$, shown in red, are approximate scaling exponents relating LQG and Euclidean diameters for ``atypically thick'' elements of $\mathcal{R}$, and they index the latter sum in \eqref{eq:46}. We show that this latter sum is small (in particular, at most $C_{H,\beta}/2$) for sufficiently small $\delta$. The values $\Adeltasymlink{\delta} \cap [\alpha_{\delta,*},\overline{\alpha}]$, shown in green, are approximate scaling exponents for the remaining elements of $\mathcal{R}$, which are not atypically thick.}
\label{fig:g-alpha-graph}
\end{center}
\end{figure}

\begin{proof}[Proof of Proposition \ref{euclidean-dim-lemma}]
 Recall that it suffices to fix $N \in \mathbb{N}$ and assume that $H\subseteq \overline{B_N(0)}$. Fix $\delta>0$ and use this to define the set $\Adeltasymlink{\delta}$; we will send $\delta$ to $0$ at the end of the proof. By assumption there exists a constant $C_{H,\beta}>0$ such that for all countable covers $\cR$ of $H$ we have $\sum_{R\in \cR}\mathrm{diam}_{D_h}(R)^\beta \cdot \mathrm{diam}_{\mathrm{Euc}}(R)>C_{H,\beta}$. Combining this with \Cref{lem:split}, we deduce that for all $k_0$ large enough and any cover $\cR$ of $H$ consisting only of elements of $\Ggeqsymlink{k_{0}}=\bigcup_{k>k_0}\Gsymlink{k}$, for any $\alpha\in \Adeltasymlink{\delta}$, we have
  \begin{equation}
    \label{eq:46}
        C_{H,\beta} \leq \sum_{R\in \cR}\operatorname{diam}_{\mathrm{Euc}}(R)^{1+\beta (\alpha_* -\delta)} + \sum_{\alpha\in \Adeltasymlink{\delta}, \alpha< \alpha_*} \sum_{k>k_0}\# \Gkasymlink{k}{\alpha} (2^{-k})^{1+\beta (\alpha-\delta)}.
      \end{equation}
We now consider the quadratic equation $\alpha\mapsto g(\alpha)$ defined in \eqref{eq:quad}. Its roots are given by
\begin{align*}
    \underline{r}, \overline{r} 
    &= \frac{d_{\gamma}(\gamma^{2}+4) - 2\beta\gamma^{2} \pm 2\gamma\sqrt{2d_{\gamma}^{2}-(4\beta+\beta\gamma^{2}) d_{\gamma}+\beta^{2}\gamma^{2}}}{2d_{\gamma}^{2}},   
\end{align*}
and we note that $\beta\underline{r}=\DEuc{\gamma}{\beta}$. By using routine estimates for LQG dimension derived in \cite{gp-lfpp-bounds}, it can be checked that the above roots are real and that $\underline{r}\in (\underline{\alpha},\overline{\alpha})$. For completeness, we include a proof of this in \Cref{euclidean-roots-appendix-lemma} of the Appendix. Further, we note that
\begin{equation}
  \label{eq:59}
  \xi Q>\underline{r}
\end{equation}
(since $\textrm{argmin } g = \xi Q-\beta\xi^2\in [\underline{r},\overline{r}]$, so $\xi Q \geq \underline{r} + \beta\xi^{2} > \underline{r}$). We also note that, since $g(\alpha)$ is quadratic with the coefficient of $\alpha^2$ being positive, $g|_{[\underline{\alpha},\underline{r}]}$ is a strictly decreasing and non-negative continuous function. We may thus define
\begin{equation}
    \alpha_{\delta,*} = \max \left\{\alpha \in \Adeltasymlink{\delta} \cap [\underline{\alpha},\underline{r}] : g(\alpha) > (2+\beta)\delta \right\},
\end{equation}
and this is well-defined for all sufficiently small $\delta > 0$. (See \Cref{fig:g-alpha-graph} for an illustration of the setup.)

Now, as a consequence of \eqref{eq:59}, we know that $\alpha_{\delta,*}< \xi Q$.  Thus, we may invoke Lemma \ref{euclidean-sum-control-lemma} with $\epsilon=C_{H,\beta}/2$ and $\alpha_*=\alpha_{\delta,*}$ to deduce that for all sufficiently large $k_0$, the latter term in \eqref{eq:46} is at most $C_{H,\beta}/2$. Thus, for all sufficiently large $k_0$ and any cover $\cR$ of $H$ consisting only of elements of $\Ggeqsymlink{k_{0}}=\bigcup_{k>k_0}\Gsymlink{k}$, we must have
\begin{equation}
    \label{eq:47a}
    C_{H,\beta}/2 \leq \sum_{R\in \cR}\operatorname{diam}_{\mathrm{Euc}}(R)^{1+\beta (\alpha_{\delta,*}-\delta)}.       
\end{equation}
Since $\Ggeqsymlink{k_0}$ is a sufficient covering family in the sense of Definition \ref{def-sufficient-collection} for any sufficiently large $k_0$ (see Lemma \ref{lem:it-is-suf}), the above implies that $\operatorname{dim}_{\mathrm{Euc}}(H)\geq 1+\beta (\alpha_{\delta,*}-\delta)$. Finally, since $g|_{[\underline{\alpha},\underline{r}]}$ is a strictly decreasing continuous function, we also have $\lim_{\delta\rightarrow 0} \alpha_{\delta,*}=\underline{r}$, thereby yielding that $\operatorname{dim}_{\mathrm{Euc}}(H)\geq 1+\beta \underline{r}=1+\DEuc{\gamma}{\beta}$, which completes the proof. 
\end{proof}

\subsection{LQG lower bound}
\label{subsc:LQG}

Our goal in this section will be to obtain the following lower bound on the LQG dimension of sets which satisfy the same ``juxtaposed'' Frostman-style estimate \eqref{eq:79} as in the previous section.

\begin{prop}\label{lqg-dim-lemma}
Let $h$ be a whole-plane GFF and let $\gamma \in (0,2)$. Then almost surely, the following holds for all bounded Borel sets $H\subseteq \CC$ and $\beta>0$. If there exists a constant $C_{H,\beta}>0$ such that for all countable covers $\cR$ of $H$, we have
  \begin{equation}
    \label{eq:79}
    \sum_{R\in \cR}\mathrm{diam}_{D_h}(R)^\beta \cdot \mathrm{diam}_{\mathrm{Euc}}(R)>C_{H,\beta},
  \end{equation}
  then we necessarily have $\dim_{D_h}(H)\geq \beta+\DLQG{\gamma}{\beta}$.
  \end{prop}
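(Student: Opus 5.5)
The plan is to mirror the proof of Proposition~\ref{euclidean-dim-lemma}, with the roles of ``thick'' and ``thin'' reversed. Fix $N$ and assume $H\subseteq\overline{B_N(0)}$. Take an arbitrary countable cover $\cR$ of $H$ by sets of small diameter. For each $R\in\cR$ define its scaling exponent $\alpha(R)$ by $\operatorname{diam}_{D_h}(R)\approx\operatorname{diam}_{\mathrm{Euc}}(R)^{\alpha(R)}$, discretized in $\Adeltasymlink{\delta}$. By \Cref{lem:estholder}, $\alpha(R)$ lies in $[\underline\alpha-\delta,\overline\alpha+\delta]$ once diameters are small. If $\alpha(R)\le\alpha_*$, then $\operatorname{diam}_{\mathrm{Euc}}(R)\le \operatorname{diam}_{D_h}(R)^{1/(\alpha_*+\delta)}$, so
\[
\operatorname{diam}_{D_h}(R)^\beta\operatorname{diam}_{\mathrm{Euc}}(R)\le \operatorname{diam}_{D_h}(R)^{\beta+1/(\alpha_*+\delta)}.
\]
Here thick sets are harmless, and the exceptional sets are the atypically \emph{thin} ones, with $\alpha(R)>\alpha_*$.

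The threshold will be $\alpha_*\approx\overline r$, the larger root of the same quadratic $g(\alpha)=-1+\beta\alpha+(Q-\alpha/\xi)^2/2$ from \eqref{eq:quad}. Note that $(Q-\alpha/\xi)^2=(\alpha/\xi-Q)^2$, so the same $g$ governs the thin side. Rationalizing,
\[
\frac1{\overline r}=\frac{2\bigl(d_\gamma(\gamma^2+4)-2\beta\gamma^2-2\gamma\sqrt{\cdots}\bigr)}{16+\gamma^4}=\DLQG{\gamma}{\beta},
\]
since the product of the conjugates equals $d_\gamma^2(16+\gamma^4)$. So the target bound is $\sum_R\operatorname{diam}_{D_h}(R)^{\beta+1/(\alpha_{\delta,*}+\delta)}\ge C_{H,\beta}/2$. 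Letting $\delta\to0$ and using that $g$ is increasing and positive on $(\overline r,\overline\alpha]$ then gives $\dim_{D_h}(H)\ge\beta+\DLQG{\gamma}{\beta}$. I would check in the Appendix, as for $\underline r$, that $\overline r\in(\xi Q,\overline\alpha)$.

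The counting estimate needed is a lower-tail analogue of \Cref{tk-alpha-bound-lemma}. For $\alpha>\xi Q$, consider the lattice points $z\in 2^{-k-3}\ZZ^2\cap\overline{B_{N+1}(0)}$ and the event that $D_h$ across the annulus $B_{2^{-k-1}}(z)\setminus B_{2^{-k-2}}(z)$ is at most $2^{-\alpha k}$. By \Cref{lem:est2}, this event forces $e^{\xi h_{2^{-k}}(z)}\lesssim A\,2^{-k(\alpha-\xi Q)}$ with $A=2^{\delta k/4}$, outside an event of superpolynomially small probability. The Gaussian tail of $h_{2^{-k}}(z)$, whose variance is $k\log 2+O(1)$, then bounds this probability by $2^{-k((\alpha/\xi-Q)^2/2-\delta)}$. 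Markov's inequality and Borel--Cantelli, as before, give that almost surely at most $C2^{2k}2^{-k((\alpha/\xi-Q)^2/2-\delta)}$ such ``thin'' lattice points exist at each large scale. The same application of \Cref{lem:est2} also shows that for these $z$ the whole ball $B_{2^{-k+1}}(z)$ has $D_h$-diameter at most $2^{-(\alpha-2\delta)k}$, since the diameter and the crossing distance are both comparable to $2^{-k\xi Q}e^{\xi h_{2^{-k}}(z)}$.

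The main obstacle, and the step I would handle most carefully, is that the cover is arbitrary. Many thin sets $R$ may sit near the same thin annulus, so one cannot simply count them. I would resolve this by modifying the cover before invoking the hypothesis \eqref{eq:79}. Each thin $R$ with $\operatorname{diam}_{\mathrm{Euc}}(R)\in[2^{-k},2^{-k+1})$ crosses one of these annuli, so $\operatorname{diam}_{D_h}(R)\ge$ the crossing distance and the corresponding lattice point $z$ is thin. Replace all such $R$ by the single ball $B_{2^{-k+1}}(z)$, which contains them. The result $\cR'$ is still a cover of $H$, so \eqref{eq:79} applies to it. The contribution of the replaced balls to the left side of \eqref{eq:79} is at most
\[
\sum_{\alpha>\alpha_*}\sum_{k>k_0}C2^{2k}2^{-k((\alpha/\xi-Q)^2/2-\delta)}2^{-k}2^{-k\beta(\alpha-2\delta)}=\sum 2^{-k(g(\alpha)-O(\delta))},
\]
which is below $C_{H,\beta}/2$ for large $k_0$ when $\alpha_{\delta,*}$ is chosen with $g>O(\delta)$ on $[\alpha_{\delta,*},\overline\alpha+\delta]$, exactly as in Lemma~\ref{euclidean-sum-control-lemma}. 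The unreplaced sets then carry at least $C_{H,\beta}/2$, and each of them satisfies the first display above. Hence $\sum_{R\in\cR}\operatorname{diam}_{D_h}(R)^{\beta+1/(\alpha_{\delta,*}+\delta)}\ge C_{H,\beta}/2$ uniformly over covers, which gives the claimed dimension bound.
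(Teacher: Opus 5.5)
\textbf{There is a genuine gap.} Some parts are right. Your threshold is correct: the paper's function $\ell$ in \eqref{eq:54} satisfies $\alpha\,\ell(\alpha)=g(\alpha)$, so the relevant root is the larger root $\overline r$ of $g$, and $1/\overline r=\Delta_{\mathrm{LQG}}(\gamma,\beta)$. The cover-modification scheme is also sound in outline.

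\textbf{The replacement-ball step fails as written.} Proposition~\ref{lem:est2} controls $D_h(rK_1+z,rK_2+z)$ for two fixed compact sets, i.e.\ annulus crossing distances. It gives no upper bound on $\mathrm{diam}_{D_h}(B_{2^{-k+1}}(z))$, which is governed by the thickest points inside the ball. The normalized diameter $\mathrm{diam}_{D_h}(B_{2r}(z))/(r^{\xi Q}e^{\xi h_r(z)})$ has only a polynomial upper tail: one sufficiently thick point inside the ball already makes it large with polynomially small probability. So with $A=2^{O(\delta)k}$ you cannot union bound over the roughly $2^{k(2-(\alpha/\xi-Q)^2/2)}$ thin lattice points. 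For small $\delta$ one expects many of them to violate $\mathrm{diam}_{D_h}\le 2^{-(\alpha-2\delta)k}$.

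Note the asymmetry. A thin $R$ only tells you the crossing distance at $z$ is small, via a lower bound on $\mathrm{diam}_{D_h}(R)$. The replacement ball, however, needs an upper bound on a diameter. Hence your cover-independent bound on the replaced balls' contribution is unjustified. A repair would have to bound $\EE\bigl[\sum_{z\ \text{thin}}\mathrm{diam}_{D_h}(B_{2^{-k+2}}(z))^\beta\bigr]$ directly. That needs a finite $\beta$-th moment of the normalized diameter and approximate independence of it from $h_{2^{-k}}(z)$, which determines thinness. Neither is in your argument.

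\textbf{Second problem: you need $\overline r>\xi Q$, and the Appendix inputs do not give it.} Your lower-tail count is valid only for $\alpha>\xi Q$. For $\alpha<\xi Q$ a crossing distance at most $2^{-\alpha k}$ is typical, the count is of order $2^{2k}$, and the replaced sum diverges. One computes $\overline r-\xi Q=\gamma\bigl(\sqrt{\mathcal D}-\beta\gamma\bigr)/d_\gamma^2$ with $\mathcal D=2d_\gamma^2-(4\beta+\beta\gamma^2)d_\gamma+\beta^2\gamma^2$. So the requirement is equivalent to $d_\gamma>\beta(2+\gamma^2/2)=\beta\gamma Q$. For $\beta=1$ and $\gamma\le\sqrt{8/3}$, the bound $d_\gamma\ge\frac14(4+\gamma^2+\sqrt{16+\gamma^4})$ used in the Appendix lies strictly below $2+\gamma^2/2$. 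So the check you defer to the Appendix does not follow from those inputs.

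\textbf{How the paper avoids both issues.} It restricts to the sufficient covering family $\cW_{>k_0}$ of $D_h$-balls of radius $2^{-k}$, centred at Poisson points with intensity proportional to $\mu_h$, and classifies these balls by Euclidean diameter. A large Euclidean diameter forces a small annulus crossing distance around a $\mu_h$-typical centre. This is handled by Proposition~\ref{lem:est2} and Lemma~\ref{lem:biased}, and no diameter upper bound is ever needed. The resulting count $2^{k(d_\gamma-H(\alpha))}$ is exactly your exponent $2-(Q-\alpha/\xi)^2/2$ after changing from LQG scale $2^{-k}$ to Euclidean scale $2^{-k/\alpha}$. However, it holds for all $\alpha>\xi(Q-\gamma)$, so the paper only needs $\overline r>\xi(Q-\gamma)$.
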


Just as in the previous section, because $\mathbb{C} = \bigcup_{n \in \mathbb{N}} \overline{B_{n}(0)}$ and because our sets of interest $H$ are bounded, it will suffice to fix $N \in \mathbb{N}$ and prove the desired bound under the additional assumption that $H \subset \overline{B_{N}(0)}$; we will work with a fixed such $\overline{B_{N}(0)}$ throughout this section.   
The intuition behind our proof strategy is the same as in the previous section, except that this time, the roles of the Euclidean and LQG metrics will be reversed. Recall that when deriving the bound on $\dim_{\mathrm{Euc}}(H)$ in the previous section, we restricted the elements of our covers to the sufficient covering family $\Ggeqsymlink{k_{0}}$, which were Euclidean balls centered at points of a Euclidean lattice. This time, when deriving the desired bound on $\dim_{D_{h}}(H)$, we will restrict instead to covers consisting of LQG balls associated to an ``LQG lattice.'' More specifically, we will limit ourselves to considering covers which consist of elements of a collection $\Wgeqsymlink{k_0}$ of LQG metric balls centered at Poissonian sprinkled points, which we now define.

To begin, we fix a ``mesh size'' $\delta > 0$ which we will use throughout this section (just as in the Euclidean case before, we will eventually send $\delta$ to zero at the end of the proof). With $\mu_h$ denoting the LQG measure on $\CC$ associated to the GFF $h$, we define 
\begin{equation}\label{def:ppp}
    \Pi_k \sim \mathrm{PPP}\big((2^{-k})^{-d_\gamma-2\delta}\mu_h\big),
\end{equation}
where $\mathrm{PPP}(\lambda)$ denotes a Poisson point process with intensity measure $\lambda$. We now define 
\begin{equation}\label{def:wk}
\cW_{k}:=\{\cB_{2^{-k}}(u): u\in \PPPlink{k+1}\},   
\end{equation}
where $\cB_r(u)$ denotes the LQG metric ball of radius $r$ centered at $u$. We then define the family
\begin{equation}\label{def:W-geqk0}
    \cW_{>k_0}:=\bigcup_{k>k_0}\Wsymlink{k}.
\end{equation}
The following easy lemma guarantees that the collection $\Wsymlink{k}$ can in fact be used to cover $\overline{B_{N}(0)}$.

\begin{lem}\label{lqg-sufficient-collection-covering-lemma}
Almost surely, for all $k$ large enough, the collection $\Wsymlink{k}$ covers $\overline{B_{N}(0)}$.
\end{lem}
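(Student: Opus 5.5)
The plan is a discretize-then-union-bound argument. We want that a.s., for all large $k$, every point $x\in\overline{B_N(0)}$ lies within $D_h$-distance $2^{-k}$ of some point of $\Pi_{k+1}$. Since $\Pi_{k+1}$ has intensity $(2^{-k-1})^{-d_\gamma-2\delta}\mu_h$, it suffices to find a finite family of small LQG balls that together cover $\overline{B_N(0)}$, each of which contains a point of $\Pi_{k+1}$ with overwhelming probability.

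\textbf{Step 1 (discretization).} Work conditionally on $h$, so that $\Pi_{k+1}$ is a Poisson process with a fixed intensity. Set $r=2^{-k-1}$. By compactness of $\overline{B_N(0)}$ and the Minkowski dimension statement in \Cref{lem:est3}, $\overline{B_N(0)}$ can be covered by at most $N_k\le r^{-d_\gamma-\delta}$ LQG balls $\cB_{r}(y_j)$, for all large $k$. Apply this to a slightly larger closed set with nontrivial interior, so that the statement applies. Any point of $\Pi_{k+1}$ in $\cB_r(y_j)$ is within $D_h$-distance $2r=2^{-k}$ of every point of $\cB_r(y_j)$. So it suffices that each $\cB_r(y_j)$ contains a Poisson point.

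\textbf{Step 2 (volume lower bound).} Since the $y_j$ lie in a fixed compact set $K$, the second half of \Cref{lem:est3}, applied with the given $\delta$, yields an a.s.\ finite random constant $c>0$ with $\mu_h(\cB_r(y_j))\ge c\,r^{d_\gamma+\delta}$ uniformly in $j$ and in $r\in(0,1)$. Hence
\[
\PP\big[\Pi_{k+1}\cap \cB_r(y_j)=\emptyset \,\big|\, h\big]\le \exp\!\big(-c\,r^{-d_\gamma-2\delta}r^{d_\gamma+\delta}\big)=\exp(-c\,r^{-\delta}).
\]
The extra $2\delta$ in the intensity is exactly what produces this stretched-exponential decay.

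\textbf{Step 3 (union bound and Borel--Cantelli).} A union bound over the $N_k\le r^{-d_\gamma-\delta}$ balls gives a conditional failure probability of at most $2^{(k+1)(d_\gamma+\delta)}\exp(-c\,2^{(k+1)\delta})$. This is summable in $k$. By conditional Borel--Cantelli, for a.e.\ $h$, a.s.\ only finitely many $k$ fail, and averaging over $h$ gives the almost sure statement.

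The main subtlety is that the covering centers $y_j$ and the constant $c$ depend on $h$. This is harmless because the argument conditions on $h$ throughout and uses the Poisson property only given $h$. One must also check that the uniform-in-$s$ bound in \Cref{lem:est3} covers the radii $r=2^{-k-1}$, which it does.
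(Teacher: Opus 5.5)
Your proposal is correct and follows essentially the same route as the paper: cover $\overline{B_N(0)}$ by $O(r^{-d_\gamma-\delta})$ small LQG balls using the Minkowski dimension statement in \Cref{lem:est3}, and lower-bound their $\mu_h$-mass by the same proposition so that each ball is missed by $\Pi_{k+1}$ with conditional probability at most $\exp(-c\,2^{k\delta})$; you then conclude by a union bound and Borel--Cantelli. The only difference is that the paper uses covering balls of radius $2^{-k-2}$ rather than your $r=2^{-k-1}$. This yields the slightly stronger fact that $\{\cB_{2^{-k-1}}(u):u\in\Pi_{k+1}\}$ already covers $\overline{B_N(0)}$, which the paper needs for the subsequent sufficient-covering-family lemma, whereas your choice gives exactly the stated lemma.
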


\begin{proof}
  In fact, we will prove the following slightly stronger statement which will be useful in the proof of the next lemma: that the collection $\widetilde{\cW_{k}} := \{\cB_{2^{-k-1}}(u): u\in \Pi_{k+1}\}$ covers $\overline{B_{N}(0)}$ for all $k$ large enough. By \Cref{lem:est3}, the Minkowski dimension of $\gamma$-LQG is a.s.\ equal to $d_\gamma$, and therefore almost surely for all sufficiently large $k$ there exists a collection $Z_{k}$ of $\#Z_k \leq C(2^{-k-2})^{-d_{\gamma}-\delta}$ points such that $\cup_{z \in Z_{k}}\cB_{2^{-k-2}}(z)$ covers $\overline{B_{N}(0)}$. Also by \Cref{lem:est3}, we have $\mu_{h}(\cB_{2^{-k-2}}(z)) \geq C'(2^{-k-2})^{d_{\gamma}+\delta}$ for each $z \in Z_{k}$. (Here $C, C' > 0$ are random constants which depend on $N$ and $\delta$ but not on $k$ or $z$.) So for each $z \in Z_{k}$, we have
\begin{align*}
    \mathbb{P}\left(\Pi_{k+1}\cap \cB_{2^{-k-2}}(z) = \emptyset \lvert h\right) &\leq \exp\left(-(2^{-k-1})^{-d_{\gamma}-2\delta}\cdot C'(2^{-k-2})^{d_{\gamma}+\delta}\right) = e^{-c2^{k\delta}},
\end{align*}
for some $c > 0$ independent of $z$ and $k$, and thus by a union bound,
\begin{align}\label{missing-prob-ineq}
  \mathbb{P}\left(\bigcap_{z\in Z_{k}}\{\Pi_{k+1} \cap \cB_{2^{-k-2}}(z)\neq \emptyset\}\lvert h\right) &\geq 1- \#Z_ke^{-c2^{k\delta}}\nonumber\\
  &\geq 1 - C(2^{-k-2})^{-d_{\gamma}-\delta}e^{-c2^{k\delta}}.
\end{align}
Observe that if the event $\{\Pi_{k+1} \cap \cB_{2^{-k-2}}(z)\neq \emptyset\}$ occurs for every $z \in Z_{k}$, then by the triangle inequality $\widetilde{\cW_{k}}$ will cover $\overline{B_{N}(0)}$. This is because $\widetilde{\cW_{k}}$ consists of balls of radius $2^{-k-1}$ centered at the points of $\Pi_{k+1}$, and we already know by the choice of $Z_{k}$ that $\bigcup_{z\in Z_{k}}\cB_{2^{-k-2}}(z)$ covers $\overline{B_{N}(0)}$. Now applying \eqref{missing-prob-ineq} along with the Borel-Cantelli lemma, we obtain the desired result.
\end{proof}

In fact, the proof of the above lemma implies the following stronger result.
\begin{lem}\label{lem:lqg-covering-lemma}
    For $\overline{B_{N}(0)}$ equipped with the restriction of the LQG metric, almost surely, the family $\Wgeqsymlink{k_0}=\bigcup_{k>k_0}\Wsymlink{k}$ is a sufficient covering family for $\overline{B_{N}(0)}$ (in the sense of Definition \ref{def-sufficient-collection}) for all $k_0$ large enough. 
\end{lem}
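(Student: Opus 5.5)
We must show: a.s. there are $\delta_0>0$, $M$, $C$ such that for all large $k_0$, every $U\subset\overline{B_N(0)}$ with $0<\mathrm{diam}_{D_h}(U)\le\delta_0$ is covered by at most $M$ elements of $\Wgeqsymlink{k_0}$, each of $D_h$-diameter at most $C\,\mathrm{diam}_{D_h}(U)$. The plan is to cover $U$ by a single ball of $\Wsymlink{k}$, with $k$ chosen according to the size of $U$. This uses the strengthened statement already obtained in the proof of \Cref{lqg-sufficient-collection-covering-lemma}: almost surely there is $k_1$ such that for all $k\ge k_1$ the smaller balls $\widetilde{\cW_k}=\{\cB_{2^{-k-1}}(u):u\in\PPPlink{k+1}\}$ cover $\overline{B_N(0)}$. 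The Borel--Cantelli argument there gives exactly this, since the failure probabilities are summable in $k$.

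\textbf{Choosing the scale.} Given $U$ with $\rho:=\mathrm{diam}_{D_h}(U)\in(0,\delta_0]$, let $k$ be the integer with $2^{-k-2}<\rho\le 2^{-k-1}$. Take $\delta_0=2^{-k_0-2}$ (with $k_0\ge k_1$). Then $\rho\le\delta_0$ forces $k\ge k_0$, so $k+1>k_0$ and $k+1\ge k_1$.

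\textbf{Covering by one ball.} Pick any $x\in U$. By the covering property at level $k+1$, there is $u\in\PPPlink{k+2}$ with $D_h(x,u)<2^{-k-2}$. For any $y\in U$,
\[
D_h(y,u)\le D_h(y,x)+D_h(x,u)<\rho+2^{-k-2}\le 2^{-k-1}+2^{-k-2}<2^{-k-1}\cdot 2 ,
\]
so $U\subset\cB_{2^{-k-1}}(u)$. This ball lies in $\Wsymlink{k+1}\subset\Wgeqsymlink{k_0}$, because $u\in\PPPlink{(k+1)+1}$.

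\textbf{Diameter bound.} The ball has $D_h$-diameter at most $2\cdot2^{-k-1}=2^{-k}<4\cdot 2^{-k-2}\cdot 2<8\rho$. So $M=1$ and $C=8$ suffice.

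The only delicate point is that a.s.\ the covering holds simultaneously for all large $k$, not merely along a subsequence. This follows from the summable bound \eqref{missing-prob-ineq} together with Borel--Cantelli. A minor secondary point is that $\widetilde{\cW_k}$ is stated for $\overline{B_N(0)}$, while the ball center $u$ may lie slightly outside it. This is harmless, because only the covering of the points of $U\subset\overline{B_N(0)}$ is used.
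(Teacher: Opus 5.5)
Your strategy is the paper's: cover $U$ by a single ball, obtained from the finer covering $\widetilde{\cW_j}$ plus the triangle inequality, so $M=1$. However, the key inclusion step has an off-by-one error in the scales and fails as written. You choose $k$ with $2^{-k-2}<\rho\le 2^{-k-1}$ and a center $u\in\Pi_{k+2}$ with $D_h(x,u)<2^{-k-2}$. The triangle inequality then only gives $D_h(y,u)<\rho+2^{-k-2}$, and this bound always exceeds $2^{-k-1}$, since $\rho>2^{-k-2}$. So it does not yield $U\subset\cB_{2^{-k-1}}(u)$. Your own chain ends with $2^{-k-1}\cdot 2=2^{-k}$, i.e.\ it only shows $U\subset\cB_{2^{-k}}(u)$. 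That ball is not in $\cW_{>k_0}$. Elements of $\cW_j$ are balls of radius exactly $2^{-j}$ centered at points of $\Pi_{j+1}$. A ball of radius $2^{-k}$ therefore needs its center in $\Pi_{k+1}$, but $u\in\Pi_{k+2}$, and in general $u\notin\Pi_{k+1}$. The only admissible radius for centers in $\Pi_{k+2}$ is $2^{-k-1}$, which your estimate cannot reach.

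The fix is a one-index shift, and either of two versions works.
\begin{enumerate}
\item Do what the paper does and use the covering at level $k$. Pick $z\in\Pi_{k+1}$ with $D_h(x,z)<2^{-k-1}$. Then $D_h(y,z)<\rho+2^{-k-1}\le 2^{-k}$, so $U\subset\cB_{2^{-k}}(z)\in\cW_k$. With your $\delta_0=2^{-k_0-2}$ you get $k>k_0\ge k_1$, so this ball lies in $\cW_{>k_0}$, and its diameter is at most $2^{1-k}<8\rho$.
\item Keep the level-$(k+1)$ covering but choose $k$ with $2^{-k-3}<\rho\le 2^{-k-2}$. Then $\rho+2^{-k-2}\le 2^{-k-1}$, so $U\subset\cB_{2^{-k-1}}(u)\in\cW_{k+1}$, and again $k+1>k_0$ and the diameter is less than $8\rho$.
\end{enumerate}
In both cases $M=1$ and $C=8$ suffice. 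The rest of your argument is fine: the almost sure covering by $\widetilde{\cW_j}$ holds for all large $j$ by Borel--Cantelli.
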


\begin{proof}
  By the proof of \Cref{lqg-sufficient-collection-covering-lemma}, almost surely there exists $k_{0} \in \mathbb{N}$ such that for all $k \geq k_{0}$, the collection $\widetilde{\cW_{k}} := \{\cB_{2^{-k-1}}(u): u\in \Pi_{k+1}\}$ covers $\overline{B_{N}(0)}$. Consider $U \subseteq \overline{B_{N}(0)}$ satisfying $r := \textrm{diam}_{D_{h}}(U) \leq 2^{-k_{0}-1}$, and set $k := \lfloor \log_2 r^{-1}\rfloor - 1$, so that $k \geq k_{0}$ and $2^{-k-2} < r \leq 2^{-k-1}$. Now take a point $u \in U$. Since $\widetilde{\cW_{k}}$ covers $\overline{B_{N}(0)}$, there exists $z \in \Pi_{k+1}$ such that $u \in \cB_{2^{-k-1}}(z)$. Since $\textrm{diam}_{D_{h}}(U)\leq 2^{-k-1}$, we must also have $U\subseteq \cB_{2^{-k-1}}(u)$, and therefore by the triangle inequality $U\subseteq \cB_{2^{-k}}(z)\in \Wsymlink{k}\subseteq \Wgeqsymlink{k_0}$. Thus, for all $k_0$ large enough, $\Wgeqsymlink{k_0}$ satisfies the definition of a sufficient covering family with $C=4,M=1$.
\end{proof}

  In view of \Cref{lem:lqg-covering-lemma}, to lower bound $\dim_{D_{h}}(H)$, it will suffice to consider only covers of $H$ which consist of elements of $\Wgeqsymlink{k_0}$ for large $k_0$. Fix $\delta > 0$ and recall the constants $\underline{\alpha}, \overline{\alpha}$ (\ref{def:holder-exps}) and the set $\Adeltasymlink{\delta}$ defined in the previous section. We now introduce a few additional definitions. For $\alpha\in \Adeltasymlink{\delta}$, we define
  \begin{equation}
    \label{eq:55}
    \cW_{k}^\alpha=\{W\in \Wsymlink{k} : \mathrm{diam}_{\mathrm{Euc}}(W)\in ( 2^{-k/\alpha}, 2^{-k/(\alpha+\delta)}]\}.
  \end{equation} 
  For later use, we also define the functions
  \begin{align}
    \label{eq:541}
    H(\alpha) &= \frac{1}{2\alpha}\left( \frac{2}{\gamma} - \frac{\gamma}{2} - \frac{\alpha}{\xi}\right)^{2}\\
    \label{eq:54}
    \ell(\alpha)&= \beta + 1/\alpha + H(\alpha) -d_\gamma.
  \end{align}
 
We will now proceed to prove a few preparatory lemmas, which will help us to prove Proposition \ref{lqg-dim-lemma}. First, we have the following analogue of Lemma \ref{lem:split}.

\begin{lem}
  \label{lem:split1}
  Almost surely, for all $k_0$ large enough, and any cover $\cR$ of $H$ which consists only of elements of $\Wgeqsymlink{k_0}$, and any $\alpha_*\in \Adeltasymlink{\delta}$, we have
  \begin{equation}
    \sum_{R\in \cR}\operatorname{diam}_{D_{h}}(R)^\beta\cdot\operatorname{diam}_{\mathrm{Euc}}(R)\leq \sum_{R\in \cR}\operatorname{diam}_{D_h}(R)^{\beta+1/(\alpha_* +\delta)} + \sum_{\alpha\in \Adeltasymlink{\delta}, \alpha> \alpha_*} \sum_{k>k_0}\#\Wkasymlink{k}{\alpha} (2^{-k})^{\beta+ 1/(\alpha+\delta)}.
  \end{equation}
\end{lem}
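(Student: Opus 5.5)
The plan is to mirror the proof of \Cref{lem:split}, with the roles of the two metrics reversed. First I fix $k_0$ large enough that the almost sure H\"older bounds of \Cref{lem:estholder} hold on a neighbourhood of $\overline{B_N(0)}$ at all scales below $2^{-k_0}$. Then every LQG ball $W=\cB_{2^{-k}}(u)\in\Wsymlink{k}$ with $k>k_0$ and $u$ near $\overline{B_N(0)}$ satisfies
\[
2^{-k/\underline{\alpha}'}\lesssim \operatorname{diam}_{\mathrm{Euc}}(W)\lesssim 2^{-k/\overline{\alpha}'}
\]
for exponents slightly perturbed from $\underline{\alpha},\overline{\alpha}$. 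Consequently, after possibly enlarging $k_0$ using the $\delta$-margin built into $\Adeltasymlink{\delta}$, each such $W$ lies in $\Wkasymlink{k}{\alpha}$ for at least one $\alpha\in\Adeltasymlink{\delta}$. For each $R$ I fix one such index pair $(k(R),\alpha(R))$. If the same set arises from several Poisson points or several scales, I simply choose one representation; the only effect is that the counting sums bound each $R$ at least once. Centers far from $B_N(0)$ can be discarded: if $R$ meets $H$ and has small diameter, its center is near $\overline{B_N(0)}$.

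For each $R\in\cR$ I then use $\operatorname{diam}_{D_h}(R)\le 2\cdot 2^{-k}$, together with
\[
\operatorname{diam}_{\mathrm{Euc}}(R)\le 2^{-k/(\alpha(R)+\delta)}.
\]
This gives
\[
\operatorname{diam}_{D_h}(R)^\beta\operatorname{diam}_{\mathrm{Euc}}(R)\lesssim (2^{-k})^{\beta+1/(\alpha(R)+\delta)}.
\]
I split the sum over $\cR$ according to whether $\alpha(R)\le\alpha_*$ or $\alpha(R)>\alpha_*$.

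\textbf{Case $\alpha(R)\le\alpha_*$.} Here $1/(\alpha(R)+\delta)\ge 1/(\alpha_*+\delta)$, and $2^{-k}<1$, so the term is at most $(2^{-k})^{\beta+1/(\alpha_*+\delta)}$. I then need to compare $2^{-k}$ with $\operatorname{diam}_{D_h}(R)$ from below. Since the metric is geodesic and $R$ is a ball of radius $2^{-k}$ that is not the whole space, its diameter is at least $2^{-k}$. This lets me bound the term by $\operatorname{diam}_{D_h}(R)^{\beta+1/(\alpha_*+\delta)}$, which is at most the first sum in the statement.

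\textbf{Case $\alpha(R)>\alpha_*$.} I regroup these terms by the pair $(k,\alpha)$. Each such $R$ is counted in $\Wkasymlink{k}{\alpha}$, so this part is bounded by
\[
\sum_{\alpha>\alpha_*}\sum_{k>k_0}\#\Wkasymlink{k}{\alpha}\,(2^{-k})^{\beta+1/(\alpha+\delta)}.
\]

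The main obstacle is bookkeeping of constants. The factor $2$ in $\operatorname{diam}_{D_h}\le 2\cdot 2^{-k}$ produces multiplicative constants, which are harmless for dimension purposes but are not present in the stated inequality. I would handle this by interpreting the bound as holding up to a universal constant factor, as is implicitly done in \Cref{lem:split}. Alternatively, one can absorb the constants into the $\delta$-slack in the exponents by taking $k_0$ large. The lower bound $\operatorname{diam}_{D_h}(R)\ge 2^{-k}$ also needs the ball not to cover everything, which holds for large $k$.
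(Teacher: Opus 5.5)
Your proposal is correct and follows the paper's intended route. The paper omits this proof as a repetition of Lemma~\ref{lem:split}: assign each $R$ an exponent $\alpha(R)$ via H\"older continuity, bound the terms with $\alpha(R)\le\alpha_*$ termwise using $2^{-k}\le\operatorname{diam}_{D_h}(R)\le 1$, and count the remaining terms through $\#\cW_k^{\alpha}$. The factor $2^\beta$ you flag (from $\operatorname{diam}_{D_h}(R)\le 2\cdot 2^{-k}$) does affect the second sum, and it enters only there if you keep $\operatorname{diam}_{D_h}(R)^\beta$ intact in the first case. The same kind of factor is silently dropped in the paper's Lemma~\ref{lem:split} (where $\operatorname{diam}_{\mathrm{Euc}}(R)=2^{1-k}$), and it is harmless because that sum is only used after being made arbitrarily small.
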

\begin{proof}
  The proof follows precisely the same steps as the proof of Lemma \ref{lem:split}, and we thus omit it.
\end{proof}

We will also need the following analogue of Lemma \ref{tk-alpha-bound-lemma}; recall the definition of the function $H$ from \eqref{eq:541}.
\begin{lem}\label{wk-alpha-bound-lemma}
Fix $\zeta>0$. For any $\alpha\in \Adeltasymlink{\delta}$ additionally satisfying $\alpha > \xi (Q-\gamma)$, and all $k$ sufficiently large, we have
\begin{align*}
    \#\Wkasymlink{k}{\alpha} \leq 2^{k (d_\gamma+2\delta)}2^{-k\left(H(\alpha)-\zeta\right)}.
\end{align*}
\end{lem}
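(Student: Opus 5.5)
We bound $\#\Wkasymlink{k}{\alpha}$ through its conditional expectation given $h$, using Campbell's formula for the Poisson process $\PPPlink{k+1}$. Conditionally on $h$,
\begin{equation*}
\mathbb{E}\big[\#\Wkasymlink{k}{\alpha}\,\big|\,h\big] = (2^{-k-1})^{-d_\gamma-2\delta}\int_{\overline{B_{N+1}(0)}} \mathbbm{1}\big\{\mathrm{diam}_{\mathrm{Euc}}(\cB_{2^{-k}}(u)) > 2^{-k/\alpha}\big\}\, d\mu_h(u),
\end{equation*}
up to an irrelevant restriction to a bounded region. Only balls meeting $\overline{B_N(0)}$ matter for covers of $H$, and H\"older continuity places their centers in $\overline{B_{N+1}(0)}$ for large $k$. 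The prefactor already produces $2^{k(d_\gamma+2\delta)}$. It therefore suffices to show that the $\mu_h$-mass of the set of ``bad'' points $u$ is at most $2^{-k(H(\alpha)-\zeta/2)}$ with high enough probability, and then pass from conditional expectation to an almost sure statement. We will use Markov's inequality with a loss of $2^{k\zeta/2}$, followed by Borel--Cantelli over $k$, a union over the finitely many $\alpha\in\Adeltasymlink{\delta}$, and a summable-in-$k$ bound.

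The main step is the annealed estimate
\begin{equation*}
\mathbb{E}\Big[\mu_h\big(\{u\in \overline{B_{N+1}(0)} : \mathrm{diam}_{\mathrm{Euc}}(\cB_{2^{-k}}(u))> 2^{-k/\alpha}\}\big)\Big] \lesssim 2^{-k(H(\alpha)-\zeta/2)}.
\end{equation*}
Set $\epsilon=2^{-k/\alpha}$. The event says that the LQG ball of radius $\epsilon^\alpha$ around $u$ exits $B_\epsilon(u)$, which forces $D_h(u,\partial B_{\epsilon}(u))\le \epsilon^\alpha$. By \Cref{lem:est2}, up to a superpolynomially unlikely error, this distance is comparable to $\epsilon^{\xi Q}e^{\xi h_\epsilon(u)}$. (To handle the point $u$ itself, chain over dyadic annuli, or use an annulus-crossing distance together with a union bound over scales.) The event therefore essentially requires $h_\epsilon(u)\ge (\alpha/\xi - Q)\log\epsilon$.

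We then weight by $\mu_h$. By the Girsanov/rooted-measure identity, $\mathbb{E}\int \mathbbm{1}_{E(u)}\,d\mu_h(u)=\int \mathbb{P}[E(u)\text{ under } h+\gamma\log|\cdot-u|^{-1}]\,du$. Under this shift, $h_\epsilon(u)$ is Gaussian with mean $\gamma\log\epsilon^{-1}$ and variance $\log\epsilon^{-1}$. The required deviation is thus $(Q-\alpha/\xi-\gamma)\log\epsilon^{-1}$ in the direction of larger values. Since $Q-\gamma=2/\gamma-\gamma/2$, the Gaussian tail is $\epsilon^{(2/\gamma-\gamma/2-\alpha/\xi)^2/2}$, and with $\epsilon=2^{-k/\alpha}$ this equals $2^{-kH(\alpha)}$. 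The hypothesis $\alpha>\xi(Q-\gamma)$ makes the threshold lie on the far side of the mean; it is exactly the condition for this to be a genuine large deviation rather than a typical event. The constant-order corrections from \Cref{lem:est2}, together with the $\mu_h$-normalization factors, are absorbed into the $\zeta$ loss.

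The main obstacle is making the comparison between $D_h(u,\partial B_\epsilon(u))$ and the circle average rigorous under the rooted measure. Under the shifted field, the field has a $\gamma$-log singularity at $u$. I would handle this as follows. First lower-bound $D_h(u,\partial B_\epsilon(u))\ge D_h(\partial B_{\epsilon/2}(u),\partial B_\epsilon(u))$. Then apply \Cref{lem:est2} with $K_1,K_2$ concentric circles, which is unaffected by the singularity away from $u$ since the shift is smooth on the annulus. Finally, absorb the $O(1)$ change between $h_{\epsilon/2}$ and $h_\epsilon$, together with the $A^{\pm1}$ factors, by taking $A=2^{k\zeta'}$ with superpolynomial failure probability. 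Combining these steps gives the conditional-mean bound, and the Markov/Borel--Cantelli argument then yields the lemma almost surely for all large $k$.
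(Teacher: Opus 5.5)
Your route is the paper's route. You use the $\mu_h$-rooted field ($h+\gamma G_{\CC}(\cdot,u)$, the paper's \Cref{lem:biased}, which you phrase as a Girsanov identity), an annulus-crossing lower bound, \Cref{lem:est2} plus Weyl scaling for the $\gamma\log\epsilon^{-1}$ shift, and a Gaussian tail for the circle average, then finish with Markov and Borel--Cantelli. You take the first moment under the original law, with the rooted field appearing only inside the probability. That is the clean form of the paper's first step, and it lets you skip the reweight-then-absolute-continuity detour.

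One step needs correcting, because as written it points the wrong way. The crossing distance is comparable to $\epsilon^{\xi Q}e^{\xi h_\epsilon(u)}$, and small distances come from \emph{small} field values. So the event $D_h\le\epsilon^\alpha$ forces $h_\epsilon(u)\le(Q-\alpha/\xi)\log\epsilon^{-1}$, not $\ge$. Under the rooted law the mean is $\gamma\log\epsilon^{-1}$, so this is a \emph{downward} deviation of size $(\alpha/\xi-(Q-\gamma))\log\epsilon^{-1}$. It is atypical precisely because $\alpha>\xi(Q-\gamma)$.

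With your stated direction (upward, threshold below the mean), the event would be typical under that very hypothesis, and your sentence about the hypothesis would contradict it. Once the direction is fixed, the magnitude of the deviation is unchanged, so $\epsilon^{(Q-\gamma-\alpha/\xi)^2/2}=2^{-kH(\alpha)}$ stands.

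A minor point: Euclidean diameter $>\epsilon$ only forces the LQG ball to leave $B_{\epsilon/2}(u)$, not $B_\epsilon(u)$. So cross the annulus $B_{\epsilon/2}(u)\setminus B_{\epsilon/4}(u)$, as the paper does; this changes only constants.
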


In order to prove the above, we will require the following standard description of the GFF around a point uniformly sampled from the LQG measure.
\begin{lem}[{{\cite[Lemma A.10]{DMS14}}}]
  \label{lem:biased}
Consider a field $\mathtt{h}$ sampled from the law of $h$ weighted by $\mu_h(\overline{B_{N}(0)})/ \EE \mu_h(\overline{B_{N}(0)})$. Let $\mathfrak{z}$ be uniformly sampled from $\mu_{\mathtt{h}}\lvert_{\overline{B_{N}(0)}}$. Then, conditional on $\mathfrak{z}$, $\mathtt{h}(\cdot)$ is distributed as $h(\cdot)+\gamma G_{\CC}(\cdot,\mathfrak{z})$, where $G_{\CC}(v,w)$ denotes the Green's function \eqref{eq:21}.
\end{lem}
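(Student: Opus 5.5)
The plan is to establish the statement in its equivalent \emph{rooted measure} form: for every bounded measurable functional $F$ of the field and every bounded measurable $\varphi$ on $\overline{B_N(0)}$,
\begin{equation*}
\EE\left[\int_{\overline{B_N(0)}} \varphi(z)\,F(h)\,d\mu_h(z)\right] = \int_{\overline{B_N(0)}} \varphi(z)\,\EE\left[F\big(h+\gamma G_{\CC}(\cdot,z)\big)\right] m(z)\,d^2z,
\end{equation*}
where $m(z)\,d^2z := \EE[\mu_h(d^2z)]$ is the intensity of $\mu_h$. Dividing by $\EE\mu_h(\overline{B_N(0)})$, the left side is exactly the joint law of $(\mathtt h,\mathfrak z)$ described in the statement. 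The right side says that $\mathfrak z$ has marginal density proportional to $m$, and that conditionally on $\mathfrak z=z$ the field is $h+\gamma G_{\CC}(\cdot,z)$. So the identity is equivalent to the lemma.

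To prove the identity, I will use the circle-average regularization $\mu_h=\lim_{\epsilon\to0}\epsilon^{\gamma^2/2}e^{\gamma h_\epsilon(z)}\,d^2z$ from \cite{DS11}, which holds in probability (or in $L^1$ on bounded sets, since $\gamma<2$). Fix $\epsilon>0$ and $z$. The weight $e^{\gamma h_\epsilon(z)}/\EE[e^{\gamma h_\epsilon(z)}]$ is a Gaussian exponential martingale density, so by the Cameron--Martin/Girsanov theorem, under the reweighted law the field $h$ has the law of $h+\gamma\,\mathrm{Cov}(h(\cdot),h_\epsilon(z))$. The covariance shift equals $\gamma G^\epsilon_{\CC}(\cdot,z)$, the circle average of $G_{\CC}(\cdot,z)$ over $\partial B_\epsilon(z)$ in the second variable. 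Also $\epsilon^{\gamma^2/2}\EE[e^{\gamma h_\epsilon(z)}]=\epsilon^{\gamma^2/2}e^{\gamma^2\mathrm{Var}(h_\epsilon(z))/2}$ converges locally uniformly to the density $m(z)$, because $\mathrm{Var}(h_\epsilon(z))=\log\epsilon^{-1}+O(1)$. Fubini then gives the $\epsilon$-version of the identity, with $\mu_h$ replaced by $\mu_h^\epsilon$ and $G_{\CC}$ replaced by $G^\epsilon_{\CC}$.

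It remains to let $\epsilon\to0$; I expect this to be the main technical step. First reduce to $F$ bounded and continuous with respect to the topology in which $h$ lives (say $H^{-1}_{\mathrm{loc}}$), which suffices by a monotone class argument. On the right, $G^\epsilon_{\CC}(\cdot,z)\to G_{\CC}(\cdot,z)$ in $H^{-1}_{\mathrm{loc}}$, and in fact agrees with it outside $B_\epsilon(z)$. Dominated convergence in $z$ then gives convergence of the right side. On the left, I need $\int\varphi F(h)\,d\mu_h^\epsilon\to\int\varphi F(h)\,d\mu_h$ in $L^1$. This follows from $\mu^\epsilon_h(\overline{B_N(0)})\to\mu_h(\overline{B_N(0)})$ in $L^1$ (uniform integrability holds in the $L^1$ phase $\gamma<2$) together with weak convergence of the measures; since $F(h)$ is a fixed bounded factor, the $h$-dependence causes no difficulty.

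Combining the two limits yields the identity, and hence the lemma. If I wanted to avoid the regularization, an alternative is to verify the identity only for $F(h)=\exp((h,\rho))$ with $\rho\in\cM_{\CC}$. For such $F$, both sides reduce to explicit Gaussian computations via the Cameron--Martin shift $\gamma\int G_{\CC}(\cdot,z)\,d\rho$. These exponentials determine the law of the field, so the identity would extend to all bounded measurable $F$.
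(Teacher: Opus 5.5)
The paper does not prove this lemma; it is cited from \cite[Lemma A.10]{DMS14}. Your argument is the standard rooted-measure proof behind that reference (cf.\ \cite{DS11}): Girsanov for the weight $\epsilon^{\gamma^2/2}e^{\gamma h_\epsilon(z)}$, which shifts the field by $\gamma\int G_{\CC}(\cdot,w)\,d\rho_{z,\epsilon}(w)$, followed by $\epsilon\to0$. It is correct, with only cosmetic loose ends: the regularized shift agrees with $G_{\CC}(\cdot,z)$ off $B_\epsilon(z)$ only up to an $O(\epsilon)$ constant when $\partial B_\epsilon(z)$ meets the unit circle, and for merely measurable $\varphi$ you should first reduce to continuous $\varphi$ (or use $L^1$ convergence of $\int\varphi\,d\mu_h^\epsilon$ for fixed $\varphi$) rather than invoke weak convergence.
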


We are now ready to provide the proof of Lemma \ref{wk-alpha-bound-lemma}.

\begin{proof}[Proof of Lemma \ref{wk-alpha-bound-lemma}] 
As in Lemma \ref{lem:biased}, consider a field $\mathtt{h}$ sampled from the law of $h$ weighted by $\mu_h(\overline{B_{N}(0)})/\EE\mu_h(\overline{B_{N}(0)})$. Note that the laws of $h$ and $\mathtt{h}$ are mutually absolutely continuous. As a result, instead of proving the desired bound on the cardinality of $\Wkasymlink{k}{\alpha}$ for all large enough $k$, we can replace $h$ by $\mathtt{h}$ when defining the Poisson point process $\PPPlink{k}$ and the sets $\Wsymlink{k}$, $\Wkasymlink{k}{\alpha}$, and prove the desired bound instead for the analogously defined $\Pi_{k+1,\mathtt{h}}, \cW_{k,\mathtt{h}}, \cW_{k,\mathtt{h}}^\alpha$.

Thus, we wish to estimate the expectation $\EE[\#\cW_{k,\mathtt{h}}^\alpha]$. Recall the definition of $\cW_{k,\mathtt{h}}$ as the collection of LQG metric balls of radius $2^{-k}$ around points of a Poisson process $\Pi_{k+1,\mathtt{h}}$. Using $\mathfrak{z}$ to denote a point uniformly sampled from the measure $\mu_{\mathtt{h}}\lvert_{\overline{B_{N}(0)}}$, we can use the above definition to write

\begin{equation}
  \label{eq:57}
  \EE[\#\cW_{k,\mathtt{h}}^\alpha]\leq \mathbb{E}[\# \cW_{k,\mathtt{h}}] \PP(\mathrm{diam}_{\mathrm{Euc}}(\cB_{2^{-k},\mathtt{h}}(\mathfrak{z}))> 2^{-k/\alpha}).
\end{equation}
Note that if $\mathrm{diam}_{\mathrm{Euc}}(\cB_{2^{-k},\mathtt{h}}(\mathfrak{z}))> 2^{-k/\alpha}$, then at least one point of $\cB_{2^{-k},\mathtt{h}}(\mathfrak{z})$ must lie outside $B_{2^{-k/\alpha-1}}(\mathfrak{z})$. As a result, we have 
\begin{equation}
  \label{eq:58}
\PP(\mathrm{diam}_{\mathrm{Euc}}(\cB_{2^{-k},\mathtt{h}}(\mathfrak{z}))> 2^{-k/\alpha})\leq \PP(D_{\mathtt{h}}(\partial B_{2^{-k/\alpha-2}}(\mathfrak{z}),\partial B_{2^{-k/\alpha-1}}(\mathfrak{z}))\leq 2^{-k}),
\end{equation}
and thus
\begin{equation}\label{eq:combine-57-58}
\EE[\#\cW_{k,\mathtt{h}}^\alpha]\leq \mathbb{E}[\# \cW_{k,\mathtt{h}}] \PP(D_{\mathtt{h}}(\partial B_{2^{-k/\alpha-2}}(\mathfrak{z}),\partial B_{2^{-k/\alpha-1}}(\mathfrak{z}))\leq 2^{-k}).
\end{equation}
Now we can use Lemma \ref{lem:biased} to describe the law of the field $\mathtt{h}$ around $\mathfrak{z}$; using $h$ to denote a whole-plane GFF independent of $\mathfrak{z}$, we can write
\begin{equation}
  \label{eq:60}
  \PP(D_{\mathtt{h}}(\partial B_{2^{-k/\alpha-2}}(\mathfrak{z}),\partial B_{2^{-k/\alpha-1}}(\mathfrak{z}))\leq 2^{-k})= \mathbb{P}\big( D_{h(\cdot)+ \gamma G_{\CC}(\cdot,\mathfrak{z})}(\partial B_{2^{-k/\alpha-2}}(\mathfrak{z}),\partial B_{2^{-k/\alpha-1}}(\mathfrak{z}))\leq 2^{-k}\big).
\end{equation}
For all $\epsilon$ small enough, note that for $x\in B_{\epsilon/2}(\mathfrak{z}))\setminus B_{\epsilon/4}(\mathfrak{z}))$, we have $\gamma G_{\CC}(x,\mathfrak{z})\sim \gamma \log\epsilon^{-1}$. As a consequence, by using Proposition \ref{lem:est2} along with the Weyl scaling of the LQG metric (see (1.8) in \cite{gm-uniqueness}), we obtain that the event $\cE_{\epsilon,A}$ defined by
  \begin{equation}
    \label{eq:56}
   \cE_{\epsilon,A}=\{ A^{-1} \epsilon^{-\gamma \xi } \epsilon^{\xi Q} e^{\xi h_{\epsilon}(\mathfrak{z})}\leq D_{h}\left(\partial B_{\epsilon/4}(\mathfrak{z}),\partial B_{\epsilon/2}(\mathfrak{z})\right)\leq A \epsilon^{-\gamma \xi } \epsilon^{\xi Q} e^{\xi h_{\epsilon}(\mathfrak{z})}\}
 \end{equation}
 has the property that $\PP(\cE_{\epsilon,A}^{c})\rightarrow 0$ superpolynomially as $A\rightarrow \infty$ uniformly in $\epsilon>0$. In view of the above, for any $\eta>0$, we can write
\begin{equation}\label{lqg_radius_est1}
    \mathbb{P}\big(D_{h}\big(\partial B_{\epsilon/4}(\mathfrak{z}),\partial B_{\epsilon/2}(\mathfrak{z})\big) \leq  \epsilon^\alpha \big) \leq \mathbb{P}(\cE_{\epsilon,\epsilon^{-\eta}}^{c})+ \mathbb{P}\big(\epsilon^{\xi (Q-\gamma)+\eta}e^{\xi h_{\epsilon}(\mathfrak{z})}< \epsilon^\alpha \big). \\
\end{equation}
As discussed, for a fixed $\eta>0$, the first term above decays superpolynomially as $\epsilon\rightarrow 0$ and thus we focus on the second term. Since $h$ is independent of $\mathfrak{z}$, we know that $B_t= h_{e^{-t}}(\mathfrak{z})-h_1(\mathfrak{z})$ is a standard Brownian motion (see e.g. \cite[Theorem 1.59]{berestycki-lqg-notes}). As a result, denoting $s_\epsilon = \log\epsilon^{-1}$, we have
\begin{align}
  \label{eq:61}
  \PP\big(\epsilon^{\xi (Q-\gamma)+\eta}e^{\xi h_{\epsilon}(\mathfrak{z})}< \epsilon^\alpha \big)
  &= \PP\big(h_1(\mathfrak{z})+B_{s_\epsilon}< -\xi^{-1}\big((\alpha- \xi(Q-\gamma)-\eta) s_\epsilon\big)\big)\nonumber\\
  &\leq\PP\big(B_{s_\epsilon} <  -\xi^{-1}\big( (\alpha -\xi (Q-\gamma)-2\eta) s_\epsilon\big)\big) + \PP\big(h_1(\mathfrak{z})<- \xi^{-1}\eta s_\epsilon\big).
\end{align}
By our assumption $\alpha>\xi(Q-\gamma)$ in the statement of the lemma, we know that $\alpha -\xi (Q-\gamma)>0$; for the remainder of the proof, we will assume that $\eta$ is taken to be small enough such that $\alpha -\xi(Q-\gamma)-2\eta>0$. We first consider the latter term in \eqref{eq:61} -- note that for any fixed $z\in \overline{B_{N}(0)}$, $h_1(z)$ is a centered Gaussian with variance given by $\int_{x,y\in \partial B_{1}(z)}G_{\mathbb{C}}(x,y)dxdy$. As a result, for all $z\in \overline{B_{N}(0)}$, $\mathrm{Var}(h_1(z))$ is bounded by a positive constant depending only on $N$. Thus, since $\mathfrak{z}$ is a point in $\overline{B_{N}(0)}$ chosen independently of $h$, conditional on $\mathfrak{z}$, $h_1(\mathfrak{z})$ is a centered Gaussian with uniformly bounded variance. As a result, by a standard Gaussian estimate, the term
\begin{displaymath}
  \PP\big(h_1(\mathfrak{z})<- \xi^{-1} \eta s_\epsilon\big)
\end{displaymath}
decays superpolynomially in $\epsilon$ (recall that $s_\epsilon=\log \epsilon^{-1}$).

Thus, since $B_t$ is simply a standard Brownian motion, using $\phi(\epsilon)$ to denote a term decaying superpolynomially as $\epsilon\rightarrow 0$, we have
\begin{align}
  \label{eq:62}
  \mathbb{P}\big(\epsilon^{\xi (Q-\gamma)+\eta}e^{\xi h_{\epsilon}(\mathfrak{z})}< \epsilon^\alpha \big)&\leq  \exp (- \xi^{-2} (\alpha -\xi(Q-\gamma) -2\eta)^2s_\epsilon/2)+\phi(\epsilon)\nonumber\\
  &\leq  2\epsilon^{ \xi^{-2} (\alpha -\xi (Q-\gamma) -2\eta)^2/2},
\end{align}
and combining this with \eqref{lqg_radius_est1}, for all $\epsilon$ small enough we have
\begin{equation}
    \mathbb{P}\big(D_{h}\big(\partial B_{\epsilon/4}(\mathfrak{z}),\partial B_{\epsilon/2}(\mathfrak{z})\big) \leq  \epsilon^\alpha \big) \leq 2\epsilon^{ \xi^{-2} (\alpha -\xi (Q-\gamma) -2\eta)^2/2}.
\end{equation}
Now applying the above bound to \eqref{eq:combine-57-58} with $\epsilon = 2^{-k/\alpha}$ and with sufficiently small $\eta$ (depending on the fixed choice of $\zeta > 0$), we obtain that for all sufficiently large $k$, for some positive constant $C_N$, we have
\begin{align}
  \label{eq:63}
  \EE[\#\cW^\alpha_{k,\mathtt{h}}]&\leq \mathbb{E}[\# \cW_{k,\mathtt{h}}](2^{-k})^{(\xi^{-2} (\alpha-\xi (Q-\gamma))^2/(2\alpha)- \zeta/3) }\nonumber\\
  &= \mathbb{E}[\# \cW_{k,\mathtt{h}}](2^{-k})^{H(\alpha)-\zeta/3}\nonumber\\
  &\leq C_N (2^{-k})^{H(\alpha)-d_\gamma-2\delta -\zeta/3}.                                
\end{align}
(To obtain the last inequality above, we have used that $\EE[\#\cW_{k,\mathtt{h}}]\sim C_N 2^{(d_\gamma+2\delta)k}$, as follows from the definition of it as balls centered around the set $\Pi_{k+1,\mathtt{h}}\cap \overline{B_{N}(0)}$.) By using \eqref{eq:63} along with Markov's inequality, we have
\begin{equation}
  \label{eq:64}
  \PP\big(\#\cW^\alpha_{k,\mathtt{h}}\geq  C_N(2^{-k})^{H(\alpha)-d_\gamma- 2\delta -2\zeta/3}\big)\leq 2^{-\zeta k/3},
\end{equation}
and an application of the Borel-Cantelli lemma now yields that almost surely, we have
\begin{align}
  \label{eq:71}
  \#\cW^\alpha_{k,\mathtt{h}}&<  C_{N}2^{k (d_\gamma+2\delta)}2^{-k\left(H(\alpha)-2\zeta/3\right)}\nonumber\\
  &\leq 2^{k (d_\gamma+2\delta)}2^{-k\left(H(\alpha)-\zeta\right)},
\end{align}
for all sufficiently large $k$ (depending on $N$).
By the mutual absolute continuity of $h$ and $\mathtt{h}$ as discussed at the beginning of the proof, we have thus shown that a.s. for all $k$ large enough, we have
\begin{equation}
  \label{eq:72}
  \#\Wkasymlink{k}{\alpha}< 2^{k (d_\gamma+2\delta)}2^{-k\left(H(\alpha)-\zeta\right)},
\end{equation}
and this completes the proof.
\end{proof}

\begin{lem}\label{lqg-sum-control-lemma-bas}
  Fix an $\alpha_*\in \Adeltasymlink{\delta}\cap (\xi(Q-\gamma),\infty)$ such that $\min \ell ( [\alpha_*,\infty)\cap \Adeltasymlink{\delta})>C'\delta$, where $\ell$ is the function defined in \eqref{eq:54} and $C' > 0$ is a large deterministic constant whose precise value depends only on $\gamma$ and $\beta$. Then, almost surely, for any $\epsilon>0$, we can choose $k_0$ to be large enough such that
  \begin{equation}\label{positive-set-lower-bound}
    \sum_{\alpha\in \Adeltasymlink{\delta}, \alpha> \alpha_*} \sum_{k>k_0}\# \Wkasymlink{k}{\alpha} (2^{-k})^{\beta+ 1/(\alpha+\delta)}<\epsilon.
    \end{equation}
  \end{lem}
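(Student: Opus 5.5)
We plan to mirror the proof of Lemma \ref{euclidean-sum-control-lemma}, with Lemma \ref{wk-alpha-bound-lemma} playing the role of Lemma \ref{tk-alpha-bound-lemma}. Every $\alpha$ in the sum satisfies $\alpha>\alpha_*>\xi(Q-\gamma)$, so Lemma \ref{wk-alpha-bound-lemma} applies to all of them. We fix $\zeta=\delta$ there. Since $\Adeltasymlink{\delta}$ is finite, there is an almost surely finite $k_1$ such that for all $k>k_1$ and all $\alpha\in\Adeltasymlink{\delta}$ with $\alpha>\alpha_*$ we have
\begin{equation*}
\#\Wkasymlink{k}{\alpha}\leq 2^{k(d_\gamma+2\delta)}2^{-k(H(\alpha)-\delta)}.
\end{equation*}

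We then substitute this into the double sum. The summand for a given pair $(\alpha,k)$ is bounded by
\begin{equation*}
(2^{-k})^{\beta+1/(\alpha+\delta)+H(\alpha)-d_\gamma-3\delta}.
\end{equation*}
To compare the exponent with $\ell(\alpha)=\beta+1/\alpha+H(\alpha)-d_\gamma$, we use
\begin{equation*}
1/\alpha-1/(\alpha+\delta)=\delta/(\alpha(\alpha+\delta))\leq \delta/\underline{\alpha}^2 .
\end{equation*}
Here we use that $\alpha\geq\underline{\alpha}-\delta$ is bounded below by a positive constant for small $\delta$; note $\underline\alpha=\xi(Q-2)>0$. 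This gives the lower bound
\begin{equation*}
\beta+\tfrac{1}{\alpha+\delta}+H(\alpha)-d_\gamma-3\delta\;\geq\;\ell(\alpha)-(3+\underline{\alpha}^{-2})\delta .
\end{equation*}

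The choice of $C'$ is the only delicate point. We take $C'\geq 4+2\underline{\alpha}^{-2}$, which depends only on $\gamma$ through $\underline\alpha$; one may enlarge it further to absorb any $\beta$-dependence if $\zeta$ is instead chosen proportional to $\delta$ with a $\beta$-dependent factor. The hypothesis $\min\ell>C'\delta$ then makes every exponent at least $\delta$. Hence, for $k_0\geq k_1$, the double sum is at most
\begin{equation*}
\#\Adeltasymlink{\delta}\sum_{k>k_0}2^{-\delta k}=\#\Adeltasymlink{\delta}\,\frac{2^{-\delta k_0}}{2^{\delta}-1}.
\end{equation*}
This tends to $0$ as $k_0\to\infty$, so it is below $\epsilon$ for $k_0$ large enough.

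One caveat is that Lemma \ref{wk-alpha-bound-lemma} is stated for each fixed $\alpha$ with its own threshold on $k$. Finiteness of $\Adeltasymlink{\delta}$ lets us take the maximum of these thresholds, and this is the only uniformity needed.
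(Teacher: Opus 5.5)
Your proposal is correct and is essentially the paper's proof. Apply Lemma \ref{wk-alpha-bound-lemma} with $\zeta=\delta$, rewrite the exponent as $\ell(\alpha)-3\delta-\delta/[\alpha(\alpha+\delta)]$, bound the last term by a $\gamma$-dependent multiple of $\delta$, and sum the resulting geometric series. One small quibble: the bound $\delta/(\alpha(\alpha+\delta))\le\delta/\underline{\alpha}^{2}$ follows from $\alpha>\xi(Q-\gamma)>\underline{\alpha}$, which is what the paper uses, rather than from $\alpha\ge\underline{\alpha}-\delta$, though the slack in your choice of $C'$ covers this either way.
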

  \begin{proof}
    By applying Lemma \ref{wk-alpha-bound-lemma} with $\zeta=\delta$, almost surely, for all sufficiently large $k_0$, we have
  \begin{align}
    \label{eq:45a}
    \sum_{\alpha\in \Adeltasymlink{\delta}, \alpha> \alpha_*} \sum_{k>k_0}\# \Wkasymlink{k}{\alpha} (2^{-k})^{\beta+ 1/(\alpha+\delta)} &\leq \sum_{\alpha\in \Adeltasymlink{\delta}, \alpha> \alpha_*} \sum_{k>k_0} 2^{k (d_\gamma+2\delta)}2^{-k\left(H(\alpha)-\delta\right)} (2^{-k})^{\beta+ 1/(\alpha+\delta)}\nonumber\\
    &=\sum_{\alpha\in \Adeltasymlink{\delta}, \alpha>\alpha_*} \sum_{k>k_0}(2^{-k})^{\ell(\alpha)-3\delta-\delta/[\alpha(\alpha+\delta)]}\leq \#\Adeltasymlink{\delta} \sum_{k>k_0}2^{-\delta k},
  \end{align}
where the final inequality follows because $\alpha > 
\xi(Q-\gamma)$ (which depends only on $\gamma$; see \eqref{def:xi-and-Q}), so the term $\delta/[\alpha(\alpha+\delta)]$ in the exponent is bounded above by $C''\delta$ for some $C'' > 0$ depending only on $\gamma$, and because $C'$ has been chosen large enough. Now the proof is completed by noting that the final expression can be made arbitrarily small by taking $k_0$ large.
  \end{proof}

  We now combine the above preparatory lemmas to give the proof of \Cref{lqg-dim-lemma}.

  \begin{proof}[Proof of Proposition \ref{lqg-dim-lemma}]
   Recall from our earlier discussion that it suffices to fix a closed unit ball $\overline{B_{N}(0)}\subseteq \CC$ and assume that $H\subseteq \overline{B_{N}(0)}$. Fix $\delta>0$ and use this to define the set $\Adeltasymlink{\delta}$; we will send $\delta$ to $0$ at the end of the proof. By the assumptions of the proposition combined with Lemma \ref{lem:split1}, we know that there exists a random $C_{H,\beta}>0$ (independent of $\delta$) such that for all $k_0$ large enough, and any cover $\cR$ of $H$ consisting only of elements of $\Wgeqsymlink{k_0}$, we must have
  \begin{equation}
    \label{eq:lqg-2sum-decomp}
        C_{H,\beta} \leq \sum_{R\in \cR}\operatorname{diam}_{D_{h}}(R)^{\beta+ 1/(\alpha_*+\delta)} + \sum_{\alpha\in \Adeltasymlink{\delta}, \alpha> \alpha_*} \sum_{k>k_0}\# \Wkasymlink{k}{\alpha} (2^{-k})^{\beta + 1/(\alpha+\delta)}.
      \end{equation}
We now consider the equation $\ell(\alpha) = 0$; recall the definition of $\ell$ from \eqref{eq:54}. The solutions to $\ell(\alpha)=0$ are the quantities $0<\underline{r}\leq\overline{r}$ whose reciprocals $0<\overline{r}^{-1}\leq \underline{r}^{-1}$ are given by
\begin{align*}
    \overline{r}^{-1},\underline{r}^{-1}&=
    \frac{2d_{\gamma}(\gamma^{2}+4) - 4\beta\gamma^{2} \pm 4\gamma\sqrt{2d_{\gamma}^{2}-(4\beta+\beta\gamma^{2}) d_{\gamma}+\beta^{2}\gamma^{2}}}{16+\gamma^{4}},
\end{align*}
and we note that $\DLQG{\gamma}{\beta}= \overline{r}^{-1}$. By using routine estimates for LQG dimension derived in \cite{gp-lfpp-bounds}, it can be checked that the above roots are real and that
\begin{equation}
  \label{eq:65}
  \underline{\alpha}< \xi(Q-\gamma)< \overline{r}< \overline{\alpha}.
\end{equation}
It can also be checked that $\ell\lvert_{[\overline{r}, \overline{\alpha}]}$ is a strictly increasing and non-negative continuous function. For completeness, we include proofs of these facts in \Cref{lqg-roots-appendix-lemma} of the Appendix. Thus, we define $\alpha_{\delta,*}$ to be the smallest value in $[\overline{r}, \overline{\alpha}]\cap \Adeltasymlink{\delta}$ such that $\ell (\alpha_{\delta,*})>C'\delta$, where $C' > 0$ is the deterministic constant from the statement of \Cref{lqg-sum-control-lemma-bas}, and we note that such a value does indeed exist as long as $\delta$ is chosen to be small enough.
      
Now, as a consequence of \eqref{eq:65}, note that $\alpha_{\delta,*}> \xi(Q-\gamma)$. Thus, by invoking Lemma \ref{lqg-sum-control-lemma-bas} with $\epsilon=C_{H,\beta}/2$ and with $\alpha_*=\alpha_{\delta,*}$, we know that for all sufficiently large $k_0$, the latter term in \eqref{eq:lqg-2sum-decomp} is at most $C_{H,\beta}/2$. Thus, for all $k_0$ large enough and any cover $\cR$ of $H$ consisting only of elements of $\Wgeqsymlink{k_0}$, we must have
      \begin{equation}
        \label{eq:47}
        C_{H,\beta}/2 \leq \sum_{R\in \cR}\operatorname{diam}_{D_{h}}(R)^{\beta+  (\alpha_{\delta,*}+\delta)^{-1}}.        
      \end{equation}
By \Cref{lem:lqg-covering-lemma}, $\Wgeqsymlink{k_0}$ is a sufficient covering family in the sense of Definition \ref{def-sufficient-collection} for any $k_0$, and so \eqref{eq:47} implies that $\operatorname{dim}_{\mathrm{LQG}}(H)\geq \beta+ (\alpha_{\delta,*}+\delta)^{-1}$. Finally, since $\ell\lvert_{[\overline{r}, \overline{\alpha}]}$ is a strictly increasing continuous function, we also have $\lim_{\delta\rightarrow 0} \alpha_{\delta,*}=\overline{r}$, thereby yielding that $\operatorname{dim}_{\mathrm{LQG}}(H)\geq \beta+ \overline{r}^{-1}=\beta+\DLQG{\gamma}{\beta}$ and completing the proof.
\end{proof}

\subsection{Non-constancy sets in the bulk}

By combining Lemma \ref{radii-product-lower-bound-lemma} with Propositions \ref{euclidean-dim-lemma} and \ref{lqg-dim-lemma}, we can now complete the proof of Theorem \ref{lqg_lower_bounds_thm}.

\begin{proof}[Proof of \Cref{lqg_lower_bounds_thm}]

  Fix $V \subseteq f(\mathbb{C})$ and $\beta < \dim(V)$. By the countable stability of Hausdorff dimension, we may choose an integer $N$ such that, with $K = B_{N}(0)$ and $V_{K} = V \cap f(B_{N}(0))$, $f\lvert_K$ is non-constant and $\dim(V_{K}) > \beta$. Since there are only countably many possible choices of $K$, \Cref{radii-product-lower-bound-lemma} holds almost surely simultaneously for every such $K$. Applying \Cref{radii-product-lower-bound-lemma} to $V_K \subseteq f(K)$, there exists a (random) $C_{V_K,\beta}>0$ such that for any cover of $\Ssym{f\lvert_K}{V_{K}}\subseteq \Ssym{f}{V}$,
  \begin{equation}
    \label{eq:80}
    C_{V_K,\beta} \leq \sum_{R \in \cR}\operatorname{diam}_{D_{h}}(R)^\beta\cdot\operatorname{diam}_{\mathrm{Euc}}(R).
  \end{equation}
  Specifically, note that $\Ssym{f\lvert_K}{V_K}\subseteq K$ is bounded since $K$ is bounded. Now, by applying Propositions \ref{euclidean-dim-lemma} and \ref{lqg-dim-lemma}, we immediately obtain that $\dim_{\mathrm{Euc}}(\Ssym{f\lvert_K}{V_K})\geq 1+\DEuc{\gamma}{\beta}$ and $\dim_{\mathrm{LQG}}(\Ssym{f\lvert_K}{V_K})\geq \beta+\DLQG{\gamma}{\beta}$ and the desired lower bounds on the dimensions of $\Ssym{f}{V}$ follow immediately since $\Ssym{f\lvert_K}{V_K}\subseteq \Ssym{f}{V}$. Finally, since the desired bounds hold for all $\beta < \dim(V)$ and the functions $\DEuc{\gamma}{\beta}$ and $\DLQG{\gamma}{\beta}$ are continuous in $\beta$, the bounds also extend to $\beta = \dim(V)$ by sending $\beta \uparrow \dim(V)$.
\end{proof}

\subsection{Non-constancy sets on the boundary}
\label{sec:restr-bdy}
In this section, we give the proof of \Cref{lqg_line_lower_bounds_thm}, which is an analogue of \Cref{lqg_lower_bounds_thm} for non-constancy sets of Lipschitz functions on the boundary. While the lower bound for the LQG dimension is a simple consequence of Theorem \ref{intersection_universal_bounds}, we will need to convert this to a Euclidean dimension lower bound using LQG-specific techniques. For this, we will first develop the following boundary version of the worst-case KPZ estimate Proposition \ref{prop:worstkpz}. 

\begin{lem}
  \label{bdy-kpz}
  Let $h$ be a Neumann GFF and let $\gamma \in (0,2)$. Then almost surely, for every bounded Borel set $H\subseteq \RR$ with $\mathrm{dim}_{\mathrm{LQG}}(H)> \beta$, we have
  \begin{equation}
    \label{eq:73}
    \dim_{\mathrm{Euc}}(H)\geq \DbKPZ{\gamma}{\beta}.
  \end{equation}
\end{lem}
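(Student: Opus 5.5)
The plan is to run the thick-point argument from the proof of \Cref{euclidean-dim-lemma}, now on the line $\RR$. There are two changes. First, the factor $\operatorname{diam}_{\mathrm{Euc}}(R)$ disappears: we compare pure $\beta$-powers of LQG diameters with pure powers of Euclidean diameters. Second, the bulk thick-point exponent $(Q-\alpha/\xi)^2/2$ becomes a boundary exponent $(Q-\alpha/\xi)^2/4$. The halving comes from the fact that semicircle averages of the Neumann GFF centered at a boundary point have variance $2\log r^{-1}$ rather than $\log r^{-1}$.

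\textbf{Setup.} By countable stability we may assume $H\subseteq[-N,N]$. Since $\dim_{\mathrm{LQG}}(H)>\beta$, the measure $\mathcal H^\beta_{D_h}(H)$ is infinite. Hence there is a (random) $c>0$ such that every cover $\cR$ of $H$ by sets of small diameter satisfies $\sum_{R}\operatorname{diam}_{D_h}(R)^\beta\geq c$. The covering family will be the half-disks $\overline{B_{2^{-k}}(x)}\cap\overline{\HH}$ with $x\in 2^{-k}\ZZ\cap[-N-1,N+1]$ and $k>k_0$. Their traces on $\RR$ form a sufficient covering family for $[-N,N]$ with the Euclidean metric, exactly as in \Cref{lem:it-is-suf}.

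\textbf{Classification by thickness.} Define $\cG_k^{\alpha}$ as in \eqref{def:Gkalpha}, i.e.\ by $\operatorname{diam}_{D_h}\in(2^{-\alpha k},2^{-(\alpha-\delta)k}]$, for $\alpha$ in a mesh $\Adeltasymlink{\delta}$ of the boundary Hölder range. The key input is a boundary analogue of \Cref{lem:est1}: uniformly in $x\in[-N-1,N+1]$ and for small $\epsilon$,
\[
\PP\big[\operatorname{diam}_{D_h}(B_\epsilon(x)\cap\overline{\HH})>\epsilon^\alpha\big]\leq \epsilon^{(Q-\alpha/\xi)^2/4-o(1)} .
\]
I would prove it as follows. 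First obtain a boundary version of \Cref{lem:est2}: the LQG diameter of the half-annulus/half-disk at scale $r$ around $x$ is comparable, up to a factor $A$ with superpolynomial tails, to $r^{\xi Q}e^{\xi h_r(x)}$. This should follow from the Weyl scaling and local independence arguments of \cite{hm-metric-gluing,Bha25}. Then apply the Gaussian tail of $h_r(x)$, whose variance is $2\log r^{-1}+O(1)$.

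Since there are $O(2^k)$ boundary cells at level $k$, Markov's inequality and Borel--Cantelli (as in \Cref{tk-alpha-bound-lemma}) give, almost surely for all large $k$ and all $\alpha<\xi Q$,
\[
\#\cG_k^\alpha\leq 2^{k(1-(Q-\alpha/\xi)^2/4+\delta)} .
\]

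\textbf{Splitting and conclusion.} Set $g^\partial(\alpha)=-1+\beta\alpha+(Q-\alpha/\xi)^2/4$, and let $\underline r$ be its smaller root. A direct computation, analogous to the one for \Cref{def:deltas}, should give $\beta\underline r=\DbKPZ{\gamma}{\beta}$. One must also check, as in the Appendix, that the root is real and lies in $(\underline\alpha,\xi Q)$. Choose $\alpha_{\delta,*}$ as the largest mesh point below $\underline r$ with $g^\partial>(2+\beta)\delta$. Splitting any admissible cover as in \Cref{lem:split} gives
\[
c\leq\sum_{R}\operatorname{diam}_{D_h}(R)^\beta\leq \sum_{R}\operatorname{diam}_{\mathrm{Euc}}(R)^{\beta(\alpha_{\delta,*}-\delta)}+\sum_{\alpha<\alpha_{\delta,*}}\sum_{k>k_0}\#\cG_k^\alpha\,2^{-k\beta(\alpha-\delta)} .
\]
The second term is at most $\sum_{k>k_0}\#\Adeltasymlink{\delta}\,2^{-\delta k}<c/2$ for $k_0$ large. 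Hence every fine Euclidean cover of $H$ has $\sum_R\operatorname{diam}_{\mathrm{Euc}}(R)^{\beta(\alpha_{\delta,*}-\delta)}\geq c/2$, so $\dim_{\mathrm{Euc}}(H)\geq\beta(\alpha_{\delta,*}-\delta)$. Letting $\delta\to0$ gives the bound $\beta\underline r=\DbKPZ{\gamma}{\beta}$.

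The main obstacle is the boundary diameter estimate in the classification step. The paper only quotes bulk estimates, so I would first try to derive the boundary comparison with $r^{\xi Q}e^{\xi h_r(x)}$ from the boundary estimates in \cite{Bha25,hm-metric-gluing}. Failing that, I would reflect the Neumann GFF across $\RR$ and use the bulk arguments with the doubled variance.
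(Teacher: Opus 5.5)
\textbf{The gap: the boundary diameter tail bound.} Your skeleton matches the paper exactly: the Frostman bound from $\dim_{\mathrm{LQG}}(H)>\beta$, binning dyadic cells by $D_h$-diameter, the split of \Cref{lem:split}, choosing $\alpha_{\delta,*}$ below the smaller root of $-1+\beta\alpha+(Q-\alpha/\xi)^2/4$, and sending $\delta\to0$. However, the one step that carries the content is left unproved: the tail bound $\PP[\operatorname{diam}_{D_h}>\epsilon^\alpha]\le\epsilon^{(Q-\alpha/\xi)^2/4-o(1)}$. The route you sketch for it does not work.

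\Cref{lem:est2} concerns the distance between two fixed macroscopic sets. Its would-be analogue for diameters is false: $\operatorname{diam}_{D_h}(B_r(x)\cap\overline{\HH})\le A\,r^{\xi Q}e^{\xi h_r(x)}$ does not hold off an event of superpolynomially small probability in $A$. To see why, note that for fixed $x\in\RR$ the process $t\mapsto h_{re^{-t}}(x)-h_r(x)$ is $\sqrt2$ times a standard Brownian motion $B_t$. So $\sup_t(\sqrt2B_t-Qt)>L$ has probability of order $e^{-QL}$. On this event there is a scale $\rho<r$ with $\rho^{\xi Q}e^{\xi h_\rho(x)}\ge e^{\xi L}\,r^{\xi Q}e^{\xi h_r(x)}$. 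The half-annulus crossing distance at that scale, and hence the diameter, is then correspondingly large. So the normalized diameter has only polynomial tails.

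Taking $A=\epsilon^{-\eta}$ in your union bound therefore leaves an error of order $\epsilon^{c\eta}$, which swamps $\epsilon^{(Q-\alpha/\xi)^2/4}$. To rescue this route you would need three further ingredients: finite moments of the normalized half-disk diameter up to order $(Q-\alpha/\xi)/(2\xi)$, approximate independence of that quantity from $h_\epsilon(x)$, and an optimized Markov bound. These are boundary analogues of bulk estimates that you would still have to establish.

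The reflection fallback does not work off the shelf either. The even extension of a Neumann GFF is not a whole-plane GFF (its variance doubles on $\RR$), so \Cref{lem:est1} and \Cref{lem:est2} say nothing about it.

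\textbf{The missing idea: avoid metric estimates near the boundary.} Since $H\subseteq\RR$, you only need covers by intervals $U\subseteq\RR$ and their $D_h$-diameters; there is no need to control half-disks, whose diameters also involve bulk points. By \Cref{hughes-miller-boundary-lemma}, almost surely $\operatorname{diam}_{D_h}(U)\le\nu_h(U)^{2/d_\gamma-\eta}$ for all short $U\subseteq I$.

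The boundary measure is a Gaussian multiplicative chaos with explicit moments: $\EE[\nu_h([x-\epsilon,x+\epsilon])^p]\asymp\epsilon^{(1+\gamma^2/4)p-\gamma^2p^2/4}$ for $p<4/\gamma^2$ (\Cref{boundary-measure-expectation-lemma}). Apply Markov's inequality at the optimal exponent $p_*=\tfrac12+2(1-\alpha d_\gamma/2)/\gamma^2$, which lies in $(0,4/\gamma^2)$ for every $\alpha\in(0,\xi Q)$. This gives $\PP[\nu_h([x-\epsilon,x+\epsilon])^{2/d_\gamma}\ge\epsilon^\alpha]\le\epsilon^{(Q-\alpha/\xi)^2/4-\delta}$. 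The loss from $\eta$ is absorbed by taking $\eta$ small (\Cref{thick-point-bound-boundary}).

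Markov's inequality plus Borel--Cantelli then give $\#\cG_k^\alpha\le C2^k2^{-k((Q-\alpha/\xi)^2/4-\delta)}$ for the interval cells. From there your splitting and root argument goes through verbatim.
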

Our strategy to prove the above will be essentially a simpler version of the proof of \Cref{euclidean-dim-lemma}, and thus we will be brief. First note that because every bounded $H \subset \mathbb{R}$ is contained in $[-N,N]$ for some $N \in \mathbb{N}$, to prove Lemma \ref{bdy-kpz}, it will suffice to prove the desired bound under the additional assumption that $H \subseteq [-N,N]$. We will therefore fix such an $N$ and work with the interval $I = [-N,N]$ throughout the section. We will require the following lemma, which will play the role of the Frostman-style estimate given by the much more involved Lemma \ref{radii-product-lower-bound-lemma}.

\begin{lem}
  \label{frostman-bdy}
Let $h$ be a Neumann GFF and let $\gamma \in (0,2)$. Almost surely, for any Borel set $H\subseteq \mathbb{R}$ satisfying $\dim_{\mathrm{LQG}}(H)> \beta$, there exists a constant $C_{H,\beta}>0$ such that for every countable cover $\cR$ of $H$ by subsets of $\mathbb{R}$, we have
  \begin{equation}
    \label{eq:74}
    C_{H,\beta}\leq \sum_{R\in \cR}\mathrm{diam}_{D_h}(R)^\beta.
  \end{equation}
\end{lem}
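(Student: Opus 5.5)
This is essentially the definition of Hausdorff dimension, provided one takes care with the uniform constant and with covers that contain large sets. The plan is to restrict to $H\subseteq I=[-N,N]$, as set up above. Fix $\beta'$ with $\beta<\beta'<\dim_{\mathrm{LQG}}(H)$. By definition of Hausdorff dimension with respect to $D_h$ (restricted to $\RR$), we have $\mathcal{H}^{\beta'}(H)=\infty$, and hence $\mathcal{H}^{\beta}(H)=\infty$ as well. In particular, there is $\rho>0$ such that $H^{\beta}_{\rho}(H)\geq 1$; here $H^\beta_\rho$ is the $\rho$-restricted Hausdorff content computed with $D_h$-diameters. So every countable cover of $H$ by sets of $D_h$-diameter less than $\rho$ satisfies $\sum \mathrm{diam}_{D_h}(R)^\beta\geq 1$.

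Next, handle arbitrary covers $\cR$ by subsets of $\RR$, which may contain sets of large diameter. If some $R\in\cR$ has $\mathrm{diam}_{D_h}(R)\geq \rho$, then the sum is already at least $\rho^\beta$. Otherwise all elements have diameter below $\rho$, and the sum is at least $1$. Taking $C_{H,\beta}=\min\{1,\rho^\beta\}>0$ gives \eqref{eq:74}.

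The main point to check is that $D_h$ restricted to $\RR$ is a genuine metric inducing the Euclidean topology on $\RR$, so that the diameters and Hausdorff measures are well-defined. This holds almost surely by the continuity of $D_h$ on $\overline{\HH}$ (\cite[Proposition 1.7]{hm-metric-gluing}). The almost-sure statement is then simultaneous over all Borel $H$, because after fixing the realization of $h$ the argument is deterministic. I do not expect any genuine obstacle: the only subtlety is choosing $\rho$ so that the constant is uniform over covers, which the case split above handles.
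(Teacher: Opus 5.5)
Your proposal is correct and follows the paper's approach: the paper's proof just says the lemma is an immediate consequence of the definition of Hausdorff dimension, and you supply exactly the details left implicit there, namely choosing $\rho$ with $H^\beta_\rho(H)\geq 1$ and splitting on whether some element of $\cR$ has $D_h$-diameter at least $\rho$. Your initial restriction to $H\subseteq I$ is never used, so the argument already covers arbitrary Borel $H\subseteq\RR$ as stated.
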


\begin{proof}
    This is an immediate consequence of the definition of Hausdorff dimension.
\end{proof}

In order to prove Lemma \ref{bdy-kpz}, the goal now is to convert the above Frostman-type estimate for the $D_h$-metric to one for the Euclidean metric; we will use the same strategy as in Section \ref{subsec:Euc} to do so. To start, we will require the following lemma from \cite{hm-metric-gluing} which relates the LQG metric to the LQG boundary measure for points on the boundary.

\begin{lem}[{{\cite{hm-metric-gluing} Proposition 4.3}}]\label{hughes-miller-boundary-lemma}
Let $h$ be a Neumann GFF and consider the $\gamma$-LQG surface $(\overline{\HH},D_h)$. Fix $\eta>0$. Almost surely, for all $x,y\in I$ with $|x-y|$ being small enough, we have $D_h(x,y)\leq \nu_h([x,y])^{2/d_\gamma-\eta}$.
\end{lem}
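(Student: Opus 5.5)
The plan is to compare, scale by scale, the $D_h$-diameter of small boundary neighbourhoods with the $\nu_h$-mass of the corresponding boundary intervals, and then sum along a Whitney-type decomposition of $[x,y]$. The comparison rests on an exponent identity. Recall $\xi=\gamma/d_\gamma$, so $(\gamma/2)\cdot(2/d_\gamma)=\xi$. For a boundary interval $J$ of length $r$ centred at $x\in I$, the heuristics are
\begin{equation*}
\nu_h(J)\approx r^{\gamma Q/2}e^{(\gamma/2)h_r(x)}
\quad\text{and}\quad
\mathrm{diam}_{D_h}(\text{half-disc over }J)\approx r^{\xi Q}e^{\xi h_r(x)},
\end{equation*}
where $h_r(x)$ is the semicircle average. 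Raising the first to the power $2/d_\gamma$ gives exactly the second. So the lemma should follow once both approximations hold uniformly over all dyadic boundary intervals, up to factors $r^{\pm\eta'}$.

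\textbf{Step 1 (metric upper bound).} I would prove a boundary analogue of Proposition~\ref{lem:est2}. For each dyadic $J\subset I$ of length $2^{-n}$, the $D_h$-diameter of the half-disc of radius $2^{-n+1}$ centred at the midpoint $x_J$ should be at most $A\,2^{-n\xi Q}e^{\xi h_{2^{-n}}(x_J)}$, except with probability superpolynomially small in $A$, uniformly in $n$. This should follow from Weyl scaling and the locality and tightness axioms of the metric, exactly as in the bulk case.

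\textbf{Step 2 (measure lower bound).} I would show
\begin{equation*}
\nu_h(J)\geq 2^{-n(\gamma Q/2+\eta')}e^{(\gamma/2)h_{2^{-n}}(x_J)}
\end{equation*}
with failure probability at most $2^{-n(1+c)}$. After subtracting the semicircle average, the normalized boundary GMC mass is a scale-invariant variable whose negative moments are finite, so its lower tail is superpolynomial. Markov's inequality then gives the bound.

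\textbf{Step 3 (all dyadic intervals).} Take $A=2^{n\eta'}$. There are about $2^{n}$ dyadic intervals at scale $2^{-n}$, so a union bound and Borel--Cantelli give, almost surely for all large $n$ and all such $J$,
\begin{equation*}
\mathrm{diam}_{D_h}(\text{half-disc over }J)\leq \nu_h(J)^{2/d_\gamma-\eta/2}.
\end{equation*}

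\textbf{Step 4 (chaining).} Given $x<y$, I would Whitney-decompose $[x,y]$ into dyadic intervals $J'$, with at most a bounded number at each scale $2^{-m}$, $m\geq n\approx\log_2|x-y|^{-1}$. Concatenating paths through the consecutive half-discs gives
\begin{equation*}
D_h(x,y)\leq\sum_{J'}\mathrm{diam}_{D_h}(J').
\end{equation*}
Each term is at most $\nu_h([x,y])^{2/d_\gamma-\eta/2}$ because $J'\subset[x,y]$. Each term is also at most $2^{-m\underline\alpha'}$ by the boundary Hölder continuity of $D_h$ with respect to the Euclidean metric. Using the first bound for $m\leq Cn$ and the second for $m> Cn$ gives
\begin{equation*}
D_h(x,y)\lesssim n\,\nu_h([x,y])^{2/d_\gamma-\eta/2}+2^{-Cn\underline\alpha'}.
\end{equation*}
The boundary measure satisfies $\nu_h([x,y])\geq |x-y|^{K}$ for some deterministic $K$ (this follows from Step 2 and Gaussian bounds on $h_r$). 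With $C$ large, the second term is negligible, and the factor $n\lesssim\log\nu_h([x,y])^{-1}$ is absorbed by lowering the exponent from $2/d_\gamma-\eta/2$ to $2/d_\gamma-\eta$.

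\textbf{Main obstacle.} I expect the main difficulty to be Steps 1--2: obtaining the boundary versions of the circle-average comparison estimates, with tails strong enough to survive the union bound. This is where the Neumann-GFF-specific input enters, namely the semicircle-average Brownian motion having variance $2$ per unit of $\log r^{-1}$, and the behaviour of the metric near the boundary.
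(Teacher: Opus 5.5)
The paper does not prove this statement; it quotes it from Hughes--Miller. Your outline has sensible ingredients: the identity $(\gamma/2)(2/d_\gamma)=\xi$, the lower bound on $\nu_h$ from negative moments in Step~2, and the Whitney chaining with a H\"older cutoff in Step~4. However, there is a genuine gap: Step~1 is false, and Step~3 rests on it.

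Proposition~\ref{lem:est2} gives superpolynomial concentration only for distances between two disjoint \emph{non-singleton} connected sets, i.e.\ crossing-type distances. It does not extend to diameters, which involve point-to-point distances. Consider the normalized diameter $X_J=\mathrm{diam}_{D_h}(\text{half-disc over }J)/(r^{\xi Q}e^{\xi h_r(x_J)})$. It is large as soon as some point $w$ of the half-disc has a scale $\epsilon\ll r$ with $\epsilon^{\xi Q}e^{\xi h_\epsilon(w)}\gg r^{\xi Q}e^{\xi h_r(x_J)}$.
\begin{itemize}
\item For boundary points $w$, $h_\epsilon(w)-h_r(x_J)$ has variance about $2\log(r/\epsilon)$, and there are about $r/\epsilon$ candidate points. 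Optimizing over $\epsilon$ gives $\PP[X_J>A]\approx A^{-2d_\gamma/\gamma^2}$, a polynomial tail.
\item This is consistent with $\nu_h(J)$ having moments only of order below $4/\gamma^2$ (Lemma~\ref{boundary-measure-expectation-lemma}).
\item With $A=2^{n\eta'}$ and $2^n$ intervals, your union bound needs $\eta'>\gamma^2/(2d_\gamma)$. Indeed $\max_J X_J$ over dyadic intervals of length $2^{-n}$ is typically of order $2^{n\gamma^2/(2d_\gamma)+o(n)}$.
\end{itemize}
So the uniform estimate of Step~3 cannot be obtained this way for small $\eta'$. Moreover, the half-disc of radius $2^{-n+1}$ sticks out of $J$. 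A thick point in the half-disc but off $J$ therefore inflates the diameter without inflating $\nu_h(J)$, so the inequality of Step~3 as written is false in general, not merely unproved.

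Step~3 for dyadic intervals is essentially the whole lemma; Step~4 is the easy reduction. The missing idea is that the heavy tail of the diameter comes from thick points at \emph{finer} scales, and these also inflate the $\nu_h$-mass of the small boundary interval beneath them. The comparison with $\nu_h$ therefore has to be made at every scale down to the points themselves, never through a single-scale diameter.

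Concretely, you would bound $D_h(u,v)$ for $u,v\in J$ by chaining through the around-loops and crossings of dyadic half-annuli around $u$ and $v$ at all scales $2^{-m}$, $m\geq n$. Only crossing/around-type distances enter. These do have superpolynomial tails, so they survive a union bound over all $O(2^m)$ positions at every scale with slack $2^{m\eta'}$.

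Using your Step~2, you would then bound the scale-$m$ term $2^{m\eta'}2^{-m\xi Q}e^{\xi h_{2^{-m}}(w)}$ by $2^{Cm\eta'}\nu_h(J_m)^{2/d_\gamma}$. Here $J_m\subset J$ is the dyadic boundary interval of length $2^{-m}$ beneath the scale-$m$ half-annulus. The accumulating factors $2^{Cm\eta'}$ are exactly what your fine-scale cutoff, H\"older tail and polynomial lower bound on $\nu_h([x,y])$ from Step~4 are needed to absorb.
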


We will also need the following lemma bounding the expectation of the LQG boundary measure of an interval of Euclidean width $\epsilon$.

\begin{lem}\label{boundary-measure-expectation-lemma}
    Let $h$ be a Neumann GFF and fix $p\in (0,4/\gamma^2)$. There exist positive constants $C,c$ such that for each $x\in I$ and $\epsilon\in (0,1)$, we have
    \begin{displaymath}
        c\epsilon^{(1+\gamma^2/4)p-\gamma^2p^2/4} \leq \mathbb{E}[ \nu_h([x-\epsilon,x+\epsilon])^{p}]\leq C\epsilon^{(1+\gamma^2/4)p-\gamma^2p^2/4} 
    \end{displaymath}
\end{lem}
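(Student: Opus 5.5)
Our approach is to reduce the statement to the moment computation for a one-dimensional multiplicative chaos measure restricted to the boundary. Restricted to $\RR$, the Neumann GFF $h$ is a log-correlated field: by \eqref{eq:27}, for $v,w\in\RR$ we have $G_{\HH}(v,w)=2\log\frac{1}{|v-w|}+O(1)$ uniformly on $I$. So $\nu_h$ is a Gaussian multiplicative chaos on the line with kernel $2\log|v-w|^{-1}$ and parameter $\gamma/2$, i.e.\ effectively $\frac{\gamma^2}{4}\cdot 2=\gamma^2/2$ in the usual normalisation. For such a measure the standard moment scaling is $\EE[\nu([x-\epsilon,x+\epsilon])^p]\asymp \epsilon^{\zeta(p)}$ with $\zeta(p)=(1+\gamma^2/4)p-\gamma^2p^2/4$, for $p<4/\gamma^2$, where $4/\gamma^2$ is the critical moment threshold $2/(\gamma/2)^2\cdot\frac12$. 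The plan is to verify this with explicit uniformity in $x\in I$.

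The first step is to localise and rescale. We decompose $h=h_\epsilon(x)+\tilde h$, where $h_\epsilon(x)$ is the semicircle average of radius $\epsilon$ around $x$. The remainder field on $B_\epsilon(x)\cap\HH$, after rescaling by $\epsilon$, has law comparable to a Neumann GFF near the origin, up to a bounded-variance smooth Gaussian error. By the boundary measure's coordinate change rule, $\nu_h([x-\epsilon,x+\epsilon])=\epsilon\, e^{(\gamma/2)h_\epsilon(x)}\,\nu_{\tilde h(\epsilon\cdot+x)}([-1,1])$. Here $h_\epsilon(x)$ is Gaussian with variance $2\log\epsilon^{-1}+O(1)$, uniformly in $x\in I$, and it is (nearly) independent of the rescaled field.

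Next we take $p$-th moments. This gives $\epsilon^p\,\EE[e^{(p\gamma/2)h_\epsilon(x)}]\cdot\EE[\nu([-1,1])^p]$. The first expectation equals $\exp\big(\tfrac{p^2\gamma^2}{8}\cdot 2\log\epsilon^{-1}+O(1)\big)=\epsilon^{-p^2\gamma^2/4}$ up to constants. The mean correction comes from the $e^{-\frac{\gamma^2}{8}\mathrm{Var}}$ normalisation implicit in $\nu_h$, i.e.\ Wick ordering; it contributes $\epsilon^{\frac{\gamma^2}{4}p}$, yielding the exponent $(1+\gamma^2/4)p-\gamma^2p^2/4$. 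The approximate independence and the bounded-variance error are absorbed into constants using Kahane's convexity inequality. Concretely, we compare the covariance on $[x-\epsilon,x+\epsilon]$ with that of an exactly scale-invariant (star-scale or $\log$-kernel) field shifted by independent constant Gaussians of bounded variance. Since $p$ may be below or above $1$, Kahane gives both directions once we sandwich the kernel by $2\log\frac{1}{|v-w|}\pm C$.

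The main obstacle is the last ingredient: finiteness and positivity of $\EE[\nu([-1,1])^p]$ for the reference field over the whole range $p\in(0,4/\gamma^2)$, including negative-type behaviour near $0$ and the upper threshold. Positivity holds because $\nu$ is a.s.\ positive. Finiteness of positive moments for $p<4/\gamma^2$ is the classical GMC moment result: the chaos exponent is $\gamma/2$ with kernel $2\log$, and the threshold for finite moments is $2d/\text{(parameter)}^2$ with $d=1$ in the kernel normalisation, giving $4/\gamma^2$. I would cite the standard GMC references (e.g.\ the survey \cite{rhodes-vargas-review}) for this rather than reprove it, checking only that the normalisation matches.
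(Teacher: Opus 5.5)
Your proposal is correct. Its first step is the same as the paper's: from \eqref{eq:27} you get $G_{\HH}(v,w)=2\log|v-w|^{-1}+O(1)$ on $I$, so $\nu_h$ is a one-dimensional Gaussian multiplicative chaos with effective parameter $\gamma/\sqrt2$ (your ``$\gamma^2/2$ in the usual normalisation'').

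The routes then diverge. The paper simply invokes the GMC moment-scaling theorem \cite[Theorem 3.27]{berestycki-lqg-notes}. With $\tilde\gamma=\gamma/\sqrt2$, its exponent $(1+\tilde\gamma^2/2)p-\tilde\gamma^2p^2/2$ and its range $p<2/\tilde\gamma^2=4/\gamma^2$ are exactly the claimed ones. You instead re-derive that theorem by comparing with an exactly scale-invariant reference field via Kahane's convexity inequality, citing only finiteness and positivity of moments. This is the standard proof of the cited result. It buys self-containedness, and it makes the uniformity in $x\in I$ transparent, since uniformity comes from one sandwich constant for the kernel on a neighbourhood of $I$. The cost is length.

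Two bookkeeping points, neither affecting correctness.
\begin{enumerate}
\item The coordinate change actually gives $\nu_h([x-\epsilon,x+\epsilon])=\epsilon^{\gamma Q/2}e^{(\gamma/2)h_\epsilon(x)}\nu_{\tilde h(\epsilon\cdot+x)}([-1,1])$, with $\gamma Q/2=1+\gamma^2/4$ and the rescaled measure self-normalised. The factor $\epsilon^{\gamma^2p/4}$ you later attribute to ``Wick ordering'' is exactly what is missing from your displayed identity. It should be counted once, inside that identity, not added as a separate correction.
\item Once you compare with an exactly scale-invariant kernel such as $2\log_+(T/|v-w|)$ with $T\geq 2$, plus independent constant Gaussians of bounded variance, the semicircle-average decomposition and the ``near independence'' discussion become unnecessary. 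Exact scaling then gives $\EE[M'([x-\epsilon,x+\epsilon])^p]=\epsilon^{(1+\gamma^2/4)p-\gamma^2p^2/4}\,\EE[M'([-1,1])^p]$ directly, for every $x$ and every $\epsilon\in(0,1)$.
\end{enumerate}
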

\begin{proof}
Since the restriction of a Neumann GFF to $\mathbb{R}$ is a Gaussian free field with covariance $-2\log|x-y| + O(1)$ (see e.g. \cite[Section 6.2]{berestycki-lqg-notes}), the field $h/\sqrt{2}$ has the standard log-correlated normalization. By \cite[Theorem 6.37]{berestycki-lqg-notes}, the boundary measure is given by
\begin{equation}
    \nu_{h}(dx) = e^{\frac{\gamma}{2}h(x)}dx = e^{\frac{\gamma}{\sqrt{2}}(h/\sqrt{2})(x)}dx,
\end{equation}
i.e. the Gaussian multiplicative chaos (GMC) measure associated with $h/\sqrt{2}$ and parameter $\gamma/\sqrt{2}$. The result then follows by applying \cite[Theorem 3.27]{berestycki-lqg-notes} to this GMC measure.
\end{proof}

The above lemma, when combined with a basic Markov inequality argument, yields the following result.
\begin{lem}\label{thick-point-bound-boundary-mn}
 Fix $\alpha\in (0,\xi Q)$ and $\delta > 0$. Then for all $x\in I$ and $\epsilon$ small enough, we have
    \begin{displaymath}
    \PP(\nu_h([x-\epsilon,x+\epsilon])^{2/d_\gamma}\geq \epsilon^\alpha)\leq \epsilon^{(Q-\alpha/\xi)^2/4 - \delta}.
    \end{displaymath}
\end{lem}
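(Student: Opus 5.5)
The plan is a Markov inequality applied to a moment of $\nu_h([x-\epsilon,x+\epsilon])$, with the moment order chosen optimally. For $p\in(0,4/\gamma^2)$ and $\epsilon$ small,
\begin{equation*}
\PP\big(\nu_h([x-\epsilon,x+\epsilon])^{2/d_\gamma}\geq \epsilon^\alpha\big)=\PP\big(\nu_h([x-\epsilon,x+\epsilon])\geq \epsilon^{\alpha d_\gamma/2}\big)\leq \epsilon^{-\alpha d_\gamma p/2}\,\EE\big[\nu_h([x-\epsilon,x+\epsilon])^p\big].
\end{equation*}
The upper bound of Lemma~\ref{boundary-measure-expectation-lemma} holds uniformly in $x\in I$. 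It turns the right-hand side into at most $C\epsilon^{\psi(p)}$, where
\begin{equation*}
\psi(p)=(1+\gamma^2/4)p-\gamma^2p^2/4-\alpha d_\gamma p/2 .
\end{equation*}
The constant $C$ and any loss from taking $p$ near an optimizer are absorbed into $\epsilon^{-\delta}$ once $\epsilon$ is small.

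The key computation is to maximize $\psi$ over $p$. I would rewrite it using $\xi=\gamma/d_\gamma$, so $d_\gamma/2=\gamma/(2\xi)$, and $Q=\gamma/2+2/\gamma$. The linear coefficient becomes
\begin{equation*}
1+\frac{\gamma^2}{4}-\frac{\alpha\gamma}{2\xi}=\frac{\gamma}{2}\Big(Q-\frac{\alpha}{\xi}\Big).
\end{equation*}
Hence $\psi(p)=\frac{\gamma}{2}(Q-\alpha/\xi)p-\frac{\gamma^2}{4}p^2$. This is maximized at
\begin{equation*}
p^*=\frac{Q-\alpha/\xi}{\gamma},
\end{equation*}
and the maximum value is $(Q-\alpha/\xi)^2/4$, which is exactly the target exponent. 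The hypothesis $\alpha<\xi Q$ ensures $p^*>0$.

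The main obstacle is the admissibility constraint $p^*<4/\gamma^2$, which I would handle next. Since $\alpha>0$ we have $p^*<Q/\gamma=1/2+2/\gamma^2$. For $\gamma<2$ this is less than $4/\gamma^2$, because $1/2<2/\gamma^2$. So the optimal moment is always in the admissible range and no truncation is needed.

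If one wants extra safety near the endpoints, one can instead take $p$ slightly off $p^*$ but still inside $(0,4/\gamma^2)$. By continuity of $\psi$ this costs only an arbitrarily small amount in the exponent, which the $\delta$ absorbs.
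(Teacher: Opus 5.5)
Your proof is correct and follows essentially the same route as the paper: Markov's inequality applied to the $p$-th moment bound of Lemma~\ref{boundary-measure-expectation-lemma}, optimization of the resulting quadratic exponent in $p$, and a check that the optimizer lies in $(0,4/\gamma^2)$. The only difference is cosmetic. The paper optimizes with a generic threshold exponent and then substitutes $\alpha\mapsto \alpha d_\gamma/2$. You substitute first and write the exponent directly as $\frac{\gamma}{2}(Q-\alpha/\xi)p-\frac{\gamma^2}{4}p^2$, which makes the admissibility check $p^*<Q/\gamma<4/\gamma^2$ slightly more transparent.
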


\begin{proof}
    By Markov's inequality and Lemma \ref{boundary-measure-expectation-lemma}, there exists a constant $C$ such that for all $p\in (0,4/\gamma^2)$ and all $\epsilon$ small enough, we have
    \begin{align}
         \PP(\nu_h([x-\epsilon,x+\epsilon])\geq \epsilon^\alpha)&\leq  \epsilon^{-\alpha p}\mathbb{E}[ \nu_h([x-\epsilon,x+\epsilon])^{p}] \nonumber\\
         &\leq C\epsilon^{(1+\gamma^2/4)p-\gamma^2p^2/4-\alpha p}.
    \end{align}
    Define $f(p)=(1+\gamma^2/4)p-\gamma^2p^2/4-\alpha p$, and note that $f(p)$ is maximized at $p_*= 1/2 + 2(1-\alpha)/\gamma^2$ and that $f(p_*)=(1+\gamma^2/4-\alpha)^2/\gamma^2=(Q\gamma/2-\alpha)^2/\gamma^2$. So as long as $p_*\in (0,4/\gamma^2)$, or in other words, for all $\alpha\in (\gamma^2/4-1,\gamma^2/4+1)$, we have
    \begin{align}
        \PP(\nu_h([x-\epsilon,x+\epsilon])\geq \epsilon^\alpha)\leq C\epsilon^{(Q\gamma/2-\alpha)^2/\gamma^2}.        
    \end{align}
    Now replacing $\alpha$ by $\alpha/(2/d_\gamma)$, we get the desired inequality. As a result, as long as $\alpha/(2/d_\gamma)\in (\gamma^2/4-1,\gamma^2/4+1)$ or equivalently, as long as $\alpha\in (\xi(\gamma/2-2/\gamma),\xi Q)$, we have the desired inequality
    \begin{align}
      \label{eq:66}
      \PP(\nu_h([x-\epsilon,x+\epsilon])^{2/d_\gamma}\geq \epsilon^\alpha)&\leq C\epsilon^{(Q-\alpha/\xi)^2/4}\nonumber\\
      &\leq \epsilon^{(Q-\alpha/\xi)^2/4-\delta}
    \end{align}
    for small enough $\epsilon$. Finally, we note that since $\gamma\in (0,2)$, we have $(0,\xi Q)\subseteq (\xi(\gamma/2-2/\gamma),\xi Q)$ and this completes the proof.
\end{proof}
  
We now state a simple consequence of the above lemma which will be more convenient for our application.
  \begin{lem}\label{thick-point-bound-boundary}
Fix $\alpha\in (0,\xi Q)$ and $\delta > 0$. Then there exists $\eta_0 > 0$ such that for any choice $\eta\in (0,\eta_0)$, for all $x\in I$ and $\epsilon$ small enough, we have
    \begin{displaymath}
    \PP(\nu_h([x-\epsilon,x+\epsilon])^{2/d_\gamma-\eta}\geq \epsilon^\alpha)\leq \epsilon^{(Q-\alpha/(\xi-\gamma\eta/2))^2/4-\delta}.
    \end{displaymath}
  \end{lem}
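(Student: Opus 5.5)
The plan is to reduce to Lemma \ref{thick-point-bound-boundary-mn} by rewriting the event in terms of the exponent $2/d_\gamma$. Set $\kappa := 2/d_\gamma - \eta$ and note that $\nu_h([x-\epsilon,x+\epsilon])^{\kappa}\geq \epsilon^\alpha$ is the same event as
\[
\nu_h([x-\epsilon,x+\epsilon])^{2/d_\gamma}\geq \epsilon^{\alpha'},\qquad \alpha' := \frac{\alpha\,(2/d_\gamma)}{2/d_\gamma-\eta},
\]
since raising both sides to the power $(2/d_\gamma)/\kappa>0$ preserves the inequality. Everything then hinges on checking that $\alpha'$ is admissible in Lemma \ref{thick-point-bound-boundary-mn} and that the resulting exponent has the claimed form.

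First, $\alpha'$ must lie in $(0,\xi Q)$. Because $\alpha<\xi Q$ strictly and $\alpha'\to\alpha$ as $\eta\to 0$, there is some $\eta_0>0$ such that $\alpha'<\xi Q$ for all $\eta\in(0,\eta_0)$. I will also shrink $\eta_0$ so that $2/d_\gamma-\eta_0>0$. Applying Lemma \ref{thick-point-bound-boundary-mn} with $\alpha'$ in place of $\alpha$, and with a smaller slack $\delta/2$, gives for small $\epsilon$ the bound
\[
\PP(\cdot)\leq \epsilon^{(Q-\alpha'/\xi)^2/4-\delta/2}.
\]

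Second, I rewrite the exponent. We have $\alpha'/\xi = \alpha\,\frac{2/d_\gamma}{\xi(2/d_\gamma-\eta)}$. Since $\xi=\gamma/d_\gamma$, it follows that $\xi(2/d_\gamma-\eta)\cdot d_\gamma/2 = \xi-\gamma\eta/2$, so
\[
\alpha'/\xi = \frac{\alpha}{\xi-\gamma\eta/2}
\]
exactly. The exponent is therefore $(Q-\alpha/(\xi-\gamma\eta/2))^2/4-\delta/2$, which is at least the claimed exponent minus $\delta$. Because $\epsilon<1$, a larger exponent gives a smaller bound, so the statement follows.

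The only point requiring care is the choice of $\eta_0$. It depends on $\alpha$, but the statement fixes $\alpha$ and $\delta$ first, so this is allowed. Uniformity in $x\in I$ is inherited directly from Lemma \ref{thick-point-bound-boundary-mn}. I expect no genuine obstacle here; the main work is the algebraic identity $\alpha'/\xi=\alpha/(\xi-\gamma\eta/2)$.
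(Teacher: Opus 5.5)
Your proposal is correct and follows the same route as the paper. The paper also applies Lemma~\ref{thick-point-bound-boundary-mn} with $\alpha$ replaced by $\alpha(2/d_\gamma)/(2/d_\gamma-\eta)$, taking $\eta$ small enough that this quantity stays below $\xi Q$; your check of the identity $\alpha'/\xi=\alpha/(\xi-\gamma\eta/2)$ and of $2/d_\gamma-\eta>0$ spells out details the paper leaves implicit.
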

  \begin{proof}
  For this, we invoke Lemma \ref{thick-point-bound-boundary-mn} with $\alpha$ therein replaced by $\alpha \times (2/d_\gamma)/(2/d_\gamma-\eta)$ and we choose $\eta$ small enough such this quantity is at most $\xi Q$.
  \end{proof}

As in Section \ref{subsec:Euc}, we now define $\underline{\alpha}=\xi(Q-2), \overline{\alpha}=\xi(Q+2)$, fix a ``mesh size'' $\delta>0$ (which will eventually be sent to zero), and define $\Adeltasymlink{\delta} = [\underline{\alpha}-\delta,\overline{\alpha}+\delta]\cap \delta\ZZ$ as in the previous sections. We also define
  \begin{equation}\label{def:g-tilde}
    \tilde{\cG}_k := \left\{[z_{j}-2^{-k},z_{j}+2^{-k}] \colon z_j \in 2^{-k}\mathbb Z \cap I
    \right\},
  \end{equation}
  and for $\alpha\in \Adeltasymlink{\delta}$, we define
  \begin{equation}\label{def:gka-tilde}
      \tilde{\cG}_{k}^{\alpha} := \{U \in \Gtildesymlink{k} : \text{diam}_{D_{h}}(U) \in (2^{- \alpha k},2^{-(\alpha-\delta) k}] \}.
  \end{equation}
We also define the family
\begin{equation}\label{def:ggeq-tilde}
    \tilde{\cG}_{>k_0}:=\bigcup_{k>k_0}\Gtildesymlink{k}.
\end{equation}  
We will consider the quadratic expression $\tilde{g}$ defined by
  \begin{equation}
  \label{eq:boundary-polynomial}
\tilde{g}(\alpha) = -1+\beta\alpha+(Q-\alpha/\xi)^{2}/4,
\end{equation}
and as with \eqref{eq:quad} in the bulk case, the bound of \Cref{bdy-kpz} will ultimately arise as a root of $\tilde{g}$. By using the same argument as in Lemma \ref{lem:split}, we can obtain the following result.  

\begin{lem}
  \label{lem:split-bd}
Almost surely, for all $k_0$ large enough, and any cover $\cR$ of $H$ which consists only of elements of $\Gtildegeqsymlink{k_0}=\bigcup_{k>k_0}\Gtildesymlink{k}$, and any $\alpha_*\in \Adeltasymlink{\delta}$, we have
  \begin{equation}\label{eq:split-boundary}
    \sum_{R\in \cR}\operatorname{diam}_{D_{h}}(R)^\beta\leq \sum_{R\in \cR}\operatorname{diam}_{\mathrm{Euc}}(R)^{\beta (\alpha_*-\delta)} + \sum_{\alpha\in \Adeltasymlink{\delta}, \alpha<\alpha_*} \sum_{k>k_0}\# \Gkatildesymlink{k}{\alpha} (2^{-k})^{\beta (\alpha-\delta)}.
  \end{equation}
\end{lem}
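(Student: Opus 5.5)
The plan is to copy the proof of \Cref{lem:split} almost line for line, with the Euclidean exponent $1$ removed and intervals in place of balls. Two preliminary facts are needed. First, by the boundary analogue of the H\"older continuity in \Cref{lem:estholder}, almost surely for all $k_0$ large enough every interval $R\in\Gtildegeqsymlink{k_0}$, say $R\in\Gtildesymlink{k}$, satisfies
\[
2^{-\overline{\alpha}k}<\operatorname{diam}_{D_h}(R)\leq 2^{-(\underline{\alpha}-\delta)k}.
\]
For the lower bound I would use that boundary points of $\overline{\HH}$ are still subject to the Euclidean-vs-LQG H\"older estimates, or else \Cref{hughes-miller-boundary-lemma} together with an almost sure lower bound on $D_h$. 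Second, the diameters $\operatorname{diam}_{\mathrm{Euc}}(R)=2^{1-k}$ are comparable to $2^{-k}$ and are at most $1$ once $k_0$ is large.

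Given these facts, each $R\in\cR$ belongs to $\Gkatildesymlink{k}{\alpha(R)}$ for a unique $k>k_0$ and a unique $\alpha(R)\in\Adeltasymlink{\delta}$. Here uniqueness of $\alpha$ holds because the windows $(2^{-\alpha k},2^{-(\alpha-\delta)k}]$, $\alpha\in\delta\ZZ$, partition the relevant range. I would then split the sum according to whether $\alpha(R)\geq\alpha_*$ or $\alpha(R)<\alpha_*$. On either piece,
\[
\operatorname{diam}_{D_h}(R)^\beta\leq 2^{-(\alpha(R)-\delta)k\beta}\lesssim \operatorname{diam}_{\mathrm{Euc}}(R)^{\beta(\alpha(R)-\delta)}.
\]

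For the terms with $\alpha(R)\geq\alpha_*$, the Euclidean diameter is at most $1$, so this is bounded by $\operatorname{diam}_{\mathrm{Euc}}(R)^{\beta(\alpha_*-\delta)}$. Summing over these $R$ gives at most the first term of \eqref{eq:split-boundary}. For the terms with $\alpha(R)<\alpha_*$, I would bound each one by $(2^{-k})^{\beta(\alpha-\delta)}$ and overcount by summing over all elements of $\Gkatildesymlink{k}{\alpha}$, for all $k>k_0$ and all $\alpha<\alpha_*$ in $\Adeltasymlink{\delta}$. This produces the second term.

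The only delicate point is the harmless constant factor $2^{\beta(\alpha-\delta)}$ coming from the comparison of $\operatorname{diam}_{\mathrm{Euc}}(R)$ with $2^{-k}$. I would handle it as the bulk proof implicitly does: either adjust the convention so that $\operatorname{diam}_{D_h}(R)\le 2^{-(\alpha-\delta)k}$ is used directly with $2^{-k}$, or absorb the factor by shifting $\delta$. The main point to verify is the boundary H\"older statement justifying that $\alpha(R)$ lies in $\Adeltasymlink{\delta}$. If no boundary version of \Cref{lem:estholder} is available, I would instead enlarge $\Adeltasymlink{\delta}$ to a bounded range, using a.s.\ polynomial bounds for $D_h$ restricted to $I$, which suffices for the argument.
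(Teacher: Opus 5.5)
Your proposal is correct and follows the paper's own route: the paper simply says the proof of \Cref{lem:split} carries over. You assign each interval its exponent $\alpha(R)$, use $\operatorname{diam}_{D_h}(R)^\beta\le(2^{-k})^{\beta(\alpha(R)-\delta)}$, and split at $\alpha_*$. There are a few simplifications.

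\begin{itemize}
\item Only the upper bound $\operatorname{diam}_{D_h}(R)\le 2^{-(\underline{\alpha}-\delta)k}$ is needed. Intervals with arbitrarily large $\alpha(R)$ fall into the first sum anyway.
\item That upper bound follows from \Cref{hughes-miller-boundary-lemma}, so this lemma serves the upper bound, not the lower one as you suggested. Combine it with the uniform GMC bound $\nu_h(R)\le (2^{-k})^{(1-\gamma/2)^2-o(1)}$ and note that $(2/d_\gamma)(1-\gamma/2)^2=\underline{\alpha}$.
\item The constant factor you worried about never appears. Since $2^{-k}\le\operatorname{diam}_{\mathrm{Euc}}(R)$ and the exponent $\beta(\alpha-\delta)$ is positive, the comparison goes in the right direction with no constant.
\end{itemize}
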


\begin{proof}
    The proof is essentially identical to that of \Cref{lem:split}, so we omit it.
\end{proof}

Further, by using Lemmas \ref{hughes-miller-boundary-lemma}, \ref{lem:split-bd} and arguing as in Lemma \ref{tk-alpha-bound-lemma}, we have the following.
\begin{lem}\label{tk-alpha-bound-lemma-bdy} There exists a constant $C > 0$ such that for all $\alpha\in \Adeltasymlink{\delta}$ satisfying $\alpha\in (0,\xi Q)$ and all $k$ large enough, we have
\begin{equation}\label{tkalpha_bound_bdy}
    \#\Gkatildesymlink{k}{\alpha} < C\cdot 2^{k}\cdot 2^{-k\left((Q-\alpha/\xi)^{2}/4-\delta\right)}.
\end{equation}
\end{lem}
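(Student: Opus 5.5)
The plan is to mirror the proof of \Cref{tk-alpha-bound-lemma}: first bound the probability that a single interval of $\Gtildesymlink{k}$ lies in $\Gkatildesymlink{k}{\alpha}$, then sum over the roughly $2^{k}$ intervals to control the expectation, and finally pass to an almost sure statement via Markov's inequality and Borel--Cantelli.

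\emph{Single-interval bound.} Fix $\alpha\in\Adeltasymlink{\delta}\cap(0,\xi Q)$ and let $U=[z_j-2^{-k},z_j+2^{-k}]\in\Gtildesymlink{k}$. If $U\in\Gkatildesymlink{k}{\alpha}$, then $\mathrm{diam}_{D_h}(U)>2^{-\alpha k}$. Since $D_h$ is a length metric and $U$ is an interval, $\mathrm{diam}_{D_h}(U)$ is controlled by $\sup_{x,y\in U}D_h(x,y)$. Lemma~\ref{hughes-miller-boundary-lemma} bounds the latter by $\nu_h(U)^{2/d_\gamma-\eta}$, because $\nu_h([x,y])\le\nu_h(U)$. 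This bound only holds on the almost sure event that it is valid for all $|x-y|$ below a random threshold, so I would intersect with the event $E_{k_1}$ that this threshold exceeds $2^{1-k_1}$. For $k\ge k_1$ this gives
\[
\PP\big(U\in\Gkatildesymlink{k}{\alpha},\,E_{k_1}\big)\le \PP\big(\nu_h(U)^{2/d_\gamma-\eta}\ge 2^{-\alpha k}\big).
\]
Lemma~\ref{thick-point-bound-boundary} with $\epsilon=2^{-k}$ bounds the right side by $2^{-k((Q-\alpha/(\xi-\gamma\eta/2))^2/4-\delta/2)}$. Choosing $\eta$ small depending on $\alpha$ and $\delta$, which is possible because $\Adeltasymlink{\delta}$ is finite, and using continuity in $\eta$, the exponent becomes at least $(Q-\alpha/\xi)^2/4-\delta/2$. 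One point needs care: if the exponent in Lemma~\ref{thick-point-bound-boundary} is read as $(Q-\alpha/(\xi-\gamma\eta/2))$, it should really be the $\alpha$ rescaled by $(2/d_\gamma)/(2/d_\gamma-\eta)$. Either way it converges to $(Q-\alpha/\xi)^2/4$ as $\eta\to0$, and that convergence is all I use.

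\emph{Expectation and almost sure bound.} Since $\#\Gtildesymlink{k}\le C_N2^{k}$, summing gives
\[
\EE\big[\#\Gkatildesymlink{k}{\alpha}\,\mathbbm{1}_{E_{k_1}}\big]\le C2^{k}2^{-k((Q-\alpha/\xi)^2/4-\delta/2)}.
\]
Markov's inequality then yields
\[
\PP\big(\#\Gkatildesymlink{k}{\alpha}\ge C2^{k}2^{-k((Q-\alpha/\xi)^2/4-\delta)},\,E_{k_1}\big)\le 2^{-k\delta/2}.
\]
This is summable in $k$, and after a union bound over the finitely many $\alpha\in\Adeltasymlink{\delta}$, Borel--Cantelli shows that on $E_{k_1}$ the desired bound holds for all large $k$. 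Letting $k_1\to\infty$ makes $\PP(E_{k_1})\to1$, which gives the almost sure statement.

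\emph{Main obstacle.} The only delicate step is going from the deterministic-looking inequality of Lemma~\ref{hughes-miller-boundary-lemma}, which holds almost surely but only below a random scale, to a probability bound. The events $E_{k_1}$ handle this. A secondary check is that $\mathrm{diam}_{D_h}(U)$ is genuinely bounded by $\sup_{x,y\in U}\nu_h([x,y])^{2/d_\gamma-\eta}$, which is immediate from monotonicity of $\nu_h$.
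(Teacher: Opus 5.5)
Your proposal is correct and follows essentially the paper's route: a single-interval bound from Lemma~\ref{thick-point-bound-boundary}, summation over the $O(2^k)$ intervals, Markov and Borel--Cantelli with a union over the finite set $A_\delta$, and the comparison of Lemma~\ref{hughes-miller-boundary-lemma}. The only difference is the order of steps: the paper runs Markov/Borel--Cantelli on the purely $\nu_h$-based count $\#\{U\in\tilde{\mathcal{G}}_k:\nu_h(U)^{2/d_\gamma-\eta}\ge 2^{-\alpha k}\}$ and only afterwards applies the almost sure comparison pathwise, which makes your localizing events $E_{k_1}$ unnecessary. (Two small remarks. First, the exponent in Lemma~\ref{thick-point-bound-boundary} is exactly right: since $\xi=\gamma/d_\gamma$, we have $\alpha\frac{2/d_\gamma}{(2/d_\gamma-\eta)\xi}=\frac{\alpha}{\xi-\gamma\eta/2}$. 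Second, you should invoke that lemma with $\delta/4$ rather than $\delta/2$; the $\eta$-dependent exponent is strictly below $(Q-\alpha/\xi)^2/4$, so the continuity step in $\eta$ needs its own slack.)
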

\begin{proof}
Since $\alpha\in (0,\xi Q)$, by Lemma \ref{thick-point-bound-boundary}, there exists an $\eta_0>0$ such that for any choice of $\eta\in (0,\eta_0)$, for all sufficiently small $\epsilon > 0$ and all $x \in I$, we have
\begin{align*}
  \mathbb{P}\left[\nu_{h}([x-\epsilon,x+\epsilon])^{2/d_\gamma-\eta} > \epsilon^{\alpha} \right] \leq \epsilon^{(Q-\alpha/(\xi-\gamma\eta/2))^2/4-\delta/3}.
\end{align*}
As an immediate consequence, for some constant $C > 0$, we have
\begin{align*}
  \EE[ \#\{U\in \Gtildesymlink{k}: \nu_h(U)^{2/d_\gamma-\eta}\geq 2^{-\alpha k}\}] \leq C 2^{k} \times 2^{-k((Q-\alpha/(\xi-\gamma\eta/2))^2/4-\delta/3)},
\end{align*}
where the term $C 2^{k}$ simply upper bounds the cardinality of the set $I\cap 2^{-k}\ZZ$. Thus, by a basic Markov and Borel-Cantelli argument, for any $\eta\in (0,\eta_0)$, for all large enough $k$, we almost surely have
\begin{equation}
  \label{eq:67}
  \#\{U\in \Gtildesymlink{k}: \nu_h(U)^{2/d_\gamma-\eta}\geq 2^{-\alpha k}\}\leq  C 2^{k} \times 2^{-k((Q-\alpha/(\xi-\gamma\eta/2))^2/4 - 2\delta/3)}.
\end{equation}
Combining the above with Lemma \ref{hughes-miller-boundary-lemma}, we obtain that for any $\eta\in (0,\eta_0)$, for all large enough $k$, we almost surely have
\begin{equation}
  \label{eq:68}
  \#\{U\in \Gtildesymlink{k}: \mathrm{diam}_{D_h}(U)\geq 2^{-\alpha k}\}\leq  C 2^{k} \times 2^{-k((Q-\alpha/(\xi-\gamma\eta/2))^2/4 - 2\delta/3)}.
\end{equation}
As a consequence, by choosing $\eta$ small enough compared to $\delta$, we obtain that for all $k$ large enough, 
\begin{equation}
  \label{eq:69}
  \# \Gkatildesymlink{k}{\alpha} \leq C2^k\times 2^{-k\left((Q-\alpha/\xi)^2/4-\delta\right)}.
\end{equation}
This completes the proof.
\end{proof}

We can now use the above to show that for an appropriate choice of $\alpha_*$, the second term on the right hand side in \eqref{eq:split-boundary} is necessarily small.

\begin{lem}\label{euclidean-sum-control-lemma-bdy}
  Fix an $\alpha_*\in \Adeltasymlink{\delta}\cap (0,\xi Q)$ such that $\min \tilde{g}( (-\infty,\alpha_*]\cap \Adeltasymlink{\delta}) > (2+\beta)\delta$. Then, almost surely, for any $\epsilon>0$, we can choose $k_0$ to be large enough such that
  \begin{equation}\label{negative-set-lower-bound-bdy}
    \sum_{\alpha\in \Adeltasymlink{\delta}, \alpha<\alpha_*} \sum_{k>k_0}\# \Gkatildesymlink{k}{\alpha} (2^{-k})^{\beta (\alpha-\delta)}<\epsilon.
    \end{equation}
\end{lem}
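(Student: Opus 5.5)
This is the boundary analogue of \Cref{euclidean-sum-control-lemma}, and the plan is to follow that proof line by line, replacing \Cref{tk-alpha-bound-lemma} with \Cref{tk-alpha-bound-lemma-bdy}.

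Every $\alpha\in\Adeltasymlink{\delta}$ with $\alpha<\alpha_*$ satisfies $\alpha<\xi Q$. This holds because $\alpha_*<\xi Q$. We will also have $\alpha>0$ for $\delta$ small, since $\underline{\alpha}=\xi(Q-2)>0$ for $\gamma\in(0,2)$. If some grid points $\alpha\leq 0$ do appear, we bound them by the $\alpha=\delta$ case: the count is monotone in $\alpha$, and the weight $(2^{-k})^{\beta(\alpha-\delta)}$ is bounded. Alternatively, we note that by \Cref{lem:estholder} these classes are empty for large $k$. So \Cref{tk-alpha-bound-lemma-bdy} applies to every relevant $\alpha$. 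Since $\Adeltasymlink{\delta}$ is finite, almost surely there is a $k_1$ such that, for all $k>k_1$ and all such $\alpha$,
\[
\#\Gkatildesymlink{k}{\alpha} < C\, 2^{k}\, 2^{-k((Q-\alpha/\xi)^2/4-\delta)}.
\]

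Substituting this into the double sum, the $(\alpha,k)$ term is bounded by
\[
C\,(2^{-k})^{-1+(Q-\alpha/\xi)^2/4-\delta+\beta\alpha-\beta\delta}=C\,(2^{-k})^{\tilde g(\alpha)-(1+\beta)\delta}.
\]
The hypothesis $\tilde g(\alpha)>(2+\beta)\delta$ for all grid points $\alpha\leq\alpha_*$ makes the exponent exceed $\delta$. Each term is therefore at most $C2^{-\delta k}$, and the whole sum is at most $C\,\#\Adeltasymlink{\delta}\sum_{k>k_0}2^{-\delta k}$. This tail of a convergent geometric series is less than $\epsilon$ once $k_0\geq k_1$ is large enough.

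The only point needing care is the range of validity of \Cref{tk-alpha-bound-lemma-bdy}, namely $\alpha\in(0,\xi Q)$, together with the almost-sure uniformity over the finitely many $\alpha$. Both are handled above: finiteness of $\Adeltasymlink{\delta}$ gives the uniform $k_1$, and positivity of $\underline{\alpha}$ covers the range. Everything else is the same exponent bookkeeping as in the bulk case.
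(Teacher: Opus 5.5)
Your argument is correct and is the same as the paper's, which simply says the proof is that of \Cref{euclidean-sum-control-lemma} with \Cref{tk-alpha-bound-lemma-bdy} in place of \Cref{tk-alpha-bound-lemma}: the exponent becomes $\tilde g(\alpha)-(1+\beta)\delta>\delta$, and one sums a geometric tail. One small slip in your side remark on grid points $\alpha\le 0$: there the weight $(2^{-k})^{\beta(\alpha-\delta)}=2^{k\beta(\delta-\alpha)}$ is unbounded, so only your second fix works (those classes are empty for large $k$, e.g.\ by uniform continuity of $D_h$ on $I$); in any case no such points occur once $\delta<\underline{\alpha}$.
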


\begin{proof}
The result follows by essentially the same proof as that of Lemma \ref{euclidean-sum-control-lemma}.
\end{proof}

We now combine the preceding lemmas to complete the proof of \Cref{bdy-kpz}.

\begin{proof}[Proof of Lemma \ref{bdy-kpz}]
  To begin, as mentioned earlier, it suffices to fix a closed interval $I = [-N,N] \subseteq \RR$ and assume that $H\subseteq I$. Fix $\delta>0$ and use this to define $\Adeltasymlink{\delta}$; we will send this to $0$ at the end of the proof. Now, on combining Lemma \ref{frostman-bdy} with Lemma \ref{lem:split-bd}, we obtain that almost surely, for all $k_0$ large enough, for any cover $\cR$ of $H$ consisting only of elements of $\Gtildegeqsymlink{k_0}$ and any $\alpha_*\in \Adeltasymlink{\delta}$,
  \begin{equation}
    \label{eq:75}
   0< C_{H,\beta}\leq \sum_{R\in \cR}\operatorname{diam}_{\mathrm{Euc}}(R)^{\beta (\alpha_*-\delta)} + \sum_{\alpha\in \Adeltasymlink{\delta}, \alpha<\alpha_*} \sum_{k>k_0}\# \Gkatildesymlink{k}{\alpha} (2^{-k})^{\beta (\alpha-\delta)}.
  \end{equation}
  Now, consider the quadratic equation $x\mapsto \tilde{g}(x)$ defined in \ref{eq:boundary-polynomial}, whose roots are given by
  \begin{equation}
    \label{eq:76}
    \underline{r}_\partial,\overline{r}_\partial= \frac{d_{\gamma}(\gamma^{2}+4) -4\beta\gamma^{2} \pm 2\gamma\sqrt{4d_{\gamma}^{2} - (8\beta + 2\beta\gamma^{2})d_{\gamma} + 4\beta^{2}\gamma^{2}}}{2d_{\gamma}^{2}}.
  \end{equation}
  Note that $\beta\underline{r}_\partial = \DbKPZ{\gamma}{\beta}$. We define $\alpha_{\delta,*}$ to be the largest value in $[\underline{\alpha},\underline{r}_\partial]\cap \Adeltasymlink{\delta}$ such that $\tilde{g}(\alpha_{\delta,*})>(2+\beta)\delta$, which is well-defined for small enough $\delta$ since $\tilde{g}$ is a quadratic with positive leading coefficient. Now, by invoking Lemma \ref{euclidean-sum-control-lemma-bdy} with $\epsilon= C_{H,\beta}/2$, we have
  \begin{equation}
    \label{eq:77}
    C_{H,\beta}/2\leq \sum_{R\in \cR}  \mathrm{diam}_{\mathrm{Euc}}(R)^{\beta(\alpha_{\delta,*}-\delta)}.
  \end{equation}
It can be checked that $\Gtildegeqsymlink{k_0}$ is a sufficient covering family in the sense of Definition \ref{def-sufficient-collection} for any $k_0$, and thus the above implies that $\operatorname{dim}_{\mathrm{Euc}}(\Ssym{f\lvert_X}{V})\geq \beta (\alpha_{\delta,*}-\delta)$. Finally, since $\tilde{g}|_{[\underline{\alpha},\underline{r}_\partial]}$ is a strictly decreasing continuous function, we also have $\lim_{\delta\rightarrow 0} \alpha_{\delta,*}=\underline{r}_\partial$, thereby yielding that $\operatorname{dim}_{\mathrm{Euc}}(H)\geq \beta \underline{r}_\partial=\DbKPZ{\gamma}{\beta}$, which completes the proof.
\end{proof}

Finally, we use Lemma \ref{bdy-kpz} to complete the proof of \Cref{lqg_line_lower_bounds_thm}.
\begin{proof}[Proof of Theorem \ref{lqg_line_lower_bounds_thm}]
    First assume that $\beta < \dim(V)$. By an immediate application of Theorem \ref{intersection_universal_bounds}, we obtain that $\dim_{\mathrm{LQG}}(\Ssym{f}{V})\geq \beta$. Thereafter, by invoking Lemma \ref{bdy-kpz} along with the continuity of the function $\DbKPZ{\gamma}{\cdot}$, we obtain $\dim_{\mathrm{Euc}}(\Ssym{f}{V})\geq \DbKPZ{\gamma}{\beta}$. Finally, while \Cref{bdy-kpz} applies for only 
    $\beta < \dim(V)$, we may extend to the $\beta = \dim(V)$ case by the same continuity argument as in the Proof of \Cref{lqg_lower_bounds_thm}, and this completes the proof.
\end{proof}

\section{\texorpdfstring{Proofs of \Cref{thm:lqgres}, \Cref{thm:lqgbbdres}, and \Cref{thm:poissonroad}}{Proofs of main results}}
\label{sec:appl-lqg-poiss}
In this section, we combine the results of the previous sections to complete the proofs of the results stated in Section \ref{sec:main-results:-summ}. As preparation, we first state a lemma which, in effect, checks that the LQG metric satisfies the regularity assumption \ref{it:as} of Theorem \ref{3-star_points_general}; this will require geodesic confluence (Proposition \ref{prop:conf1}).

\begin{lem}\label{countable_tracing_geos_lemma}
Consider the metric space $(X,D)$ given by $(\CC,D_h)$ where $h$ is a whole-plane GFF and $D_h$ is the $\gamma$-LQG metric.
Then almost surely, simultaneously for all $a,b\in \mathbb{Q}^2$, the set of $r \in \RR$ for which there exists a $D_{h}$-geodesic started from $a$ which spends a nontrivial interval of time in $\Fsym{a}{b}{r}{=}$ is at most countable.
\end{lem}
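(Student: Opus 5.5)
Fix $a,b\in\mathbb{Q}^2$ with $a\neq b$; since there are countably many such pairs, it suffices to prove the statement almost surely for one pair. Apply \Cref{prop:conf1} with $u=a$, which holds almost surely for all targets $v$ and all geodesics from $a$. The aim is to show that every geodesic $\Gamma$ from $a$ that spends a nontrivial time interval in $\Fsym{a}{b}{r}{=}$ contains a sub-segment with two features. First, it contains a point of a fixed countable set $\mathcal{Q}$. Second, this point determines $r$. Then $r\mapsto$ (point of $\mathcal{Q}$) is injective into a countable set, and the countability of the exceptional $r$ follows.

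\textbf{Step 1 (reduction to rational targets).} Let $\Gamma$ be a geodesic from $a$ and $I=[s_1,s_2]$ an interval with $\Gamma(I)\subseteq\Fsym{a}{b}{r}{=}$. A sub-geodesic of a geodesic from $a$ is still a geodesic from $a$, so we may take $\Gamma$ to end at $v=\Gamma(s_2)$. Pick rational points $v_n\to v$ and let $\Gamma^{(n)}$ be the a.s.\ unique geodesics $\Gamma_{a,v_n}$. By compactness (Arzelà–Ascoli in the boundedly compact space $(\CC,D_h)$) and uniqueness considerations, we want $\Gamma^{(n)}\to\Gamma$ uniformly along a subsequence. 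This needs care because $\Gamma$ need not be the unique geodesic to $v$. To handle that, first shrink: replace $v$ by $v'=\Gamma(s')$ with $s'<s_2$ interior to $I$. It is standard that geodesics to interior points of a geodesic from $a$ are unique. If that is not available, instead take $v_n\to v'$ and use that any subsequential limit of $\Gamma_{a,v_n}$ is a geodesic from $a$ to $v'$, then apply \Cref{prop:conf1} to that limit.

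\textbf{Step 2 (confluence gives overlap).} By \Cref{prop:conf1}, choose neighborhoods $U\ni a$ and $V\ni v'$ small enough that they miss $\Gamma([s_1+\epsilon,s'-\epsilon])$. Then for large $n$, the geodesic $\Gamma_{a,v_n}$ contains $\Gamma([s_1+\epsilon,s'-\epsilon])$. Hence this nontrivial segment lies in $\Fsym{a}{b}{r}{=}$. On it, $D(\cdot,b)-D(\cdot,a)\equiv r$, and $D(\cdot,a)$ increases at unit speed along $\Gamma_{a,v_n}$. This pins $r$ as follows: $r=D(x,b)-D(x,a)$ for any $x$ in that segment.

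\textbf{Step 3 (countability).} Define $\Phi(w)$ for rational $w$ to be the set of $r$ such that the unique geodesic $\Gamma_{a,w}$ spends a nontrivial time interval in $\Fsym{a}{b}{r}{=}$. A single geodesic has finite length, and the level sets $\{t:D(\Gamma_{a,w}(t),b)-D(\Gamma_{a,w}(t),a)=r\}$ for distinct $r$ are disjoint. So only countably many of them can have positive Lebesgue measure, i.e.\ contain an interval. Thus $\Phi(w)$ is countable. By Step 2, every exceptional $r$ lies in $\bigcup_{w\in\mathbb{Q}^2}\Phi(w)$, which is countable.

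The main obstacle is Step 1: getting uniform convergence of the rational-target geodesics to the given, possibly non-unique, $\Gamma$. Restricting to an interior endpoint $v'$ is the key trick. I expect a short argument to show that geodesics from $a$ to points strictly inside a geodesic from $a$ are forced to converge to that geodesic's initial segment. If that fails, I would fall back on using any subsequential limit together with the ``for any geodesic'' quantifier in \Cref{prop:conf1}.
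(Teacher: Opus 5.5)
Your architecture is the paper's: a countable family of geodesic pieces, \Cref{prop:conf1} to show that every geodesic from $a$ lingering in $\Fsym{a}{b}{r}{=}$ contains one of them, and disjointness of the level sets. (The paper's proof states this covering property in one line as a consequence of \Cref{prop:conf1}.) The gap is in Step 1, where you suspected, and neither of your two routes closes it.

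\Cref{prop:conf1} only speaks about a sequence already known to converge uniformly to the given $\Gamma$, and you never produce one.

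\emph{The fallback fails.} Let $\Lambda$ be a subsequential limit of $\Gamma_{a,v_n}$ with $v_n\to v'$. Applying \Cref{prop:conf1} to $\Lambda$ only shows that $\Gamma_{a,v_n}$ eventually contains $\Lambda$ away from $a$ and $v'$. If $\Lambda\neq\Gamma|_{[0,s']}$, this says nothing about $\Gamma([s_1+\epsilon,s'-\epsilon])$. Moreover, $\Lambda$ itself need not spend any time in $\Fsym{a}{b}{r}{=}$: along $\Lambda$ the function $\phi=D_h(\cdot,b)-D_h(\cdot,a)$ is non-increasing and is only forced to equal $r$ at the endpoint $v'$.

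\emph{The main route is unsupported.} The claim that geodesics from $a$ to interior points of geodesics from $a$ are unique is a no-bigon statement. It is not among the inputs used here, you give no source for it, and it does not follow from \Cref{prop:conf1} in any evident way. So Step 2's inclusion $\Gamma_{a,v_n}\supseteq\Gamma([s_1+\epsilon,s'-\epsilon])$ is unsupported.

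\emph{A repair.} You can close the gap with \Cref{prop:conf1} and compactness alone, and without rational targets.
\begin{itemize}
\item For rationals $0<t<s$, let $\mathcal{K}_s$ be the set of $D_h$-geodesics $[0,s]\to\CC$ started at $a$. By Arzel\`a--Ascoli and bounded compactness of $(\CC,D_h)$, it is compact in the uniform topology, since uniform limits of such geodesics are again such geodesics.
\item Suppose $\Gamma_n\to\Gamma$ in $\mathcal{K}_s$. Apply \Cref{prop:conf1} with $U=\cB_{t/2}(a)$ and $V=\cB_{(s-t)/2}(\Gamma(s))$. Since $\Gamma(t)\notin U\cup V$, eventually $\Gamma(t)\in\Gamma_n$. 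Because both curves are parametrized by distance from $a$, this forces $\Gamma_n(t)=\Gamma(t)$.
\item Hence $\Gamma\mapsto\Gamma(t)$ is locally constant on the compact set $\mathcal{K}_s$, so $E_{t,s}:=\{\Gamma(t):\Gamma\in\mathcal{K}_s\}$ is finite.
\item Now suppose $\Gamma([s_1,s_2])\subseteq\Fsym{a}{b}{r}{=}$ for a geodesic $\Gamma$ from $a$. Pick rationals $s_1<t<s<s_2$. Then $\Gamma|_{[0,s]}\in\mathcal{K}_s$, so $r=\phi(\Gamma(t))\in\phi(E_{t,s})$.
\item Therefore the exceptional $r$ lie in the countable set $\bigcup_{t<s}\phi(E_{t,s})$.
\end{itemize}
This replaces your Steps 1--3 entirely. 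It does not even use uniqueness of geodesics between rational points.
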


\begin{proof}
  Since $\mathbb{Q}^2$ is countable, it suffices to prove the desired statement for fixed points $a,b\in \mathbb{Q}^{2}$. The strategy now is to use geodesic confluence to construct a countable collection $S$ of geodesics such that any sub-segment of a geodesic starting at $a$ must contain a path $\eta\in S$. Indeed, we define
  \begin{equation}
    \label{eq:5}
    S=\bigcup_{u,v\in \mathbb{Q}^2}\bigcup_{0\leq s<t\leq D_h(u,v), (s,t)\in \mathbb{Q}^2}\{\Gamma_{u,v}\lvert_{[s,t]}\},
  \end{equation}
  and by \Cref{prop:conf1}, it follows that for any geodesic $P$ starting from $a$ and any nontrivial sub-segment $P'\subseteq P$, there must exist at least one $\eta\in S$ for which $\eta\subseteq P'$. Now, suppose we have a ``bad'' $r\in \RR$ for which there exists a point $z\in \CC$ and a $D_h$-geodesic $\Gamma_{a,z}$ and a nontrivial interval $I$ such that $\Gamma_{a,z}\lvert_{I}\subseteq \Fsym{a}{b}{r}{=}$. Then, by the above, there must exist an $\eta\in S$ such that $\eta\subseteq \Gamma_{a,z}\lvert_I\subseteq \Fsym{a}{b}{r}{=}$. However, note that $\Fsym{a}{b}{r}{=}\cap \Fsym{a}{b}{s}{=}$ are disjoint for $r\neq s$. Thus, since $S$ is countable, the set of bad $r$ must be at most countable, and this completes the proof.
\end{proof}

We will also require a boundary version of the above result.
\begin{lem}\label{countable_tracing_geos_lemma_bd}
Consider the metric space $(X,D)$ given by $(\overline{\HH},D_h)$ where $h$ is a Neumann GFF and $D_h$ is the $\gamma$-LQG metric. Then almost surely, simultaneously for all $a,b\in \mathbb{Q}^2\cap \overline{\HH}$, the set of $r \in \RR$ for which there exists a $D_{h}$-geodesic started from $a$ which spends a nontrivial interval of time in $\Fsym{a}{b}{r}{=}$ is at most countable.
\end{lem}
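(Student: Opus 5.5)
The proof will mirror that of \Cref{countable_tracing_geos_lemma}, with \Cref{prop:conf2} playing the role of \Cref{prop:conf1}. Since $\mathbb{Q}^2\cap\overline{\HH}$ is countable, it suffices to fix $a,b\in \mathbb{Q}^2\cap \overline{\HH}$ and show that the set of ``bad'' $r$ is almost surely at most countable. We will use that $(\overline{\HH},D_h)$ is almost surely a geodesic metric space inducing the Euclidean topology on $\overline{\HH}$. We will also use that, almost surely, geodesics $\Gamma_{u,v}$ are unique simultaneously for all $u,v\in \mathbb{Q}^2\cap\overline{\HH}$; this follows from uniqueness for each fixed pair of points together with countability.

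We define the countable family
\begin{equation*}
S=\bigcup_{u,v\in \mathbb{Q}^2\cap\overline{\HH}}\ \bigcup_{s<t,\ s,t\in\mathbb{Q}\cap[0,D_h(u,v)]}\{\Gamma_{u,v}\lvert_{[s,t]}\}.
\end{equation*}
The key claim is that, almost surely, every nontrivial sub-segment $P'$ of every $D_h$-geodesic $P$ started from $a$ contains some element of $S$.

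To prove the claim, we apply \Cref{prop:conf2} with $u=a$, which holds almost surely simultaneously for all endpoints $v$. Let $P'=P\lvert_{[s_0,t_0]}$, where $P$ runs from $a$ to $v=P(t_0)$; we restrict $P$ to $[0,t_0]$, which is again a geodesic. Pick rational points $v_n\in\overline{\HH}$ converging to $v$, and geodesics $\Gamma_{a,v_n}$ (unique, since $a,v_n$ are rational). By Arzel\`a–Ascoli and bounded compactness, a subsequence of the $\Gamma_{a,v_n}$ converges uniformly to some geodesic from $a$ to $v$. This limit need not equal $P$, so we first approximate differently. We instead take rational $v'$ close to $P(t_1)$ for an interior time $t_1\in(s_0,t_0)$, and note that any limit geodesic from $a$ to $P(t_1)$ must coincide with $P\lvert_{[0,t_1]}$ off small neighbourhoods $U\ni a$, $V\ni P(t_1)$. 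This uses \Cref{prop:conf2} applied to the geodesic $P\lvert_{[0,t_1]}$ and a sequence converging to it. To produce such a convergent sequence we use a diagonal approximation, replacing the start point $a$ by $P(s)$ for rational-neighbour starts. The cleanest route is to copy verbatim the argument the paper uses for \Cref{countable_tracing_geos_lemma}, simply citing \Cref{prop:conf2} in place of \Cref{prop:conf1}. This yields $\Gamma_{a,v'}\supseteq P\lvert_{[s_1,t_2]}$ for some $s_0<s_1<t_2<t_1$. A rational sub-interval of $\Gamma_{a,v'}$ then lies inside $P'$, which proves the claim.

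With the claim in hand, we conclude as before. If $r$ is bad, there is a geodesic from $a$ with $P\lvert_I\subseteq \Fsym{a}{b}{r}{=}$ for a nontrivial interval $I$, and hence some $\eta\in S$ with $\eta\subseteq\Fsym{a}{b}{r}{=}$. Since the sets $\Fsym{a}{b}{r}{=}$ are pairwise disjoint for distinct $r$, the map sending each bad $r$ to such an $\eta$ is injective, so the bad set is at most countable. The main obstacle is the confluence step: one must make sure that \Cref{prop:conf2} applies uniformly over all endpoints $v$, including boundary points $v\in\RR$, and to all geodesics from $a$. The statement of \Cref{prop:conf2} provides exactly this, so the argument goes through as in the bulk case.
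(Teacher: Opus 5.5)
Your proposal is correct and is the paper's proof: the paper reruns the argument for Lemma~\ref{countable_tracing_geos_lemma} verbatim, with Proposition~\ref{prop:conf2} substituted for Proposition~\ref{prop:conf1}, which is exactly where you end up. That argument has three parts: the countable family $S$ of rational sub-segments of geodesics between rational points; confluence, to show that every nontrivial sub-segment of a geodesic from $a$ contains some $\eta\in S$; and pairwise disjointness of the sets $F_{a,b}^{r,=}$. You should, however, drop the exploratory middle of your confluence paragraph. The claim that a subsequential limit of the $\Gamma_{a,v'}$ with $v'\to P(t_1)$ agrees with $P\lvert_{[0,t_1]}$ off $U\cup V$ does not follow from Proposition~\ref{prop:conf2}, which only compares a sequence with its own limit. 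That limit need not be $P\lvert_{[0,t_1]}$ — the same issue you flagged a line earlier — unless you add an input such as uniqueness of the geodesic from $a$ to $P(t_1)$. So either supply such an input or cite the confluence step directly, as the paper does.
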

\begin{proof}
  Once Proposition \ref{prop:conf2} is substituted for Proposition \ref{prop:conf1}, the proof is verbatim the same as that of Lemma \ref{countable_tracing_geos_lemma}.
\end{proof}

We will also require the following version of Lemma \ref{countable_tracing_geos_lemma} for Kendall's Poisson roads metric.
\begin{lem}
  \label{countable_tracing_geos_lemma_poisson}
    Fix $\beta>2$ and consider the corresponding Poisson roads metric $(\mathbb{R}^2,D)$. Then almost surely, simultaneously for all $a,b\in \mathbb{Q}^2$, the set of $r \in \RR$ for which there exists a $D$-geodesic started from $a$ which spends a nontrivial interval of time in $\Fsym{a}{b}{r}{=}$ is at most countable.
  \end{lem}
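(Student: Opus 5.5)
The plan is to run the argument of Lemma~\ref{countable_tracing_geos_lemma} essentially verbatim. The only model-specific ingredient is a confluence statement replacing Proposition~\ref{prop:conf1}. Since $\mathbb{Q}^2$ is countable, it suffices to fix $a,b\in\mathbb{Q}^2$. By \cite[Theorem 4.4]{kendallrandomlinesmetricspaces}, almost surely every pair $u,v\in\mathbb{Q}^2$ has a unique geodesic $\Gamma_{u,v}$. We then define the countable family
\[
S=\bigcup_{u,v\in\mathbb{Q}^2}\ \bigcup_{0\le s<t\le D(u,v),\,(s,t)\in\mathbb{Q}^2}\{\Gamma_{u,v}\lvert_{[s,t]}\}.
\]

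The key claim is the following: almost surely, for every $D$-geodesic $P$ started from $a$ and every nontrivial subsegment $P'\subseteq P$, some $\eta\in S$ satisfies $\eta\subseteq P'$. Granting this, the conclusion follows exactly as before. If $r$ is ``bad'', witnessed by $\Gamma_{a,z}\lvert_I\subseteq \Fsym{a}{b}{r}{=}$, then some $\eta\in S$ lies inside $\Fsym{a}{b}{r}{=}$. The level sets $\Fsym{a}{b}{r}{=}$ are pairwise disjoint in $r$, so each $\eta$ serves at most one $r$. Hence the bad set injects into the countable set $S$.

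To prove the claim, I would use the road structure of the Poisson roads metric. Roughly, Kendall shows that geodesics (away from their endpoints) travel along finitely many road segments on compact subintervals, switching roads only at road intersections. Hence any nontrivial subsegment $P'$ contains a nontrivial sub-subsegment lying on a single road $\ell$, between two points $x,y$ of $\ell$. I would then choose rational points $u,v$ near $x,y$ so that $\Gamma_{u,v}$ rides the same road $\ell$ through a middle portion contained in $[x,y]$. The reason is that the fastest roads near a segment are forced: geodesics between nearby points enter the highest-speed road in the vicinity and follow it. Taking a rational-time subsegment of $\Gamma_{u,v}$ inside $[x,y]\cap\ell$ then gives $\eta\subseteq P'$.

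Alternatively, one can cite the confluence/``geodesics do not pause en route'' results of \cite{geosdonotpausenroute} (or \cite{BK25}) to obtain an analogue of Proposition~\ref{prop:conf1}: geodesics converging uniformly to a geodesic from $a$ eventually agree with it outside neighborhoods of the endpoints. Then we approximate $P$ by geodesics $\Gamma_{u_n,v_n}$ with $u_n,v_n\in\mathbb{Q}^2$ converging to interior points of $P'$, and extract a rational-time subsegment as in Lemma~\ref{countable_tracing_geos_lemma}.

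The main obstacle is this confluence step. A single point of $P'$ might be an intersection of roads, and a geodesic from nearby rational points might shortcut onto a different road. So one must make sure the chosen subsegment lies in the interior of a single road stretch, with no faster road crossing near it. I would first try to cite the relevant confluence result from \cite{geosdonotpausenroute} directly; only if that fails would I prove the road-following argument by hand, using local finiteness of roads with speed above a threshold.
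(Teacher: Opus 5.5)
This is essentially the paper's proof: it defines the same countable family $S$ of rational-time subsegments of geodesics between rational points, gets your key claim directly from \cite[Lemma 1]{geosdonotpausenroute} (which holds almost surely simultaneously for all geodesics $\Gamma_{u,v}$ and all nontrivial time intervals, not only those started from $a$), and then runs the disjoint-level-set injection verbatim. So your primary citation route is exactly right, and the hand-made road-following fallback is unnecessary; as sketched it would also need more care, since Kendall's structural results concern geodesics between fixed endpoints, whereas here the claim must hold at once for all geodesics from $a$ to arbitrary $z$.
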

  \begin{proof}
    Just as in \eqref{eq:5}, we define
\begin{equation}
  \label{eq:42}
      S=\bigcup_{u,v\in \mathbb{Q}^2}\bigcup_{0\leq s<t \leq D(u,v), (s,t)\in \mathbb{Q}^2}\{\Gamma_{u,v}\lvert_{[s,t]}\}.
    \end{equation}
    The work \cite{geosdonotpausenroute} established confluence properties for the Poisson roads metric akin to Proposition \ref{prop:conf1}. In particular, it is established in \cite[Lemma 1]{geosdonotpausenroute} that, almost surely, for any points $u,v\in \CC$, any geodesic $\Gamma_{u,v}$, and any nontrivial interval $I\subseteq [0, D(u,v)]$, there must exist an $\eta\in S$ such that $\eta\subseteq \Gamma_{u,v}\lvert_I$. Thus, the verbatim analogue of the proof of Lemma \ref{countable_tracing_geos_lemma} holds for the current setting of the Poisson roads metric as well. This completes the proof.
  \end{proof}
Finally, we now combine the results of the entirety of the paper to complete the proofs of Theorems \ref{thm:lqgres} and \ref{thm:lqgbbdres}, which concern LQG defined with the whole-plane GFF and with the free-boundary GFF respectively.

\begin{proof}[Proof of Theorem \ref{thm:lqgres}]
  We go through the proofs of each of the four items one by one.
  
  \noindent \textbf{Proof of item \ref{it:LQG-3-lb}}: By using \Cref{countable_tracing_geos_lemma}, we see that the necessary assumption \ref{it:as} of \Cref{3-star_points_general} is a.s.\ satisfied for the countable dense set $A = \mathbb{Q}^2$. We note also that the identity map from $(\mathbb{C}, D_{h})$ to $\CC$ endowed with the Euclidean metric is a homeomorphism. As a consequence $(\CC,D_h)$ is a $\sigma$-compact metric space which is not a tree and also has trivial first homology (with integer coefficients). That is, almost surely, all the conditions of \Cref{3-star_points_general} hold. We conclude that almost surely, $\dim_{D_h}\left(T_{\textup{3-star}}\right)\geq 2$. By combining this with the worst-case KPZ estimate Proposition \ref{prop:worstkpz}, we obtain $\dim_{\mathrm{Euc}}(T_{\textup{3-star}})\geq  \DKPZ{\gamma}{2}$, and this completes the proof of item \ref{it:LQG-3-lb}.

 \noindent \textbf{Proof of item \ref{it:LQG-2-lb-wholeplane}}: Fix an arbitrary pair of points $a, b \in \mathbb{C}$, and define
\begin{equation}
f(z) = D_{h}(b,z) - D_{h}(a,z),
\end{equation}
which is Lipschitz with respect to $D_h$. Note that the function $f$ cannot be globally constant since we have $f(a)=D_h(b,a)>0$ and $f(b)=-D_h(b,a)<0$. For $r\in \RR$, recall the set $\Fsym{a}{b}{r}{=}=\{z\in \CC: D(z,b)-D(z,a)=r\}$ defined earlier in \eqref{def:F}. As a consequence of Lemma \ref{lem-inside}, note that for any $r\in \RR$ and any $z\in \Fsym{a}{b}{r}{=}$, we are guaranteed that $z$ admits almost disjoint geodesics $\Gamma_{z,a},\Gamma_{z,b}$ unless the following holds:
\begin{enumerate}
\item \label{it:timeint1} A geodesic from $z$ to $a$ traces $\Fsym{a}{b}{r}{=}$ for a nontrivial time interval.
\end{enumerate}
Let $K_{a,b}$ be the set of  $r\in \RR$ for which condition \ref{it:timeint} above holds. By Lemma \ref{countable_tracing_geos_lemma}, the set $K_{a,b}$ is necessarily at most countable, which implies that  $\dim (\RR\setminus K_{a,b})=1$. We now observe that $f^{-1}(\RR\setminus K_{a,b})\subseteq T_{\textup{2-star}}$. As a consequence, by using Theorem \ref{lqg_lower_bounds_thm} with $\beta=1$, we immediately obtain the desired lower bounds for the LQG and Euclidean dimensions of the set $\Ssym{f}{\RR\setminus K_{a,b}}\subseteq f^{-1}(\RR\setminus K_{a,b})\subseteq T_{\textup{2-star}}$. This completes the proof of item \ref{it:LQG-2-lb-wholeplane}.

\noindent \textbf{Proof of item \ref{it:LQG-1net-lb}}: We define
\begin{equation}
  \label{eq:6}
  f(x) = \min\{r : x \in \mathcal{B}^{\sbullet[0.5],z}_{r}(a)\},
\end{equation}
where we recall that $\mathcal{B}^{\sbullet[0.5],z}_{r}(a)$ is the filled LQG metric ball with radius $r$ centered at $a$ and targeted at $z$. Recall that we are justified taking a minimum rather than an infimum here by \Cref{lem:infmin}. Then $f$ is Lipschitz with respect to the LQG metric (see the proof of  \Cref{intersection_metric_nets}), and its non-constancy set $\Ssym{f}{\RR}$ is the metric net $\mathcal{N}^{z}(a)$. As a result, the lower bounds of \Cref{lqg_lower_bounds_thm} (with $\beta = 1$) apply to the LQG metric net $\mathcal{N}^{z}(a)$. This completes the proof of item \ref{it:LQG-1net-lb}.

\noindent \textbf{Proof of item \ref{it:LQG-net-int-lb}}: The hypotheses of \Cref{intersection_metric_nets_general} apply almost surely when $h$ is a whole-plane GFF and $D_{h}$ is the whole-plane LQG metric: the subcritical LQG metric a.s. induces the Euclidean topology, and \cite{lqg-metric-estimates} Lemma 3.8 proves the property $\lim_{z\rightarrow\infty}D(w,z) = \infty$ (or, equivalently, that $(\mathbb{C},D)$ is boundedly compact). Thus, the desired result follows by applying \Cref{intersection_metric_nets_general}. This completes the proof of item \ref{it:LQG-net-int-lb}.
\end{proof}

\begin{proof}[Proof of Theorem \ref{thm:lqgbbdres}]
  We now go through the proofs of both the items one by one.
  
  \noindent \textbf{Proof of item \ref{it:LQG-2-lb}}: In view of Theorem \ref{boundary_thm} (applied with $A=\RR$), we need only verify that there exist some $x,y\in \RR$ such that $(x,y)\in \Hsym{\overline{\HH}}{D_h}$. However, by Lemma \ref{countable_tracing_geos_lemma_bd}, the above holds a.s.\ simultaneously for all $x,y\in \mathbb{Q}\subseteq \RR$. This completes the proof of item \ref{it:LQG-2-lb}.

  \noindent \textbf{Proof of item \ref{it:LQG-net-lb-boundary}}: We apply \Cref{intersection_universal_bounds} with $f(z) = \min\{r: z\in \cB_r^{\bullet,\infty}(0)\}$ and $V = \mathbb{R}$. Note that we have already shown this choice of $f$ is Lipschitz in the proof of \Cref{intersection_metric_nets} and thus \Cref{intersection_universal_bounds} applies. This completes the proof of item \ref{it:LQG-net-lb-boundary}.
\end{proof}
 We now come to the proof of Theorem \ref{thm:poissonroad}, and this concerns Kendall's Poisson roads metric.
\begin{proof}[Proof of Theorem \ref{thm:poissonroad}]
  We go through the proofs of both the items one by one.
  
  \noindent \textbf{Proof of item \ref{it:pr-3-lb}}: 
  Recall that $(\mathbb{R}^{2},D)$ is homeomorphic to $(\mathbb{R}^{2},|\cdot|)$ where $|\cdot|$ denotes the Euclidean metric (\cite{kendallrandomlinesmetricspaces}, pg. 515), and thus is a $\sigma$-compact metric space which is not a tree and also has trivial first homology (with integer coefficients). Also, by \cite[Theorem 5.1]{kahnlineprocess}, metric balls for the Poisson roads metric almost surely have finite Euclidean diameter, which implies the property $\lim_{z\rightarrow\infty}D(w,z)=\infty$ (or, equivalently, that $(\mathbb{R}^{2},D)$ is boundedly compact). This verifies that the Poisson roads metric satisfies the first two assumptions in Theorem \ref{3-star_points_general} and it now remains to check assumption \ref{it:as}. For this, we set $A=\mathbb{Q}^2$ and first note that, and by the uniqueness of geodesics between fixed points (see \cite[Theorem 4.4]{kendallrandomlinesmetricspaces}), there is a.s.\ a unique geodesic $\Gamma_{u,v}$ simultaneously for all $u,v\in A$. Secondly, we invoke Lemma \ref{countable_tracing_geos_lemma_poisson} and this verifies that assumption \ref{it:as} is indeed satisfied. Thus, by \Cref{3-star_points_general}, the set of $3$-stars for the Poisson roads metric $D$ must have Hausdorff dimension (with respect to the Poisson roads metric) at least $2$. This completes the proof of item \ref{it:pr-3-lb}.

  \noindent \textbf{Proof of item \ref{it:pr-net-int-lb}}: The desired result will follow by an application of \Cref{intersection_metric_nets_general}. We have already verified in the proof of item \ref{it:pr-3-lb} that the necessary hypotheses are satisfied: in particular, that the Poisson roads metric a.s. induces the Euclidean topology and that $(\mathbb{R}^{2},D)$ is boundedly compact. Item \ref{it:pr-net-int-lb} thus follows by an application of \Cref{intersection_metric_nets_general}.
\end{proof}

\section{Appendix}

Here we provide brief proofs of our assumptions about the zeros of the functions \eqref{eq:quad} and \eqref{eq:54} whose roots we considered in Section 4. These proofs are straightforward, but we do make use of the bounds for LQG dimension $d_{\gamma}$ derived in \cite{gp-lfpp-bounds} to verify in each case that the zeros are real and fall in the required ranges. We begin with the function \eqref{eq:quad} considered in the proof of the Euclidean dimension lower bound.

\begin{prop}\label{euclidean-roots-appendix-lemma}
Given $\gamma \in (0,2)$ and $\beta \in [0,1]$, define the function $g(\alpha) = -1+\beta\alpha+(Q-\alpha/\xi)^{2}/2$. Also define the constants $\underline{\alpha} = \xi\left(Q-2\right)$, $ \overline{\alpha} = \xi\left(Q+2\right)$. Then for every such choice of $\gamma$ and $\beta$, the following are true: i) the zeros $\underline{r} \leq \overline{r}$ of $g$ are real, and ii) $\underline{\alpha} < \underline{r} < \overline{\alpha}$.
\end{prop}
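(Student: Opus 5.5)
Set $u = Q - \alpha/\xi$, so that $\alpha = \xi(Q-u)$. In this variable
\[
g = -1 + \beta\xi Q - \beta\xi u + u^2/2 .
\]
This is a quadratic in $u$ with positive leading coefficient. Its discriminant (in $u$) is $\beta^2\xi^2 - 2(\beta\xi Q - 1)$.

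\emph{Part (i), real roots.} We need $\beta^2\xi^2 + 2 \ge 2\beta\xi Q$. Since $\xi Q = \gamma Q/d_\gamma = (\gamma^2/2 + 2)/d_\gamma$, and the known bounds from \cite{gp-lfpp-bounds} give $d_\gamma > 2$ together with more refined bounds, I first try the crude estimate $\xi Q < 1$ when $d_\gamma > 2 + \gamma^2/2$. Then $2\beta\xi Q < 2\beta \le 2$ for $\beta \in [0,1]$, and the discriminant is positive. If $d_\gamma > 2 + \gamma^2/2$ is not literally available, I would instead use the lower bound on $d_\gamma$ from \cite{gp-lfpp-bounds} and verify $\beta\xi Q \le 1 + \beta^2\xi^2/2$ directly. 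Since this expression is convex in $\beta$, it suffices to check $\beta = 1$, i.e. $\xi Q \le 1 + \xi^2/2$. This amounts to $2d_\gamma^2 - (4 + \gamma^2)d_\gamma + \gamma^2 \ge 0$, which matches the square-root term in $\Delta_{\mathrm{Euc}}$. Its larger root is below $2 + \gamma^2/2$, which is where I would invoke the bound $d_\gamma \ge 2 + \gamma^2/2$. I expect checking this inequality against the cited bounds to be the main obstacle.

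\emph{Part (ii), location of $\underline r$.} The condition $\alpha \in (\underline\alpha, \overline\alpha)$ corresponds to $u \in (-2, 2)$. The smaller root $\underline r$ in $\alpha$ corresponds to the larger root $u_+ = \beta\xi + \sqrt{\text{disc}}$. So it suffices to show $-2 < u_+ < 2$.

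\emph{Lower bound $u_+ > -2$.} This is immediate, since $u_+ \ge \beta\xi \ge 0$.

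\emph{Upper bound $u_+ < 2$.} Because the leading coefficient is positive, $u_+ < 2$ is equivalent to two conditions: $g$ evaluated at $u = 2$ is positive, and the vertex $\beta\xi$ lies below $2$. The vertex condition holds because $\xi = \gamma/d_\gamma < 1$. At $u = 2$,
\[
g = -1 + \beta\xi Q - 2\beta\xi + 2 = 1 + \beta\xi(Q-2).
\]
This is positive since $Q > 2$ for $\gamma \in (0,2)$.

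Hence $\underline\alpha < \underline r < \overline\alpha$. The only genuinely nontrivial input is the dimension bound used in part (i).
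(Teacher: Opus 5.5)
Your part (ii) is correct. It is the paper's argument written in the variable $u=Q-\alpha/\xi$. The bound $u_+\ge\beta\xi\ge 0$ is the paper's bound $\underline r\le \xi Q<\overline\alpha$. Your two conditions at $u=2$ (value $1+\beta\xi(Q-2)>0$ and vertex $\beta\xi<2$) are exactly the paper's $g(\underline\alpha)=1+\beta\underline\alpha>0$ and $g'(\underline\alpha)=\beta-2/\xi<0$.

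The gap is in part (i), the only step that needs real input about $d_\gamma$. Both of your routes end by invoking $d_\gamma\ge 2+\gamma^2/2$ (equivalently $\xi Q\le 1$), which you neither prove nor cite. Your fallback correctly reduces the worst case $\beta=1$ to $2d_\gamma^2-(4+\gamma^2)d_\gamma+\gamma^2\ge 0$, i.e.\ $d_\gamma\ge \frac14(4+\gamma^2+\sqrt{16+\gamma^4})$. But it then closes with the same unverified bound, so it is circular rather than an independent argument. Even if $d_\gamma\ge 2+\gamma^2/2$ can be found somewhere, it is strictly stronger than needed, since the threshold $\frac14(4+\gamma^2+\sqrt{16+\gamma^4})$ lies strictly below $2+\gamma^2/2$. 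It is also stronger than what the paper uses.

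The missing step is to compare $d_\gamma$ directly with that threshold. The paper takes from \cite[Corollary 2.5]{gp-lfpp-bounds} two bounds:
\begin{itemize}
\item for $\gamma\le\sqrt{8/3}$, $d_\gamma\ge\frac{2\gamma^2}{4+\gamma^2-\sqrt{16+\gamma^4}}=\frac14(4+\gamma^2+\sqrt{16+\gamma^4})$, which is exactly the threshold;
\item for $\gamma>\sqrt{8/3}$, the larger bound $d_\gamma\ge\frac13(4+\gamma^2+\sqrt{16+2\gamma^2+\gamma^4})$.
\end{itemize}

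Your justification for reducing to $\beta=1$ is also wrong as stated. Convexity of $\beta\mapsto 1-\beta\xi Q+\beta^2\xi^2/2$ places its maximum over $[0,1]$ at an endpoint, not its minimum. The reduction is still valid, but for a different reason: the vertex $\beta=Q/\xi$ exceeds $1$, so the function is decreasing on $[0,1]$. The paper sees the same thing by noting that the larger root $\frac{\beta}{4}(4+\gamma^2+\sqrt{16+\gamma^4})$ of its quadratic $D(d_\gamma)$ is increasing in $\beta$.
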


\begin{proof}
$g$ is quadratic in $\alpha$ with zeros given by
\begin{align*}
    \underline{r}, \overline{r} 
    &= \frac{d_{\gamma}(\gamma^{2}+4) - 2\beta\gamma^{2} \pm 2\gamma\sqrt{2d_{\gamma}^{2}-(4\beta+\beta\gamma^{2}) d_{\gamma}+\beta^{2}\gamma^{2}}}{2d_{\gamma}^{2}},
\end{align*}
so to show i) it will suffice to show that the quantity $D(d_\gamma) = 2d_{\gamma}^{2}-(4\beta+\beta\gamma^{2}) d_{\gamma}+\beta^{2}\gamma^{2}$ is non-negative. $D$ is itself quadratic in $d_{\gamma}$ with real roots
\begin{align*}
    \underline{d}, \overline{d} &= \frac{\beta}{4}\left(4 + \gamma^{2} \pm \sqrt{16 + \gamma^{4}}\right)
\end{align*}
and positive leading coefficient, so it is non-negative on $(-\infty,\underline{d}]\cup[\overline{d},\infty)$. We will show that $d_{\gamma} \geq \overline{d}$ for all $\gamma \in (0,2)$ and $\beta \in [0,1]$, so that $D(d_{\gamma})$ is always non-negative for our choice of parameters.

For this we will make use of the bounds for $d_{\gamma}$ derived in \cite{gp-lfpp-bounds}.  When $\gamma \leq \sqrt{8/3}$, Corollary 2.5 of \cite{gp-lfpp-bounds} gives us that
\begin{align*}
    d_{\gamma} &\geq \frac{2\gamma^{2}}{4+\gamma^{2}-\sqrt{16+\gamma^{4}}}  = \frac{1}{4}\left(4+\gamma^{2}+\sqrt{16+\gamma^{4}}\right) \geq \frac{\beta}{4}\left(4 + \gamma^{2} + \sqrt{16 + \gamma^{4}}\right) = \overline{d},   
\end{align*}
where the last inequality follows because $\beta \leq 1$.
When $\gamma > \sqrt{8/3}$, the same corollary gives us that 
\begin{equation*}
d_{\gamma} \geq \frac{1}{3}\left(4+\gamma^{2}+\sqrt{16+2\gamma^{2}+\gamma^{4}}\right)
> \frac{1}{4}\left(4+\gamma^{2}+\sqrt{16+\gamma^{4}}\right) \geq \overline{d}.
\end{equation*}
So $d_{\gamma} \geq \overline{d}$ for all $\gamma \in (0,2)$ and $\beta \in [0,1]$, which by the preceding discussion proves i). \\

Now for the upper bound in ii), we observe that
\begin{align*}
    \underline{r} &= \frac{d_{\gamma}\left(\gamma^{2}+4\right) - 2\beta\gamma^{2} - 2\gamma\sqrt{D(d_{\gamma})}}{2d_{\gamma}^{2}} \leq \frac{d_{\gamma}\left(\gamma^{2}+4\right)}{2d_{\gamma}^{2}} = \frac{\gamma^{2}}{2d_{\gamma}} + \frac{2}{d_{\gamma}} < \frac{\gamma^{2}}{2d_{\gamma}} + \frac{2}{d_{\gamma}} + \frac{2\gamma}{d_{\gamma}} = \overline{\alpha},
\end{align*}
where the first inequality follows because $\beta\gamma^{2} \geq 0$ and $\gamma\sqrt{D(d_{\gamma})} \geq 0$, and the final equality follows from the definitions of $Q$ and $\xi$ \eqref{def:xi-and-Q}.

For the lower bound in ii), since $g$ is quadratic in $\alpha$ with positive leading coefficient and real roots, it will suffice to show that $g(\underline{\alpha}) > 0$ and $g'(\underline{\alpha}) < 0$. We compute
\begin{align*}
    g(\underline{\alpha}) &= -1 + \beta\underline{\alpha} + \frac{1}{2}\left(Q-\frac{\underline{\alpha}}{\xi}\right)^{2} = -1 + \beta\underline{\alpha} + \frac{1}{2}(2)^{2} = \beta\underline{\alpha} + 1 > 0,
\end{align*}
and
\begin{align*}
    g'(\underline{\alpha}) &= \beta - \frac{Q - \underline{\alpha}/\xi}{\xi} = \beta - \frac{2}{\xi} = \beta - \frac{2d_{\gamma}}{\gamma} < 0,
\end{align*}
where to obtain the final inequality we use that $\beta \leq 1$ while $d_{\gamma} > \gamma$. So it follows that $\underline{\alpha} < \underline{r}$; this completes the proof of ii).
\end{proof}

We also prove an analogous result for the function \eqref{eq:54} considered in the LQG lower bound.
\begin{prop}\label{lqg-roots-appendix-lemma}
Given $\gamma \in (0,2)$ and $\beta \in [0,1]$, define the function $\ell(\alpha)= \beta + 1/\alpha + \frac{1}{2\alpha}\left( \frac{2}{\gamma} - \frac{\gamma}{2} - \frac{\alpha}{\xi}\right)^{2} - d_\gamma$. Define the constants $\underline{\alpha} = \xi\left(Q-2\right)$, $ \overline{\alpha} = \xi\left(Q+2\right)$. Then for every such choice of $\gamma$ and $\beta$, the following are true: i) the zeros $\underline{r} \leq \overline{r}$ of $\ell$ are real, ii) $\underline{\alpha}< \xi(Q-\gamma)< \overline{r}< \overline{\alpha}$, and iii) $\ell$ is strictly increasing and non-negative on $[\overline{r}, \overline{\alpha}]$.
\end{prop}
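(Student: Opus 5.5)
The plan is to clear the denominator in $\ell$ and reduce everything to facts about one quadratic, reusing the discriminant analysis from the proof of \Cref{euclidean-roots-appendix-lemma}. First note that $\frac{2}{\gamma}-\frac{\gamma}{2}=Q-\gamma$. For $\alpha>0$ write $\ell(\alpha)=q(\alpha)/\alpha$ with
\begin{equation*}
q(\alpha)=(\beta-d_\gamma)\alpha+1+\tfrac12\left(Q-\gamma-\alpha/\xi\right)^2 .
\end{equation*}
This is a quadratic with leading coefficient $1/(2\xi^2)>0$. The zeros of $\ell$ on $(0,\infty)$ are therefore the zeros of $q$, and solving $q=0$ for $\alpha^{-1}$ gives the displayed formula for $\overline r^{-1},\underline r^{-1}$. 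In particular, the discriminant of $q$ is a positive multiple of $D(d_\gamma)=2d_\gamma^2-(4\beta+\beta\gamma^2)d_\gamma+\beta^2\gamma^2$, exactly as in \Cref{euclidean-roots-appendix-lemma}. That proof showed $d_\gamma\ge\overline d$ using the bounds of \cite{gp-lfpp-bounds}, hence $D(d_\gamma)\ge 0$, which gives i). Since $q(0)=1+(Q-\gamma)^2/2>0$ and $q$ opens upward, both roots then lie on the same side of $0$. The vertex of $q$ sits at
\begin{equation*}
\alpha_v=\xi(Q-\gamma)+\xi^2(d_\gamma-\beta)>0 ,
\end{equation*}
so both roots are positive. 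This confirms $0<\underline r\le\overline r$.

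For ii), the inequality $\underline\alpha=\xi(Q-2)<\xi(Q-\gamma)$ is immediate from $\gamma<2$. Next, $d_\gamma>\beta$, so $\xi(Q-\gamma)<\alpha_v\le\overline r$, because the larger root of an upward parabola lies at or to the right of its vertex. For $\overline r<\overline\alpha$ I will show two things. First, $q(\overline\alpha)>0$: using $\xi(Q+2)d_\gamma=\gamma(Q+2)=\gamma^2/2+2+2\gamma$ and $Q-\gamma-\overline\alpha/\xi=-(2+\gamma)$, one computes $q(\overline\alpha)=\beta\overline\alpha+1>0$. Second, $\overline\alpha>\alpha_v$: this is equivalent to $\xi^2(d_\gamma-\beta)<\xi(2+\gamma)$, i.e.\ $\gamma(d_\gamma-\beta)/d_\gamma<2+\gamma$, which clearly holds. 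Together these place $\overline\alpha$ to the right of both roots.

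For iii), non-negativity follows because $q\ge 0$ to the right of $\overline r$ and $\ell=q/\alpha$ with $\alpha>0$. Monotonicity is the step that needs a little care, since $q$ increasing does not by itself make $q/\alpha$ increasing. Expanding,
\begin{equation*}
\ell(\alpha)=\beta-d_\gamma-\frac{Q-\gamma}{\xi}+\frac{c}{\alpha}+\frac{\alpha}{2\xi^2},\qquad c=1+\tfrac12(Q-\gamma)^2 .
\end{equation*}
This function is strictly convex on $(0,\infty)$, with unique minimizer $\alpha_0=\xi\sqrt{2c}$, and it is strictly increasing on $[\alpha_0,\infty)$. So it suffices to show $\overline r\ge\alpha_0$. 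If $\ell(\alpha_0)<0$, then $\alpha_0$ lies strictly between the two zeros, so $\overline r>\alpha_0$. If $\ell(\alpha_0)\ge0$, then because the zeros are real by i), $\ell$ must vanish at its minimum; hence $\ell(\alpha_0)=0$ and $\overline r=\underline r=\alpha_0$. In either case $\ell$ is strictly increasing on $[\overline r,\overline\alpha]$.

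I expect i) to be the only genuinely non-elementary step, since it relies on the lower bounds for $d_\gamma$ from \cite{gp-lfpp-bounds}; that is exactly why the hypothesis $\beta\le1$ is imposed. The rest is sign-checking of a quadratic.
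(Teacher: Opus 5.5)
Your proof is correct, and for parts ii) and iii) it takes a different route from the paper's; part i) is handled the same way in both, via $D(d_\gamma)\ge 0$ from the previous appendix lemma.

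\textbf{The paper's route.} The paper works directly with the closed-form larger root $\overline r=\frac{4d_\gamma+(d_\gamma-2\beta)\gamma^2+2\gamma\sqrt{D(d_\gamma)}}{2d_\gamma^2}$.
\begin{itemize}
\item For ii), it computes $\overline r-\xi(Q-\gamma)=\gamma\bigl((d_\gamma-\beta)\gamma+\sqrt{D(d_\gamma)}\bigr)/d_\gamma^2$ and $\overline\alpha-\overline r=\gamma\bigl(2d_\gamma+\beta\gamma-\sqrt{D(d_\gamma)}\bigr)/d_\gamma^2$, and then checks the signs.
\item For iii), it computes $\ell'$ and finds the same critical point $\sqrt{\gamma^4+16}/(2d_\gamma)$ as your $\alpha_0=\xi\sqrt{2c}$. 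It then shows $\overline r$ lies to the right of it by bounding $\overline r$ below by $\frac{4d_\gamma+(d_\gamma-2)\gamma^2}{2d_\gamma^2}$, using $\beta\le 1$ and $D(d_\gamma)\ge 0$. Comparing this bound with the critical point requires a second appeal to the bound $d_\gamma\ge\frac14\bigl(4+\gamma^2+\sqrt{\gamma^4+16}\bigr)$ from \cite{gp-lfpp-bounds}.
\end{itemize}

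\textbf{Your route.} You work with $q=\alpha\ell$ and never need the root formula.
\begin{itemize}
\item For ii), the vertex location $\alpha_v=\xi(Q-\gamma)+\xi^2(d_\gamma-\beta)$ and the identity $q(\overline\alpha)=\beta\overline\alpha+1$ suffice.
\item For iii), the strict convexity of $\ell$ on $(0,\infty)$ shows that its larger zero lies at or to the right of its minimizer as soon as the zeros are real. So iii) follows from i) with no further input on $d_\gamma$.
\end{itemize}

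\textbf{What each buys.} Your argument is more structural and robust. The lower bounds on $d_\gamma$ enter only through the discriminant in i), apart from the trivial $d_\gamma>\beta$. It also makes the positivity of both zeros explicit. The paper's computation gives explicit quantitative gaps between $\overline r$ and the endpoints, but these are not needed anywhere.
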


\begin{proof}
    Solving $\ell(\alpha) = 0$ yields two real zeros $\underline{r} \leq \overline{r} \in \mathbb{R} \setminus \{0\}$ if and only if $D(d_{\gamma}) = 2d_{\gamma}^{2}-(4\beta+\beta\gamma^{2}) d_{\gamma}+\beta^{2}\gamma^{2}$ is non-negative; these roots are given by
    \begin{align*}
        \underline{r},\overline{r}&=\frac{16+\gamma^{4}}{8d_{\gamma} + (2d_{\gamma} - 4\beta)\gamma^{2} \pm 4\gamma\sqrt{D(d_{\gamma})}}.
    \end{align*}
We have already shown in the proof of \Cref{euclidean-roots-appendix-lemma} that $D(d_{\gamma}) \geq 0$ when $\gamma \in (0,2), \beta \in [0,1]$, so it follows that $\ell$ has two real zeros; this gives i).  

Now we will prove ii). The larger zero $\overline{r}$ is given by
    \begin{align*}
        \overline{r}&=\frac{16+\gamma^{4}}{8d_{\gamma} + (2d_{\gamma} - 4\beta)\gamma^{2} - 4\gamma\sqrt{D(d_{\gamma})}} = \frac{4d_{\gamma} + (d_{\gamma}-2\beta)\gamma^{2} + 2\gamma\sqrt{D(d_{\gamma})}}{2d_{\gamma}^{2}}.
    \end{align*}
We compute
\begin{align*}
    \overline{r} - \xi(Q-\gamma) &= \frac{4d_{\gamma} + (d_{\gamma}-2\beta)\gamma^{2} + 2\gamma\sqrt{D(d_{\gamma})}}{2d_{\gamma}^{2}} - \frac{4 - \gamma^{2}}{2d_{\gamma}} = \frac{\gamma\left((d_{\gamma}-\beta)\gamma + \sqrt{D(d_{\gamma})}\right)}{d_{\gamma}^{2}} > 0,
\end{align*}
where the positivity is immediate because $d_{\gamma} \geq 2 > \beta$ and $\sqrt{D(d_{\gamma})} \geq 0$ as discussed above.
We also compute,
\begin{align*}
    \overline{\alpha} - \overline{r} &= \frac{(\gamma+2)^{2}}{2d_{\gamma}} - \frac{4d_{\gamma} + (d_{\gamma}-2\beta)\gamma^{2} + 2\gamma\sqrt{D(d_{\gamma})}}{2d_{\gamma}^{2}} = \frac{\gamma\left(2d_{\gamma} + \beta\gamma -\sqrt{D(d_{\gamma})}\right)}{d_{\gamma}^{2}}.
\end{align*}
Since $2d_{\gamma} + \beta\gamma > 0$, to show that $\overline{\alpha} - \overline{r} > 0$ it thus suffices to show $\left(2d_{\gamma} + \beta\gamma\right)^{2} > D(d_{\gamma})$; this is indeed the case as $D(d_{\gamma}) \leq 2d_{\gamma}^{2} + \beta^{2}\gamma^{2}$, while we have $(2d_{\gamma}+\beta\gamma)^{2} \geq 4d_{\gamma}^{2} + \beta^{2}\gamma^{2}  > 2d_{\gamma}^{2} + \beta^{2}\gamma^{2}$. This concludes the proof of ii).

Finally, for iii), we compute
\begin{align*}
    \ell'(\alpha) = \frac{d_{\gamma}^{2}}{2\gamma^{2}} - \frac{\gamma^{4}+16}{8\gamma^{2}\alpha^{2}},
\end{align*}
so $\ell'(\alpha) > 0$ whenever $\alpha > \alpha_{*} = \frac{\sqrt{\gamma^{4}+16}}{2d_{\gamma}}$. We have (since $\beta \leq 1$ and $D(d_{\gamma}) \geq 0$)
\begin{align*}
    \overline{r} \geq \frac{4d_{\gamma} + (d_{\gamma}-2)\gamma^{2}}{2d_{\gamma}^{2}},
\end{align*}
so to show that $\overline{r} \geq \alpha_{*}$ it suffices to show that
\begin{align*}
4d_{\gamma} + (d_{\gamma}-2)\gamma^{2} \geq d_{\gamma}\sqrt{\gamma^{4}+16}.
\end{align*}
Rearranging, this condition is equivalent to 
\begin{align*}
d_{\gamma} \geq \frac{1}{4}\left(4+\gamma^{2}+\sqrt{\gamma^{4}+16}\right),
\end{align*}
which holds for all $\gamma \in (0,2)$ by Corollary 2.5 of \cite{gp-lfpp-bounds}. So $\overline{r} \geq \alpha_{*}$, and so $\ell'(\overline{r}) \geq 0$ and $\ell'(\alpha) > 0$ for all $\alpha > \overline{r}$. So $\ell$ is strictly increasing and non-negative on $[\overline{r},\overline{\alpha}]$.
\end{proof}

An analogous proposition also holds for the function \eqref{eq:boundary-polynomial} considered in the boundary case; the proof is very similar to that of \Cref{euclidean-roots-appendix-lemma} so we omit it here.

\bibliography{references}

@article{gwynne2022geodesicsmetricballboundaries,
  title={Geodesics and metric ball boundaries in {Liouville} quantum gravity},
  author={Gwynne, Ewain and Pfeffer, Joshua and Sheffield, Scott},
  journal={Probability Theory and Related Fields},
  volume={182},
  number={3},
  pages={905--954},
  year={2022},
  publisher={Springer}
}

@article{kendallrandomlinesmetricspaces,
      title={From Random Lines to Metric Spaces}, 
      author={Kendall, Wilfrid S.},
      year={2017},
      journal={The Annals of Probability},
      volume={45(1)},
      pages={467-517},
      url={https://arxiv.org/abs/1403.1156}, 
}

@article{kahnlineprocess,
      title={Improper {Poisson} line process as {SIRSN} in any dimension}, 
      author={Kahn, Jonas},
      year={2016},
      journal={The Annals of Probability},
      volume={44(4)},
      pages={2694-2725},
      url={https://arxiv.org/pdf/1503.03976}, 
}

@article{geosdonotpausenroute,
author = {Blanc, Guillaume and Curien, Nicolas and Kahn, Jonas},
title = {Geodesics in planar Poisson road random metric},
journal = {Proceedings of the London Mathematical Society},
volume = {131},
number = {1},
pages = {e70070},
doi = {https://doi.org/10.1112/plms.70070},
url = {https://londmathsoc.onlinelibrary.wiley.com/doi/abs/10.1112/plms.70070},
eprint = {https://londmathsoc.onlinelibrary.wiley.com/doi/pdf/10.1112/plms.70070},
year = {2025}
}

@article{Dau25,
 author = {Dauvergne, Duncan},
 title = {The 27 geodesic networks in the directed landscape},
 fjournal = {Inventiones Mathematicae},
 journal = {Invent. Math.},
 issn = {0020-9910},
 volume = {242},
 number = {1},
 pages = {123--220},
 year = {2025},
 language = {English},
 doi = {10.1007/s00222-025-01355-8},
 zbMATH = {8090104}
}

@book{pw-gff-notes,
    AUTHOR = {Werner, Wendelin and Powell, Ellen},
     TITLE = {Lecture notes on the {G}aussian free field},
    SERIES = {Cours Sp\'{e}cialis\'{e}s [Specialized Courses]},
    VOLUME = {28},
 PUBLISHER = {Soci\'{e}t\'{e} Math\'{e}matique de France, Paris},
      YEAR = {2021},
     PAGES = {vi+171},
      ISBN = {978-2-85629-952-4},
   MRCLASS = {60-02 (60G15 60G60 60J67 82B20 82B21)},
  MRNUMBER = {4466634},
       eprint = {\arxiv{2004.04720}},
}

@article{shef-gff,
      AUTHOR = {Sheffield, Scott},
     TITLE = {Gaussian free fields for mathematicians},
   JOURNAL = {Probab. Theory Related Fields},
  FJOURNAL = {Probability Theory and Related Fields},
    VOLUME = {139},
      YEAR = {2007},
    NUMBER = {3-4},
     PAGES = {521--541},
      ISSN = {0178-8051},
     CODEN = {PTRFEU},
   MRCLASS = {60K35 (60J65 81T10 82B31)},
  MRNUMBER = {2322706 (2008d:60120)},
MRREVIEWER = {Ofer Zeitouni},
       eprint= {\arxiv{math/0312099}}
       }

@article{hm-metric-gluing,
  title={Equivalence of metric gluing and conformal welding in $\gamma$-{L}iouville quantum gravity for $\gamma \in (0, 2)$},
  author={Hughes, Liam and Miller, Jason},
  journal={Annales de l'Institut Henri Poincar\'e (B) Probabilit\'es et Statistiques},
  volume={61},
  number={3},
  pages={1961--2003},
  year={2025},
}

@incollection{ddg-metric-survey,
    AUTHOR = {Ding, Jian and Dub\'{e}dat, Julien and Gwynne, Ewain},
     TITLE = {Introduction to the {L}iouville quantum gravity metric},
 BOOKTITLE = {I{CM}---{I}nternational {C}ongress of {M}athematicians. {V}ol.
              6. {S}ections 12--14},
     PAGES = {4212--4244},
 PUBLISHER = {EMS Press, Berlin},
      YEAR = {[2023] \copyright 2023},
   MRCLASS = {60D05 (60G60)},
  MRNUMBER = {4680401},
       eprint = {\arxiv{2109.01252}},
}

@ARTICLE{dg-confluence,
    AUTHOR = {Ding, Jian and Gwynne, Ewain},
     TITLE = {Regularity and confluence of geodesics for the supercritical
              {L}iouville quantum gravity metric},
   JOURNAL = {Probab. Math. Phys.},
  FJOURNAL = {Probability and Mathematical Physics},
    VOLUME = {5},
      YEAR = {2024},
    NUMBER = {1},
     PAGES = {1--54},
      ISSN = {2690-0998},
   MRCLASS = {60D05 (60G60 83C45)},
  MRNUMBER = {4696081},
       DOI = {10.2140/pmp.2024.5.1},
       URL = {https://doi.org/10.2140/pmp.2024.5.1},
       eprint = {\arxiv{2104.06502}}, 
}

@article{gwynne-geodesic-network,
    AUTHOR = {Gwynne, Ewain},
     TITLE = {Geodesic networks in {L}iouville quantum gravity surfaces},
   JOURNAL = {Probab. Math. Phys.},
  FJOURNAL = {Probability and Mathematical Physics},
    VOLUME = {2},
      YEAR = {2021},
    NUMBER = {3},
     PAGES = {643--684},
      ISSN = {2690-0998},
   MRCLASS = {83C45 (60D05 60G60)},
  MRNUMBER = {4408022},
       DOI = {10.2140/pmp.2021.2.643},
       URL = {https://doi.org/10.2140/pmp.2021.2.643},
       eprint = {\arxiv{2010.11260}}
}

@book{mq-strong-confluence,
  title={Geodesics in the Brownian map: Strong confluence and geometric structure},
  author={Miller, Jason and Qian, Wei},
  series={Memoirs of the American Mathematical Society},
  number={1602},
  year={2025},
  publisher={American Mathematical Society}
}

@article{afs-metric-ball, 
    AUTHOR = {Ang, Morris and Falconet, Hugo and Sun, Xin},
     TITLE = {Volume of metric balls in {L}iouville quantum gravity},
   JOURNAL = {Electron. J. Probab.},
  FJOURNAL = {Electronic Journal of Probability},
    VOLUME = {25},
      YEAR = {2020},
     PAGES = {Paper No. 160, 50},
   MRCLASS = {60D05 (60G15 60G60)},
  MRNUMBER = {4193901},
       DOI = {10.1214/20-ejp564},
       URL = {https://doi.org/10.1214/20-ejp564},
       eprint = {\arxiv{2001.11467}},
}

@ARTICLE{vargas-dozz-notes,
       author = {{Vargas}, Vincent},
        title = "{Lecture notes on Liouville theory and the DOZZ formula}",
      journal = {ArXiv e-prints},
         year = "2017",
        month = "Dec", 
archivePrefix = {arXiv},
       eprint = {\arxiv{1712.00829}},
 primaryClass = {math.PR},
       adsurl = {https://ui.adsabs.harvard.edu/abs/2017arXiv171200829V}
}

@book{berestycki-lqg-notes,
 author = {Berestycki, Nathana{\"e}l and Powell, Ellen},
 title = {Gaussian free field and {Liouville} quantum gravity},
 fseries = {Cambridge Studies in Advanced Mathematics},
 series = {Camb. Stud. Adv. Math.},
 volume = {220},
 isbn = {978-1-00-940550-8; 978-1-00-940549-2},
 year = {2026},
 publisher = {Cambridge: Cambridge University Press},
 language = {English},
 doi = {10.1017/9781009405492},
 zbMATH = {8097122}
}

@article{KPZ88,
    author = "Knizhnik, V. G. and Polyakov, Alexander M. and Zamolodchikov, A. B.",
    editor = "Khalatnikov, I. M. and Mineev, V. P.",
    title = "{Fractal Structure of 2D Quantum Gravity}",
    reportNumber = "PRINT-88-0812 (LANDAU)",
    doi = "10.1142/S0217732388000982",
    journal = "Mod. Phys. Lett. A",
    volume = "3",
    pages = "819",
    year = "1988"
}

@article{gm-uniqueness, 
    AUTHOR = {Gwynne, Ewain and Miller, Jason},
     TITLE = {Existence and uniqueness of the {L}iouville quantum gravity
              metric for {$\gamma\in(0,2)$}},
   JOURNAL = {Invent. Math.},
  FJOURNAL = {Inventiones Mathematicae},
    VOLUME = {223},
      YEAR = {2021},
    NUMBER = {1},
     PAGES = {213--333},
      ISSN = {0020-9910},
   MRCLASS = {83C45 (30F99 58 60)},
  MRNUMBER = {4199443},
       DOI = {10.1007/s00222-020-00991-6},
       URL = {https://doi.org/10.1007/s00222-020-00991-6},
   eprint = {\arxiv{1905.00383}},
}

@article{gm-confluence,
    AUTHOR = {Gwynne, Ewain and Miller, Jason},
     TITLE = {Confluence of geodesics in {L}iouville quantum gravity for
              {$\gamma \in (0,2)$}},
   JOURNAL = {Ann. Probab.},
  FJOURNAL = {The Annals of Probability},
    VOLUME = {48},
      YEAR = {2020},
    NUMBER = {4},
     PAGES = {1861--1901},
      ISSN = {0091-1798},
   MRCLASS = {60J67 (60G52)},
  MRNUMBER = {4124527},
       DOI = {10.1214/19-AOP1409},
       URL = {https://doi.org/10.1214/19-AOP1409},
   eprint = {\arxiv{1905.00381}}, 
}

@article{lqg-metric-estimates,
    AUTHOR = {Dub\'{e}dat, Julien and Falconet, Hugo and Gwynne, Ewain and
              Pfeffer, Joshua and Sun, Xin},
     TITLE = {Weak {LQG} metrics and {L}iouville first passage percolation},
   JOURNAL = {Probab. Theory Related Fields},
  FJOURNAL = {Probability Theory and Related Fields},
    VOLUME = {178},
      YEAR = {2020},
    NUMBER = {1-2},
     PAGES = {369--436},
      ISSN = {0178-8051},
   MRCLASS = {60D05 (60G60)},
  MRNUMBER = {4146541},
       DOI = {10.1007/s00440-020-00979-6},
       URL = {https://doi.org/10.1007/s00440-020-00979-6},
   eprint = {\arxiv{1905.00380}}, 
}

@article{dddf-lfpp,
    AUTHOR = {Ding, Jian and Dub\'{e}dat, Julien and Dunlap, Alexander and
              Falconet, Hugo},
     TITLE = {Tightness of {L}iouville first passage percolation for
              {$\gamma \in (0,2)$}},
   JOURNAL = {Publ. Math. Inst. Hautes \'{E}tudes Sci.},
  FJOURNAL = {Publications Math\'{e}matiques. Institut de Hautes \'{E}tudes
              Scientifiques},
    VOLUME = {132},
      YEAR = {2020},
     PAGES = {353--403},
      ISSN = {0073-8301},
   MRCLASS = {60K35 (60G15 83C45)},
  MRNUMBER = {4179836},
       DOI = {10.1007/s10240-020-00121-1},
       URL = {https://doi.org/10.1007/s10240-020-00121-1},
       eprint = {\arxiv{1904.08021}},
}

@article{gp-lfpp-bounds,
       author = {{Gwynne}, Ewain and {Pfeffer}, Joshua},
        title = "{Bounds for distances and geodesic dimension in Liouville first passage percolation}",
      journal = {{E}lectronic {C}ommunications in {P}robability}, 
      volume = {24},
      pages= {no. 56, 12},
         year = "2019",  
       eprint = {\arxiv{1903.09561}},   
}

@article{DS11,
       AUTHOR = {Duplantier, Bertrand and Sheffield, Scott},
     TITLE = {Liouville quantum gravity and {KPZ}},
   JOURNAL = {Invent. Math.},
  FJOURNAL = {Inventiones Mathematicae},
    VOLUME = {185},
      YEAR = {2011},
    NUMBER = {2},
     PAGES = {333--393},
      ISSN = {0020-9910},
     CODEN = {INVMBH},
   MRCLASS = {81T40 (60K35)},
  MRNUMBER = {2819163 (2012f:81251)},
MRREVIEWER = {Lee-Peng Teo},
       DOI = {10.1007/s00222-010-0308-1},
       URL = {http://dx.doi.org/10.1007/s00222-010-0308-1},
       eprint={\arxiv{1206.0212}}
       }

@article{rhodes-vargas-review,
    AUTHOR = {Rhodes, R{\'e}mi and Vargas, Vincent},
     TITLE = {Gaussian multiplicative chaos and applications: {A} review},
   JOURNAL = {Probab. Surv.},
  FJOURNAL = {Probability Surveys},
    VOLUME = {11},
      YEAR = {2014},
     PAGES = {315--392},
      ISSN = {1549-5787},
   MRCLASS = {60G57 (28A80 60G15)},
  MRNUMBER = {3274356},
       DOI = {10.1214/13-PS218},
       URL = {http://dx.doi.org/10.1214/13-PS218},
eprint={\arxiv{1305.6221}}
}

@article{kahane,
AUTHOR = {Kahane, Jean-Pierre},
     TITLE = {Sur le chaos multiplicatif},
   JOURNAL = {Ann. Sci. Math. Qu\'ebec},
  FJOURNAL = {Annales des Sciences Math\'ematiques du Qu\'ebec},
    VOLUME = {9},
      YEAR = {1985},
    NUMBER = {2},
     PAGES = {105--150},
      ISSN = {0707-9109},
   MRCLASS = {60G57 (60G42)},
  MRNUMBER = {829798 (88h:60099a)},
MRREVIEWER = {S. D. Chatterji}
}

@article{DG20,
 author = {Ding, Jian and Gwynne, Ewain},
 title = {The fractal dimension of {Liouville} quantum gravity: universality, monotonicity, and bounds},
 fjournal = {Communications in Mathematical Physics},
 journal = {Commun. Math. Phys.},
 issn = {0010-3616},
 volume = {374},
 number = {3},
 pages = {1877--1934},
 year = {2020},
 language = {English},
 doi = {10.1007/s00220-019-03487-4},
 zbMATH = {7187082},
 Zbl = {1436.83024}
}

@article{RV11,
 author = {Rhodes, R{\'e}mi and Vargas, Vincent},
 title = {{KPZ} formula for log-infinitely divisible multifractal random measures},
 fjournal = {European Series in Applied and Industrial Mathematics (ESAIM): Probability and Statistics},
 journal = {ESAIM, Probab. Stat.},
 issn = {1292-8100},
 volume = {15},
 pages = {358--371},
 year = {2011},
 language = {English},
 doi = {10.1051/ps/2010007},
 zbMATH = {6157522},
 Zbl = {1268.60070}
}

@article{tbm-characterization,
    AUTHOR = {Miller, Jason and Sheffield, Scott},
     TITLE = {An axiomatic characterization of the {B}rownian map},
   JOURNAL = {J. \'{E}c. polytech. Math.},
  FJOURNAL = {Journal de l'\'{E}cole polytechnique. Math\'{e}matiques},
    VOLUME = {8},
      YEAR = {2021},
     PAGES = {609--731},
      ISSN = {2429-7100},
   MRCLASS = {60D05 (83C45)},
  MRNUMBER = {4225028},
       DOI = {10.5802/jep.15},
       URL = {https://doi.org/10.5802/jep.15},
   eprint = {\arxiv{1506.03806}}, 
}

@ARTICLE{lqg-tbm1,
    AUTHOR = {Miller, Jason and Sheffield, Scott},
     TITLE = {Liouville quantum gravity and the {B}rownian map {I}: the
              {${\mathrm QLE}(8/3,0)$} metric},
   JOURNAL = {Invent. Math.},
  FJOURNAL = {Inventiones Mathematicae},
    VOLUME = {219},
      YEAR = {2020},
    NUMBER = {1},
     PAGES = {75--152},
      ISSN = {0020-9910},
   MRCLASS = {60D05 (60J67 60J80 83C45)},
  MRNUMBER = {4050102},
       DOI = {10.1007/s00222-019-00905-1},
       URL = {https://doi.org/10.1007/s00222-019-00905-1},
   eprint = {\arxiv{1507.00719}}, 
}

@ARTICLE{legall-geodesic-stars,
    AUTHOR = {{Le Gall}, Jean-Fran{\c{c}}ois},
     TITLE = {Geodesic stars in random geometry},
   JOURNAL = {Ann. Probab.},
  FJOURNAL = {The Annals of Probability},
    VOLUME = {50},
      YEAR = {2022},
    NUMBER = {3},
     PAGES = {1013--1058},
      ISSN = {0091-1798},
   MRCLASS = {60D05 (05C80)},
  MRNUMBER = {4413211},
MRREVIEWER = {Zhongyang Li},
       DOI = {10.1214/21-aop1553},
       URL = {https://doi.org/10.1214/21-aop1553},
       eprint = {\arxiv{2102.00489}},
}

@ARTICLE{gwynne-miller-uihpq,
    AUTHOR = {Ewain Gwynne and Jason Miller},
     TITLE = {Scaling limit of the uniform infinite half-plane quadrangulation in the {G}romov-{H}ausdorff-{P}rokhorov-uniform topology},
   JOURNAL = {Electron. J. Probab.},
  FJOURNAL = {Electronic Journal of Probability},
      YEAR = {2017},
    VOLUME = {22},
       PNO = {84},
     PAGES = {1-47},
      ISSN = {1083-6489},
       DOI = {10.1214/17-EJP102},
      SICI = {1083-6489(2017)22:84<1:SLOTUI>2.0.CO;2-E},
   eprint = {\arxiv{1608.00954}},
}

@article{legall-geodesics,
AUTHOR = {{Le Gall}, Jean-Fran{\c{c}}ois},
     TITLE = {Geodesics in large planar maps and in the {B}rownian map},
   JOURNAL = {Acta Math.},
  FJOURNAL = {Acta Mathematica},
    VOLUME = {205},
      YEAR = {2010},
    NUMBER = {2},
     PAGES = {287--360},
      ISSN = {0001-5962},
     CODEN = {ACMAA8},
   MRCLASS = {60J65 (60D05)},
  MRNUMBER = {2746349 (2012b:60272)},
MRREVIEWER = {Xiaowen Zhou},
       DOI = {10.1007/s11511-010-0056-5},
       URL = {http://dx.doi.org/10.1007/s11511-010-0056-5},
eprint={\arxiv{0804.3012}}
}

@article{miermont-brownian-map,
AUTHOR = {Miermont, Gr{\'e}gory},
     TITLE = {The {B}rownian map is the scaling limit of uniform random
              plane quadrangulations},
   JOURNAL = {Acta Math.},
  FJOURNAL = {Acta Mathematica},
    VOLUME = {210},
      YEAR = {2013},
    NUMBER = {2},
     PAGES = {319--401},
      ISSN = {0001-5962},
   MRCLASS = {60D05 (05Cxx 52C17 60F05 60G57 60J65)},
  MRNUMBER = {3070569},
       DOI = {10.1007/s11511-013-0096-8},
       URL = {http://dx.doi.org/10.1007/s11511-013-0096-8},
eprint={\arxiv{1104.1606}}
}

@article {epw-real-tree,
    AUTHOR = {Evans, Steven N. and Pitman, Jim and Winter, Anita},
     TITLE = {Rayleigh processes, real trees, and root growth with
              re-grafting},
   JOURNAL = {Probab. Theory Related Fields},
  FJOURNAL = {Probability Theory and Related Fields},
    VOLUME = {134},
      YEAR = {2006},
    NUMBER = {1},
     PAGES = {81--126},
      ISSN = {0178-8051},
   MRCLASS = {60B05 (60B99 60J27 60J80)},
  MRNUMBER = {2221786},
MRREVIEWER = {Jochen Geiger},
       DOI = {10.1007/s00440-004-0411-6},
       URL = {https://doi.org/10.1007/s00440-004-0411-6},
       eprint={\arxiv{0402293}}
}

@article {dov-dl,
    AUTHOR = {Dauvergne, Duncan and Ortmann, Janosch and Vir\'{a}g, B\'{a}lint},
     TITLE = {The directed landscape},
   JOURNAL = {Acta Math.},
  FJOURNAL = {Acta Mathematica},
    VOLUME = {229},
      YEAR = {2022},
    NUMBER = {2},
     PAGES = {201--285},
      ISSN = {0001-5962},
   MRCLASS = {60K35 (60B20 82B43)},
  MRNUMBER = {4554223},
       DOI = {10.4310/acta.2022.v229.n2.a1},
       URL = {https://doi.org/10.4310/acta.2022.v229.n2.a1},
       eprint = {\arxiv{1812.00309}},
}

@preamble{
   "\def\cprime{$'$} "
}

@book {bbi-metric-geometry,
    AUTHOR = {Burago, Dmitri and Burago, Yuri and Ivanov, Sergei},
     TITLE = {A course in metric geometry},
    SERIES = {Graduate Studies in Mathematics},
    VOLUME = {33},
 PUBLISHER = {American Mathematical Society, Providence, RI},
      YEAR = {2001},
     PAGES = {xiv+415},
      ISBN = {0-8218-2129-6},
   MRCLASS = {53C23},
  MRNUMBER = {1835418},
MRREVIEWER = {Mario Bonk},
       DOI = {10.1090/gsm/033},
       URL = {http://dx.doi.org/10.1090/gsm/033},
}

@book{DMS14,
 author = {Duplantier, Bertrand and Miller, Jason and Sheffield, Scott},
 title = {Liouville quantum gravity as a mating of trees},
 fseries = {Ast{\'e}risque},
 series = {Ast{\'e}risque},
 issn = {0303-1179},
 volume = {427},
 isbn = {978-2-85629-941-8},
 year = {2021},
 publisher = {Paris: Soci{\'e}t{\'e} Math{\'e}matique de France (SMF)},
 language = {English},
 zbMATH = {7453876},
 Zbl = {1503.60003}
}

@article{kpz-fluctuation,
author={M. Kardar and G. Parisi and Y.-C. Zhang},
title={{D}ynamic scaling of growing interfaces},
journal={{Phys. Rev. Lett.}},
volume={56},
pages={889--892}, 
month=mar,
year={1986}
}

@article{BSS19,
 author = {Basu, Riddhipratim and Sarkar, Sourav and Sly, Allan},
 title = {Coalescence of geodesics in exactly solvable models of last passage percolation},
 fjournal = {Journal of Mathematical Physics},
 journal = {J. Math. Phys.},
 issn = {0022-2488},
 volume = {60},
 number = {9},
 pages = {093301, 22},
 year = {2019},
 language = {English},
 doi = {10.1063/1.5093799},
 zbMATH = {7116349},
 Zbl = {1480.60284}
}

@article{DEP24,
 author = {Dembin, Barbara and Elboim, Dor and Peled, Ron},
 title = {Coalescence of geodesics and the {BKS} midpoint problem in planar first-passage percolation},
 fjournal = {Geometric and Functional Analysis. GAFA},
 journal = {Geom. Funct. Anal.},
 issn = {1016-443X},
 volume = {34},
 number = {3},
 pages = {733--797},
 year = {2024},
 language = {English},
 doi = {10.1007/s00039-024-00672-z},
 zbMATH = {7862379},
 Zbl = {1554.60126}
}

@article{BGH21,
 author = {Basu, Riddhipratim and Ganguly, Shirshendu and Hammond, Alan},
 title = {Fractal geometry of {{\(\text{Airy}_2\)}} processes coupled via the {Airy} sheet},
 fjournal = {The Annals of Probability},
 journal = {Ann. Probab.},
 issn = {0091-1798},
 volume = {49},
 number = {1},
 pages = {485--505},
 year = {2021},
 language = {English},
 doi = {10.1214/20-AOP1444},
 zbMATH = {7310797},
 Zbl = {1457.82165}
}

@misc{Bha22,
 author = {Bhatia, Manan},
 title = {Atypical stars on a directed landscape geodesic},
 year = {2022},
 howpublished = {Preprint, {arXiv}:2211.05734 [math.{PR}] (2022)},
 url = {https://arxiv.org/abs/2211.05734},
 arXiv = {arXiv:2211.05734}
}

@article{BGH22,
 author = {Bates, Erik and Ganguly, Shirshendu and Hammond, Alan},
 title = {Hausdorff dimensions for shared endpoints of disjoint geodesics in the directed landscape},
 fjournal = {Electronic Journal of Probability},
 journal = {Electron. J. Probab.},
 issn = {1083-6489},
 volume = {27},
 pages = {44},
 note = {Id/No 1},
 year = {2022},
 language = {English},
 doi = {10.1214/21-EJP706},
 zbMATH = {7478703},
 Zbl = {1496.60115}
}

@misc{BK25,
      title={Strong confluence of geodesics in {L}iouville quantum gravity}, 
      author={Manan Bhatia and Konstantinos Kavvadias},
      year={2025},
      eprint={2512.09219},
      archivePrefix={arXiv},
      primaryClass={math.PR},
      url={https://arxiv.org/abs/2512.09219}, 
}

@article{Bha25,
 author = {Bhatia, Manan},
 title = {The {{\(d_{\gamma} /2\)}}-variation of distance profiles in {{\(\gamma\)}}-{Liouville} quantum gravity},
 fjournal = {Communications in Mathematical Physics},
 journal = {Commun. Math. Phys.},
 issn = {0010-3616},
 volume = {406},
 number = {3},
 pages = {58},
 note = {Id/No 61},
 year = {2025},
 language = {English},
 doi = {10.1007/s00220-024-05206-0},
 zbMATH = {7986052},
 Zbl = {1558.60029}
}

@article{GP22,
 author = {Gwynne, Ewain and Pfeffer, Joshua},
 title = {{KPZ} formulas for the {Liouville} quantum gravity metric},
 fjournal = {Transactions of the American Mathematical Society},
 journal = {Trans. Am. Math. Soc.},
 issn = {0002-9947},
 volume = {375},
 number = {12},
 pages = {8297--8324},
 year = {2022},
 language = {English},
 doi = {10.1090/tran/8085},
 zbMATH = {7612765},
 Zbl = {1501.60007}
}

@article{MQ20,
 author = {Miller, Jason and Qian, Wei},
 title = {The geodesics in {Liouville} quantum gravity are not {Schramm}-{Loewner} evolutions},
 fjournal = {Probability Theory and Related Fields},
 journal = {Probab. Theory Relat. Fields},
 issn = {0178-8051},
 volume = {177},
 number = {3-4},
 pages = {677--709},
 year = {2020},
 language = {English},
 doi = {10.1007/s00440-019-00949-7},
 zbMATH = {7227776},
 Zbl = {1450.60009}
}
\bibliographystyle{hmralphaabbrv}

\end{document}